\newcommand*{\NN}{\mathbb{N}}
\newcommand*{\ZZ}{\mathbb{Z}}
\newcommand*{\RR}{\mathbb{R}}
\newcommand*{\CC}{\mathbb{C}}
\newcommand*{\PP}{\mathbb{P}}
\DeclareMathOperator*{\EE}{\mathbb{E}}
\newcommand*{\calO}{\mathcal{O}}
\newcommand*{\calP}{\mathcal{P}}
\newcommand*{\calU}{\mathcal{U}}
\newcommand*{\calV}{\mathcal{V}}
\renewcommand\d{\delta}
\renewcommand\k{\kappa}
\newcommand\s{\sigma}
\newcommand{\G}{\Gamma}
\newcommand{\Si}{\Sigma}
\newcommand{\eps}{\epsilon}
\newcommand*{\A}{\mathtt{A}}
\newcommand*{\ta}{\mathtt{a}}
\newcommand*{\B}{\mathtt{B}}
\newcommand*{\tK}{\mathtt{K}}
\newcommand*{\tL}{\mathtt{L}}
\newcommand{\bta}{\mathbf{a}}
\newcommand{\btb}{\mathbf{b}}
\newcommand*{\1}{\mathbf{1}}
\newcommand*{\mb}[1]{\mathbf{#1}}
\newcommand*{\st}{\,:\,}
\newcommand*{\ball}[3][\relax]{\mathrm{B}^{#1}(#2, #3)} 
\newcommand*{\cluster}{\mathrm{cl}}
\def\cc{{\curvearrowright}}
\newtheorem{lemma}{Lemma}[section]
\newtheorem{cor}[lemma]{Corollary}
\newtheorem{prop}[lemma]{Proposition}
\newtheorem{theorem}[lemma]{Theorem}
\newtheorem{mainthm}{Theorem}
\theoremstyle{definition}
\newtheorem*{remark}{Remark}
\newtheorem{defn}[lemma]{Definition}
\newcommand*{\Chec}{\mathtt{Check}}
\newcommand*{\Ver}{\mathtt{Vert}}
\newcommand*{\codegrowth}{G_{\mathrm{cw}}}
\newcommand*{\kik}{\mathrm{H_K}}
\newcommand*{\hann}{\mathrm{h_{ann}}}
\newcommand*{\unif}{\mathrm{unif}}
\DeclareMathOperator{\Prob}{Prob}
\DeclareMathOperator{\Hom}{Hom}
\DeclareMathOperator{\Aut}{Aut}
\DeclareMathOperator{\Sym}{Sym}
\DeclareMathOperator{\TV}{TV}
\DeclareMathOperator{\shent}{H}
\DeclareMathOperator{\TC}{TC}
\DeclareMathOperator{\info}{I}
\DeclareMathOperator{\h}{h}
\newcommand*{\Rok}{\mathrm{Rok}}
\newcommand{\Loc}[3][\relax]{\mathrm{Loc}_{#1}(#2, #3)}
\newcommand{\dee}{\textrm{d}}
\DeclarePairedDelimiter{\abs}{\lvert}{\rvert}
\DeclarePairedDelimiter{\norm}{\|}{\|}
\newcommand{\nnorm}[1]{{\left\vert\kern-0.25ex\left\vert\kern-0.25ex\left\vert #1 
    \right\vert\kern-0.25ex\right\vert\kern-0.25ex\right\vert}}
\DeclarePairedDelimiterX{\inprod}[2]{\langle}{\rangle}{#1,\ #2}
\newcommand{\bigslant}[2]{{\left.\raisebox{.2em}{$#1$}\middle/\raisebox{-.2em}{$#2$}\right.}}
\begin{document}

\title{Algebraic dynamical systems from LDPC codes satisfy a strong negation of the weak Pinsker property}

\author{Tim Austin\footnote{supported in part by NSF grant DMS-1855694}, Lewis Bowen\footnote{supported in part by NSF grant DMS-2154680}\ \ and Christopher Shriver\footnote{supported in part by NSF grant DMS-1937215}}

\maketitle

\begin{abstract}
We construct an explicit algebraic example of a subshift of finite type over a group $\Gamma$ with an invariant Markov measure which has completely positive sofic entropy (with respect to `most' sofic approximations) and yet does not have a direct Bernoulli factor, because its model spaces shatter into exponentially many clusters of sub-exponential size. The example and its analysis are related to random low-density parity-check (LDPC) codes. 
\end{abstract}

\noindent
{\bf Keywords}: sofic entropy, LDPC codes, weak Pinsker property, Bernoulli shifts, algebraic dynamics\\
{\bf MSC}:37A35\\

\tableofcontents

\section{Introduction}\label{S:intro}

This paper constructs explicit dynamical systems with unusual properties related to recent work on the weak Pinsker property and shattering. The construction is explained next; the background, motivation and precise statements are developed afterwards.

Fix natural numbers $d,k$ and let $\Gamma=\Gamma_{d,k}$ be the $d$-fold free product of order-$k$ cyclic groups:
\[\Gamma := \langle s_1,\dots,s_d:\ s_1^k = \dots = s_d^k = e\rangle = \underbrace{\ZZ_k\ast \cdots \ast \ZZ_k}_{d},\]
where $\ZZ_k$ means $\ZZ/k\ZZ$. The set of all functions $x:\Gamma \to \ZZ_2$ is denoted $\ZZ_2^\Gamma$. This is a compact Abelian group under pointwise addition with the pointwise convergence topology. Let $X \le \ZZ_2^\Gamma$ be the closed subgroup defined by
\[X = \left\{x \in \ZZ_2^\Gamma:\ \sum_{j=0}^{k-1}x_{gs_i^j} = 0 \quad \forall g\in \Gamma,\ i = 1,\dots,d\right\},\]
and let $\mu = m_X$ be the Haar probability measure on $X$.

For $g\in \Gamma$, let $T^g: \ZZ_2^\Gamma \to \ZZ_2^\Gamma$ be the continuous group automorphism given 
by permuting indices on the left:
\begin{equation}\label{eq:left-action}
T^g((x_h)_{h \in \Gamma}) = (x_{g^{-1}h})_{h\in \Gamma}.
\end{equation}
The subgroup $X$ is invariant under this action, and hence so is its Haar measure.

This state space is easily visualized in terms of the Cayley graph of $\Gamma$ with its generators $s_1,\dots,s_k$.  Through each group element $g$, each $s_i$ generates a $k$-cycle.  So each vertex of the Cayley graph lies in $d$ of these $k$-cycles, and there are no other relations in the group, so these $k$-cycles are attached together into a hyper-tree.  With this picture in mind, a member of $X$ is simply an assignment of zeros and ones to the vertices of the Cayley graph such that the sum around every $k$-cycle is even.  For this reason, and by analogy with similar constructions in coding theory, we call $X$ a \textbf{parity check subshift}.  Indeed, certain random finite parity check codes play a crucial auxiliary role later in the paper: see Section~\ref{sec:LDPC}.

Informally stated, our main results are these:
\begin{itemize}
\item If $k > d \ge 3$ then the sofic entropy of the dynamical system $(X,m_X,T)$ is $(1-d/k)\log(2)$.
\item Every nontrivial factor of $(X,m_X,T)$ has positive sofic entropy and therefore positive Rokhlin entropy (this property is called `completely positive entropy' or `CPE').  In fact, we prove the stronger assertion that the outer Pinsker factor of $(X,m_X,T)$ is trivial.
\item The system $(X,m_X,T)$ is not isomorphic to a direct product of a nontrivial Bernoulli shift with another system. Combined with the previous conclusion, this is a strong negation of the weak Pinsker property.
\item The system $(X,m_X,T)$ is not weakly contained in a Bernoulli shift.  It is one of the first examples that has completely positive entropy and also this property.
\end{itemize}

Next we introduce background needed to state our main results precisely.

\subsection{Background: classical entropy theory}


Kolmogorov introduced entropy theory into dynamics for the purpose of distinguishing Bernoulli shifts up to measure conjugacy. Given a standard probability space $(\tK,\k)$, the {\bf Bernoulli shift over a countable group $\G$ with base space $(\tK,\k)$} consists of the probability space $(\tK^\G, \k^\G)$ together with the action of $\G$ by permuting indices as in~\ref{eq:left-action}.  A sample of $(\tK^\G,\k^\G)$ is a random $\tK$-valued configuration $(x_g)_{g \in \G}$ whose coordinates are i.i.d. with law $\k$. 

Suppose we are given a standard probability space $(X,\mu)$ (where we have left the sigma-algebra out of the notation for simplicity). Let $\Aut(X,\mu)$ denote the group of all measure-preserving automorphisms of $(X,\mu)$. A {\bf pmp} (probability-measure-preserving) action of $\G$ is a homomorphism $T:\G \to \Aut(X,\mu)$. The triple $(X,\mu,T)$ is a {\bf $\G$-system}. We also refer to it as a {\bf system}  or {\bf action} if $\G$ is understood. 

If we are given two $\G$-systems $(X_i,\mu_i,T_i)$ then a measurable map $\Phi:X_1 \to X_2$ is a {\bf factor map} if it is a.e. $\G$-equivariant (this means $\Phi(T_1^g x)=T_2^g \Phi(x)$ for all $g\in \G$ and $\mu_1$-a.e. $x\in X_1$) and the pushforward measure satisfies $\Phi_*\mu_1=\mu_2$. More precisely, we allow that $\Phi$ be defined only on a subset of full measure. If $\Phi$ is invertible (after ignoring a null set) then it is a {\bf measure-conjugacy} or {\bf isomorphism}. 

If $\Gamma = \ZZ$ then an action of the integers is given by a single transformation $T \in \Aut(X,\mu)$. Thus it makes sense to consider whether two transformations are measurably conjugate.

A problem attributed to von Neumann asks whether there could be two Bernoulli shifts over the group of integers which are not measurably conjugate. To answer this, Kolmogorov defined the entropy rate of a dynamical system in the special case in which $\G=\ZZ$ \cite{kolmogorov-1958, kolmogorov-1959}. He proved entropy is invariant under measure-conjugacy and computed entropy rates for Bernoulli shifts, thereby answering the problem in the affirmative. In fact, the entropy rate of a Bernoulli shift action is the same as the Shannon entropy of the base space. When the base space is $(\tK,\k)$ and $\tK$ is countable, its {\bf Shannon entropy} is 
$$H(\k) = - \sum_{k \in \tK} \k(\{k\}) \log (\k(\{k\})).$$
If $\k$ is not supported on a countable set, then its Shannon entropy is defined to be $+\infty$. 

Kolmogorov's theory extends fairly directly to the case when $\G$ is amenable. The first published work on entropy theory for general amenable groups is due to Kieffer \cite{kieffer-1975a}. 

Since Kolmogorov's pioneering work, entropy and Bernoulli shifts have played a central role in classifying dynamical systems. For example, Sinai proved that if an ergodic action of $\ZZ$ has positive entropy then it factors onto a Bernoulli shift of the same entropy \cite{sinai-weak}. Because entropy cannot increase under a factor map, this shows that Bernoulli factors witness entropy. Inspired by Sinai's theorem, Ornstein proved that Bernoulli shifts over the integers are isomorphic if and only if they have the same entropy \cite{ornstein-1970a, ornstein-1970b}.   These results were extended to the case of amenable acting groups in~\cite{OW87}.

Shannon entropy is easily seen to be additive under direct products, and this property is inherited by Kolmogorov's entropy rate. Naively, one might guess that any ergodic system is isomorphic to a direct product of a Bernoulli shift with a zero entropy system. This turns out to be false; counterexamples to weaker claims appear in \cite{MR0399416, MR0330416, MR0316682}. If it were true for a system which was not itself isomorphic to a Bernoulli shift, then additivity implies that the system would have a nontrivial (direct) factor with zero entropy. A system is said to have {\bf completely positive entropy} (CPE) if every nontrivial factor has positive entropy. Here a ``trivial'' factor is a measure-preserving system where the measure is a delta mass at a single point. This system is a factor of every other system, and it is easy to see that its entropy is zero. The paper \cite{MR0382598} shows that for any positive number $h>0$ there exist uncountably many pairwise non-isomorphic transformations which are CPE and have entropy $h$. 

A factor map $\pi:(X_1,\mu_1,T_1) \to (X_2,\mu_2,T_2)$ is said to be {\bf direct} or {\bf split} if there is another factor map $\xi:(X_1,\mu_1,T_1) \to (X_3,\mu_3,T_3)$ so that the pair $(\pi,\xi)$ together forms an isomorphism
\[(X_1,\mu_1,T_1) \to (X_2 \times X_3,\mu_2 \times \mu_3,T_2\times T_3).\]
Note that the measure on the right-hand side is required to be the product, so in particular the factor maps $\pi$ and $\xi$ must generate independent sigma-subalgebras of subsets of $X_1$. While Sinai's factor theorem shows the existence of Bernoulli factors, it does not say anything about the existence of {\em direct} Bernoulli factors. 

In the 1970s, Thouvenot defined a system to have the weak Pinsker property (WPP) if for every $\epsilon>0$ it is isomorphic to a direct product of a Bernoulli shift with a system of entropy less than $\epsilon$ \cite{MR0453982}. In other words, a system has the WPP if its entropy is witnessed by {\em direct} Bernoulli factors. Thouvenot asked whether every ergodic transformation has the WPP. The first author recently proved that this is indeed the case \cite{MR3905465}. Moreover, the statement holds whenever the acting group $\G$ is amenable. 

\subsection{Background: sofic entropy theory}

The second author constructed a system without the WPP in the special case when the group $\G$ is a free group of sufficiently high rank \cite{bowen2022}. To explain, we need to pause for a moment to discuss entropy theory when the acting group is not amenable.

An example due to Ornstein and Weiss in \cite{OW87} suggested it might not be possible to extend entropy theory to non-amenable groups. However, this changed with the introduction of sofic entropy theory \cite{bowen-jams-2010}. The new theory applies to all sofic groups, which is a class of groups containing amenable and linear groups, for example. It is unknown whether all countable groups are sofic. Sofic entropy theory is reviewed in \S \ref{S:prelim-entropy}.

A sofic approximation to a group $\G$ is a sequence $\Si$  of partial actions on finite sets which approximates the action of the group on itself by left-translations. To be precise, $\Sigma=(\sigma_n)_{n\in \NN}$ where $\sigma_n: \Gamma \to \Sym(V_n)$, $V_n$ are finite sets, $\Sym(V_n)$ is the symmetric group on $V_n$ and the sequence is required to satisfy for all $g, h, f \in \Gamma$ such that $f$ is not the identity,
\begin{eqnarray*}
1 &=& \lim_{n\to\infty} |V_n|^{-1} |\{ v\in V_n:~ \s_n(gh)v = \s_n(g)\s_n(h)v\}|\\
0&=& \lim_{n\to\infty} |V_n|^{-1} |\{ v\in V_n:~ \s_n(f)v = v\}|.
\end{eqnarray*}

A group is called {\bf sofic} if it admits a sofic approximation. 
The sofic entropy of a system $(X,\mu,T)$ (defined in Section~\ref{S:prelim-entropy}) depends a priori on a choice of sofic approximation, although for many actions where it has been computed, it has been shown not to. 

Many classical results extend to the sofic setting. For example, the sofic entropy of a Bernoulli shift action is equal to the Shannon entropy of the base. Sofic entropy is a measure-conjugacy invariant and so two Bernoulli shifts with different sofic entropy are not isomorphic. In recent work, Seward completed the converse direction: for any countable group $\G$, if two Bernoulli shifts over $\G$ have the same {\em base space Shannon entropy} then they are measurably conjugate \cite{stepin-1975, bowen-ornstein-2012, seward2022}. This converse does not depend on sofic entropy, which might not even be defined.

In a series of works generalizing Krieger's Theorem \cite{seward-kreiger-1, MR3959054}, Seward introduced Rokhlin entropy. To define it, suppose we are given an action $T:\G \to \Aut(X,\mu)$ and a countable measurable partition $\calP$ of $X$. We say the partition is {\bf generating} if the smallest sigma-algebra containing it which is also $T(\G)$-invariant is the sigma-algebra of all measurable sets (up to sets of measure zero). Then Rokhlin entropy is defined to be the infimum of $H_\mu(\calP)$ over all generating partitions, where 
$$H_\mu(\calP) = - \sum_{P \in \calP} \mu(P)\log(\mu(P))$$
is the {\bf Shannon entropy} of $(\calP,\mu)$.

It is immediate that Rokhlin entropy is a measure-conjugacy invariant. Moreover, it upper bounds sofic entropy. In fact, it is unknown whether Rokhlin entropy equals sofic entropy whenever the latter is not minus infinity (which can happen). On the other hand, the only known method for computing a lower bound to Rokhlin entropy uses sofic entropy. For example, it is unknown how to compute the Rokhlin entropy of Bernoulli shift actions, except in the case when $\G$ is assumed to be sofic.

In a different paper \cite{MR4066472} Seward generalized Sinai's factor theorem: every ergodic system with positive Rokhlin entropy factors onto a Bernoulli shift with the same entropy. 

However, other structural results about classical Kolmogorov--Sinai entropy break down outside the world of amenable groups. For example, Ornstein and Weiss' example shows that sofic entropy can {\em increase} under a factor map. In fact recent work of the second author shows that if $\G$ is an arbitrary non-amenable group then every Bernoulli shift over $\G$ factors onto every Bernoulli shift over $\G$ \cite{MR3962876}. For example, Bernoulli shifts of small entropy factor onto Bernoulli shifts of infinite entropy. 

Although Bernoulli shifts themselves have been classified, there is no known substitute for broader `Ornstein theory', which provides necessary and sufficient conditions for general ergodic processes to be isomorphic to Bernoulli shifts.  Moreover, some specific counterexamples show that the story must change substantially for some non-amenable groups.  For example, when $G$ has property (T), Popa and Sasyk~\cite{popa-sasyk} have given simple examples of factors of Bernoulli shifts that are not isomorphic to Bernoulli shifts.

It is also known that the weak Pinsker property does not hold for all non-amenable groups and for the main non-amenable notions of entropy.  The first counterexample appeared in~\cite{bowen2022}. While that counterexample does not have the WPP, it might still admit some direct Bernoulli factors. In other words, the system might be measurably conjugate to the direct product of a Bernoulli shift with another system, but one cannot choose the other system to have entropy less than $\eps$ if $\eps>0$ is chosen low enough.  In this work we present a new counterexample which does not admit \emph{any} nontrivial Bernoulli factors. Here, a Bernoulli shift $(\tK^\Gamma,\kappa^\Gamma,T)$ is said to be {\bf trivial} if $\kappa$ is supported on a single point in $\tK$. So a trivial Bernoulli shift is measurably conjugate to the trivial system which consists of $\Gamma$ acting on a single point.


\subsection{Main results}

Recall from the introduction that $\G$ is the $d$-fold free power of $\ZZ_k$, $X \subset (\ZZ_2)^\G$ is a certain closed ``parity check'' subgroup and $T: \G \to \Aut(X,m_X)$ is the canonical shift action by automorphisms.

\begin{mainthm}\label{mainthm1}
Let $k > d \ge 3$.  Then there exists a sofic approximation $\Sigma =(\sigma_n)_n$ to $\Gamma$ such that
\[ \h_\Sigma(X,m_X,T) = (1-d/k)\log 2\]
and along which the outer Pinsker factor of $(X,m_X,T)$ is trivial.  In particular, $(X,m_X,T)$ has completely positive sofic entropy along $\Sigma$.
\end{mainthm}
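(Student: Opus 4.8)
The plan is to compute the sofic entropy via a sequence of random sofic approximations coming from the LDPC-code structure, and separately to show that the outer Pinsker factor is trivial. For the entropy, I would choose $\Sigma = (\sigma_n)_n$ to be (a typical realization of) a random sofic approximation in which the finite quotient spaces $V_n$ carry a uniformly random system of $k$-cycles for each generator $s_i$; equivalently, $\sigma_n(s_i)$ is a random permutation all of whose cycles have length $k$, chosen independently over $i$. Such sequences are sofic approximations to $\Gamma$ with probability one, because locally the random $k$-cycle structure converges to the hyper-tree Cayley graph of $\Gamma$. The associated finite model space is then exactly the set of solutions $x \in \ZZ_2^{V_n}$ to the parity-check system ``the sum around each $k$-cycle is even,'' which is a random LDPC code with $|V_n|$ variables and roughly $\frac{d}{k}|V_n|$ checks. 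The first step is therefore to show that, with high probability, this linear system has rank close to $\frac{d}{k}|V_n|$, so the solution space has dimension $(1 - d/k)|V_n| + o(|V_n|)$; combined with the fact that $m_X$-typical points have empty local statistics obstruction (the Haar measure is the uniform measure on the code), this gives $\h_\Sigma(X, m_X, T) = (1 - d/k)\log 2$. The rank computation is a standard but delicate second-moment / cycle-counting argument for random sparse parity-check matrices over $\FF_2$, and the condition $k > d$ is exactly what makes the expected nullity match the naive count.

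Next I would address the outer Pinsker factor. The outer Pinsker factor of $(X, m_X, T)$ along $\Sigma$ is the largest factor with zero sofic entropy relative to $\Sigma$; to show it is trivial I would show that every nontrivial factor has positive $\Sigma$-entropy. Since $(X, m_X, T)$ is an algebraic system (a shift on a closed subgroup of $\ZZ_2^\Gamma$ with Haar measure), its factors are governed by algebraic/combinatorial structure: I expect one can reduce to understanding $T$-invariant closed subgroups $Y \le \ZZ_2^\Gamma$ through which a factor map from $X$ factors, or dually to submodules of the Pontryagin dual. The key mechanism is that any nontrivial factor must ``see'' a positive-density set of coordinates, and because the code is a good expander-like LDPC code, projecting the model spaces onto any positive fraction of coordinates still leaves exponentially many configurations — so the entropy cannot drop to zero. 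Making this precise is where I expect the main work: one must show a uniform lower bound on the exponential growth rate of the number of distinct ``shadows'' of model-space points under any nontrivial factor, using the local structure of the random hyper-tree together with the parity constraints.

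Putting these together: with $\Sigma$ chosen as above (and the probabilistic statements holding for a fixed typical $\Sigma$), the entropy equals $(1-d/k)\log 2 > 0$ because $d < k$, and the outer Pinsker factor is trivial, so in particular $(X, m_X, T)$ has completely positive sofic entropy along $\Sigma$. The main obstacle, I believe, is the second part — controlling the entropy of \emph{all} nontrivial factors simultaneously. The rank/entropy computation in the first part, while technical, follows a well-trodden path (first-moment upper bound plus a matching lower bound via the exactness of the local statistics, or a direct concentration argument for the dimension of a random LDPC code). By contrast, showing triviality of the outer Pinsker factor requires either a structural classification of factors of this algebraic system or a robust ``expansion implies entropy'' estimate that survives passage to arbitrary factors; I would expect the proof to lean on the rigidity of algebraic actions together with quantitative expansion properties of random LDPC codes, and this is the step where the hypotheses $k > d \ge 3$ (ensuring the code is a genuine expander with no small even-weight codewords causing degeneracy) are used most essentially.
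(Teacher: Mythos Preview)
Your plan for the entropy value is workable but differs from the paper's route. You propose computing $\dim X_{\sigma_n}$ directly by showing the random parity-check matrix has nearly full rank. The paper instead obtains the \emph{upper} bound on $h_\Sigma$ from a first-moment (Kikuchi/annealed entropy) calculation --- which gives $\kik(m_X) = (1-d/k)\log 2$ --- and the \emph{lower} bound from the trivial estimate $\dim X_{\sigma_n} \ge (1-d/k)|V_n|$ together with local-empirical convergence of the uniform measures $\mu_n$ on $X_{\sigma_n}$ to $m_X$. That convergence is itself nontrivial (one must know that typical codewords are good microstates for $m_X$), and the paper derives it from the same machinery that drives the CPE result. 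So the paper never needs to prove full rank directly.

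The genuine gap is in your approach to the outer Pinsker factor. You write that factors of the algebraic system $(X,m_X,T)$ should reduce to $T$-invariant closed subgroups, or dually to submodules of the Pontryagin dual. This is not correct: factors of a \emph{measure-preserving} system need not be algebraic, even when the system itself is given by a compact group with Haar measure. The outer Pinsker factor is a purely measure-theoretic construction, and there is no a priori reason it respects the group structure. Your fallback idea --- that expansion of the LDPC code forces positive entropy for any ``nontrivial shadow'' --- is closer in spirit, but as stated you have no mechanism for handling an arbitrary measurable factor map.

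The paper's mechanism is quite different from either of your suggestions. It introduces a model-mixing condition, \emph{Property M} (Definition~\ref{dfn:M}): for every $r,\varepsilon$ there exist positive-density subsets $S_n \subset V_n$ such that the joint entropy of $(\mu_n)_{\sigma_n^{B_r}(S_n)}$ is at least $|S_n|(\shent(m_{B_r})-\varepsilon)$. Theorem~\ref{T:wmmcpe} then shows that Property M plus local-empirical convergence forces every nontrivial factor to inherit positive entropy, via a total-correlation and data-processing argument that applies to \emph{any} measurable factor map after approximation by local maps --- no algebraic structure of the factor is used. The verification of Property M for the parity-check subshift (Proposition~\ref{prop:few-extra-cancellations}) is the technical heart and is where $k>d\ge 3$ enters most essentially: it rules out ``unexpected'' near-cancellations among the rows of the parity-check matrix outside a small conditioned set.
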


Since sofic entropy is always bounded above by Rokhlin entropy, the last part of this theorem has the following immediate corollary:

\begin{cor}
The outer Rokhlin Pinsker factor of $(X,m_X,T)$ is trivial, so it has completely positive Rokhlin entropy.
\end{cor}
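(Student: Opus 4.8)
The plan is to deduce the corollary from Theorem~\ref{mainthm1} using only the elementary comparison $\h_\Sigma\le\h^{\Rok}$, with no new dynamical input. Two standard facts carry the argument. First, for any countable group, any system, and any sofic approximation $\Sigma$, sofic entropy is dominated by Rokhlin entropy; I will use this in the form $\h_\Sigma(Y)\le\h^{\Rok}(Y)$ for every factor $Y$, and if one prefers to argue relatively, in the form $\h_\Sigma(X\mid Y)\le\h^{\Rok}(X\mid Y)$, which is proved by the same microstate--counting estimate as the absolute inequality, now carried out conditionally on the $Y$-coordinate. Second, from Seward's theory of Rokhlin entropy and its relative version: the outer Rokhlin Pinsker factor of a system has zero Rokhlin entropy, and a system has completely positive Rokhlin entropy exactly when its outer Rokhlin Pinsker factor is trivial.

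Granting these, the argument is immediate. By Theorem~\ref{mainthm1}, $(X,m_X,T)$ has completely positive sofic entropy along $\Sigma$, so every nontrivial factor of $(X,m_X,T)$ has strictly positive sofic entropy along $\Sigma$. Let $Y$ denote the outer Rokhlin Pinsker factor of $(X,m_X,T)$. Then $\h_\Sigma(Y)\le\h^{\Rok}(Y)=0$, so $Y$ does not have positive sofic entropy along $\Sigma$; by the previous sentence $Y$ must be trivial. Thus the outer Rokhlin Pinsker factor of $(X,m_X,T)$ is trivial, and by the characterization quoted above, $(X,m_X,T)$ has completely positive Rokhlin entropy.

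There is no genuine obstacle here: all the content is in Theorem~\ref{mainthm1}, which is precisely why the corollary is stated as immediate. The only steps deserving a line of justification are the two ``standard facts'' invoked above --- the comparison $\h_\Sigma\le\h^{\Rok}$ together with its relative form, and the basic properties of the outer Rokhlin Pinsker factor (that it has zero Rokhlin entropy, and that its triviality is equivalent to complete positivity, which in turn rests on subadditivity of Rokhlin entropy under extensions). Both are available in the literature, so nothing new is required here; the slight care needed is only in matching the conventions for ``outer Pinsker factor'' used for the sofic and Rokhlin invariants.
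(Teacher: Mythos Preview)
Your proposal is correct and follows essentially the same approach as the paper: both deduce the corollary immediately from Theorem~\ref{mainthm1} via the inequality $\h_\Sigma \le \h^{\Rok}$, so that every nontrivial factor, having positive sofic entropy along $\Sigma$, must also have positive Rokhlin entropy. The paper states this as a one-line observation just before the corollary, and your write-up simply expands the same reasoning with explicit reference to the defining property of the outer Rokhlin Pinsker factor.
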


\begin{mainthm}\label{mainthm2}
    If $k > d \ge 3$ then the system $(X,m_X,T)$ has no non-trivial direct Bernoulli factors.
\end{mainthm}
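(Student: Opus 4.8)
The plan is to isolate a general principle -- \emph{a system whose good model spaces decompose into clusters of sub-exponential size has no non-trivial direct Bernoulli factor} -- and then apply it to $(X,m_X,T)$, whose model spaces shatter into exponentially many clusters of sub-exponential size by the analysis of the previous sections. So suppose for contradiction that $(X,m_X,T)$ is measurably conjugate to a product $(B,\beta,S)\times(Y,\nu,R)$ in which $B=(\tL^\G,\la^\G,T)$ is a non-trivial Bernoulli shift; writing $\tL=\supp\la$ we have $\abs{\tL}\ge 2$. The idea is that inside the good model spaces of $B\times Y$ one can freely re-randomise the Bernoulli coordinate on a small, positive-density set of vertices, producing an exponentially large family of good models that is internally connected under single-coordinate changes and therefore contained in a single cluster.

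To carry this out I would fix the generating observable $\phi(b,y)=(b_e,\psi(y))$ on $B\times Y$, where $\psi$ is a finite generating observable for $Y$, so that the good model space on $V_n$ sits inside $(\tL\times A)^{V_n}$ and the $\tL$-coordinates of a model record its Bernoulli symbols. Fix a tolerance and a finite window $F\subseteq\G$. Since $\h_\Sigma(B\times Y)=\h_\Sigma(X)=(1-d/k)\log 2$ by Theorem~\ref{mainthm1}, the good model space is non-empty for infinitely many $n$; fix such a (large) $n$ and a good model $z_0=(b_0,a_0)$ lying comfortably inside the tolerance. Choose $W\subseteq V_n$ with $\abs{W}=\lfloor c\abs{V_n}\rfloor$, where $c>0$ depends only on $F$ and the tolerance, and set
\[
\calC:=\bigl\{\,(b,a_0)\ :\ b|_{V_n\setminus W}=b_0|_{V_n\setminus W}\,\bigr\}.
\]
Every element of $\calC$ agrees with $z_0$ off $W$, so, since each $\sigma_n(w)$ permutes $V_n$, the windowed $\phi$-names of $z_0$ and of any member of $\calC$ differ on at most $\abs{F}\abs{W}$ vertices; hence their empirical name distributions differ by at most $\abs{F}c$, which for $c$ small keeps $\calC$ inside the good model space. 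Moreover $\abs{\calC}=\abs{\tL}^{\abs{W}}\ge 2^{\lfloor c\abs{V_n}\rfloor}$, and any two members of $\calC$ are joined inside $\calC$ by a path of single-coordinate flips, so $\calC$ lies within one cluster at every clustering scale. Thus the model spaces of $B\times Y$ contain clusters of exponential size.

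It remains to transfer this to $(X,m_X,T)$. For this one uses that the cluster structure of the model spaces is preserved by measure conjugacy: a conjugacy between the systems is approximated by finite-radius codes which induce maps between the corresponding model spaces that are Lipschitz with constant $o(\abs{V_n})$ for the normalised Hamming metric and of multiplicity $\exp(o(\abs{V_n}))$, and such maps carry clusters into clusters with at most sub-exponential distortion of cardinality. Applying this to the inverse conjugacy, the exponentially large cluster $\calC$ produces an exponentially large cluster in the model spaces of $(X,m_X,T)$ along $\Sigma$, contradicting the shattering result. Hence $\abs{\tL}=1$, i.e.\ $B$ is trivial, so $(X,m_X,T)$ has no non-trivial direct Bernoulli factor.

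The last step is where the real work lies. One must make the comparison of the two families of model spaces precise enough -- controlling the Lipschitz constant and multiplicity of the approximating codes and verifying that ``clusters'' behave well under them -- and one must line up the quantifiers (the tolerance and window defining a good model, the scale defining a cluster, the density $c$ of $W$, and the limit $n\to\infty$) so that the exponential lower bound $2^{\lfloor c\abs{V_n}\rfloor}$ genuinely beats the sub-exponential cluster bound furnished by the shattering theorem; and if ``size'' is measured by the natural measure on the model spaces rather than by cardinality, one additionally checks that the generic subcube $\calC$ carries non-negligible measure. All of this is exactly the kind of bookkeeping the model-space framework of the earlier sections is built to support; the conceptual content -- that a free Bernoulli direction re-glues the model space into one exponentially large cluster -- is short.
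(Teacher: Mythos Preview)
Your outline is sound and coincides with one of the three arguments the paper itself records in Section~\ref{S:shattering}: a direct Bernoulli factor forces model-space clusters of exponential size (your sub-cube $\calC$, connected by single-coordinate flips), contradicting the sub-exponential cluster bound that total shattering imposes. The paper, however, chooses a different main route that sidesteps the transfer step you flag as delicate. Rather than comparing cluster sizes across a conjugacy, it works directly with the would-be factor map $\phi^\Gamma\colon\B^\Gamma\times\tK^\Gamma\to\A^\Gamma$ inside $\A^{V_n}$: for a fixed good model $\mb{y}$ of $\nu$, any two independent Bernoulli completions $(\mb{y},\mb{z})$ and $(\mb{y},\mb{z}')$ are $\delta$-connected in the model space of $\nu\times\kappa^\Gamma$ with high probability (Lemma~\ref{lem:bernconnected}), so their images under a local approximation to $\phi$ are $\delta$-connected in $\Omega(\sigma_n,\calO)$, and total shattering then forces these images to be $\varepsilon$-close for every $\varepsilon>0$; hence $\phi$ is almost surely independent of the Bernoulli coordinate. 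No comparison of model spaces between two conjugate systems is needed. The paper also gives a second complete proof via the zeroth sofic Betti number $b_{0,\Sigma}$ of \cite{bowen2022}, whose conjugacy-invariance and insensitivity to Bernoulli direct factors is exactly the rigorous form of your ``cluster structure is preserved by measure conjugacy'', and it explicitly mentions your cluster-size route as a third alternative. So your instinct is validated; but the bookkeeping you defer --- coordinating the approximation quality of the codes, the nested neighbourhoods, and the clustering scale so that approximate inverses of approximate codes carry exponential clusters to exponential clusters --- is genuinely the substance of the argument, and it is precisely what the $b_{0,\Sigma}$ machinery of \cite{bowen2022} packages and what the paper's main proof buys its way out of.
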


\begin{remark}
Theorems \ref{mainthm1} and \ref{mainthm2} should hold in the more general setting in which $\Gamma = \Gamma_1 \ast \cdots \ast \Gamma_d$ where each $\Gamma_i$ is a group of order $k$. That is, we do not need to require that each $\Gamma_i$ is cyclic. The proofs are essentially the same.
\end{remark}

\begin{remark}
The {\bf weak Pinsker entropy} of a system $(X,\mu,T)$ is defined to be the supremum of the Shannon entropies $H(\tK,\k)$ over all direct Bernoulli factors of the form $\G \cc (\tK,\k)^\G$. This concept was introduced in \cite{MR4424970}. So Theorem \ref{mainthm2} implies that $(X,m_X,T)$ has zero weak Pinsker entropy. 
\end{remark}


Finally, our methods also give the following.

\begin{mainthm}\label{mainthm5}
    If $k > d \ge 3$ then the system $(X,m_X,T)$ is not weakly contained in a Bernoulli system.
\end{mainthm}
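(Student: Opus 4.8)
The plan is to argue by contradiction, the crucial input being the clustering (``shattering'') of the model spaces of $(X,m_X,T)$ along the sofic approximation $\Sigma$ of Theorem~\ref{mainthm1}. Concretely, I will use the following property, established in the body of the paper while analysing the random $(d,k)$-LDPC codes (and underlying Theorems~\ref{mainthm1} and~\ref{mainthm2}): writing $\alpha(x)=x_e$ for the canonical generating observable, there is $c=c(d,k)>0$ such that for every large enough finite $F\subseteq\Gamma$ and small enough $\delta>0$ the set of $(\sigma_n,F,\delta)$-good models of $(X,m_X,\alpha)$ partitions into clusters, each of cardinality $\exp(o(|V_n|))$, any two of which are at Hamming distance $\ge c\,|V_n|$. (The linear separation is where $d\ge3$ enters, via the linear minimum distance of the code.) Suppose toward a contradiction that $(X,m_X,T)$ is weakly contained in a Bernoulli system $(\tK^\Gamma,\kappa^\Gamma,S)$; after a routine approximation we may assume $(\tK,\kappa)$ is finite, and I fix $F,\delta$ in the shattering regime above.

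Since $\alpha$ generates, weak containment supplies (after approximating by a cylinder function) a finite code $\beta\colon\tK^E\to\ZZ_2$, $E\subseteq\Gamma$ finite, whose pushforward of $\kappa^\Gamma$ matches the law of $(x_g)_{g\in F}$ under $m_X$ to within $\delta$. I then form $\Phi_n\colon\tK^{V_n}\to\ZZ_2^{V_n}$, $\Phi_n(\psi)(v)=\beta\big((\psi(\sigma_n(g)v))_{g\in E}\big)$, and set $\nu_n:=(\Phi_n)_*(\kappa^{\otimes V_n})$. Two features of $\nu_n$ drive the argument. First, because each $\sigma_n(g)$ is a genuine permutation of $V_n$, changing one coordinate of $\psi$ changes at most $|E|$ coordinates of $\Phi_n(\psi)$; hence $f\circ\Phi_n$ has bounded differences $\le|E|$ for every $1$-Lipschitz $f\colon\ZZ_2^{V_n}\to\RR$ (Hamming metric), and the bounded-differences inequality gives $\nu_n\big(|f-\EE_{\nu_n}f|\ge t\big)\le 2\exp\!\big(-2t^2/(|E|^2|V_n|)\big)$. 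Second, for $n$ large $\Phi_n$ pushes $\kappa^{\otimes V_n}$-almost all mass into the $(\sigma_n,F,2\delta)$-good models of $(X,m_X,\alpha)$, since the $F$-window empirical distribution of $\Phi_n(\psi)$ is the $\beta$-image of the $FE$-window empirical distribution of $\psi$, which concentrates near $\kappa^{\otimes FE}$. So $\nu_n$ is a Lipschitz image of a product measure that, up to $o(1)$ mass, sits on the shattered model space.

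The heart of the matter is a lower bound showing $\nu_n$ is not degenerate. If $\beta$ were $\kappa^{\otimes E}$-essentially constant then the coded process would be a point mass, which cannot lie within $\delta<\tfrac12$ of the $F$-marginal of $m_X$ (that marginal is uniform on a subgroup of $\ZZ_2^F$ of size $\ge2$); hence $\eta:=\shent(\beta_*\kappa^{\otimes E})>0$. Choosing a maximal family $v_1,\dots,v_m\in V_n$ whose $E$-orbits $\{\sigma_n(g)v_i:g\in E\}$ are pairwise disjoint and of full size $|E|$, soficity gives $m\ge|V_n|/|E|^2-o(|V_n|)$, and the coordinates $\Phi_n(\psi)(v_1),\dots,\Phi_n(\psi)(v_m)$ are then i.i.d.\ with law $\beta_*\kappa^{\otimes E}$, so $\shent(\nu_n)\ge m\eta$. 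Decomposing $\nu_n=\sum_i p_i\nu_n^{(i)}$ along the clusters and using $\log|C_i|=o(|V_n|)$ (and that $\nu_n$ escapes the model space with mass $o(1)$), I get $\shent(p_\bullet)\ge\shent(\nu_n)-o(|V_n|)\ge\beta_0|V_n|$ for a fixed $\beta_0>0$.

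To finish: from $\shent(p_\bullet)\ge\beta_0|V_n|$ a short combinatorial argument (greedy if the top cluster has mass $<1/2$; otherwise note that $\shent(p_\bullet)$ being linear forces the top cluster's mass to be $\le1-\Omega(1)$) produces a $2$-colouring of the cluster indices into $G_1,G_2$ with $p(G_1),p(G_2)\ge\rho$ for a fixed $\rho>0$ depending only on $\beta_0$; the corresponding unions of clusters are $\ge c|V_n|$ apart. Then $f(x):=\min\big(d_{\mathrm H}(x,\bigcup_{i\in G_1}C_i),\tfrac c2|V_n|\big)$ is $1$-Lipschitz, vanishes on a set of $\nu_n$-mass $\ge(1-o(1))\rho$ and equals $\tfrac c2|V_n|$ on a set of $\nu_n$-mass $\ge(1-o(1))\rho$, so $\nu_n$ gives mass $\ge\rho/2$ to $\{|f-\mathrm{med}_{\nu_n}f|\ge\tfrac c4|V_n|\}$ for $n$ large — which flatly contradicts the bounded-differences estimate. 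Hence $(X,m_X,T)$ is not weakly contained in any Bernoulli system. The step I expect to be the real obstacle is exactly the entropy lower bound on $\nu_n$: a priori a code witnessing weak containment could be hugely many-to-one, collapsing $\kappa^{\otimes V_n}$ onto a tiny (even single-cluster) image, which would trivially ``concentrate'' and leave no contradiction; what rescues the argument is superadditivity of Shannon entropy over a linear-sized packing of disjoint $E$-neighbourhoods together with the elementary fact that the code cannot be essentially constant, and making both this packing and the bounded-differences bound uniform in $n$ is precisely where one uses that the $\sigma_n(g)$ are honest permutations. A secondary, bookkeeping obstacle is to invoke the shattering of the model spaces in exactly the $(F,\delta)$-regime handed to us by weak containment.
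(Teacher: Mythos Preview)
Your overlap-gap/concentration approach is in the right spirit, but the entropy step has a genuine quantifier problem that your closing ``secondary obstacle'' anticipates without resolving. The claim that clusters have cardinality $\exp(o(|V_n|))$ for the \emph{fixed} $(F,\delta)$ you chose is not what total shattering gives: shattering says that for each $\varepsilon>0$ there is a neighbourhood $\calO_\varepsilon$ in which clusters have diameter $<\varepsilon$, hence size $\le\exp(c(\varepsilon)|V_n|)$ with $c(\varepsilon)\to0$ as $\varepsilon\to0$. Once $(F,\delta)$ is frozen, $c(\varepsilon)$ is a fixed positive number, and your inequality becomes $\shent(p_\bullet)\ge(\eta/|E|^2-c(\varepsilon))|V_n|$. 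For this to be useful you need $\eta/|E|^2>c(\varepsilon)$; but the locality $E$ of the weak-containment witness $\beta$ depends on $(F,\delta)$, which in turn depends on $\varepsilon$ via the shattering definition. Nothing in weak containment bounds how fast $|E|$ grows as $\varepsilon\to0$, so the inequality cannot be arranged and the two-colouring step is unsupported.

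The paper avoids this circularity by using the \emph{connectedness} of Bernoulli model spaces rather than an entropy count. For independent $\mb{z},\mb{z}'\sim\kappa^{V_n}$, one can interpolate from $\mb{z}$ to $\mb{z}'$ within a slightly enlarged microstate space (a lemma taken from the earlier anti-weak-Pinsker paper); the $|E|$-Lipschitz map $\Phi_n$ carries this to a $\delta$-connected chain of $\calO$-microstates, and total shattering collapses the chain into a single cluster, forcing $\dee^{(V_n)}(\Phi_n(\mb{z}),\Phi_n(\mb{z}'))<\varepsilon$. Since this holds for every $\varepsilon>0$, the one-site law of $\beta$ must be a point mass, contradicting the fact that the identity-coordinate marginal of $m_X$ is uniform on $\ZZ_2$. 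Only the separation scale $\delta$ enters this argument, never the cluster sizes, so no comparison between $|E|$ and $c(\varepsilon)$ is needed. (Your bounded-differences estimate could in fact substitute for the connectedness lemma --- apply it to $(\psi,\psi')\mapsto\dee^{(V_n)}(\Phi_n(\psi),\Phi_n(\psi'))$ directly and combine with the forbidden-overlap interval --- but that is a different argument from the entropy one you wrote.)
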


For a definition of weak containment of measure-preserving systems, see for example \cite{burton2020}.

Theorems~\ref{mainthm2} and \ref{mainthm5} are consequences of the system having totally shattered microstate spaces along some sofic approximation: see Corollary~\ref{cor:shatt-cons}.

Together, Theorems~\ref{mainthm1} and~\ref{mainthm5} show that $(X,m_X,T)$ has completely positive sofic entropy along some sofic approximation, hence also completely positive Rokhlin entropy, but is not weakly contained in a Bernoulli shift.  The authors are not aware of other systems for which both of these properties have been verified previously.

\subsection{Probabilistic versions of the main theorems}

Both Theorem~\ref{mainthm1} and Theorem~\ref{mainthm2} will be derived as corollaries of probabilistic theorems. To explain, we say that a permutation of a set $V$ is \textbf{$k$-uniform} if it consists entirely of $k$-cycles: that is, its cycle type is $[k^{n/k}]$.  Recall that $\G = \G_{d,k} =  \langle s_1,\dots,s_d:\ s_1^k = \dots = s_d^k = e\rangle$. A homomorphism $\s:\G \to \Sym(V)$ is \textbf{$k$-uniform} if for each generator $s_i$, the image $\s(s_i)$ is $k$-uniform. Let $\PP_n$ be the uniform distribution on the set $\Hom_{\mathrm{unif}}(\Gamma,\Sym(V_n))$ of $k$-uniform homomorphisms. We will always assume $n$ is chosen so that $k$ divides $n$ (since otherwise there are no $k$-uniform permutations). 

We infer the existence of sofic approximations with the desired properties from the next proposition, which follows immediately from a more precise estimate in \cite[Lemma 3.1]{airey2022}.

 \begin{prop}\label{prop:probably-sofic}
There are subsets $\Omega_n^{\mathrm{sofic}}$ of $\Hom_{\mathrm{unif}}(\Gamma,\Sym(V_n))$ with ${\PP_n(\Omega_n^{\mathrm{sofic}})\to 1}$ and such that $(\sigma_n)_n$ is a sofic approximation to $\Gamma$ whenever $\sigma_n \in \Omega_n^{\mathrm{sofic}}$ for all $n$. \qed
\end{prop}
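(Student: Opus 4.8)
The plan is fourfold: (i) observe that the multiplicativity requirement in the definition of a sofic approximation is automatic here; (ii) reduce the remaining (asymptotic freeness) requirement, for a single group element, to a first-moment estimate on the number of fixed points of a random $k$-uniform homomorphism; (iii) invoke \cite[Lemma 3.1]{airey2022} for that estimate; and (iv) diagonalize over the countably many elements of $\Gamma$ using Markov's inequality to carve out the sets $\Omega_n^{\mathrm{sofic}}$.

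For (i): every $\sigma_n \in \Hom_{\mathrm{unif}}(\Gamma,\Sym(V_n))$ is a genuine group homomorphism, so $\sigma_n(gh) = \sigma_n(g)\sigma_n(h)$ identically and the first limit in the definition holds exactly, for every $\sigma_n$. Hence the only thing to arrange is that for each $f \in \Gamma\setminus\{e\}$ one has $|V_n|^{-1}\#\mathrm{Fix}(\sigma_n(f)) \to 0$. Also note that, since $\Gamma$ is the free product of the cyclic groups $\langle s_i\rangle \cong \ZZ_k$, a $k$-uniform homomorphism is nothing but an arbitrary $d$-tuple $(\sigma_n(s_1),\dots,\sigma_n(s_d))$ of $k$-uniform permutations of $V_n$ (the relation $\sigma_n(s_i)^k=e$ being automatic for a union of $k$-cycles); so under $\PP_n$ these $d$ permutations are independent, each uniform on the set of $k$-uniform permutations of $V_n$.

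For (ii)--(iii): the claim is that for each fixed $f \ne e$ there is a constant $C_f$ with $\EE_{\PP_n}[\#\mathrm{Fix}(\sigma_n(f))] \le C_f$ for all $n$; this is the content of \cite[Lemma 3.1]{airey2022}, which I would prove as follows. Since $\#\mathrm{Fix}$ is conjugacy-invariant, replace $f$ by a cyclic reduction; if the cyclically reduced word has syllable length $\le 1$ then $\sigma_n(f)$ is conjugate to a power $\sigma_n(s_i)^{e}$ with $k \nmid e$, which is fixed-point-free because $\sigma_n(s_i)$ is a union of $k$-cycles. Otherwise write $\#\mathrm{Fix}(\sigma_n(f)) = \sum_{v\in V_n}\1\{\sigma_n(f)v=v\}$, expand the inner event over the trajectory $v = v_0, v_1,\dots,v_{\ell-1},v_\ell = v$ traced out by the syllables of $f$, and group trajectories by which of the $v_j$ coincide. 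For a fixed trajectory type, the constraints imposed on each $\sigma_n(s_i)$ by the steps using that generator must be mutually consistent as a partial $\ZZ_k$-action; using the conjugation-invariance of the uniform law on $k$-uniform permutations one checks that each genuinely new such constraint is satisfied with conditional probability $O(1/|V_n|)$, so that the probability of realizing a trajectory of that type decays fast enough in $|V_n|$ to beat the $O(|V_n|^{r-1})$ trajectories of that type through a fixed $v_0$ (where $r$ is the number of distinct vertices). Summing over $v_0$ and over the finitely many trajectory types gives the bound $C_f$, the dominant contribution $O(1)$ coming from trajectories with all $v_j$ distinct (the cyclically reduced word then closes up with one extra $\sim 1/|V_n|$ constraint, offset by the $|V_n|$ choices of $v_0$). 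For the present application even the much weaker $\EE_{\PP_n}[\#\mathrm{Fix}(\sigma_n(f))] = o(|V_n|)$ would suffice.

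For (iv): enumerate $\Gamma\setminus\{e\} = \{f_1,f_2,\dots\}$, set $S(m) = \sum_{j\le m} C_{f_j}$, and choose $\phi\colon\NN\to\NN$ with $\phi(n)\to\infty$ and $\phi(n)\,S(\phi(n)) = o(|V_n|)$ (possible since $|V_n|\to\infty$), then put
\[\Omega_n^{\mathrm{sofic}} = \{\sigma \in \Hom_{\mathrm{unif}}(\Gamma,\Sym(V_n)) : \#\mathrm{Fix}(\sigma(f_j)) \le |V_n|/\phi(n)\ \text{for all } j \le \phi(n)\}.\]
By Markov's inequality and a union bound, $\PP_n\big((\Omega_n^{\mathrm{sofic}})^c\big) \le \phi(n)|V_n|^{-1}\sum_{j\le\phi(n)}C_{f_j} = \phi(n)S(\phi(n))/|V_n| \to 0$, so $\PP_n(\Omega_n^{\mathrm{sofic}})\to 1$; and if $\sigma_n\in\Omega_n^{\mathrm{sofic}}$ for every $n$, then for each fixed $j$ and all $n$ with $\phi(n)\ge j$ we get $|V_n|^{-1}\#\mathrm{Fix}(\sigma_n(f_j))\le 1/\phi(n)\to 0$, which together with the automatic exact multiplicativity is exactly the assertion that $(\sigma_n)_n$ is a sofic approximation to $\Gamma$. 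The only real obstacle is the first-moment estimate of step (ii): the book-keeping of the trajectory sum and the control of conditional probabilities in the $k$-uniform (rather than fully uniform) permutation model is where all the work lies, and this is precisely what \cite[Lemma 3.1]{airey2022} supplies in sharper form; everything else is soft. (We note in passing that the hypothesis $k>d\ge 3$ plays no role here: this random model gives a sofic approximation to $\Gamma_{d,k}$ for all $d\ge 1$ and $k\ge 2$.)
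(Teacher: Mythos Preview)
Your proposal is correct and matches the paper's approach exactly: the paper gives no proof of its own beyond the sentence ``follows immediately from a more precise estimate in \cite[Lemma 3.1]{airey2022}'' and the \qed symbol, so you have essentially reconstructed the argument the paper leaves implicit. Your observation that multiplicativity is automatic for genuine homomorphisms, your reduction to a first-moment fixed-point bound (which is precisely what the cited lemma supplies), and your Markov-plus-diagonalization in step (iv) are all sound and are the natural way to pass from that lemma to the proposition.
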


With Proposition \ref{prop:probably-sofic} in hand, Theorem~\ref{mainthm1} is a corollary of the following probabilistic version.

\begin{theorem}\label{mainthm3}
    There are subsets $\Omega_n' \subseteq \Omega_n^{\mathrm{sofic}}$ with $\PP_n(\Omega_n')\to 1$ 
    and such that, if $\sigma_n \in \Omega_n'$ for every $n$, then
    \begin{itemize}
\item[a.] the sofic entropy of $(X,m_X,T)$ along any subsequence of $(\sigma_n)_n$ equals
\[(1-d/k)\log 2;\]
 \item[b.] every nontrivial factor of $(X,m_X,T)$ inherits positive sofic entropy along $(\sigma_n)_n$.
    \end{itemize}
\end{theorem}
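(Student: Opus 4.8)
The plan is to prove both halves of Theorem~\ref{mainthm3} by an annealed (first-moment) analysis of the random model spaces attached to the $k$-uniform homomorphisms $\sigma_n$, together with a concentration argument upgrading the annealed picture to a quenched one for most $\sigma_n$. First I would set up the microstate space: for $\sigma_n \in \Hom_{\mathrm{unif}}(\Gamma,\Sym(V_n))$ and a finite generating partition (here the canonical partition by the value $x_e \in \ZZ_2$), a good model is a point $y \in \ZZ_2^{V_n}$ that approximately satisfies the parity-check constraints pushed forward along $\sigma_n$, i.e. $\sum_{j=0}^{k-1} y_{\sigma_n(s_i)^j v} = 0$ for all but $o(n)$ pairs $(v,i)$. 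Because each generator $s_i$ contributes $n/k$ disjoint $k$-cycles, the exact solution set is an affine (in fact linear) subspace of $\ZZ_2^{V_n}$ of dimension exactly $n - \mathrm{rank}(H_{\sigma_n})$, where $H_{\sigma_n}$ is the $dn/k \times n$ parity-check matrix. The expected log-cardinality of this space is $(n - \EE\,\mathrm{rank}(H_{\sigma_n}))\log 2$, and the content of part~(a) is that $\EE\,\mathrm{rank}(H_{\sigma_n}) = (d/k)n - o(n)$, i.e. the random LDPC code has near-full-rank parity-check matrix; this is where I would cite the LDPC-coding input developed in Section~\ref{sec:LDPC}. The upper bound on sofic entropy also follows from this rank computation (a model cannot do much better than lie near this subspace, by a standard counting argument controlling the $o(n)$ violated checks), and the lower bound follows from exhibiting enough genuine microstates — the exact solution space already has the right exponential size, and one checks the equidistribution condition (that a uniformly random element of the solution space has empirical distribution of names close to that prescribed by $m_X$) using the Markov/Gibbs structure of $m_X$.

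For part~(b), the key is the "shattering" phenomenon advertised in the abstract: I would show that for most $\sigma_n$ the set of good models, viewed as a subset of the Hamming cube, breaks into exponentially many clusters each of sub-exponential size, and that any two models in different clusters are at linear Hamming distance. Concretely, the solution space of $H_{\sigma_n} y = 0$ is a linear code, and the LDPC analysis gives that its minimum distance is linear in $n$ (with high probability over $\sigma_n$); combined with the local-neighborhood structure this forces genuine clustering once one accounts for the $o(n)$ tolerance in the definition of a model. Given shattering, positive entropy of factors follows from a general principle: if a nontrivial factor had zero sofic entropy, its microstate spaces would be sub-exponential, but pulling back along the factor map a sub-exponential family of target models would have to cover the exponentially-many-clusters source model space with each preimage fiber confined near a single cluster — and the linear separation between clusters contradicts the continuity (Lipschitz-in-Hamming) property of the factor map on microstates. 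This is essentially the mechanism behind Corollary~\ref{cor:shatt-cons}; I would phrase part~(b) as: triviality of the outer Pinsker factor follows once one knows the microstate spaces are totally shattered along $(\sigma_n)_n$.

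The transfer from annealed to quenched statements — i.e. producing the sets $\Omega_n'$ with $\PP_n(\Omega_n') \to 1$ rather than merely controlling expectations — would be handled by a second-moment or Azuma-type concentration argument: $\log|\{\text{exact solutions}\}|$ is a function of $\sigma_n$ that changes by $O(\log n)$ (or better) under a single transposition-type modification of one generator's permutation, so it concentrates around its mean, and similarly for the cluster-counting functionals; one then intersects the resulting high-probability events with $\Omega_n^{\mathrm{sofic}}$ from Proposition~\ref{prop:probably-sofic}. I expect the main obstacle to be the LDPC input itself: establishing that the random $k$-uniform parity-check matrix has rank $(d/k)n - o(n)$ and linear minimum distance with high probability, and, more delicately, proving the full \emph{clustering} statement (not just large minimum distance) with the uniform sub-exponential bound on cluster sizes — this requires a careful first-moment computation over the "weight enumerator" of the code restricted to the relevant Hamming-distance scales, i.e. showing the number of solutions at any intermediate distance from a fixed solution is sub-exponentially many, which is the technical heart carried out in the sections following this excerpt.
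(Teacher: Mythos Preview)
Your approach to part~(a) is close to the paper's in spirit, though the mechanisms differ in detail. The paper obtains the upper bound not from a rank computation but from the Kikuchi (Bethe) entropy formula of Proposition~\ref{prop:kikformula} and a Markov-inequality argument (Proposition~\ref{P:upper-estimate}), which sidesteps any concentration estimate. The lower bound is indeed the deterministic dimension count $|X_{\sigma_n}|\ge 2^{(1-d/k)n}$ together with the fact that the uniform measure $\mu_n$ on $X_{\sigma_n}$ converges locally and empirically to $m_X$ (Theorem~\ref{thm:umm}(2)); this replaces your equidistribution check.

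Your proposed proof of part~(b), however, has a genuine gap: shattering does \emph{not} imply completely positive inherited entropy, and the paper does not use it for that purpose. Your argument says that if a factor had zero inherited entropy then its microstate projections would be sub-exponential, and that each preimage fiber would be confined near a single cluster, contradicting Lipschitz continuity. But the Lipschitz property of $\phi^{\sigma_n}$ only constrains the \emph{image} of nearby points; it places no restriction on the preimage of a single target point. A factor map can perfectly well collapse exponentially many well-separated clusters to a single microstate in the target --- indeed this is exactly what happens for the trivial factor. Corollary~\ref{cor:shatt-cons}, which you invoke, concerns factor maps \emph{from} a product with a Bernoulli shift \emph{onto} the shattered system, and the connectedness obstruction runs the other way: it is the Bernoulli side that is connected while the target is not. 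This yields ``no direct Bernoulli factor'' (Theorem~\ref{mainthm2}), not CPE.

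The paper's route to part~(b) is instead through \emph{Property~M} (Definition~\ref{dfn:M}), a weak model-mixing condition: one finds positive-density subsets $S_n\subset V_n$ on which the joint entropy of the radius-$r$ neighbourhoods under $\mu_n$ is nearly $|S_n|\cdot\shent(m_{B_r})$. This is established (Theorem~\ref{thm:umm}(1)) by a delicate analysis ruling out ``extra'' parity checks inside a conditioned neighbourhood (Proposition~\ref{prop:few-extra-cancellations}). CPE then follows from the general Theorem~\ref{T:wmmcpe}, whose proof uses a total-correlation/data-processing argument rather than any cluster geometry. The shattering analysis you sketch is carried out separately in Section~\ref{S:mainthm2pf} and feeds into Theorems~\ref{mainthm2} and~\ref{mainthm5}, not into Theorem~\ref{mainthm3}(b).
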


Part (b) of this theorem refers to `inherited' sofic entropy.  This quantity has previously been referred to as `extension' sofic entropy, `outer' sofic entropy, or `sofic entropy in the presence'.  The outer Pinsker factor along $(\sigma_n)_n$ is the largest factor of $(X,m_X,T)$ for which this quantity vanishes, so part (b) above asserts that the outer Pinsker factor of $(X,m_X,T)$ along $(\sigma_n)_n$ is trivial.  We recall these definitions and discuss our choice of terminology in Subsection~\ref{S:prelim-inherited-entropy} below.

\begin{remark}
    It is tempting to summarize part (b) of Theorem~\ref{mainthm3} as ``$(X,m_X,T)$ has completely positive entropy with high probability according to $\PP_n$''.  But we must be careful, because this summary hides an important detail about the order of quantifiers.  For part (b) of Theorem~\ref{mainthm3}, the high-probability subsets $\Omega_n'$ are found a priori, and then \emph{any} factor of $(X,m_X,T)$ has positive sofic entropy along \emph{any} choice of $\sigma_n$ from those subsets.  One could instead take ``completely positive entropy with high probability'' to mean that for every factor of $(X,m_X,T)$ there is a sequence of high-probability sets $\Omega'_n$ (depending on the factor) within which one sees the desired positive sofic entropy.  The latter conclusion is formally weaker, and we do not know whether there exist examples that satisfy one but not the other.  Since the collection of all factors may be uncountable, one cannot use a simple diagonal argument to prove that these two formulations are equivalent.  (If one identifies factors with their conditional expectation operators, then their induced strong operator topology is separable, but the semi-continuity properties of sofic entropy still do not obviously combine with approximations in this topology to enable a more complicated diagonal argument.)
\end{remark}

Theorem~\ref{mainthm2} is also a corollary of a probabilistic assertion about randomly-chosen $\sigma_n$ and the geometry of the space of microstates. We will prove that the space of microstates shatters in a strong sense, which roughly speaking means that it splits into a union of exponentially many well-separated clusters, each of which has sub-exponential size. The precise definition is given in \S \ref{S:shattering}, and the precise statement of the result for the parity-check subshift (Theorem \ref{thm:umm}, part (3)) is formulated in \S \ref{S:outline} below. It was essentially already known from \cite{bowen2022} that this phenomenon is incompatible with having a direct Bernoulli factor. We give a formal proof of this incompatibility in \S \ref{S:shattering}.

Our focus on the geometry and on shattering is inspired by similar ideas from statistical physics on random sparse graphs \cite{MR2277138}. As far as we know, the term `shattering' first appears in \cite{MR3385742}. In other works,  this phenomenon is called dynamical replica symmetry breaking \cite{MR847620, MR2317690}.



\subsection{Outline of the paper}

This paper is divided into two parts. In Part I, we consider general symbolic dynamical systems with a focus on the case in which $\G$ is a free product of finite cyclic groups.
\begin{itemize}
\item Section \ref{S:prelim} is a review of sofic group theory and sofic entropy. 
\item Section \ref{sec:B-K} introduces Kikuchi entropy and annealed entropy for actions of $\G_{d,k}$. These entropies generalize the $F$-functional and the $f$-invariant from \cite{bowen2010a} and are strongly related to the first moment method in statistical physics. This is the main tool for proving  Theorem \ref{mainthm3} part (a). 
\item Section \ref{S:cpeandmm} relates completely positive inherited entropy to a version of uniform mixing for model spaces that we call `property M'.  These general notions provide the background and tools needed to prove Theorem \ref{mainthm3} part (b).
\item Section \ref{S:shattering} defines totally shattered microstate spaces and shows how this property prevents both direct Bernoulli factors and weak containment in a Bernoulli shift.
\end{itemize}

Part II focuses on the parity check sub-shifts which appear in the main theorems:
\begin{itemize}
    \item Section \ref{sec:LDPC} discusses random (finite) LDPC codes.
    \item Section \ref{sec:propM} proves that for a typical sequence of these codes, the sequence of uniform measures on the codebooks has property M.
    \item Section \ref{S:lde} proves that typically these uniform measures converge locally and empirically to the Haar measure. Since the sequence also has property M, this gives Theorem~\ref{mainthm3}(b) as an application of Theorem~\ref{T:wmmcpe}.
    \item Section \ref{sec:mainthm3pf} proves that the sofic entropy of the Haar measure is $(1-d/k) \log 2$ over most sofic approximations (Theorem~\ref{mainthm3}(a)).
    \item Section \ref{S:mainthm2pf} proves that the Haar measure typically has totally shattered microstate spaces (Theorem~\ref{thm:umm}(3)). This implies Theorem~\ref{mainthm2} (the Haar measure has no direct Bernoulli factors) via Corollary~\ref{cor:shatt-cons}(2).
\end{itemize}

\subsection{Notational conventions}
\label{sec:notation}
We use the following standard notation for approximate comparison of functions.  Let $f$ and $g$ be real-valued functions on the same domain $S$, and let $A$ be any set of additional parameters in specifying these functions.  Then:
\begin{itemize}
    \item In case $g$ is non-negative, we write $f = O_A(g)$ if there is a positive constant $c$, depending possibly on $A$ but on nothing else, such that $|f| \le c g$.
    \item In case $g$ is non-negative and $S = \NN$, we write $f = o_A(g)$ if $g(n)$ is strictly positive for all sufficiently large $n$ and $f(n)/g(n)\to 0$, where this convergence may be bounded by a function that tends to $0$ and depends only on $A$.
    \item For any $S$, we write $f \lesssim_A g$ or $g \gtrsim_A f$ if (i) both functions are non-negative, and (ii) there is a positive constant $c$, depending possibly on $A$ but on nothing else, such that $f \le c g$.  Note that this is similar to writing `$f = O_A(g)$', except here it is part of the assertion that $f$ and $g$ are both non-negative, in case this is not obvious.
\end{itemize}

Finally, if $(\Omega_n,\PP_n)$ is a sequence of probability spaces indexed by $n \in \NN$, we write `$o_p(1)$' as a placeholder for any sequence of random variables $X_n$ on these spaces
such that
\[\PP_n(|X_n| > \varepsilon) \to 0 \qquad \forall \varepsilon > 0.\]
So this is essentially an analog of $o(1)$ for convergence in probability.

\subsection{Acknowledgements}
The authors thank Brandon Seward for helpful discussions, and an anonymous referee for pointing out several corrections. The authors (ordered alphabetically by last name) were supported by NSF grants  DMS-1855694, DMS-2154680 and DMS-1937215 respectively.




For the purpose of open access, the authors have applied a Creative Commons Attribution (CC-BY) licence to any Author Accepted Manuscript version arising from this submission.

\part{General systems}\label{P1}

\section{Preliminaries}\label{S:prelim}

\subsection{Sofic groups}\label{S:prelim-sofic}

A {\bf sofic approximation} to a countable group $\Gamma$ is a sequence $\Sigma=\{\sigma_n\}_{n\in \NN}$ of maps
$$\sigma_n: \Gamma \to \Sym(V_n)$$
where $V_n$ are finite sets, $\Sym(V_n)$ is the symmetric group on $V_n$, and the sequence is required to be asymptotically homomorphic and free in the following sense: For every $g,h \in \Gamma$ we require that the homomorphism equation $\s_n(gh)v=\s_n(g)\s_n(h)v$ holds for asymptotically all $v\in V_n$:
$$1 = \lim_{n\to\infty} |V_n|^{-1} |\{ v\in V_n:~ \s_n(gh)v = \s_n(g)\s_n(h)v\}|.$$
For every non-identity element $g \in \Gamma \setminus \{1_\Gamma\}$, we require that the percentage of points fixed by $\s_n(g)$ tends to zero:
\begin{eqnarray*}
0&=& \lim_{n\to\infty} |V_n|^{-1} |\{ v\in V_n:~ \s_n(g)v = v\}|.
\end{eqnarray*}
A group $\Gamma$ is {\bf sofic} if it admits a sofic approximation. 

If $\Gamma$ admits a finite generating set $S$ then it is common to visualize a map $\sigma_n$ as above in terms of the labelled and directed graph $G(\sigma_n)=(V_n,E_n)$ it induces. The edges of this graph are pairs of the form $(v, \s_n(s)v)$ for $s\in S$ and $v\in V_n$, and the label of this pair is $s$. Then $\Sigma$ is a sofic approximation precisely when this sequence of graphs $G(\sigma_n)$ Benjamini-Schramm converges to the (labelled and directed) Cayley graph of $\Gamma$ with respect to $S$ \cite{MR2089244}. 

It is an exercise to check that all amenable groups and all residually finite groups are sofic. Because finitely generated linear groups are sofic and direct unions of sofic groups are also sofic, it follows that all linear groups are sofic. It is an open problem whether all countable groups are sofic. 

The class of sofic groups was introduced implicitly by Gromov \cite{MR1694588} and explicitly by Weiss \cite{weiss-2000}. For further background on sofic groups, see \cite{pestov-sofic-survey, capraro-lupini}.

\subsection{Sofic entropy}\label{S:prelim-entropy}


This section defines sofic entropy for subshifts using the formulation from \cite{bowen-ICM}. See also \cite{bowen-survey} or \cite{MR3616077} for more comprehensive references.

Let $\Gamma$ denote a countable group and let $\A$ be a finite set (called the {\bf alphabet}). Let $\A^\Gamma$ be the set of all functions $x:\Gamma \to \A$. We write either $x_g$ or $x(g)$ for the value of $x$ on $g\in \Gamma$, whichever is most convenient. We endow $\A^\Gamma$ with the pointwise convergence topology, under which it is compact and metrizable. 

Let $\Prob(\A^\Gamma)$ denote the space of all Borel probability measures on $\A^\Gamma$, which we endow with the weak* topology. In this topology, a sequence of Borel probability measures $(\mu_n)_{n \in \NN}$ converges to a measure $\mu_\infty$ if and only if for every continuous function $f:\A^\Gamma \to \RR$,  $\lim_{n\to\infty} \int f~d \mu_n= \int f~d \mu_\infty$. An equivalent characterization uses cylinder sets which are defined as follows. Given a finite subset $F \subset \Gamma$ and $x:F \to \A$ let $C(x,F)$ be the set of all functions $y:\Gamma \to \A$ such that $y(f)=x(f)$ for all $f\in F$. Then $(\mu_n)_{n \in \NN}$ converges to $\mu_\infty$ if and only if for every such $F$ and $x$, $\lim_{n\to\infty} \mu_n(C(x,F)) = \mu_\infty(C(x,F))$. This is because the cylinder sets $C(x,F)$ form a sub-basis for the topology on $\A^\Gamma$.

Let $T = (T^g)_{g\in \Gamma}$ be the shift action on $\A^\Gamma$ defined by $T^gx(f)= x(g^{-1}f)$ for $x \in \A^\Gamma$. This induces an action on $\Prob(\A^\Gamma)$ by pushforwards. The set of all shift-invariant Borel probability measures on $\A^\Gamma$ will be denoted $\Prob^\Gamma(\A^\Gamma)$.  If $\mu$ is a shift-invariant Borel probability measure on $\A^\G$ then the system $(\A^\G,\mu,T)$ is called a {\bf shift $\G$-system}. We define here the sofic entropy of such systems.



Given $\sigma:\Gamma \to \textrm{Sym}(V)$, $v \in V$ and $x:V \to \A$, the {\bf pullback name of $x$ at $v$} is the labeling $\Pi^\sigma_v(x) \in \A^\Gamma$ defined by
    \[ \Pi^\sigma_v(x)(g) = x_{\sigma(g^{-1})v} \quad \forall g \in \Gamma. \]
For the sake of building some intuition, note that when $\sigma$ is a homomorphism, the map $v \mapsto \Pi^\sigma_{v}(x)$ is $\Gamma$-equivariant (in the sense that $\Pi^\sigma_{\sigma(g)v}(x) = T^g \Pi^\sigma_{v}(x)$). In particular $\Pi^\sigma_{v}(x) \in \A^\Gamma$ is periodic. In general, we think of $\Pi^\sigma_v(x)$ as an approximate periodic point. 

The {\bf empirical measure of $x:V\to \A$} is 
$$P^\sigma_x = |V|^{-1} \sum_{v\in V} \delta_{\Pi^\sigma_v(x)} \in \Prob(\A^\Gamma)$$
where, for $y \in \A^\Gamma$, $\delta_y \in \Prob(\A^\Gamma)$ is the Dirac measure concentrated on $\{y\}$. For example, if $\sigma$ is a homomorphism then $P^\sigma_x$ is a $\Gamma$-invariant measure supported on the $\Gamma$-orbits of the pullback names $\Pi^\sigma_v(x)$. 


Given an open set $\mathcal{O} \subset \Prob(\A^\Gamma)$, a map $x:V \to \A$ is called an {\bf $(\mathcal{O},\sigma)$-microstate} if $P^\sigma_x \in \mathcal{O}$. Typically we take $\calO$ to be a small neighborhood of $\mu$, in which case we consider $(\calO,\sigma)$-microstates to be `good microstates for $\mu$'.
Let $\Omega(\sigma,\mathcal{O}) \subset \A^V$ denote the set of all $(\mathcal{O},\sigma)$-microstates. 

Let $\mu \in \Prob(\A^\Gamma)$ be $\Gamma$-invariant and let $\Sigma=(\sigma_i:\Gamma \to \textrm{Sym}(V_i))_{i \in \NN}$ be a sofic approximation to $\Gamma$.  We say that the system $(\A^\Gamma,\mu,T)$ {\bf has microstates along $\Sigma$} if for every neighbourhood $\calO$ of $\mu$
\[\Omega(\s_n,\calO) \ne \emptyset \quad \hbox{for all sufficiently large }n.\]
More generally, an arbitrary measure-preserving $\Gamma$-system {\bf has microstates along $\Sigma$} if every shift-system factor of it has microstates along $\Sigma$.

The {\bf $\Sigma$-entropy} of the action $(\A^\Gamma, \mu, T)$ is defined by
\begin{equation}\label{eq:sigma-ent}
h_\Sigma(\A^\Gamma, \mu, T):= \inf_{\mathcal{O} \ni \mu} \limsup_{i\to\infty} |V_i|^{-1} \log \abs{\Omega(\sigma_i,\calO)}
\end{equation}
where the infimum is over all open neighborhoods of $\mu$ in $\Prob(\A^\Gamma)$. We abbreviate this to $h_\Sigma(\mu)$ if the other data are clear from the context. This number depends on the action $(\A^\Gamma, \mu, T)$ only up to measure conjugacy~\cite{bowen-jams-2010}.  It therefore defines an invariant for any abstract measure-preserving system that can be represented up to measure conjugacy by a shift system with a finite alphabet, or, equivalently, that has a finite generating partition. If the system does not have microstates along any subsequence of $\Sigma$ then we declare that the $\Sigma$-entropy is $-\infty$.





\subsection{Factor maps and inherited entropy}\label{S:prelim-inherited-entropy}

The second part of Theorem~\ref{mainthm1} concerns the entropy not only of $(X,m_X,T)$ but also of all its factors. Since $X$ is a shift-invariant subset of $\ZZ_2^\Gamma$, the $\Sigma$-entropy of $(X,m_X,T)$ is an instance of formula~\eqref{eq:sigma-ent}.  But this system may have factors that are not measure conjugate to shift systems, so that formula~\eqref{eq:sigma-ent} does not apply.

Sofic entropy can be generalized to measure-preserving systems on standard measurable spaces in various ways: see, for instance,~\cite[Subsection 2.4]{bowen-survey} and~\cite[Subsection 3.1]{austin2016} for formulations and proofs of their equivalence.  However, rather than repeat these in detail here, we need only recall how they are controlled by another entropy notion that does permit us to reduce our work to the study of shift systems.

Towards defining this, consider a factor map between two shift systems on finite alphabets, say
\begin{equation}\label{eq:factor-map}
\Phi:(\A^\Gamma,\mu)\to (\B^\Gamma,\nu),
\end{equation}
where $\mu$ and $\nu$ are both invariant under the shift actions of $\Gamma$ on their respective spaces.  Rather than counting good microstates for $\mu$ or $\nu$ separately, we can ask how many of the good microstates for $\nu$ can be lifted to good microstates for $\mu$: that is, how many good microstates $\nu$ `inherits' through the map $\Phi$.  To make this precise, consider the graphical joining
\begin{equation}\label{eq:graphical}
\lambda := \int_{\A^\Gamma} \delta_{(x,\Phi(x))}\ d\mu(x).
\end{equation}
This is invariant under the shift action of $\Gamma$ on $(\A\times \B)^\Gamma$. Let $\mathrm{proj}_i$ be the coordinate projection from $(\A\times \B)^{V_i}$ to $\B^{V_i}$.  Finally, define the \textbf{inherited $\Sigma$-entropy of $\Phi$} to be
\begin{equation}\label{eq:inherited}
h_\Sigma(\mu,T\,;\,\Phi) := \inf_{\mathcal{O}\ni \lambda}\limsup_{i\to\infty}|V_i|^{-1}\log|\mathrm{proj}_i[\Omega(\sigma_i,\calO)]|.
\end{equation}
If $\Phi$ is the identity then this is easily checked to coincide with $h_\Sigma(\mu)$.

Like sofic entropy itself, inherited sofic entropy can be generalized to factor maps between arbitrary measure-preserving systems.  This general notion is not new, but our use of the term `inherited' is new.  Indeed, the idea behind this quantity is already implicit in Kerr's original approach to defining sofic entropy itself for general measure-preserving systems (see~\cite{kerr--soficdfn} or~\cite[Subsubsection 2.4.2]{bowen-survey}).  It was formulated and studied explicitly by Hayes in~\cite{hayes2016a,hayes2017}, who refers to it as `entropy in the presence' in recognition of a parallel usage in Voiculescu's theory of free entropy~\cite{Voi--III}.  It has also been studied by other authors under various names, including `outer sofic entropy' and `extension sofic entropy'.  It is reviewed in~\cite[Subsection 11.1]{bowen-survey}, which gives more complete references.  In spite of this history, we do propose the new name `inherited' entropy, because this seems to capture the idea behind the definition better.

Starting from~\eqref{eq:inherited}, one can define entropy for a factor map between general systems by carefully inserting a supremum over generating partitions of the lower system and an infimum over generating partitions of the upper system.  However, the proof that sofic entropy itself is invariant under measure conjugacy can be adapted directly to the quantity in~\eqref{eq:inherited}, showing that it is an invariant of $\Phi$, where we consider two factor maps to be equivalent if they appear downwards in a commuting square whose horizontal arrows are conjugacies.  As a corollary,~\eqref{eq:inherited} coincides with the abstract inherited entropy in the case of two shift systems.  A more general version of this argument can be found in~\cite[Proposition 2.9]{hayes2016a}.

One crucial advantage to working with inherited sofic entropy is its monotonicity under factor maps.  The following lemma is a special case of parts (iii) and (iv) of~\cite[Proposition 2.10]{hayes2016a}.

\begin{lemma}\label{lem:presence-monotone}
If
\[(X,\mu,T) \stackrel{\Pi}{\to} (Y,\nu,S) \stackrel{\Phi}{\to} (Z,\theta,R)\]
are factor maps, then
\[h_\Sigma(\mu,T\,;\,\Phi\circ\Pi) \le \min\big\{h_\Sigma(\nu,S\,;\,\Phi),\  h_\Sigma(\mu,T\,;\,\Pi)\big\}.\]

In particular, taking $\Pi$ (resp. $\Phi$) to be the identity, we obtain
\[h_\Sigma(\mu,T\,;\,\Phi) \le h_\Sigma(\mu,T) \quad \hbox{(resp.}\quad h_\Sigma(\mu,T\,;\,\Pi) \le h_\Sigma(\mu,S)\quad \hbox{).}\]
\qed
\end{lemma}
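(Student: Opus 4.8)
The plan is to reduce everything to the concrete shift-system picture. By the measure-conjugacy invariance discussion preceding the lemma, it suffices to prove the two inequalities when all three systems are shift systems on finite alphabets and $h_\Sigma(\,\cdot\,;\,\cdot\,)$ is computed from the graphical-joining formula~\eqref{eq:inherited}; the fully general statement is then either obtained by carrying the estimates below through the generating-partition definition of inherited entropy, or simply quoted from Hayes's Proposition 2.10(iii)--(iv), whose proof proceeds by the analogous bookkeeping. It is convenient first to rephrase~\eqref{eq:inherited}: for a factor map $\Psi:(X,\mu,T)\to (W,\rho,U)$ of shift systems with graphical joining $\lambda_\Psi=\int\delta_{(x,\Psi(x))}\,d\mu$, the quantity $|\mathrm{proj}_i[\Omega(\mathcal{O},\sigma_i)]|$ is exactly the number of configurations $\bar w\in W^{V_i}$ that occur as the second coordinate of some pair whose empirical measure lies in $\mathcal{O}$; so $h_\Sigma(\mu,T;\Psi)$ is the exponential growth rate of the number of such ``shadows'' $\bar w$.

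The single tool is block-code approximation. Any factor map $\Psi$ between finite-alphabet shift systems is approximated, for every $\eta>0$, by a shift-equivariant \emph{block code} $\hat\Psi$ that depends only on the coordinates in a finite window $F\subset\Gamma$ and satisfies $\mu(\{x:\hat\Psi(x)(e)\ne\Psi(x)(e)\})<\eta$. Two routine consequences (weak-$*$/empirical-measure estimates, up to the asymptotically negligible set where $\sigma_i$ fails to be multiplicative on $F$): (i) if $\bar x\in X^{V_i}$ has empirical measure close to $\mu$, then the joint empirical measure of $(\bar x,\hat\Psi(\bar x))$ is close to $\lambda_\Psi$, with closeness controlled by $\eta$, $|F|$, and the closeness of $P^{\sigma_i}_{\bar x}$ to $\mu$; and (ii) if $(\bar x,\bar w)$ has empirical measure close to $\lambda_\Psi$, then $\bar x$ has empirical measure close to $\mu$ and $\bar w$ agrees with $\hat\Psi(\bar x)$ off at most an $\varepsilon$-fraction of $V_i$, where $\varepsilon\to 0$ as $\eta\to 0$ and the neighborhood shrinks.

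For $h_\Sigma(\mu,T;\Phi\circ\Pi)\le h_\Sigma(\nu,S;\Phi)$: fix a block code $\hat\Pi\approx\Pi$, and given a good microstate $(\bar x,\bar z)$ for $\lambda_{\Phi\circ\Pi}$ (empirical measure in some $\mathcal{O}$), set $\bar y:=\hat\Pi(\bar x)$. Then $(\bar x,\bar y,\bar z)$ has empirical measure close to $(\mathrm{id},\hat\Pi,\mathrm{id})_*\lambda_{\Phi\circ\Pi}$, whose $(Y,Z)$-marginal differs from $\lambda_\Phi=\int\delta_{(\Pi x,\Phi\Pi x)}\,d\mu$ only through the replacement of $\Pi$ by $\hat\Pi$; hence $(\bar y,\bar z)$ is a good microstate for $\lambda_\Phi$ in a neighborhood $\mathcal{O}'$ that can be made arbitrarily small by shrinking $\mathcal{O}$ and improving $\hat\Pi$. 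Thus every $Z$-shadow of a $\lambda_{\Phi\circ\Pi}$-microstate over $\mathcal{O}$ is a $Z$-shadow of a $\lambda_\Phi$-microstate over $\mathcal{O}'$; taking cardinalities, logarithms, the factor $|V_i|^{-1}$, $\limsup_i$, and then $\inf_{\mathcal{O}}$ (which drives $\mathcal{O}'\downarrow\lambda_\Phi$) gives the inequality. For $h_\Sigma(\mu,T;\Phi\circ\Pi)\le h_\Sigma(\mu,T;\Pi)$: again put $\bar y:=\hat\Pi(\bar x)$; by (i), $(\bar x,\bar y)$ is a good microstate for $\lambda_\Pi$, so $\bar y$ is a $Y$-shadow of a $\lambda_\Pi$-microstate; and by (ii) applied to $\lambda_{\Phi\circ\Pi}$ with the composed block code $\hat\Phi\circ\hat\Pi$ (where $\hat\Phi\approx\Phi$), the configuration $\bar z$ agrees with $\hat\Phi(\bar y)$ off at most an $\varepsilon$-fraction of $V_i$. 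So each $Z$-shadow $\bar z$ lies within Hamming distance $\varepsilon|V_i|$ of $\hat\Phi(\bar y)$ for some $Y$-shadow $\bar y$ of a $\lambda_\Pi$-microstate over some $\mathcal{O}'$, and the number of configurations within Hamming distance $\varepsilon|V_i|$ of a fixed one is $\exp(|V_i|\cdot\delta(\varepsilon))$ with $\delta(\varepsilon)\to0$; hence the count of such $\bar z$ is at most the count of such $\bar y$ times $\exp(|V_i|\cdot\delta(\varepsilon))$, and we conclude as before. The ``in particular'' clause is the case $\Pi=\mathrm{id}$ or $\Phi=\mathrm{id}$.

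The only genuine subtlety, and the step to handle with care, is the interlocking of the limits: one must arrange the passage to the infimum so that as $\mathcal{O}$ shrinks to the target joining, the auxiliary neighborhood $\mathcal{O}'$ shrinks to \emph{its} target and simultaneously the block-code error $\varepsilon$ tends to $0$, so that the combinatorial factor $\exp(|V_i|\cdot\delta(\varepsilon))$ is annihilated in the outer infimum rather than surviving as a spurious additive constant. Everything else is standard bookkeeping with empirical measures and cylinder sets.
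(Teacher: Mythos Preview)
The paper does not actually prove this lemma: it is stated as a special case of parts (iii) and (iv) of Hayes's~\cite[Proposition 2.10]{hayes2016a}, and the proof environment is closed immediately with a \qed\ symbol. So there is nothing in the paper to compare your argument against, beyond the citation itself.

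Your sketch is a correct direct argument in the shift-system setting and is in the same spirit as the bookkeeping Hayes carries out; you even flag this yourself. The reduction to finite-alphabet shifts plus local (block-code) approximation is the standard route, and both halves of your argument---pushing a good $\lambda_{\Phi\circ\Pi}$-microstate forward to a good $\lambda_\Phi$-microstate via $\bar y=\hat\Pi(\bar x)$ for the first inequality, and covering each $Z$-shadow by a Hamming ball around $\hat\Phi(\bar y)$ for the second---are sound. Your identification of the only genuine subtlety (arranging the order of limits so that shrinking $\mathcal{O}$ forces $\mathcal{O}'$ to shrink and $\varepsilon\to 0$, killing the $\exp(|V_i|\cdot\delta(\varepsilon))$ factor) is exactly right. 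In short: your proposal supplies what the paper deliberately outsources to the reference.
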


Inspired by this property, one defines the \textbf{outer $\Sigma$-Pinsker factor} of a measure-preserving system to be the largest factor whose inherited $\Sigma$-entropy is zero.  A routine argument shows that a unique maximal such factor exists.  Hayes' paper~\cite{hayes2016a} develops this story as well, although it had been considered in unpublished work previously.  As a result, the assertion that every nontrivial factor of a measure-preserving system has positive inherited $\Sigma$-entropy is equivalent to the assertion that the outer $\Sigma$-Pinsker factor of that system is trivial.  In addition, by the last inequality in Lemma~\ref{lem:presence-monotone}, if a system has this property then it also has completely positive $\Sigma$-entropy.  The observations together explain our formulation of the second part of Theorem~\ref{mainthm1}.

Starting from Lemma~\ref{lem:presence-monotone}, Hayes develops enough properties of the outer Pinsker factor to turn it into a valuable tool in the study of sofic entropy in general.  For instance, these properties are crucial to his proof that a certain large class of systems of algebraic origin all have completely positive sofic entropy~\cite{hayes2017}. Our reasons for using inherited entropy in the proof of Theorem~\ref{mainthm1} are similar, but the details of our proof are essentially disjoint from those of Hayes', and our LDPC system is not among the systems of algebraic origin that he considers in that reference.

Among its useful consequences, Lemma~\ref{lem:presence-monotone} gives the following:

\begin{cor}\label{cor:presence-monotone}
Let $(X,\mu,T)$ be a measure-preserving system.  If every nontrivial factor map of $(X,\mu,T)$ to a shift system has positive inherited $\Sigma$-entropy, then \emph{every} nontrivial factor map of $(X,\mu,T)$ has positive inherited $\Sigma$-entropy.
\end{cor}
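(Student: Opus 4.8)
The plan is to reduce an arbitrary nontrivial factor of $(X,\mu,T)$ to a nontrivial factor that \emph{is} a shift system, and then invoke Lemma~\ref{lem:presence-monotone} (the monotonicity of inherited entropy). Concretely, suppose $\Pi \colon (X,\mu,T) \to (Y,\nu,S)$ is a nontrivial factor map onto an arbitrary measure-preserving system. I want to produce a nontrivial factor map $\Phi \colon (Y,\nu,S) \to (Z,\theta,R)$ where $(Z,\theta,R)$ is a shift system on a finite alphabet. Granting this, the composition $\Phi \circ \Pi \colon (X,\mu,T) \to (Z,\theta,R)$ is a nontrivial factor map to a shift system, so by hypothesis $h_\Sigma(\mu,T\,;\,\Phi\circ\Pi) > 0$; then the first inequality of Lemma~\ref{lem:presence-monotone} gives $0 < h_\Sigma(\mu,T\,;\,\Phi\circ\Pi) \le h_\Sigma(\mu,T\,;\,\Pi)$, which is exactly the desired conclusion.

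The construction of $\Phi$ is a standard measure-theoretic observation. Since $(Y,\nu,S)$ is nontrivial, $\nu$ is not a point mass, so there is a measurable set $A \subseteq Y$ with $0 < \nu(A) < 1$. Let $\calP = \{A, Y \setminus A\}$, a two-element partition, and let $\calP_\Gamma$ be the smallest $S(\Gamma)$-invariant sigma-algebra containing $\calP$; equivalently, consider the map $\Phi \colon Y \to \{0,1\}^\Gamma$ sending $y$ to the configuration $g \mapsto \1_A(S^{g^{-1}} y)$. This $\Phi$ is measurable and $\Gamma$-equivariant by construction, and its pushforward $\theta := \Phi_*\nu$ is a shift-invariant measure on the finite-alphabet space $\{0,1\}^\Gamma$. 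Because $\nu(A) \in (0,1)$, the measure $\theta$ is not a point mass, so the shift system $(\{0,1\}^\Gamma, \theta, R)$ is nontrivial. Thus $\Phi$ is a nontrivial factor map from $(Y,\nu,S)$ onto a shift system, as required.

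The only mild subtlety — and the one point that deserves a sentence of care rather than a routine calculation — is to make sure that ``nontrivial'' is preserved at each stage: a factor system is trivial precisely when its measure is a point mass, and one must check that the constructed shift factor is genuinely nontrivial (i.e. $\theta$ is not supported on a single configuration), which follows immediately from $0 < \nu(A) < 1$ since the coordinate at the identity already takes both values with positive probability. No compactness, approximation, or quantitative estimate is needed; the entire content is the abstract monotonicity in Lemma~\ref{lem:presence-monotone} together with the fact that every nontrivial system admits a nontrivial finite-alphabet shift factor. I do not anticipate any real obstacle here; if anything, the work is purely bookkeeping about the definitions of triviality and of the inherited entropy of a composition.
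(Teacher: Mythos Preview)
Your proposal is correct and follows essentially the same approach as the paper: take a nontrivial two-set partition of the target system, use it to build a further factor map to $(\{0,1\}^\Gamma,\theta,\text{shift})$ with $\theta$ not a Dirac measure, apply the hypothesis to the composition, and then invoke Lemma~\ref{lem:presence-monotone}. Your write-up is in fact slightly more detailed than the paper's in checking nontriviality of $\theta$, but the argument is the same.
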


\begin{proof}
Let $\Pi$ be a factor map to another nontrivial system $(Y,\nu,S)$.  Then $(Y,\nu)$ has a nontrivial partition into two measurable subsets.  By acting on this partition using $S$, we define a further factor map of the form
\[\Phi:(Y,\nu,S)\to (\{0,1\}^\Gamma,\theta,\rm{shift}),\]
where $\theta$ is not a Dirac measure.  Now our hypothesis gives that $h_\Sigma(\mu,T\,;\,\Phi\circ \Pi) > 0$, and this implies that $h_\Sigma(\mu,T\,;\,\Pi) > 0$ by Lemma~\ref{lem:presence-monotone}.
\end{proof}

Corollary~\ref{cor:presence-monotone} can simplify many of the technicalities involved in a proof of completely positive inherited entropy by letting us restrict our attention to factor maps between shift spaces.  Given such a map, say
\[\Phi:(\A^\Gamma,\mu)\to (\B^\Gamma,\nu),\]
it is uniquely determined by its coordinate at the identity of $\Gamma$, which is an arbitrary measurable map $\phi:\A^\Gamma\to \B$.  Starting from $\phi$, we write $\phi^\Gamma$ for the factor map that it induces, which is given by
\begin{equation}\label{eq:map-to-factor}
\phi^\Gamma(x)(\gamma) = \phi(\gamma^{-1} \cdot x) .
\end{equation}

When working with such a map $\Phi$, a further simplification is often necessary.  If $D$ is a finite subset of $\Gamma$, then the map $\phi$ above is called \textbf{$D$-local} if the image $\phi(x)$ depends only on the coordinates of $x$ indexed by $D$; equivalently, if $\phi$ factorizes into maps
\[\A^\Gamma \to \A^D\to \B,\]
where the first map is the coordinate projection.  We say that $\phi$ is \textbf{local} if it is $D$-local for some finite set $D$, and apply the same terminology to the whole of $\phi^\Gamma$. This is equivalent to $\phi^\Gamma$ being a continuous map for the product topologies on our shift spaces.

If $\mb{x} \in \A^{V}$ is some good microstate for $\mu$ over a map $\sigma \colon \Gamma \to \Sym(V)$, we might attempt to send it to a good microstate for $\nu$ using the map $\phi^\sigma \colon \A^V \to \B^V$ defined by
\begin{equation}
\label{eqn:microstatemap}
    \phi^\sigma(\mb{x})(v) = \phi( \Pi_v^\sigma \mb{x}) ,
\end{equation}
since the empirical distribution of $\phi^\sigma(\mb{x})$ would then be $\phi^\Gamma_* P_{\mb{x}}^\sigma$. Since $P_{\mb{x}}^\sigma$ is close to $\mu$, one would hope this would be close to $\nu$. This argument is correct in case $\phi^\Gamma_*$ acts continuously on measures, which in turn holds if $\phi$ is local.  In that case, $\phi^\sigma$ also has the following form of quantitative continuity.

\begin{lemma}
\label{lem:localmaplipschitz}
    Suppose $\A,\B$ are finite sets, $\phi \colon \A^\G \to \B$ is $D$-local, and $\sigma \colon \G \to \Sym(V)$. If $\mb{x}$ and $\mb{y}$ are elements of $\A^V$ that disagree in exactly one coordinate, then $\phi^\sigma(\mb{x})$ and $\phi^\sigma(\mb{y})$ disagree in at most $|D|$ coordinates.
\end{lemma}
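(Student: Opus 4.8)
The plan is to trace through how a single-coordinate change in the input propagates through the definition~\eqref{eqn:microstatemap} of $\phi^\sigma$. Suppose $\mb{x},\mb{y} \in \A^V$ differ only at the vertex $w \in V$. For each $v \in V$, the value $\phi^\sigma(\mb{x})(v) = \phi(\Pi_v^\sigma \mb{x})$ depends, by $D$-locality of $\phi$, only on the coordinates of $\Pi_v^\sigma \mb{x}$ indexed by $D$; and by the definition $\Pi_v^\sigma(\mb{x})(g) = \mb{x}_{\sigma(g^{-1})v}$, these are exactly the values $(\mb{x}_{\sigma(g^{-1})v})_{g \in D}$. Thus $\phi^\sigma(\mb{x})(v)$ reads off $\mb{x}$ only at the finite set of vertices $D^{-1}\!\cdot_\sigma v := \{\sigma(g^{-1})v : g \in D\}$.

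The key observation is then elementary: $\phi^\sigma(\mb{x})(v) \ne \phi^\sigma(\mb{y})(v)$ can only happen if $\mb{x}$ and $\mb{y}$ differ somewhere in this set $\{\sigma(g^{-1})v : g \in D\}$, which — since $\mb{x}$ and $\mb{y}$ differ only at $w$ — forces $w = \sigma(g^{-1})v$ for some $g \in D$, i.e. $v = \sigma(g)w$ for some $g \in D$. Hence the set of coordinates $v$ at which $\phi^\sigma(\mb{x})$ and $\phi^\sigma(\mb{y})$ disagree is contained in $\{\sigma(g)w : g \in D\}$, a set of cardinality at most $|D|$. This completes the argument.

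I do not anticipate any real obstacle here; the only point requiring a little care is bookkeeping with the inverses. One should note that $\sigma$ is merely a map $\Gamma \to \Sym(V)$, not assumed to be a homomorphism, so $\sigma(g^{-1})$ need not be $\sigma(g)^{-1}$ and the set $\{\sigma(g^{-1})v : g \in D\}$ should be handled as written rather than rewritten via inverses of $\sigma(g)$; but since we only need an upper bound on its size, and since $\{\sigma(g)w : g \in D\}$ likewise has size at most $|D|$, no homomorphism property is needed. It is also harmless that the map $v \mapsto \sigma(g)v$ may fail to be a bijection in degenerate cases — we only use that its image has at most $|D|$ points as $g$ ranges over $D$.
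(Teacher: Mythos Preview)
Your proof is correct and follows essentially the same route as the paper's own proof: both identify that $\phi^\sigma(\mb{x})(v)$ depends on $\mb{x}$ only through the set $\{\sigma(g^{-1})v : g \in D\}$, and then invert to bound the set of affected $v$ by a set indexed by $D$. Your caveat about $\sigma$ not being a homomorphism is well taken---indeed the ``i.e.\ $v = \sigma(g)w$'' step should strictly read $v = \sigma(g^{-1})^{-1}w$, but as you note this still gives a set of size at most $|D|$; the paper makes the same tacit identification. (One small remark: since $\sigma(g) \in \Sym(V)$ by hypothesis, the map $v \mapsto \sigma(g)v$ is always a bijection, so your final worry about ``degenerate cases'' does not arise.)
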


Equivalently, this asserts that $\phi^\sigma$ is $|D|$-Lipschitz for the `normalized Hamming metrics' on $\A^V$ and $\B^V$.  This point of view is introduced and used for an application of Lemma~\ref{lem:localmaplipschitz} in Subsection~\ref{subs:shatt-cons}.

\begin{proof}
    Suppose that $\mb{x}(u)\ne \mb{y}(u)$, and let $v \in V$.  Since $\phi$ is $D$-local, we can have $\phi(\Pi_v^\sigma\mb{x}) \ne \phi(\Pi_v^\sigma\mb{y})$ only if there exists $\gamma \in D$ such that $\mb{x}(\sigma(\gamma^{-1}) v) \ne \mb{y}(\sigma(\gamma^{-1}) v)$, hence only if $u$ appears in the set $\sigma(D^{-1})\cdot v$.  Since $\sigma$ is an action by permutations, this holds if and only $v$ lies in $\sigma(D^{-1})^{-1}\cdot u$, and that set has cardinality at most $|D^{-1}| = |D|$.
\end{proof}

The arguments about $\phi^\sigma$ above can fail for general measurable factor maps, but this problem can be overcome by approximating these in measure by local factor maps.  We say that a sequence of maps
\[\psi_m:\A^\Gamma \to \B \quad (m=1,2,\dots)\]
is a \textbf{local approximating sequence} to $\phi$ if each $\psi_m$ is local and
\begin{equation}\label{eq:local-approx}
\mu\{\psi_m \ne \phi\} \to 0.
\end{equation}
(Note that this notion implicitly also depends on the measure $\mu$.)  These are the special case for shift spaces of the `almost Lipschitz approximating sequences' introduced and used in~\cite{austin2016} to study factor maps between more general measure-preserving systems.  At a few points in the sequel we refer to~\cite{austin2016} for properties that we need in our special case.  For instance, informally, if $\psi$ is a good enough local approximation to $\phi$, then $\psi^\sigma$ sends very good microstates for $\mu$ to fairly good microstates for $\phi^\Gamma_*\mu$ \cite[Lemma 4.10]{austin2016}.


\section{Bethe--Kikuchi entropy and annealed calculations}\label{sec:B-K}

Throughout this section, we set $\Gamma = \Gamma_{d,k} =\ZZ_k \ast \dots \ast \ZZ_k$ equal to the free product of $d$ copies of $\ZZ_k = \ZZ/k\ZZ$. 

The proof of Theorem \ref{mainthm1} part (a) (and its probabilistic version Theorem \ref{mainthm3}) relies on a first moment calculation. These kinds of computations have a long and interesting history in statistical physics and more recently appeared in the entropy theory of actions of the free group, where, for example, they were used to answer a long-standing open problem on the classification of Bernoulli shifts over free groups \cite{bowen2010a}. This history is reviewed in \S \ref{S:annealed-history}. 

In the next section, we introduce the uniformly random sofic approximation to $\Gamma$. This will be our main focus and we will only obtain results about deterministic sofic approximations indirectly as a consequence of the analysis of these random ones. 

In Section \ref{sec:kikformula}, we introduce Kikuchi entropy, which has historical roots in physics \cite{kikuchi1951} and is a version of the functional $F$ of \cite{bowen2010a} adapted from the free group to groups $\G$ which are free products of finite groups. This entropy is a first approximation to the annealed entropy which in recent ergodic-theory research was called the $f$-invariant \cite{bowen2010a}. These entropies are key ingredients for proving Theorem \ref{mainthm3} part (a).

\subsection{Random sofic approximations}\label{subs:rndm-sofic-approx}


Instead of directly constructing sofic approximation sequences $\s_n: \Gamma \to \Sym(V_n)$, we will construct probability measures on the space of homomorphisms from $\Gamma$ to $\Sym(kn)$. 
These measures were used for the same purpose in~\cite{airey2022}. 

Let $V$ be a finite set whose size is divisible by $k$.  Let us say that a permutation of $V$ is \textbf{$k$-uniform} if it consists entirely of $k$-cycles: that is, its cycle type is $[k^{n/k}]$.  Consider a $d$-tuple of $k$-uniform permutations $(\sigma(s_1),\dots,\sigma(s_d))$.  Then $\sigma(s_i^k)$ equals the identity permutation for each $i$, and so the tuple $(\sigma(s_i),\dots,\sigma(s_d))$ is the image of the generators $(s_1,\dots,s_d)$ under a homomorphism $\sigma:\Gamma \to \Sym(V)$.  We call such a homomorphism \textbf{$k$-uniform} if the images of these generators are all $k$-uniform permutations. Denote the set of $k$-uniform homomorphisms into $\Sym(V)$ by $\Hom_{\unif}(\Gamma, \Sym(V))$. Note that for arbitrary members of $\Hom(\Gamma, \Sym(V))$ the cycle sizes of the images of the generators of $\Gamma$ must be factors of $k$.

Given a homomorphism $\sigma \in \Hom_{\unif}(\Gamma, \Sym(V))$, consider the collection of all the orbits of the individual maps $\sigma_i = \sigma(s_i)$: that is, all the subsets of the form
\begin{equation}\label{eq:hyper-edge}
\{ \sigma(s_i^j)(v) \st 0 \leq j < k \}
\end{equation}
for $v \in V$ and $i \in [d]$. Taken together, these may be regarded as a hyper-graph on $V$.   Because we consider a $k$-uniform homomorphism, this hyper-graph is $k$-uniform in the usual sense in combinatorics: this is the origin of the terminology.  However, it could happen that two $\sigma_i$ and $\sigma_j$ give some vertex the same orbit.  In this eventuality it is better to think of the family of sets~\eqref{eq:hyper-edge} as a \emph{multi-hyper-graph}, in that we count each hyper-edge with this multiplicity.

For each $n$, we set $V_n := \{1,2,\dots,n\}$. 
 If $k$ divides $n$, then we let $\PP_n$ be the uniform distribution on $\Hom_{\mathrm{unif}}(\Gamma,\Sym(V_n))$.  The sofic approximations that appear in Theorem~\ref{mainthm1} are obtained at random from these distributions, and shown to have all the desired properties with high probability.

 In order to take this approach, Proposition~\ref{prop:probably-sofic} above is a basic prerequisite. It allows us to focus on typical properties of the microstate spaces, knowing that random homomorphisms will be good sofic approximation maps with high probability. It is implied by the more precise estimate in~\cite[Lemma 3.1]{airey2022}, where our $\PP_n$ is called `$\PP_n^u$'.  Note that attention is restricted to even $n$ in that reference, but that restriction is unnecessary for the case of $\PP_n^u$; it is included there only for the sake of the other probability distribution $\PP_n^p$ that is covered by the same lemma.

\subsection{Formula for Kikuchi entropy}
\label{sec:kikformula}


Let $\A$ be a finite set and $\Gamma = (\ZZ_k)^{\ast d}$ as above. Given a $\Gamma$-invariant measure $\mu \in \Prob(\A^\Gamma)$, we define the {\bf edge weight} $W_\mu( \cdot; \cdot) \colon \A^{\ZZ_k} \times [d] \to [0,1]$ by
	\[ W_\mu (\mb{a}; i) = \mu \{ \mb{x} \in \A^\Gamma \st \mb{x}(s_i^j) = \mb{a}(j) \ \forall 0 \leq j < k \} . \]
Each $W(\cdot; i)$ is a probability measure on $\A^{\ZZ_k}$ which records the statistics of $\mu$ on the hyper-edge $\{s_i, s_i^2, \ldots, s_i^{d-1}\}$.
Also define the {\bf vertex weight} 
	\[ W_\mu(\ta) = \mu \{ \mb{x} \in \A^\Gamma \st \mb{x}(e) = \ta \} \]
	for $\ta \in \A$. This probability measure on $\A$ records the single-site statistics of $\mu$. It is determined by the edge weight $W_\mu(\cdot; \cdot)$.

More generally, an abstract {\bf weight} $W$ is a $d$-tuple of probability vectors $W(\cdot; 1), \ldots , W(\cdot; d)$ on $\A^{\ZZ_k}$ such that there is a probability vector $W(\cdot)$ on $\A$ satisfying the consistency condition
    \[ W(\ta_0) = \sum_{\mb{a} \in \A^{\ZZ_k} \st \mb{a}(j) = \ta_0 } W(\mb{a}; i)\]
for every $i \in [d]$, $j \in \ZZ_k$ and $\ta_0 \in \A$.

For any weight $W$, we define the {\bf Kikuchi entropy} of $W$ by
\[ \kik(W) := (1-d) \shent( W(\cdot) ) + \frac{1}{k} \sum_{i \in [d]} \shent( W(\cdot; i) ), \]	
where $\shent(\cdot)$ denotes Shannon entropy, and for a $\G$-invariant measure $\mu$ on $\A^\Gamma$ we abbreviate $\kik(W_\mu)$ to $\kik(\mu)$.  The functional $\kik$ appears in our work because it gives the upper exponential growth rate of the expected number of microstates whose averaged hyper-edge marginals are approximately specified by $\mu$.  This is an analog of \cite[Theorem 1.4]{bowen2010}.  Given a microstate $\mb{x} \in \A^V$ and $\sigma \in \Hom(\Gamma, \Sym(V))$, let $W_{\mb{x}, \sigma}$ be the weight corresponding to $P_{\mb{x}}^\sigma \in  \Prob(\A^\Gamma)$, the empirical distribution of $\mb{x}$ over $\sigma$.  Also, given two weights $W,W'$, let
$$\norm{W - W'} := \max_{1\le i \le d} \max_{\mb{a} \in \A^{\ZZ_k}} |W(  \mb{a};i) - W'(\mb{a} ;i)|.$$

\begin{prop}
\label{prop:kikformula}
We have
	\[ \kik(\mu) = \lim_{\varepsilon \to 0} \limsup_{n \to \infty} \frac{1}{nk} \log \EE_{\sigma_n \sim \PP_{nk}} \abs*{\{ \mb{x} \in \A^{nk} \st \norm{W_{\mb{x}, \sigma_n} - W_\mu} < \varepsilon \}}. \]
\end{prop}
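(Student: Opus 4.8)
The plan is to prove this by the first--moment / method-of-types computation, in direct parallel with \cite[Theorem 1.4]{bowen2010}. Write $N=nk$, so that $\sigma_n$ consists of $d$ independent uniformly random $k$-uniform permutations $\pi_1,\dots,\pi_d$ of $V_n=\{1,\dots,N\}$, each determining a partition of $V_n$ into $n$ $k$-cycles. First I would expand $\EE_{\sigma_n}|\{\dots\}|$ by linearity over $\mb{x}\in\A^N$. The event $\norm{W_{\mb{x},\sigma_n}-W_\mu}<\varepsilon$ is the intersection over $i\in[d]$ of the events ``the $i$-th component $W_{\mb{x},\sigma_n}(\cdot;i)$ lies in the $\varepsilon$-ball around $W_\mu(\cdot;i)$'', and this $i$-th component is the empirical distribution of the $k$-cycle colour-patterns of $\mb{x}$ along $\pi_i$, so it depends only on $\pi_i$. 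Using the independence of the $\pi_i$,
\[ \EE_{\sigma_n}\big|\{\mb{x}: \norm{W_{\mb{x},\sigma_n}-W_\mu}<\varepsilon\}\big| = \sum_{\mb{x}\in\A^N}\ \prod_{i=1}^d \PP_{\pi_i}\big(\norm{W_{\mb{x},\sigma_n}(\cdot;i)-W_\mu(\cdot;i)}<\varepsilon\big). \]
Because $\pi_i$ is uniformly random, conjugation-invariance shows each factor depends on $\mb{x}$ only through its colour composition $c=c(\mb{x})\in\ZZ_{\ge 0}^\A$ (with $\sum_\ta c_\ta=N$); there are only polynomially many compositions, and exactly $\binom{N}{c}$ strings of each, which to exponential order is $\exp(N\shent(c/N))$. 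So one groups the sum by composition.

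The combinatorial heart is the estimate, for a fixed $i$ and a fixed shift-invariant target $Q$ on $\A^{\ZZ_k}$ with single-site marginal $\bar Q$, of the probability that the empirical cycle-pattern distribution of a composition-$c$ string along a uniform $k$-uniform permutation lies within $\varepsilon$ of $Q$. By the method of types this equals, to exponential order, $\max_P \PP_{\pi_i}(\text{empirical cycle-pattern distribution}=P)$ over shift-invariant achievable $P$ within $\varepsilon$ of $Q$ (polynomially many), and this probability vanishes unless $\bar P=c/N$. I would compute $\PP_{\pi_i}(\,\cdot=P)$ exactly by counting $k$-uniform permutations: encoding such a permutation as a bijection $V_n\to[n]\times[k]$ (up to reordering the $n$ blocks and rotating within each, i.e.\ dividing by $n!\,k^n$) and decomposing $P$ over cyclic word-classes $[\mb{w}]$ with prescribed multiplicities $N_{[\mb{w}]}=nP([\mb{w}])$, one obtains
\[ \PP_{\pi_i}\big(\text{empirical cycle-pattern distribution}=P\big) = \frac{\big(\prod_\ta c_\ta!\big)\, n!\, \prod_{[\mb{w}]}\ell_{[\mb{w}]}^{\,N_{[\mb{w}]}}}{N!\, \prod_{[\mb{w}]}N_{[\mb{w}]}!}, \]
where $\ell_{[\mb{w}]}\mid k$ is the cyclic period of a representative of $[\mb{w}]$. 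Applying Stirling's formula, all the $\log n$, $\log N$, period and linear terms cancel, leaving $\tfrac1N\log\PP_{\pi_i}(\,\cdot=P)\to\tfrac1k\shent(P)-\shent(\bar P)$ as $n\to\infty$.

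Combining these ingredients, $\tfrac1N\log$ of the expectation converges, after $n\to\infty$ and then $\varepsilon\to0$ and using continuity of Shannon entropy and of the marginal map, to
\[ \shent(\bar Q)+\sum_{i=1}^d\Big(\tfrac1k\shent(W_\mu(\cdot;i))-\shent(\bar Q)\Big) = (1-d)\,\shent(W_\mu(\cdot))+\tfrac1k\sum_{i=1}^d\shent(W_\mu(\cdot;i)) = \kik(\mu), \]
where the consistency condition on the weight $W_\mu$ forces $\bar Q=W_\mu(\cdot)$, the common single-site marginal of all the $W_\mu(\cdot;i)$, in the limit $\varepsilon\to0$. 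I would carry this out as matching upper and lower bounds: the lower bound by exhibiting, for each large $n$, an achievable $P$ near each $W_\mu(\cdot;i)$ whose marginal is near $W_\mu(\cdot)$ together with an $\mb{x}$ of the corresponding composition; the upper bound by the type-counting just described together with the polynomial count of types.

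The step I expect to demand the most care is the exact count of $k$-uniform permutations with a prescribed cycle-pattern distribution, specifically the bookkeeping around word-classes of period strictly less than $k$. It is a pleasant feature of the calculation that the period factors $\ell_{[\mb{w}]}^{N_{[\mb{w}]}}$ in the numerator cancel precisely against the period contribution hidden in the identity relating $\shent(P)$ to $\sum_{[\mb{w}]}P([\mb{w}])\log P([\mb{w}])$ after Stirling, and verifying this cancellation (and that the residual $\log n$, $\log N$ and linear terms also cancel) is the one genuinely delicate computation. The remaining points --- interchanging $\limsup_n$, the limit $\varepsilon\to0$, and the polynomially-many sums over compositions $c$ and over achievable $P$, plus the continuity arguments --- are routine method-of-types manipulations.
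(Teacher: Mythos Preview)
Your proposal is correct and follows essentially the same approach as the paper. Both proofs exchange the sum over $\mb{x}$ and $\sigma$, group $\mb{x}$ by vertex composition, factor over $i\in[d]$ by independence, count $k$-uniform permutations with a prescribed cyclic-word type, apply Stirling, and finish by noting there are only polynomially many types. The one cosmetic difference is that the paper organizes the permutation count via the orbit--stabilizer theorem for the conjugation action of $G(\mb{x})=\prod_\ta\Sym(c_\ta)$ on the set $G_i$ of good permutations (obtaining $|G_i|=|G(\mb{x})|/|\mathrm{Stab}(\pi_i)|$ with $|\mathrm{Stab}(\pi_i)|=\prod_{[\bta]}N_{[\bta]}!\,(k/|[\bta]|)^{N_{[\bta]}}$), whereas you do the equivalent direct bijective count; since $\ell_{[\mb{w}]}=|[\mb{w}]|$ your displayed formula coincides exactly with the paper's, and the ``delicate'' period bookkeeping you flag is handled transparently by orbit--stabilizer.
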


The proof of this proposition shows that we also obtain $\kik(\mu)$ if $\limsup$ is replaced with $\liminf$ on the right-hand side.

\begin{proof}
  Let $\ZZ_k$ act on $\A^{\ZZ_k}$ in the usual way: $\pi \bta(j) = \bta(j-\pi)$ for $\pi \in \ZZ_k$, $\bta \in \A^{\ZZ_k}$ and $j \in \ZZ_k$. We will say that two elements $\bta, \btb \in \A^{\ZZ_k}$ are equivalent if they are in the same $\ZZ_k$-orbit. Let $[\bta] \subset \A^{\ZZ_k}$ denote the equivalence class of $\bta$. For a weight $W$ write
		\[ W([\bta]; i) \coloneqq \sum_{\btb \in [\bta]} W(\btb; i) . \]
A weight $W$ is {\bf cyclically invariant} if $W(\bta; i)=W(\btb; i)$ for every equivalent pair $\bta,\btb$ in $\A^{\ZZ_k}$. For example, since $\mu$ is $\Gamma$-invariant, for each $i \in [d]$ the probability measure $W_\mu(\cdot; i) \in \Prob(\A^{\ZZ_k})$ is cyclically invariant.

	We first calculate $\EE \abs*{\{ \mb{x} \in \A^{kn} \st W_{\mb{x}, \sigma_n} = W\}}$ for an arbitrary cyclically invariant weight $W$ with denominator $kn$, \emph{i.e.} such that $kn \cdot W([\bta]; i) \in \ZZ$ for each $i, \bta$. Letting
		\[ N_n = \abs*{\Hom_{\unif}(\Gamma, \Sym(kn))} = \left( \frac{(kn)!}{n! k^n} \right)^d, \]
	we have
	\begin{align*}
		\EE \abs*{\{ \mb{x} \in \A^{kn} \st W_{\mb{x}, \sigma_n} = W\}}
			&= \frac{1}{N_n} \sum_{\sigma \in \Hom_{\unif}(\Gamma, \Sym(kn))} \abs*{\{ \mb{x} \in \A^{kn} \st W_{\mb{x}, \sigma} = W \}} \\
			&= \frac{1}{N_n} \sum_{\mb{x} \in \A^{kn}} \abs*{\{ \sigma \in \Hom_{\unif}(\Gamma, \Sym(kn)) \st W_{\mb{x}, \sigma} = W\}} .
	\end{align*}
	Now a labeling $\mb{x} \in \A^{kn}$ admits at least one $\sigma$ that gives the correct weight $W$ if and only if it has the correct ``vertex statistics,'' that is, $\frac{1}{kn}\abs*{\{ i \in [kn] \st \mb{x}(i) = \ta \}} = W(\ta)$ for all $\ta \in \A$. There are $\exp\{ kn ( \shent(W(\cdot)) + o(1))\}$ such $\mb{x}$, and each admits the same number of $\sigma$. From now on fix one such $\mb{x}$. 
	
For $i \in [d]$, let $G_i$ be the set of $k$-uniform permutations $\pi \in \Sym(kn)$ such that if $W(\cdot; \pi)$ is the probability vector on $\A^{\ZZ_k}$ given by
$$W(\bta; \pi) = (kn)^{-1} |\{v \in [kn]:~ \bta(t) = \mb{x}(\pi^t(v))~\forall 0\le t < k\}|$$
then $W(\cdot ; \pi) \equiv W(\cdot; i)$.
Any $k$-uniform homomorphism $\sigma$ with $W_{\mb{x}, \sigma} = W$ is determined by the permutations $\s(s_i)$ which must be in $G_i$. So the number of such homomorphisms is $\prod_{i=1}^d |G_i|$.

Let $G(\mb{x})$ be the set of permutations $g\in \Sym(kn)$ which fix $\mb{x}$ in the sense that $\mb{x}(v)=\mb{x}(gv)$ for all $v\in [kn]$. Observe that $G(\mb{x})$ acts on $G_i$ by conjugation. This means that if  $\pi_i$ is a fixed permutation in $G_i$ and $g\in G(\mb{x})$ then $g \pi_i g^{-1} \in G_i$. Moreover, this action is transitive.
So 
$$|G_i| = \frac{ |G(\mb{x})|}{|\textrm{Stab}(\pi_i)|}$$
where $\textrm{Stab}(\pi_i)$ is the set of $g \in G(\mb{x})$ with $g \pi_i g^{-1} = \pi_i$. 
Observe
$$|G(\mb{x})| = \prod_{\ta \in \A} \left( kn \cdot W(\ta) \right) !.$$
	There are two mechanisms by which a $g \in G(\mb{x})$ can stabilize $\pi_i$. Either $g$ can permute $k$-cycles with the same labels or it can rotate a given labeled $k$-cycle. Therefore,
	\[ |\textrm{Stab}(\pi_i)| = \prod_{[\bta] \in \A^{\ZZ_k}/\ZZ_k} \left( n \cdot W([\bta]; i) \right)! \left( \frac{k}{\abs{[\bta]}} \right)^{n \cdot W([\bta]; i)} . \]

	Putting everything together, we get
		\[ \EE \abs*{\{ \mb{x} \in \A^{kn} \st W_{\mb{x}, \sigma_n} = W\}} = \frac{ e^{kn (\shent(W(\cdot)) + o(1))} \left( n! k^n \prod_{\ta \in \A} \left( kn \cdot W(\ta) \right) ! \right)^d}{(kn)!^d \prod_{i \in [d]} \prod_{[\bta] \in \A^{\ZZ_k}/\ZZ_k} \left( n \cdot W([\bta]; i) \right)! \left( \frac{k}{\abs{[\bta]}} \right)^{n \cdot W([\bta]; i)} } . \]
	Applying Stirling's approximation $\log n! = n \log n - n + o(n)$, the logarithm of this is
		\[ kn \left( \shent(W(\cdot)) + d\sum_{\ta \in \A} W(\ta) \log W(\ta) - \frac{1}{k} \sum_{i \in [d]} \sum_{[\bta]} W([\bta]; i) \log \frac{W([\bta]; i)}{\abs{[\bta]}} \right) + o(n) . \]
	By cyclic invariance of each $W(\cdot; i)$, we have $W(\bta; i) = \frac{W([\bta]; i)}{\abs{[\bta]}}$, so this gives
        \[ \EE \abs*{\{ \mb{x} \in \A^{kn} \st W_{\mb{x}, \sigma_n} = W\}} = e^{nk F(W) + o(n)} \]
    where
        \[ F(W) \coloneqq (1-d) \shent( W(\cdot) ) + \frac{1}{k} \sum_{i \in [d]} \shent( W(\cdot; i) ) . \]

    Now, since the number of denominator-$n$ weights $W$ grows polynomially in $n$, it follows that for any $\varepsilon>0$
        \[\lim_{n \to \infty} \frac{1}{kn} \log \EE \abs*{\{ \mb{x} \in \A^{kn} \st \norm{W_{\mb{x}, \sigma_n} - W_\mu} < \varepsilon\}} = \sup \{ F(W) \st \norm{W - W_\mu} < \varepsilon\} \]
    and taking $\varepsilon$ to 0 gives the claimed formula, by continuity of $F$.
\end{proof}

The {\bf annealed entropy} of $\mu$ is defined by
    \[ \hann(\mu) = \inf_{\calO \ni \mu} \limsup_{n \to \infty} \frac{1}{nk} \log \EE_{\sigma_n \sim \PP_{nk}} \abs*{\Omega(\sigma_n, \calO)} \]
where the infimum is over all open neighborhoods of $\mu$.

To emphasize the relationship between $\kik(\mu)$ and $\hann(\mu)$, let
$$\calO_\eps(\mu) = \{\nu \in \Prob(\A^\Gamma):~ \norm{W_\nu - W_\mu} <\eps\}.$$
Then $\calO_\eps(\mu)$ is an open neighborhood of $\mu$ and Proposition~\ref{prop:kikformula} becomes
    \[ \kik(\mu) = \inf_{\eps>0} \limsup_{n \to \infty} \frac{1}{nk} \log \EE_{\sigma_n \sim \PP_{nk}} \abs*{\Omega(\sigma_n, \calO_\eps(\mu))}. \]
In particular $\hann (\mu) \leq \kik(\mu)$.

The next proposition shows that $\kik(\mu)$ is an upper bound for sofic entropy with respect to ``most'' sofic approximations.

\begin{prop}\label{P:upper-estimate}
  Let $\mu$ be a $\Gamma$-invariant Borel probability measure on $\A^\Gamma$. Then there are subsets $\Omega_n' \subseteq \Omega_n^{\mathrm{sofic}}$ with 
    $$\lim_{n\to\infty} \PP_n(\Omega_n')= 1$$ 
    and such that, if $\Sigma=\{\sigma_n\}_{n=1}^\infty$ satisfies $\sigma_n \in \Omega'_{i_n}$ for some increasing sequence $(i_n)_n$ with $k \mid i_n$ for all $n$ then 
  $$h_\Sigma(\A^\Gamma,\mu,T)\le \kik(\mu).$$
\end{prop}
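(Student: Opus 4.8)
The plan is a first-moment (Markov) argument applied to Proposition~\ref{prop:kikformula}, combined with a diagonalization over the shrinking neighbourhoods $\calO_\eps(\mu)$ of $\mu$, so that a single high-probability event controls the microstate counts for all of these neighbourhoods at once. First I would recast the goal. Recall that $\Omega(\calO_\eps(\mu),\sigma)=\{\mb x:\ \norm{W_{\mb x,\sigma}-W_\mu}<\eps\}$ and that each $\calO_\eps(\mu)$ is an open neighbourhood of $\mu$; hence for any sofic approximation $\Sigma=\{\sigma_n\}_n$ with $\sigma_n\colon\Gamma\to\Sym(V_{i_n})$ and any $m\in\NN$,
\[ h_\Sigma(\A^\Gamma,\mu,T)\ \le\ \limsup_{n\to\infty}\frac{1}{i_n}\log\abs{\Omega(\calO_{1/m}(\mu),\sigma_n)}. \]
Set $a_m:=\limsup_{N\to\infty,\ k\mid N}\tfrac1N\log\EE_{\sigma\sim\PP_N}\abs{\Omega(\calO_{1/m}(\mu),\sigma)}$. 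In the reformulation of Proposition~\ref{prop:kikformula} stated just before the present proposition, $a_m$ is non-increasing in $m$ with $a_m\searrow\kik(\mu)$.

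Next I would perform the Markov step and assemble the sets. For each $m$ I would fix a threshold $N_m$, with $(N_m)_m$ strictly increasing and growing fast, say $N_m\ge m^3$, and large enough that $\EE_{\sigma\sim\PP_N}\abs{\Omega(\calO_{1/m}(\mu),\sigma)}\le e^{N(a_m+1/(2m))}$ for every $N\ge N_m$ with $k\mid N$; this is possible because $a_m$ is defined as a $\limsup$. Markov's inequality then gives, for such $N$,
\[ \PP_N\big(\abs{\Omega(\calO_{1/m}(\mu),\sigma)}\ge e^{N(a_m+1/m)}\big)\ \le\ e^{-N/(2m)}. \]
Define, for $N$ with $k\mid N$,
\[ \Omega_N':=\Omega_N^{\mathrm{sofic}}\cap\bigcap_{m\,:\,N_m\le N}\Big\{\sigma\in\Hom_\unif(\Gamma,\Sym(V_N)):\ \abs{\Omega(\calO_{1/m}(\mu),\sigma)}<e^{N(a_m+1/m)}\Big\} \]
(with the convention that an empty intersection leaves $\Omega_N':=\Omega_N^{\mathrm{sofic}}$), and $\Omega_N':=\emptyset$ if $k\nmid N$. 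Writing $M(N):=\max\{m:N_m\le N\}$, which tends to $\infty$ since each $N_m$ is finite, and which obeys $N\ge N_{M(N)}\ge M(N)^3$ and hence $N/(2M(N))\ge M(N)^2/2$, a union bound with the Markov estimates above gives
\[ \PP_N(\Omega_N')\ \ge\ \PP_N(\Omega_N^{\mathrm{sofic}})-\sum_{m=1}^{M(N)}e^{-N/(2m)}\ \ge\ \PP_N(\Omega_N^{\mathrm{sofic}})-M(N)\,e^{-M(N)^2/2}, \]
which tends to $1$ as $N\to\infty$ through multiples of $k$, by Proposition~\ref{prop:probably-sofic}.

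Finally I would read off the entropy bound. Suppose $\Sigma=\{\sigma_n\}_n$ has $\sigma_n\in\Omega'_{i_n}$ for some increasing sequence $(i_n)$ with $k\mid i_n$ for all $n$. Then each $\sigma_n\in\Omega_{i_n}^{\mathrm{sofic}}$, so $\Sigma$ is a sofic approximation. Fix $m$; for all $n$ large enough that $i_n\ge N_m$, the $m$-th defining constraint of $\Omega'_{i_n}$ applies to $\sigma_n$, so $\tfrac{1}{i_n}\log\abs{\Omega(\calO_{1/m}(\mu),\sigma_n)}<a_m+1/m$ (and if that bound forces $\Omega(\calO_{1/m}(\mu),\sigma_n)=\emptyset$ for all large $n$, then $h_\Sigma=-\infty$ and the conclusion is trivial). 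Taking $\limsup$ over $n$ in the first display gives $h_\Sigma(\A^\Gamma,\mu,T)\le a_m+1/m$, and letting $m\to\infty$ yields $h_\Sigma(\A^\Gamma,\mu,T)\le\kik(\mu)$.

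The one genuinely delicate point is the order of quantifiers: the sets $\Omega_N'$ have to be fixed in advance and then simultaneously control $\abs{\Omega(\calO_{1/m}(\mu),\cdot)}$ for \emph{every} $m$ and along \emph{every} admissible subsequence $(i_n)$. This is what forces the diagonal construction above, and in particular the requirement that the thresholds $N_m$ grow fast enough (superlinearly) that the $M(N)$-term tail in the union bound still vanishes; once that is arranged, everything else is a routine first-moment computation.
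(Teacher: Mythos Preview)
Your proof is correct and follows essentially the same first-moment Markov strategy as the paper: bound $\EE|\Omega(\calO_\varepsilon(\mu),\sigma)|$ via Proposition~\ref{prop:kikformula} and then cut with Markov's inequality to get high-probability events.  The only difference is bookkeeping: the paper picks a single diagonal pair of sequences $(\varepsilon_n,\delta_n)$ and uses one Markov event per $n$, whereas you intersect finitely many Markov events (one for each scale $1/m$ with $N_m\le N$) and control the union bound by forcing $N_m\ge m^3$.  Your version is a bit more explicit about why a \emph{fixed} neighbourhood $\calO_{1/m}$ is controlled for all large $n$ along any admissible subsequence, which is exactly what the definition of $h_\Sigma$ requires.
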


\begin{proof}
Given a positive integer $n$ divisible by $k$, $\varepsilon>0$, and $\s:\Gamma \to \Sym(n)$, let $N_{n,\varepsilon}(\s)$ be $\abs*{\Omega(\sigma, \calO_\eps(\mu))}$. Think of $N_{n,\varepsilon}$ as a random variable with respect to the uniform measure on the space of $k$-uniform homomorphisms from $\Gamma$ to $\Sym(n)$. In addition, let
\begin{eqnarray*}
 \mathrm{H}_{\mathrm{K},\varepsilon}(\mu) &:=& \limsup_{n \to \infty} \frac{1}{n} \log \EE[N_{n,\varepsilon}],
\end{eqnarray*}
where here and below we always restrict $n$ to be a multiple of $k$.

For each $\varepsilon$ and $n$ as above, let $\Omega_{n,\eps}'$ be the set of those $\sigma \in \Omega_n^{\mathrm{sofic}}$ that satisfy $N_{n,\eps}(\sigma) \le e^{\sqrt{n}}\EE[N_{n,\varepsilon}]$, and let $\Omega_n' := \Omega_{n,1}'\cap \cdots \cap \Omega_{n,1/n}'$. 
Then Markov's inequality gives
$$\PP_n(\Omega_n') \ge 1 - \sum_{m=1}^n\PP_n\big(N_{n,1/m}(\sigma) > e^{\sqrt{n}}\EE[N_{n,1/m}]\big) \ge 1 - ne^{-\sqrt{n}} \to 1.$$

Finally, suppose that $\Sigma=\{\sigma_n\}_{n=1}^\infty$ satisfies $\sigma_n \in \Omega'_{i_n}$ for some increasing sequence $(i_n)_n$ with $k \mid i_n$ for all $n$. Then we also have $\sigma_n \in \Omega'_{i_n,1/m}$ whenever $i_n\ge m$.  Fixing $m$ and letting $n\to\infty$, it follows that
$$h_\Sigma(\A^\Gamma,\mu,T) \le \limsup_{n \to \infty} \frac{1}{i_n} \log  N_{i_n,1/m}(\sigma_n) \le  \limsup_{n \to \infty} \frac{1}{i_n} \log \EE[N_{i_n,1/m}] \le \mathrm{H}_{\mathrm{K},1/m}(\mu),$$
where the first inequality uses again the fact that $\calO_{1/m}(\mu)$ is an open neighborhood of $\mu$ for every positive integer $m$. Now letting $m\to\infty$, Proposition~\ref{prop:kikformula} gives $\kik(\mu) = \lim_{m\to\infty}\mathrm{H}_{\mathrm{K},1/m}(\mu)$, so this completes the proof.
\end{proof}




\subsection{A brief history}\label{S:annealed-history}

The most basic method for analyzing the behaviour of a random sofic approximation is the first moment method.  Our first indication of the typical number of microstates for $(X,m_X,T)$ over $\sigma_n$ chosen from $\PP_n$ is given by the expectation of that number. 

In the analogous setting of actions of free groups, such averages have been studied intensely in recent years.  In~\cite{bowen2010}, the exponential growth rate of the expected number of good microstates was shown to coincide with an invariant of systems previously introduced by the second author in~\cite{bowen2010a}, where it was used to solve the isomorphism problem for finite-state Bernoulli actions of free groups.  In those and several subsequent papers, this invariant was called the `f-invariant'.  Here we propose a new term instead: we refer to this quantity as `annealed entropy'. 

In work of the second author, the f-invariant was obtained as a limit of functionals referred to as $F$, which are annealed entropies of Markov approximations. As explained further below, this quantity first appeared in refinements of work of Kikuchi \cite{kikuchi1951}. For this reason, we call it Kikuchi entropy. In later sections, it is used to prove Theorem \ref{mainthm3}.

The reason for the name ``annealed entropy'' is a connection to statistics and statistical physics.  During the last forty years, very similar first-moment calculations for various configurations over large sparse random graphs have become a central feature of the analysis of `graphical models' in those disciplines.  Often, the use of such averages can be seen as a first attempt to find the value for a `typical' random graph. 
In such settings, the first moment is referred to as an annealed average: see, for instance, the usage in~\cite[Section IV.1]{MPV1987} or~\cite[Section 5.4]{mezard2009}.  Its use as a prediction of typical behaviour is called the `Bethe ansatz' (or sometimes the `replica symmetric' approximation in reference to a phenomenology in the study of spin glasses that we do not explain here: see, for instance,~\cite[Chapter I]{MPV1987} or~\cite[Chapter 8]{mezard2009}\footnote{Indeed, the second author has previously also suggested the term `replica-symmetric entropy'~\cite[Subsection 7.3]{bowen-ICM}, but we feel `annealed' reflects the general nature of this quantity better.}).

In fact, the origins of these quantities lie even further back in the statistical physics literature.  In the general setting for random graphical models studied in statistics, the leading order exponents in first moment calculations are given by quantities called `Bethe' or `Kikuchi' entropy.

The first of these terms refers to foundational work by Bethe~\cite{bethe1935}.  He estimated the free energy of a certain model of an alloy on a two-dimensional lattice by a recursive expansion that retained nearest-neighbour interactions but ignored the effect of loops in the lattice graph.  A more mathematical description is that the two-dimensional lattice is approximated by an infinite regular tree, and this is why such trees are now often called `Bethe lattices' in statistical physics.

In~\cite{kikuchi1951}, Kikuchi expanded on Bethe's ideas by proposing a more careful expansion that respects slightly more of the lattice structure.  In modern terms, this can be understood as an approximation to the lattice by a hyper-tree rather than a simple tree.  While Bethe argued mostly in terms of free energies, Kikuchi's paper includes various explicit formulae for entropy estimates, and these evolved over time into the quantities studied in statistical inference today.  See~\cite[Equations (A.7) and (C1.6)]{kikuchi1951} for early intimations of these modern formulae.  Bethe's and Kikuchi's approximation methods can also be found in physics surveys from closer to that time such as Section III in Burley's contribution~\cite[Chapter 9]{DomGre72}.

These formulae were brought explicitly into statistical theory around 2000 by Yedidia and various co-authors in a series of technical reports: see, in particular,~\cite{yedidia2000,YFW2001} and the further references given there.  While these references continued to emphasize free energy more than entropy, they do cover both: the explicit formula for Bethe entropy is~\cite[Formula (1.32)]{yedidia2000}, for example.

The term `annealed entropy' (instead of `f-invariant') emphasizes the connection between these two fields.  To be more precise, the annealed entropy of a measure-preserving action of a free group is defined as an infimum of the values of a more elementary quantity over Markov approximations to the action.  This more elementary quantity, denoted by `$F$' in~\cite{bowen2010a,bowen2010}, has precisely the same formula as Bethe entropy.  Thus, the same formula for an entropy-like quantity was discovered independently and then used for very similar first-moment calculations in both fields.

Bethe entropy has by now become a textbook topic in statistical inference with graphical models: see, for instance,~\cite[Section 4]{WJ2008} or~\cite[Chapter 14, especially Subsection 14.2.4]{mezard2009}.  Some of the theory of these models has also been analyzed rigorously in the probability literature. For example,~\cite{DMSS2014} proves that a first-moment quantity asymptotically agrees with the typical free energy for ferromagnetic Potts models over sparse graph sequences, justifying the `Bethe ansatz' for these models.

Whereas Bethe entropy can be understood as a functional of a probability distribution over a tree, the extension to Kikuchi entropy allows an underlying graph that is a hyper-tree, which is a hyper-graph $G=(V,E)$ such that there is some tree with vertex set $V$ whose subgraphs induced by hyper-edges of $G$ are all connected.


For the first-moment calculations we need below, the exponent is given by the analogous annealed entropy for an action of $\Gamma$, the $d$-fold free power of $\ZZ_k$, rather than a free group.  It turns out that this could again be defined as an infimum of a more elementary quantity over Markov approximations, where now the more elementary quantity is the Kikuchi entropy associated to the Cayley hyper-tree of $\Gamma$. We do not work this out completely here. Instead, we give a more direct formula for the annealed entropy and only show that it is bounded above by the Kikuchi entropy (see discussion following Prop.~\ref{prop:kikformula}).



\section{Completely positive entropy, local convergence, and model mixing}\label{S:cpeandmm}

In order to prove the completely positive entropy (CPE) statement in Theorem \ref{mainthm3}, we will use a variant of the main result of \cite{austinburton2019}. That paper proves that if a model measure sequence locally and empirically converges to the target measure and is uniformly model mixing then the system is CPE. The local and empirical convergence result needed to prove CPE will also help us establish the lower bound in Theorem \ref{mainthm3}, part (a). We review these concepts here.

As in previous sections, let $\A$ be a finite set, $\Gamma$ a countable group and $\mu$ be a $\Gamma$-invariant Borel probability measure on $\A^\Gamma$. We also let $\Sigma=(\sigma_n)_n$ be a sofic approximation, where $\sigma_n:\Gamma \to \Sym(V_n)$ for each $n$. 

Given a probability measure $\kappa$ on $\A^{V_n}$ and a vertex $v \in V_n$, the {\bf localization of $\kappa$ at $v$} is the probability measure
$$\Loc{\kappa}{v} = (\Pi_v^{\sigma_n})_* \kappa = \sum_{\mb{x} \in \A^{V_n} }  \kappa(\mb{x}) \d_{\Pi^{\sigma_n}_v(\mb{x})} \in \Prob(\A^\Gamma).$$
This is the law of the pull-back name of a $\kappa$-random sample, as viewed from a fixed $v\in V_n$. This measure depends on the homomorphism $\s_n$, but we will usually leave that dependence implicit. If we  want to specify $\sigma_n$, we use the notation $\Loc[\sigma_n]{\kappa}{v}$.

A {\bf model measure sequence} is a sequence $(\mu_n)_n$ of probability measures $\mu_n$ on $\A^{V_n}$. The sequence $(\mu_n)_n$ is said to converge to $\mu$ {\bf locally and empirically} if for every open neighborhood $\mathcal{O}$ of $\mu$ in $\Prob(\A^\Gamma)$,
\begin{eqnarray*}
1 &=& \lim_{n\to\infty} |V_n|^{-1} \, \abs{\{v\in V_n:~ \Loc{\mu_{n}}{v} \in \mathcal{O}\}} \\
1 &=& \lim_{n\to\infty} \mu_n(\{\mb{x} \in \A^{V_n}:~ P_{\mb{x}}^{\sigma_n} \in \mathcal{O}\}).
\end{eqnarray*}
Below, we will sometimes refer to the first equality holding for every $\calO$ as \emph{local convergence} and the second as \emph{empirical convergence} in order to be explicit about which property is relevant.

If the measures $\mu_n$ and/or the maps $\sigma_n$ are random with law $\PP_n$, then we say the sequence converges locally and empirically in probability to $\mu$ if the same limits hold in probability. Explicitly, for every open neighborhood $\calO$ of $\mu$ and every $\varepsilon>0$,
\begin{eqnarray*}
1 &=& \lim_{n\to\infty} \PP_n\left\{ |V_n|^{-1} \, \abs{\{v\in V_n:~ \Loc{\mu_{n}}{v} \in \mathcal{O}\}} > 1 - \varepsilon \right\} \\
1 &=& \lim_{n\to\infty} \PP_n\left\{ \mu_n(\{\mb{x} \in \A^{V_n}:~ P_{\mb{x}}^{\sigma_n} \in \mathcal{O}\}) > 1 - \varepsilon \right\}.
\end{eqnarray*}

It will be convenient to reformulate local convergence in probability in terms of total variation distance between marginals. To make this precise, we need notation for the marginals. 

Given a finite set $B \subset \Gamma$ and a probability measure $\nu$ on $\A^\Gamma$, let $\nu_B$ be the probability measure on $\A^B$ equal to the pushforward of $\nu$ under the projection map $\A^\Gamma \to \A^B$. This is the marginal of $\nu$ on $B$.

Let $d_{\TV}$ denote total variation distance. Because the sets of the form $\calO(B,\varepsilon,\mu)=\{\nu \in \Prob(\A^\G):~ d_{\TV}(\nu_B, \mu_B) < \varepsilon\}$ form a neighborhood basis for the topology at $\mu$, it follows that a sequence of random measures $\mu_n \in \Prob(\A^{V_n})$ converges locally in probability to a measure $\mu \in \Prob(\A^\Gamma)$ if and only if for every finite $B \subset \Gamma$ and $\varepsilon>0$,
		\begin{eqnarray}\label{E:localconvergence}
  \lim_{n\to\infty} \PP_n \left\{ \frac{1}{\abs{V_n}} \abs*{\left\{ v \in V_n \st d_{\TV} \big( \Loc{\mu_n}{v}_{B},\, \mu_{B} \big) > \varepsilon \right\}} > \varepsilon \right\} = 0.
  \end{eqnarray}

Versions of the next lemma have appeared several times before: for instance, inside the proof of \cite[Theorem 4.1]{bowen-expansive}, or explicitly as \cite[Lemma 5.4]{hayes-fuglede} or \cite[Corollary 5.7]{austin2016}. We include a proof for completeness.


\begin{lemma}
\label{lem:lepergodic}
    If a sequence of random measures $(\mu_n)_n$ converges locally in probability to an ergodic measure $\mu \in \Prob^\Gamma(\A^\Gamma)$ over some random sequence of homomorphisms, then it converges locally and empirically in probability to~$\mu$.
\end{lemma}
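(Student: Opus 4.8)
The hypothesis already supplies local convergence in probability, so the task is to prove \emph{empirical} convergence in probability: for every open $\calO \ni \mu$ in $\Prob(\A^\Gamma)$ and every $\varepsilon > 0$,
\[
\PP_n\Big(\mu_n\big\{\mb{x} \in \A^{V_n} \st P^{\sigma_n}_{\mb{x}} \in \calO\big\} > 1 - \varepsilon\Big) \longrightarrow 1 .
\]
The plan is to route this through the ``annealed'' empirical measure $\bar\mu_n := \EE_{\mb{x} \sim \mu_n}\big[P^{\sigma_n}_{\mb{x}}\big]$, formed for each fixed realization of the pair $(\mu_n,\sigma_n)$. Since each $\sigma_n$ is a genuine homomorphism, every $P^{\sigma_n}_{\mb{x}}$ is $\Gamma$-invariant, so $\bar\mu_n$ lies in the compact metrizable convex set $K := \Prob^\Gamma(\A^\Gamma)$, and in fact $\bar\mu_n$ is precisely the barycenter of the law $\nu_n \in \Prob(K)$ of the random point $P^{\sigma_n}_{\mb{x}}$, $\mb{x} \sim \mu_n$. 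Unwinding the definition of the localization, $\Loc[\sigma_n]{\mu_n}{v} = \EE_{\mb{x}\sim\mu_n}[\delta_{\Pi^{\sigma_n}_v(\mb{x})}]$, hence $\bar\mu_n = |V_n|^{-1}\sum_{v\in V_n}\Loc[\sigma_n]{\mu_n}{v}$. (Since $P^{\sigma_n}_{\mb x}$ and $\mu$ both lie in $K$, it suffices to consider $\calO$ open in $K$.)

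First I would observe that local convergence in probability forces $\bar\mu_n \to \mu$ in probability for the weak$^\ast$ topology. It is enough to treat a neighbourhood of the form $\calO(B,\delta) := \{\nu \st d_\TV(\nu_B,\mu_B) < \delta\}$ for finite $B \subset \Gamma$ and $\delta > 0$, since these form a neighbourhood basis at $\mu$. By convexity of total variation distance,
\[
d_\TV\big((\bar\mu_n)_B, \mu_B\big) \le \frac{1}{|V_n|}\sum_{v\in V_n} d_\TV\big((\Loc[\sigma_n]{\mu_n}{v})_B,\ \mu_B\big) ,
\]
and, bounding each summand by $1$ when it exceeds $\delta/2$ and by $\delta/2$ otherwise, \eqref{E:localconvergence} shows that with probability tending to $1$ fewer than a $\delta/2$-fraction of the $v$ are ``bad'', so the right-hand side is $< \delta$. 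Thus $\PP_n\big(\bar\mu_n \in \calO'\big) \to 1$ for every open $\calO' \ni \mu$ in $K$.

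The heart of the matter --- and the only place the ergodicity of $\mu$ is used --- is a deterministic continuity property of the barycenter map at $\mu$. The barycenter map $\Prob(K)\to K$ is weak$^\ast$ continuous (test against $g\in C(\A^\Gamma)$), and since $\mu$ is ergodic it is an extreme point of $K$, so $\delta_\mu$ is the \emph{unique} element of $\Prob(K)$ with barycenter $\mu$. Combined with weak$^\ast$-compactness of $\Prob(K)$, this yields: for every open $\calO \ni \mu$ in $K$ and every $\varepsilon > 0$ there is an open $\calO' \ni \mu$ in $K$ such that every $\nu \in \Prob(K)$ with barycenter in $\calO'$ satisfies $\nu(\calO) > 1 - \varepsilon$. (Otherwise one could find $\nu_j$ with barycenters converging to $\mu$ but $\nu_j(K \setminus \calO) \ge \varepsilon$ for all $j$; a weak$^\ast$ subsequential limit $\nu_\infty$ would have barycenter $\mu$, hence equal $\delta_\mu$, yet the portmanteau theorem would give $\nu_\infty(K \setminus \calO) \ge \varepsilon$ --- a contradiction, since $\mu \in \calO$.) Applying this to the given $\calO,\varepsilon$, fixing the resulting $\calO'$, and noting that on the event $\{\bar\mu_n \in \calO'\}$ --- which has probability tending to $1$ by the previous paragraph --- we have $\mu_n\{\mb{x} \st P^{\sigma_n}_{\mb{x}} \in \calO\} = \nu_n(\calO) > 1 - \varepsilon$, the proof is complete.

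The barycenter-continuity step is the only substantive point; the remainder is bookkeeping, modulo one caution to state carefully: $\nu_n$ and $\bar\mu_n$ depend on the randomness only through $(\mu_n,\sigma_n)$, so the deterministic barycenter fact is applied realization by realization. One could instead simply quote standard Choquet theory for the uniqueness of representing measures at extreme points of a compact convex metrizable set.
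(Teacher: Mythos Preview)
Your proof is correct and rests on the same core idea as the paper's: ergodicity of $\mu$ means it is an extreme point of $K=\Prob^\Gamma(\A^\Gamma)$, so $\delta_\mu$ is the unique element of $\Prob(K)$ with barycenter $\mu$, and this forces empirical distributions to concentrate near $\mu$ once their average does.

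The organization differs. The paper works with the law $\theta_n$ of $\bar\mu_n$ over the randomness in $(\sigma_n,\mu_n)$, passes to weak$^\ast$ subsequential limits, shows any such limit has barycenter $\mu$ and hence equals $\delta_\mu$, and then closes with a Markov-inequality step. You instead isolate a deterministic ``barycenter-continuity'' statement (any $\nu\in\Prob(K)$ with barycenter in a small enough neighbourhood $\calO'$ of $\mu$ already has $\nu(\calO)>1-\varepsilon$) and apply it realization by realization to the conditional law $\nu_n$ of $P^{\sigma_n}_{\mb{x}}$ given $(\sigma_n,\mu_n)$. Your packaging makes the quantifier structure and the role of the two layers of randomness more explicit, and the final implication $\{\bar\mu_n\in\calO'\}\subset\{\nu_n(\calO)>1-\varepsilon\}$ is immediate; the paper's route is slightly terser but trades that for an identification of $\theta_n(\calO)$ with an expected microstate mass in its last line. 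Either way the substance is the same compactness-plus-extremality argument.
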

\begin{proof}
    For each $n$, let $\theta_n \in \Prob(\Prob^\Gamma(\A^\Gamma))$ denote the law of 
        \[ \frac{1}{\abs{V_n}} \sum_{v \in V_n} \Loc[\sigma_n]{\mu_n}{v} , \]
    where $(\sigma_n, \mu_n)$ are jointly distributed as given. As stated, $\theta_n$ is supported on $\Gamma$-imvariant measures because each $\s_n$ is a homomorphism and therefore the empirical measure $P^{\s_n}_{\mathbf{x}}$ is invariant, for any ${\bf x} \in \A^{V_n}$. 
    
    Passing to a subsequential limit if necessary, the sequence $(\theta_n)_n$ converges weakly to some $\theta \in \Prob(\Prob^\Gamma(\A^\Gamma))$. We first show the barycenter of $\theta$ must be $\mu$: given a continuous function $g \in C(\A^\Gamma)$
    \begin{align*}
        \iint g(\mb{z}) \, \nu(d\mb{z})\, \theta(d\nu)
            &= \lim_{n \to \infty} \iint g(\mb{z}) \, \nu(d\mb{z})\, \theta_n(d\nu) \\
            &= \lim_{n \to \infty} \EE\left[ \frac{1}{|V_n|} \sum_{v \in V_n} \int g(\Pi_v^{\sigma_n}\mb{x}) \, \mu_n(d\mb{x}) \right] .
    \end{align*}
    Now given $\varepsilon>0$, let $\calO \ni \mu$ be an open neighborhood such that if $\nu \in \calO$ then $\int g\, d\nu$ is within $\varepsilon$ of $\int g\, d\mu$. Then we control the expectation above by dividing up the terms based on whether $\mu_n$ looks like $\mu$ near $v$:
    \begin{align*}
        \EE\left[ \frac{1}{|V_n|} \sum_{v \in V_n} \int g(\Pi_v^{\sigma_n}\mb{x}) \, \mu_n(d\mb{x}) \right]
            &= \frac{1}{|V_n|} \EE\left[ \sum_{\substack{v \in V_n \\ \Loc{\mu_n}{v} \in \calO}} \int g(\Pi_v^{\sigma_n}\mb{x}) \, \mu_n(d\mb{x}) \right] \\
            &\qquad + \frac{1}{|V_n|} \EE\left[ \sum_{\substack{v \in V_n \\ \Loc{\mu_n}{v} \not\in \calO}} \int g(\Pi_v^{\sigma_n}\mb{x}) \, \mu_n(d\mb{x}) \right] .
    \end{align*}
    The magnitude of the second term is bounded by
        \[ \max \abs{g} \cdot \EE \left[ \frac{1}{|V_n|} \abs{\{v \in V_n : \Loc{\mu_n}{v} \not\in \calO \}}\right] \]
    which goes to 0 as $n$ goes to infinity by definition of local convergence in probability. By choice of $\calO$, the first term is within $\varepsilon$ of $\int g\, d\mu$ for large $n$. Since $\varepsilon>0$ was arbitrary, this proves that the (subsequential) limit must have barycenter $\mu$.
    
    Since $m$ is ergodic, the only possible subsequential limit with barycenter $\mu$ is $\delta_\mu$, so this is the true limit. This implies that for any $\varepsilon > 0$ and open $\calO \ni \mu$,
        \[ \PP \big\{ \mu_n(\Omega(\sigma_n, \calO)) > 1-\varepsilon \big\} \to 1 . \]
    This is because
       \[ \PP \big\{ 1-\mu_n(\Omega(\sigma_n, \calO)) > \varepsilon \big\} \leq \frac{\EE\big[1-\mu_n(\Omega(\sigma_n, \calO))]}{\varepsilon} = \frac{1 - \EE[\EE_{\mb{x} \sim \mu_n}[\1_{P_{\mb{x}}^{\sigma_n} \in \calO}]]}{\varepsilon} = \frac{1 - \theta_n(\calO)}{\varepsilon} \to 0\]
    using Markov's inequality, the tower law of expectation, and the portmanteau theorem.
\end{proof}

\subsection{Property M}

To define notions of model mixing, we will impose distance functions on the finite sets $V$ which form a given sofic approximation. For this purpose, we will assume $\Gamma$ is finitely generated and let $E \subset \Gamma$ be a finite symmetric generating subset. For $g\in \Gamma$, let $|g|$ be the word-length of $g$, which is the length of the shortest word in $E$ representing $g$. Given $\sigma:\Gamma \to \Sym(V)$, define distance in $V$ by
$$d_\sigma(v,w) = \min\{ |g|:~ g \in \Gamma, \sigma(g)v=w\}.$$
If there does not exist $g$ with $\sigma(g)v=w$ then we set $d_\sigma(v,w)=+\infty$. If $\sigma$ is not a homomorphism then $d_\sigma$ may fail to satisfy the triangle inequality.

A subset $S \subset V$ is {\bf $r$-separated} if $d_\sigma(v,w) > r$ for every pair of distinct $v,w \in S$. 

Suppose a model measure sequence $(\mu_n)_n$ converges locally and empirically to $\mu$. 
We say the sequence is {\bf uniformly model mixing (umm)} if for every finite $F \subset \Gamma$ and every $\eps>0$ there is some $r<\infty$ and a sequence of finite subsets $W_n \subset V_n$ such that 
$$|W_n| = (1-o(1))|V_n|$$
and if $S \subset W_n$ is $r$-separated then 
$$\shent( (\mu_n)_{\sigma_n^{F}(S)} ) \ge |S| (\shent(\mu_F) - \eps)$$
where 
\begin{itemize}
\item $\mu_F$ is the probability measure on $\A^F$ which is the pushforward of $\mu$ under the projection map $\A^\Gamma \to \A^F$;
\item $\sigma_n^F(S) = \{\sigma_n(f)s:~f \in F, s\in S\}$;
\item $ (\mu_n)_{\sigma_n^{F}(S)} $ is the probability measure on $\A^{\sigma_n^{F}(S)}$ which is the pushforward of $\mu_n$ under the projection map $\A^{V_n} \to \A^{\sigma_n^{F}(S)}$.
\end{itemize}

This is a microstates analog of uniform mixing, introduced by Rudolph and Weiss in~\cite{rudolphweiss} for actions of an amenable group; see also~\cite[Definition 10]{weiss-survey}, where the name `uniform mixing' appears for the first time. The main result of \cite{austinburton2019} is that if $(\mu_n)_n$ locally and empirically converges to $\mu$ and is uniformly model mixing with respect to a fixed sofic approximation $\Sigma$, then the system $(\A^\Gamma,\mu,T)$ has completely positive entropy with respect to $\Sigma$, in analogy with a corresponding result of~\cite{rudolphweiss}.

Unfortunately, we do not know whether the parity check sub-shifts of Theorem \ref{mainthm3} are uniformly model mixing. Instead we define a weaker version of model mixing which suffices. 

\begin{defn}[Property M]\label{dfn:M}

Suppose $(\mu_n)_n$ is a model measure sequence and $\mu \in \Prob^\Gamma(\A^\Gamma)$ is an invariant measure. We say the sequence has \textbf{property M} if for every $\eps>0$ and $0<r<\infty$ there is a sequence of subsets $S_n \subset V_n$ such that 
$$\liminf \frac{|S_n|}{|V_n|} >0$$
and
\begin{equation}\label{eq:high-ent}
\shent( (\mu_n)_{\sigma_n^{B_r}(S_n)} ) \ge |S_n| (\shent(\mu_{B_r}) - \eps)
\end{equation}
for all $n$, where $B_r = \ball{r}{e} \subset \Gamma$ denotes the ball of radius $r$ centered at the identity. In applications, $\mu$ will be a limit of the sequence $(\mu_n)_n$ but we do not impose any such requirement for the definition.
\end{defn}


In contrast with uniform model-mixing, we only require $S_n$ to have asymptotically positive density in $V_n$, and there is no uniform lower bound on this density across different choices of $\epsilon, r$. This density could be much smaller than $|B_r|^{-1}$, for example. We also do not require the sets $S_n$ to be separated, although the lower bound~\eqref{eq:high-ent} does usually imply a kind of approximate separation anyway.

We suspect that other variants of uniform model mixing could be used in a similar way to prove completely positive entropy, so Definition~\ref{dfn:M} is not an attempt at optimal generality.  This is why we have chosen a rather bland name for Property M, although it is convenient in our work below.

Here is the main result of this section:

\begin{theorem}\label{T:wmmcpe}
As above, let $\mu$ be a $\Gamma$-invariant probability measure on $\A^\Gamma$, $\Sigma=(\sigma_n:\Gamma \to \Sym(V_n))_n$ a sofic approximation, and $(\mu_n)_n$ a model measure sequence. Assume $(\mu_n)_n$ converges to $\mu$ locally and empirically along $\Sigma$ and has property M. Then every nontrivial factor of $(\A^\Gamma,\mu,T)$ inherits positive $\Sigma$-entropy.
\end{theorem}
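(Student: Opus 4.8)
The plan is to reduce to factor maps onto shift systems and then, for every fixed neighbourhood of the graphical joining, to produce exponentially many distinct model microstates for the factor measure by pushing good microstates for $\mu$ through a local approximation of the factor map and counting the images using Property~M. By Corollary~\ref{cor:presence-monotone} it suffices to show that every nontrivial factor map $\Phi=\phi^\Gamma\colon(\A^\Gamma,\mu)\to(\B^\Gamma,\nu)$ onto a shift system has $h_\Sigma(\mu,T\,;\,\Phi)>0$, where $\phi\colon\A^\Gamma\to\B$ is measurable and $\nu=\phi^\Gamma_*\mu$ is not a point mass; since $\nu$ is shift-invariant, its one-site marginal $\nu_{\{e\}}$ is then also not a point mass, so $h_0:=\shent(\nu_{\{e\}})>0$. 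Fix a neighbourhood $\calO$ of the graphical joining $\lambda$ of $\mu$ and $\nu$. The goal is to bound $|\mathrm{proj}_n[\Omega(\calO,\sigma_n)]|$ below by $e^{c|V_n|}$ for large $n$ with $c>0$ \emph{independent of $\calO$}, and then to take the infimum over $\calO$. Fix a local approximating sequence $(\psi_m)_m$ for $\phi$. Since $\psi_m\to\phi$ in $\mu$-measure, for large $m$ the measure $((\psi_m^\Gamma)_*\mu)_{\{e\}}$ is close in total variation to $\nu_{\{e\}}$, hence has entropy at least $h_0/2$; moreover, by the almost-Lipschitz machinery for local approximating sequences (see the discussion after Lemma~\ref{lem:localmaplipschitz} and \cite[Lemma~4.10]{austin2016}) there is an index $m(\calO)$ and a neighbourhood $\calO'\ni\mu$ such that, for large $n$, $\psi_{m(\calO)}^{\sigma_n}$ carries every $(\calO',\sigma_n)$-microstate $\mb{x}$ into $\mathrm{proj}_n[\Omega(\calO,\sigma_n)]$, via the graphical pair $(\mb{x},\psi_{m(\calO)}^{\sigma_n}\mb{x})$. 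By empirical convergence, $1-o(1)$ of the $\mu_n$-mass sits on such microstates, so it remains to lower bound the number of distinct values $\psi_{m(\calO)}^{\sigma_n}\mb{x}$ as $\mb{x}$ ranges over $(\calO',\sigma_n)$-microstates.

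This is where Property~M enters, and it must be used at a scale that does \emph{not} depend on $\calO$. Fix an index $m_0$ (large enough for the entropy bound just mentioned), say $\psi_{m_0}$ is $D_0$-local, and pick $r_0$ with $B_{r_0}\supseteq D_0\cup D_0^{-1}$. Apply Property~M at radius $r_0$ with a small parameter $\epsilon$ to obtain sets $S_n\subseteq V_n$ with $\delta_0:=\liminf_n|S_n|/|V_n|>0$ and $\shent\big((\mu_n)_{\sigma_n^{B_{r_0}}(S_n)}\big)\ge|S_n|(\shent(\mu_{B_{r_0}})-\epsilon)$. Combining this with the opposite inequality $\sum_{s\in S_n}\shent\big((\mu_n)_{\sigma_n^{B_{r_0}}(s)}\big)\le|S_n|(\shent(\mu_{B_{r_0}})+\epsilon+o(1))$, which follows from local convergence after discarding the $o(|V_n|)=o(|S_n|)$ vertices of $S_n$ with anomalous $r_0$-localization, forces the ``blocks'' $\mb{x}|_{\sigma_n^{B_{r_0}}(s)}$ to be nearly independent. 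Since $\psi_{m_0}^{\sigma_n}\mb{x}(s)$ is a fixed function of the block at $s$ whose law is close to $\mathrm{law}_\mu(\psi_{m_0})$ for typical $s$ (local convergence plus continuity of $\psi_{m_0}$), a chain-rule computation gives, for $\mb{x}\sim\mu_n$ and large $n$,
\[
\shent\big(\psi_{m_0}^{\sigma_n}(\mb{x})|_{S_n}\big)\ \ge\ |S_n|\big(\shent(((\psi_{m_0}^\Gamma)_*\mu)_{\{e\}})-O(\epsilon)-o(1)\big)\ \ge\ \tfrac{h_0}{4}\,|S_n|.
\]
All of this is unchanged when $\mu_n$ is replaced by its restriction to the set of $(\calO',\sigma_n)$-microstates, since Property~M and local convergence are stable under conditioning on a set of $\mu_n$-mass $1-o(1)$. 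Hence the number of distinct values of $\psi_{m_0}^{\sigma_n}(\mb{x})|_{S_n}$ over $(\calO',\sigma_n)$-microstates is at least $e^{(h_0/4)|S_n|}$, and a greedy packing argument extracts a family of $(\calO',\sigma_n)$-microstates $\mb{x}^{(1)},\mb{x}^{(2)},\dots$ of size $e^{c_1|V_n|}$ with $\psi_{m_0}^{\sigma_n}\mb{x}^{(i)}$ pairwise at normalized Hamming distance $\ge\alpha$, where $c_1>0$ and $\alpha>0$ depend only on $h_0$ and $\delta_0$.

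Now glue the two scales. For a good microstate $\mb{x}$ one has $d_{\mathrm{Ham}}\big(\psi_{m(\calO)}^{\sigma_n}\mb{x},\psi_{m_0}^{\sigma_n}\mb{x}\big)\le\big(\mu\{\psi_{m(\calO)}\ne\psi_{m_0}\}+o(1)\big)|V_n|\le\big(2\,\mu\{\psi_{m_0}\ne\phi\}+o(1)\big)|V_n|$ for $m(\calO)$ large; so, with $m_0$ chosen so that $2\,\mu\{\psi_{m_0}\ne\phi\}<\alpha/2$, the images $\psi_{m(\calO)}^{\sigma_n}\mb{x}^{(i)}$ remain pairwise distinct for large $n$. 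Therefore $|\mathrm{proj}_n[\Omega(\calO,\sigma_n)]|\ge e^{c_1|V_n|}$ for large $n$, whence $\limsup_n|V_n|^{-1}\log|\mathrm{proj}_n[\Omega(\calO,\sigma_n)]|\ge c_1$; since $c_1$ does not depend on $\calO$, taking the infimum over $\calO$ gives $h_\Sigma(\mu,T\,;\,\Phi)\ge c_1>0$, as desired.

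The main obstacle is precisely this decoupling of scales. To land inside a small $\calO$ around the graphical joining one is forced to use an approximation $\psi_{m(\calO)}$ whose locality radius grows without bound as $\calO$ shrinks, whereas Property~M supplies only a positive — and \emph{a priori} radius-dependent, possibly tiny — lower density; so one cannot simply run the counting argument with $\psi_{m(\calO)}$ and hope the resulting exponential rate stays bounded below. The remedy is to carry out the entropy estimate and the extraction of a well-separated family once and for all at the fixed scale $m_0$ (hence at the fixed density $\delta_0$ and fixed rate $c_1$), letting $m(\calO)$ do nothing but move the images into $\calO$, which it does harmlessly because $\psi_{m(\calO)}$ and $\psi_{m_0}$ are both $\mu$-close to $\phi$ and therefore Hamming-close on good microstates; arranging that a \emph{fixed-scale} $\psi_{m_0}$ is already close enough to $\phi$ for the separation parameter $\alpha$ to tolerate the perturbation is the delicate point. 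The remaining ingredients — the transfer of Property~M and of local convergence to conditioned model measures, and the chain-rule entropy bound itself — are routine.
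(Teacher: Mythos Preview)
Your overall architecture matches the paper's: reduce via Corollary~\ref{cor:presence-monotone}, fix a local approximation at a scale independent of $\calO$, apply Property~M at that scale to obtain $S_n$ with positive density $\delta_0$, and for each $\calO$ choose a further $\psi_{m(\calO)}$ that carries good microstates into $\Omega(\calO,\sigma_n)$. The gap is in the transfer between the two scales. Your packing argument produces $\alpha$-separation in normalized Hamming distance on $\B^{V_n}$ with $\alpha=\alpha(h_0,\delta_0)$ (as you say yourself); concretely $\alpha$ is of order $\delta_0$, because the separation comes from coordinates in $S_n$ and $|S_n|/|V_n|\approx\delta_0$. Your perturbation bound $\dee^{(V_n)}\big(\psi_{m(\calO)}^{\sigma_n}\mb{x},\psi_{m_0}^{\sigma_n}\mb{x}\big)\lesssim\mu\{\psi_{m_0}\ne\phi\}$ is likewise a normalized Hamming distance over all of $V_n$. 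So ``$2\mu\{\psi_{m_0}\ne\phi\}<\alpha/2$'' is really demanding $\mu\{\psi_{m_0}\ne\phi\}\lesssim\delta_0(m_0)$, where $\delta_0(m_0)$ is the density Property~M supplies at the locality radius of $\psi_{m_0}$. Property~M gives no uniform lower bound on $\delta_0$ as the radius grows, and nothing prevents $\delta_0(m)$ from tending to zero faster than $\mu\{\psi_m\ne\phi\}$, so there is no reason such an $m_0$ exists. You flag this as ``the delicate point'' but do not resolve it; as written the choice of $m_0$ is circular.

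The paper avoids the circularity by replacing your Hamming comparison with an entropy comparison that scales with $|S_n|$ rather than $|V_n|$, so that $\delta_0$ never enters the perturbation. Property~M plus subadditivity gives $\TC\big(\{\mb{x}_n|_{\sigma_n^{B_r}(v)}:v\in S_n\}\big)\le\epsilon|S_n|$ under $\mu_n$, and the data-processing inequality passes this to $\TC\big(\{\psi^{\sigma_n}\mb{x}_n(v):v\in S_n\}\big)$ for the fixed-scale $\psi$. The transfer to $\psi_m$ then uses
\[
\big|\TC(\{\psi^{\sigma_n}\mb{x}_n(v)\})-\TC(\{\psi_m^{\sigma_n}\mb{x}_n(v)\})\big|\ \le\ 2\sum_{v\in S_n}\dee^{\Rok}_{\Loc{\mu_n}{v}}(\psi,\psi_m),
\]
and local convergence bounds the right side by $(2\,\dee^{\Rok}_\mu(\psi,\psi_m)+o(1))|S_n|$. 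The needed smallness is therefore $\dee^{\Rok}_\mu(\psi,\psi_m)<c\,\shent_\mu(\phi)$, which is \emph{independent of $\delta_0$} and can be arranged by the initial choice of $\psi$. This yields $\shent\big((\psi_m^{\sigma_n})_*\mu_n\big)\ge\tfrac{1}{8}\shent_\mu(\phi)\,|S_n|$ directly (no packing step), and Fano's inequality converts this to the desired lower bound on $\log|\mathrm{proj}_n[\Omega(\calO,\sigma_n)]|$. Swapping your Hamming-separation transfer for this Rokhlin-distance transfer on total correlation closes the gap.
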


\begin{proof}
By Corollary~\ref{cor:presence-monotone} it suffices to consider factor maps to other shift systems.  For these the proof is based on the proof of~\cite[Theorem 1.2]{austinburton2019}.

\vspace{7pt}

\emph{Step 1.}\quad We start by establishing some general entropy inequalities.  For two or more jointly distributed random variables $X_1, \ldots, X_k$, define the total correlation
	\[ \TC(X_1; \cdots; X_k) = \left(\sum_{i=1}^k \shent(X_i)\right) - \shent(X_1, \ldots, X_k) . \]
This is a generalization of mutual information to more than two random variables, introduced in \cite{watanabe1960}. It can also be recursively defined by setting $\TC(X_1; X_2) = \info(X_1; X_2)$ and for $k \geq 3$
	\[ \TC(X_1; \cdots; X_k) = \TC(X_1; \cdots; X_{k-1}) + \info(X_1, \ldots, X_{k-1}; X_k) . \]
Using this recursion and the data-processing inequality \cite[Theorem~2.8.1]{coverthomas2006}, it can be shown by induction on $k$ that if $f$ is any function and $Y_i = f(X_i)$ for each $i$ then
	\begin{equation}
    \label{ineq:dataprocessing}  
	   \TC(Y_1; \cdots; Y_k) \leq \TC(X_1; \cdots; X_k) .
    \end{equation}
This inequality has also appeared in \cite[Lemma~4.3]{austin2020}.
Note that the total correlation does not depend on the order in which the random variables are listed. Below, we will refer to the total correlation of a collection of random variables $\{X_i \st i \in I\}$ indexed by a finite set $I$ using the notation $\TC(\{ X_i \st i \in I\})$, since fixing an ordering would unnecessarily complicate notation.

The {\bf Rokhlin distance} between random variables $\alpha,\beta$ which are defined on the same probability space is defined by $\dee^{\Rok}_\mu(\alpha, \beta) = \shent_\mu(\alpha | \beta) + \shent_\mu(\beta | \alpha)$. This satisfies the triangle inequality, and it equals zero if and only if $\alpha$ and $\beta$ generate the same partition up to null sets. This distance can be used to control total correlation via the bound
\begin{align*}
    &\hspace{-1in} \abs*{\TC(X_1; \cdots; X_k) - \TC(Y_1; \cdots; Y_k)} \\
        &\leq \abs*{\shent(X_1, \ldots, X_k)- \shent(Y_1, \ldots, Y_k)}
            + \sum_{i=1}^k \abs*{\shent(X_i) - \shent(Y_i)} \\
        &\leq 2 \sum_{i=1}^k \big( \shent(X_i | Y_i) + \shent(Y_i | X_i) \big) = 2 \sum_{i=1}^k \dee^{\Rok}(X_i,Y_i).
\end{align*}

\vspace{7pt}

\emph{Step 2.}\quad Since $(\mu_n)_n$ locally and empirically converges to $\mu$, if $S_n \subset V_n$ satisfies $\liminf \frac{\abs{S_n}}{\abs{V_n}} > 0$ then
	\[ \frac{1}{\abs{S_n}} \sum_{v \in S_n} \shent \big( (\mu_n)_{\sigma_n^{B_r}(v)} \big) = \shent(\mu_{B_r}) + o(1). \]
So property M implies that for every $r,\varepsilon >0$ there is a sequence of subsets $S_n \subset V_n$ such that $\liminf \frac{\abs{S_n}}{\abs{V_n}} > 0$ and
	\[ \frac{1}{\abs{S_n}} \TC\big( \{ (\mb{x}_n)^{\sigma_n^{B_r}(v)} \st v \in S_n \} \big) \leq \varepsilon \]
for all large $n$, where $\mb{x}_n$ is a random sample of $\mu_n$ and the projections $\{ (\mb{x}_n)^{\sigma_n^{B_r}(v)} \st v \in S_n \}$ are jointly distributed in the natural way (from a common sample of $\mb{x}_n$).

\vspace{7pt}

\emph{Step 3.}\quad Now let $\phi \colon \A^\Gamma \to \B$ be a measurable map into a finite set $\B$ generating a factor map $\phi^\Gamma$ as in~\eqref{eq:map-to-factor}. If this factor is nontrivial then $\shent_\mu(\phi) > 0$. We want to show that the property M assumption on $\mu_n$ implies that
\[h_\Sigma(\mu\,;\,\phi^\Gamma) > 0.\]
Let $\lambda$ be the graphical joining of the factor map $\phi^\Gamma$ as in~\eqref{eq:graphical}.

Fix $r \in \NN$ and a $\ball{r}{e}$-local function $\psi \colon \A^\Gamma \to \B$ which approximates $\phi$ closely enough in measure that $\dee^{\Rok}_\mu(\psi, \phi) < \frac{1}{8}\shent_\mu(\phi)$. 

Now with $\varepsilon = \frac{1}{8}\shent_\mu(\phi)$ and this $r$, let $(S_n)_n$ be the sequence of subsets of $V_n$ given by property M. Since $\psi$ is $\ball{r}{e}$-local, the data-processing inequality (\ref{ineq:dataprocessing}) above implies that
	\[ \TC\big( \{ {\psi^{\sigma_n}}(\mb{x}_n)(v) \st v \in S_n \} \big) \leq \TC\big( \{ (\mb{x}_n)^{\sigma_n^{B_r}(v)} \st v \in S_n \} \big) \leq \varepsilon \abs{S_n} \]
where ${\bf x}_n$ is a random sample of $\mu_n$ and $\psi^{\sigma_n} \colon \A^{V_n} \to \B^{V_n}$ is defined as in~\eqref{eqn:microstatemap}.

\vspace{7pt}

\emph{Step 4.}\quad Let $(\psi_m)_m$ be a local approximating sequence sequence to $\phi$, meaning that~\eqref{eq:local-approx} holds and hence $\dee^{\Rok}_\mu(\phi, \psi_m)$ also converges to 0. Since the Rokhlin distance satisfies the triangle inequality, there is some $M \in \NN$ such that if $m \geq M$ then $\dee^{\Rok}_\mu(\psi, \psi_m) < \frac{1}{8} \shent_\mu(\phi)$.

Given an open neighborhood $\calO \ni \lambda$, by \cite[Prop.~4.10]{austin2016} there is some open neighborhood $\calU \ni \mu$ and some $m \geq M$ such that, for all large enough $n$, the map $(\mathrm{id}_{\A^{V_n}},\psi_m^{\sigma_n})$ sends $\calU$-microstates to $\calO$-microstates.
Let us also assume $m$ is large enough that $\shent_\mu(\psi_m) \geq \frac{1}{2} \shent_\mu(\phi)$.

Now fix some $R$ such that $\psi$ and $\psi_m$ are both $\ball{R}{e}$-local. By local convergence of $\mu_n$ to $\mu$, for any $\delta>0$, the fraction of $v \in S_n$ for which the local marginal $\Loc{\mu_n}{v}_{B_R}$ is within total variation distance $\delta$ of $\mu_{B_R}$ is $1-o(1)$. For the rest of the $v \in S_n$, the term in the sum below has the upper bound $2 \log \abs{\B}$:
    \begin{align*}
        &\hspace{-1in} \tfrac{1}{\abs{S_n}} \abs*{\TC\big( \{ {\psi^{\sigma_n}}(\mb{x}_n)(v) \st v \in S_n \} \big) - \TC\big( \{ {\psi_m^{\sigma_n}}(\mb{x}_n)(v) \st v \in S_n \} \big)} \\
            &\leq \frac{2}{\abs{S_n}} \sum_{v \in S_n} \left( \shent_{\Loc{\mu_n}{v}} (\psi | \psi_m) + \shent_{\Loc{\mu_n}{v}} (\psi_m | \psi) \right) \\
            &\leq 2 \dee^{\Rok}_\mu(\psi_{m},\psi) + o(1) .
    \end{align*}
Hence
\begin{equation}
\label{ineq:tcbound}
    \tfrac{1}{\abs{S_n}} \TC\big( \{ {\psi_m^{\sigma_n}}(\mb{x}_n)(v) \st v \in S_n \} \big) \leq  \frac{3}{8} \shent_\mu(\phi) + o(1) .
\end{equation}
	
By empirical convergence, for large $n$ the model measure $\mu_n$ is mostly supported on $\calU$-microstates. So $(\mathrm{id}_{\A^{V_n}},\psi_m^{\sigma_n})_*\mu_n$ is mostly supported on $\calO$-microstates, and using Fano's inequality we see that
	\[ \frac{1}{\abs{V_n}} \shent( {\psi_{m}^{\sigma_n}}_*\mu_n) \leq \frac{1}{\abs{V_n}} \log \abs{\mathrm{proj}_n[\Omega(\sigma_n, \calO)]} + o(1) . \]
The total correlation bound (\ref{ineq:tcbound}) gives
	\begin{align*}
	\shent( {\psi_{m}^{\sigma_n}}_*\mu_n) =	\shent( \psi_{m}^{\sigma_n}({\bf x}_n))
			&\ge \shent( \{ (\psi_{m}^{\sigma_n}({\bf x}_n)(v) \st v \in S_n \}) \\
			&\geq \sum_{v \in S_n} \shent( (\psi_{m}^{\sigma_n}({\bf x}_n)(v) \}) -\abs{S_n} \big( \tfrac{3}{8} \shent_\mu(\phi) + o(1) \big).
	\end{align*}
	Since $(\mu_n)_n$ converges locally to $\mu$ and $\psi_m$ is a local function,
		\[ \frac{1}{\abs{S_n}} \sum_{v \in S_n} \shent( (\psi_{m}^{\sigma_n}({\bf x}_n)(v) ) = \shent_\mu(\psi_m) + o(1)
            \geq \tfrac{1}{2} \shent_\mu(\phi) + o(1). \]
	So
		\[ \frac{1}{\abs{V_n}} \log \abs{\mathrm{proj}_n[\Omega(\sigma_n, \calO)]} \geq \frac{\abs{S_n}}{\abs{V_n}} \left( \tfrac{1}{8}\shent_\mu(\phi) + o(1) \right) + o(1) , \]
    and for every $\calO \ni \lambda$
        \[ \liminf_{n \to \infty} \frac{1}{\abs{V_n}} \log \abs{\mathrm{proj}_n[\Omega(\sigma_n, \calO)]} \geq \left(\liminf_{n \to \infty} \frac{\abs{S_n}}{\abs{V_n}}\right) \tfrac{1}{8} \shent_\mu(\phi) . \]
    Since we chose $(S_n)_n$ independently of $\calO$, and $\liminf_{n \to \infty} \frac{\abs{S_n}}{\abs{V_n}} > 0$, taking the infimum over $\calO$ completes the proof.
\end{proof}

\section{Shattering}\label{S:shattering}

Let $(\A^\Gamma,\mu,T)$ be a shift $\Gamma$-system and $\Sigma=(\sigma_n)_n$ a sofic approximation, where ${\sigma_n:\Gamma \to \Sym(V_n)}$. On each model space $\A^{V_n}$ we have the normalized Hamming distance defined by
$$\dee^{(V_n)}(\mathbf{x},\mathbf{y})= |V_n|^{-1}|\{v\in V_n:~\mathbf{x}(v)\ne \mathbf{y}(v)\}|.$$

In this section we derive ergodic-theoretic consequences from the following phenomenon, which is at the heart of our study of parity check shifts.

\begin{defn}\label{defn:shatt}
The shift system has \textbf{totally shattered microstate spaces along $\Sigma$} if (i) it has microstates along $\Sigma$, and (ii) there exists a $\delta > 0$ for which the following holds. For every $\varepsilon > 0$ there exist a weak$^\ast$ neighbourhood $\calO$ of $\mu$ and a positive integer $n_0$ such that, for any $n\ge n_0$ and any two microstates $\mathbf{x},\mathbf{y} \in \Omega(\sigma_n,\calO)$, we have
$$\hbox{either} \quad \dee^{(V_n)}(\mathbf{x},\mathbf{y}) \ge \delta \quad \hbox{or}\quad \dee^{(V_n)}(\mathbf{x},\mathbf{y}) < \varepsilon.$$
We refer to any such $\delta$ as a \textbf{shatter distance} for the system along $\Sigma$.
\end{defn}





\subsection{Consequences of shattering}
\label{subs:shatt-cons}

In this section, fix a sofic approximation $\Sigma = (\sigma_n)_n$ by homomorphisms, and assume that $(\A^\Gamma,\mu,T)$ has totally shattered microstate spaces along $\Sigma$.

\begin{theorem}\label{T:shatt-cons}
For every $\varepsilon > 0$ there is a neighbourhood $\calO$ of $\mu$ for which the following holds.  Let $(\B^\Gamma,\nu,T)$ be another shift system that has microstates along $\Sigma$, let $(\tK^\Gamma, \k^\Gamma,T)$ be a Bernoulli shift, and let
\[\phi:\B^\Gamma \times \tK^\Gamma \to \A\]
be a measurable map. If
\begin{equation}\label{eq:nearly-factor}
\phi^\G_\ast (\nu\times \k^\Gamma) \in \calO
\end{equation}
then
\begin{equation}\label{eq:near-indep}
(\nu\times \k^\Gamma\times \k^\Gamma)\{(y,z,z'):\ \phi(y,z) \ne \phi(y,z')\} < \varepsilon.
\end{equation}
\end{theorem}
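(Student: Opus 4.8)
The plan is to argue by contradiction. Suppose that for some $\varepsilon > 0$ no such neighbourhood $\calO$ exists; then we can find shift systems $(\B^\Gamma,\nu,T)$ with microstates along $\Sigma$, Bernoulli shifts $(\tK^\Gamma,\k^\Gamma,T)$, and maps $\phi$ satisfying \eqref{eq:nearly-factor} for arbitrarily small neighbourhoods, yet violating \eqref{eq:near-indep}. Fix $\delta$ a shatter distance for $(\A^\Gamma,\mu,T)$ along $\Sigma$. The first step is to exploit the independent second copy $\k^\Gamma$: given a sample $y$ of $\nu$ and two \emph{independent} samples $z, z'$ of $\k^\Gamma$, the configurations $\phi^\Gamma(y,z)$ and $\phi^\Gamma(y,z')$ both have law $\phi^\Gamma_*(\nu\times\k^\Gamma)$, and their expected normalized disagreement frequency is exactly the left-hand side of \eqref{eq:near-indep} by $\Gamma$-invariance (each coordinate $\gamma$ contributes $(\nu\times\k^\Gamma\times\k^\Gamma)\{\phi(\gamma^{-1}(y,z))\ne\phi(\gamma^{-1}(y,z'))\}$, which is the same constant). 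So the negation of \eqref{eq:near-indep} says these two correlated random configurations typically differ in a fraction $\ge \varepsilon$ of coordinates — an \emph{intermediate} distance — which is exactly what a shattered microstate space forbids, \emph{provided} we can transfer this to the finite models.

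The second step is the transfer to finite models. First approximate $\phi$ by a local map: choose a finite $D\subset\Gamma$ and a $D$-local $\tilde\phi:\B^\Gamma\times\tK^\Gamma\to\A$ with $(\nu\times\k^\Gamma)\{\tilde\phi\ne\phi\}$ small, small enough that $\tilde\phi^\Gamma_*(\nu\times\k^\Gamma)$ still lies in a slightly enlarged neighbourhood of $\mu$ and that the disagreement frequency for $\tilde\phi$ is still $\ge \varepsilon/2$, say. Now fix $n$ large. Using microstates for $\nu$ over $\sigma_n$ (which exist by hypothesis) together with two independent uniformly random labellings $\mb{z},\mb{z}'\in\tK^{V_n}$ — these are good microstates for $\k^\Gamma$ with overwhelming probability, by the usual law-of-large-numbers / concentration argument for Bernoulli microstates — we push forward through $\tilde\phi^{\sigma_n}$ as in \eqref{eqn:microstatemap} to get two labellings $\mb{a},\mb{a}'\in\A^{V_n}$. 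Because $\tilde\phi$ is local and $\sigma_n$ is (asymptotically) a homomorphism, the empirical distributions $P^{\sigma_n}_{\mb a}$ and $P^{\sigma_n}_{\mb a'}$ are close to $\tilde\phi^\Gamma_*(\nu\times\k^\Gamma)\approx\mu$ with high probability, so both are genuine $(\calO,\sigma_n)$-microstates for $\mu$ once $\calO$ is chosen appropriately. On the other hand, $\dee^{(V_n)}(\mb a,\mb a')$ concentrates around its mean, which (again by $\Gamma$-equivariance of $v\mapsto\Pi^{\sigma_n}_v$ up to the sofic error, plus locality) is close to the disagreement probability for $\tilde\phi$, hence $\ge \varepsilon/2 > 0$. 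But it is also bounded: $\dee^{(V_n)}(\mb a,\mb a')\le (\nu\times\k^\Gamma\times\k^\Gamma)\{\tilde\phi\ne\phi\}$-type error plus the true disagreement — the point is that by choosing the base neighbourhood $\calO$ small we can force $\dee^{(V_n)}(\mb a,\mb a')$ to land strictly between $0$ and $\delta$ with positive probability. That contradicts the definition of totally shattered microstate spaces (Definition~\ref{defn:shatt}), since $\calO$ can be taken inside any prescribed neighbourhood of $\mu$.

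The main obstacle is controlling the distance $\dee^{(V_n)}(\mb a,\mb a')$ precisely enough to pin it into the forbidden band $[\varepsilon', \delta)$. Two sources of slack must be handled carefully: (i) the error from replacing $\phi$ by the local $\tilde\phi$, which affects \emph{both} the empirical-distribution estimate and the disagreement-frequency estimate — here Lemma~\ref{lem:localmaplipschitz} is the right tool, since it shows $\tilde\phi^{\sigma_n}$ is $|D|$-Lipschitz for the normalized Hamming metric, so changing $\mb z$ on a small fraction of coordinates (the event $\tilde\phi\ne\phi$) moves the output by a controlled amount; and (ii) the sofic approximation error in identifying $d_{\sigma_n}$-orbits with group translates, which is $o(1)$ and hence negligible. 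One also needs the concentration of $\dee^{(V_n)}(\mb a,\mb a')$ around its mean over the random $(\mb z,\mb z')$: since flipping one coordinate of $\mb z$ changes $\mb a$ in at most $|D|$ coordinates (Lemma~\ref{lem:localmaplipschitz} again), McDiarmid's bounded-difference inequality gives exponential concentration, so a typical pair of Bernoulli microstates realizes the mean disagreement up to $o(1)$. Assembling these estimates, one chooses $\varepsilon$ in the theorem statement to be, say, $\delta/2$ wait — rather, one shows the contrapositive cleanly: for the given $\varepsilon$, pick the shatter neighbourhood for $\varepsilon_0 := \varepsilon$ (the role of the "small" distance in Definition~\ref{defn:shatt}), and take $\calO$ to be a small enough sub-neighbourhood; then any $\phi$ with $\phi^\Gamma_*(\nu\times\k^\Gamma)\in\calO$ produces, via the above, microstates $\mb a,\mb a'$ with $\dee^{(V_n)}(\mb a,\mb a')\ge \tfrac12(\text{LHS of \eqref{eq:near-indep}})$ with positive probability; shattering forces this to be either $<\varepsilon_0=\varepsilon$ or $\ge\delta$, and a separate crude bound (e.g.\ the disagreement probability is always $\le 1$, or rather $\le$ something one can make $<\delta$ by further shrinking... ) — in fact the cleanest route is: if the LHS of \eqref{eq:near-indep} were $\ge\varepsilon$ we would also be able to arrange, by a slight independent perturbation of $z'$, intermediate values of the disagreement frequency densely in $[0,\varepsilon]$, contradicting the dichotomy. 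That interpolation trick — continuously deforming $z'$ toward $z$ to sweep out a continuum of distances — is the conceptual heart, and making it rigorous at the level of finite models is where the real work lies.
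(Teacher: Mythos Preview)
Your overall architecture matches the paper's: reduce to a local map, take a microstate $\mb y_n$ for $\nu$, draw two independent Bernoulli labellings $\mb z_n,\mb z_n'$, push forward through $\tilde\phi^{\sigma_n}$, and then argue that the normalized Hamming distance between the two images is essentially the left-hand side of~\eqref{eq:near-indep}. You also correctly locate the crux: you cannot simply say the distance lands in the forbidden band $[\varepsilon,\delta)$, because a priori the disagreement probability could exceed $\delta$, and your attempts to force it below $\delta$ by ``shrinking $\calO$'' do not work (nothing in the hypotheses bounds this quantity from above). The interpolation idea you end with is exactly the missing ingredient, but you have not supplied it, and the naive version---march from $\mb z_n$ to $\mb z_n'$ one coordinate at a time---fails as stated, because the intermediate strings need not be good microstates for $\k^\Gamma$, so their images under $\tilde\phi^{\sigma_n}$ need not lie in any microstate space for~$\mu$, and the shattering dichotomy says nothing about them.

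The paper fills this gap with Lemma~\ref{lem:bernconnected} (quoted from~\cite{bowen2022}): for nested neighbourhoods $\calU\subset\calV$ of $\nu\times\k^\Gamma$ with $\calU$ hereditary, any two points $(\mb y_n,\mb z_n),(\mb y_n,\mb z_n')\in\Omega(\sigma_n,\calU)$ are $(\delta/|F|)$-connected \emph{within} $\Omega(\sigma_n,\calV)$. Since $\tilde\phi^{\sigma_n}$ is $|F|$-Lipschitz (your Lemma~\ref{lem:localmaplipschitz} observation), the images form a $\delta$-chain inside $\Omega(\sigma_n,\calO)$. Now the shattering dichotomy is applied \emph{inductively along the chain}: consecutive distances are $<\delta$, hence $<\varepsilon$; then by the triangle inequality $\dee^{(V_n)}(\mb w_1,\mb w_3)<2\varepsilon<\delta$, hence $<\varepsilon$; and so on, yielding $\dee^{(V_n)}(\mb w_1,\mb w_l)<\varepsilon$. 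This telescoping use of the dichotomy is what replaces your hoped-for upper bound $<\delta$ on the raw distance, and it is the step your plan does not yet contain.
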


Intuitively, the conclusion is that if $\phi^\G$ is approximately a factor map onto $(\A^\Gamma,\mu,T)$, then it must be approximately independent from the second coordinate in $\B^\Gamma \times \tK^\Gamma$.  There are many ways to capture the latter assertion precisely, but~\eqref{eq:near-indep} turns out to be convenient during the proof.

The proof of Theorem~\ref{T:shatt-cons} has much in common with the main proof in~\cite{MR3693964}.  In that paper, Gamarnik and Sudan used a relative of shattering to prove an a.a.s. upper bound on the maximum size of an independent set on a random regular graph that can be constructed using a local algorithm.  The property they use there is now called the `overlap gap property,' and is actually a little weaker than being totally shattered.  See~\cite{gamarnik-survey} for a recent survey.  The reference \cite[Section 4]{lyons2017} explains how the absence of an approximating local algorithm for certain combinatorial problems implies that a resulting limit process is not weakly contained in a Bernoulli shift.

Our proof of Theorem~\ref{T:shatt-cons} needs a couple of known facts about microstate spaces for a product in which one factor is Bernoulli. We recall these as separate lemmas before starting the proof.

\begin{lemma}\label{lem:bern-fibres-good}
    Let $(\B^\G,\nu,T)$ be a shift system and let $(\tL^\G,\lambda^\G,T)$ a Bernoulli shift.  Assume that $\mb{y}_n \in \B^{V_n}$ is a sequence such that $P^{\s_n}_{\mb{y}_n} \to \nu$.  Then
    \[\lambda^{V_n}\big\{\mb{z}:\ P^{\s_n}_{(\mb{y}_n,\mb{z})}\in \calO\big\} \to 1\]
    for every neighbourhood $\calO$ of $\nu \times \lambda^\G$.     \hfill \qedsymbol
\end{lemma}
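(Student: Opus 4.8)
The plan is to verify the stated convergence against a neighbourhood basis of $\nu\times\lambda^\G$ consisting of finitely many cylinder constraints, and for each such constraint to combine a first-moment computation with a bounded-differences concentration estimate in the random variable $\mb z$. So fix a finite $F\subset\G$ and a pattern $(a,b)\in\B^F\times\tL^F$, and consider the cylinder frequency of the joint empirical measure,
\[
P^{\s_n}_{(\mb y_n,\mb z)}\big(C((a,b),F)\big)=\frac{1}{|V_n|}\sum_{v\in A_n}\1\big[\Pi^{\s_n}_v(\mb z)|_F=b\big],\qquad A_n:=\big\{v\in V_n:\Pi^{\s_n}_v(\mb y_n)|_F=a\big\},
\]
where $A_n$ is \emph{deterministic} and, since cylinders are clopen and $P^{\s_n}_{\mb y_n}\to\nu$, satisfies $|A_n|/|V_n|\to\nu(C(a,F))$. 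The event $\{\Pi^{\s_n}_v(\mb z)|_F=b\}$ depends on $\mb z$ only through the coordinates in the window $F_n(v):=\{\s_n(g^{-1})v:g\in F\}$.

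For the first moment I would use that $\Sigma$ is a sofic approximation by homomorphisms: for each pair $g\ne h$ in $F$ the element $hg^{-1}$ is nontrivial, so $\s_n(hg^{-1})$ fixes only an $o(1)$-fraction of $V_n$; a union bound over the $O(|F|^2)$ such pairs shows that for all but an $o(1)$-fraction of $v$ the window $F_n(v)$ consists of exactly $|F|$ distinct points. For such $v$ the coordinates $(\mb z(w))_{w\in F_n(v)}$ are i.i.d.\ with law $\lambda$, so $\lambda^{V_n}\{\Pi^{\s_n}_v(\mb z)|_F=b\}=\lambda^\G(C(b,F))$; summing over $v\in A_n$ and absorbing the exceptional vertices into an $o(1)$ error gives
\[
\EE_{\mb z\sim\lambda^{V_n}}\Big[P^{\s_n}_{(\mb y_n,\mb z)}\big(C((a,b),F)\big)\Big]\longrightarrow\nu(C(a,F))\,\lambda^\G(C(b,F))=(\nu\times\lambda^\G)\big(C((a,b),F)\big).
\]

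For concentration, observe that each coordinate $w\in V_n$ lies in $F_n(v)$ for at most $|F|$ vertices $v$ (if $w=\s_n(g^{-1})v$ then $v=\s_n(g)w$ with $g\in F$), so changing a single coordinate of $\mb z$ alters $\sum_{v\in A_n}\1[\cdots]$ by at most $|F|$, hence alters the normalised frequency by at most $|F|/|V_n|$. McDiarmid's bounded-differences inequality then yields, for every $t>0$,
\[
\lambda^{V_n}\Big(\big|P^{\s_n}_{(\mb y_n,\mb z)}\big(C((a,b),F)\big)-\EE[\,\cdot\,]\big|>t\Big)\le 2\exp\!\Big(-\tfrac{2t^2|V_n|}{|F|^2}\Big)\longrightarrow 0,
\]
so $P^{\s_n}_{(\mb y_n,\mb z)}(C((a,b),F))\to(\nu\times\lambda^\G)(C((a,b),F))$ in $\lambda^{V_n}$-probability. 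Since any neighbourhood $\calO$ of $\nu\times\lambda^\G$ contains a basic neighbourhood cut out by finitely many such cylinder constraints with a fixed tolerance, a union bound over that finite collection gives $\lambda^{V_n}\{\mb z:P^{\s_n}_{(\mb y_n,\mb z)}\in\calO\}\to1$, which is the assertion.

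The only point that is not purely formal is the first-moment step, where one must control the overlaps among the pullback windows $F_n(v)$ — that is, handle the fact that these are not literally always $|F|$-element sets. This is exactly what the asymptotic-freeness clause in the definition of a sofic approximation is for, and the same overlap bound (at most $|F|$ windows through any coordinate) is what makes the bounded-differences estimate go through, so I do not expect any genuine obstacle beyond the bookkeeping.
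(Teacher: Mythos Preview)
Your argument is correct. The paper does not actually prove this lemma: it marks it with a \qedsymbol\ in the statement and then remarks that it is ``well-known as folklore in the study of sofic entropy,'' pointing to the proof of the lower bound in \cite[Theorem 8.1]{bowen-jams-2010} as a place where the computation can be extracted. So there is no proof in the paper to compare against; you have supplied what the authors chose to omit.

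Your route---reduce to finitely many cylinder frequencies, compute the mean using asymptotic freeness to control the overlap set of ``bad'' vertices where the $F$-window fails to be injective, then apply McDiarmid via the Lipschitz bound coming from the fact that each coordinate of $\mb z$ lies in at most $|F|$ windows---is exactly the standard folklore argument, and the bookkeeping is right. Two small remarks: (i) your sentence ``$v=\s_n(g)w$'' implicitly uses $\s_n(g^{-1})^{-1}=\s_n(g)$, which holds because in Section~\ref{subs:shatt-cons} the $\s_n$ are assumed to be homomorphisms; the Lipschitz bound itself only needs that each $\s_n(g^{-1})$ is a bijection, so the conclusion is fine either way. (ii) You are tacitly taking $\B$ and $\tL$ to be finite when you index cylinders by single patterns $(a,b)$; this matches the paper's conventions for shift systems and the reduction in Step~1 of the proof of Theorem~\ref{T:shatt-cons}, so no generality is lost for the applications here.
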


Lemma~\ref{lem:bern-fibres-good} is well-known as folklore in the study of sofic entropy, and a full proof can be found inside the proofs of some of its consequences in the literature.  The earliest and perhaps easiest to extract is inside the proof of the lower bound in~\cite[Theorem 8.1]{bowen-jams-2010}.

The next lemma is more specialized, but was also used in the second author's previous counterexample to the weak Pinsker conjecture for some sofic groups: it is a special case of~\cite[Proposition 7.9]{bowen2022}.  It refers to `hereditary' neighbourhoods of a shift-invariant measure.  We do not repeat the definition of these here; the only property we need is that they form a basis for the weak$^\ast$ topology.

\begin{lemma}\label{lem:bernconnected}
	Let $\calU \subset \calV$ be open neighborhoods of $\nu \times \kappa^\Gamma$ with $\calU$ hereditary and $\calV$ containing the closure of $\calU$. For every $\delta>0$, if $n$ is large enough, then for every $\mb{z}, \mb{z}' \in \tK^{V_n}$ and $\mb{y} \in \B^{V_n}$, if $(\mb{y},\mb{z}),\, (\mb{y}, \mb{z}')$ are in $\Omega(\sigma_n, \calU)$ then they are $\delta$-connected within $\Omega(\sigma_n,\calV)$ (this means there are $\mb{w}_1,\ldots, \mb{w}_k\in \Omega(\sigma_n,\calV)$ with $\mb{w}_1=(\mb{y},\mb{z})$,  $\mb{w}_k=(\mb{y},\mb{z}')$ and $d^{(V_n)}(\mb{w}_i, \mb{w}_{i+1})<\d$ for all $i$). \hfill \qedsymbol
\end{lemma}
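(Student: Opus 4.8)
The plan is to connect $(\mb y,\mb z)$ to $(\mb y,\mb z')$ by changing one $\tK$-coordinate at a time, routing the path through an auxiliary Bernoulli microstate. Fix a finite window $F\subseteq\Gamma$ and reals $0<\vre'<\vre$ such that $\calU\subseteq\{\pi\st d_{\TV}(\pi_F,(\nu\times\k^\Gamma)_F)\le\vre'\}$ and $\{\pi\st d_{\TV}(\pi_F,(\nu\times\k^\Gamma)_F)<\vre\}\subseteq\calV$; such a window and margin exist because $\calU$ is hereditary (hereditary neighbourhoods form a basis and behave like balls centered at $\nu\times\k^\Gamma$) and $\overline{\calU}\subseteq\calV$, so that in particular every $(\mb y,\mb w)\in\Omega(\s_n,\calU)$ satisfies $d_{\TV}\bigl((P^{\s_n}_{(\mb y,\mb w)})_F,(\nu\times\k^\Gamma)_F\bigr)\le\vre'$. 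For $n$ large we have $|V_n|^{-1}<\delta$, so altering a single coordinate is a $\delta$-small step in $\dee^{(V_n)}$; it therefore suffices to exhibit a chain inside $\Omega(\s_n,\calV)$ from $(\mb y,\mb z)$ to $(\mb y,\mb z')$ each step of which changes one $\tK$-coordinate.

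The engine is the following interpolation estimate. Let $\mb a\in\tK^{V_n}$ with $(\mb y,\mb a)\in\Omega(\s_n,\calU)$, let $\mb z^\circ$ be a sample of $\k^{V_n}$, and order $V_n$ uniformly at random as $v_1,\dots,v_{|V_n|}$; define $\mb a^{(j)}$ to agree with $\mb z^\circ$ on $\{v_1,\dots,v_j\}$ and with $\mb a$ elsewhere. Then with probability $\to 1$ one has $(\mb y,\mb a^{(j)})\in\Omega(\s_n,\calV)$ for every $j$, including the endpoint $(\mb y,\mb z^\circ)$. Indeed, fix $j$, set $t=j/|V_n|$, and call $v$ \emph{$F$-good} if the points $\s_n(f^{-1})v$ $(f\in F)$ are distinct and obey the relevant homomorphism relations — all but an $o(1)$-fraction of $v$ are, and the rest contribute $o(1)$ to the $F$-marginal in total variation. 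Partition the $F$-good $v$ by the \emph{resampling pattern} $\tau(v)=\{f\in F\st \s_n(f^{-1})v\in\{v_1,\dots,v_j\}\}$. For $v$ in class $\tau$, the pullback $F$-name of $(\mb y,\mb a^{(j)})$ equals that of $(\mb y,\mb a)$ with its $\tK$-entries indexed by $\tau$ overwritten by the matching entries of $\mb z^\circ$. Since $\{v_1,\dots,v_j\}$ is chosen independently of $(\mb y,\mb a,\mb z^\circ)$, a bounded-differences argument gives, with overwhelming probability: (i) class $\tau$ has relative size $t^{|\tau|}(1-t)^{|F|-|\tau|}+o(1)$; (ii) the empirical distribution over class $\tau$ of the $(\mb y,\mb a)$-names matches, up to $o(1)$, the marginal $(P^{\s_n}_{(\mb y,\mb a)})_F$, hence (projecting out the $\tau$-indexed $\tK$-coordinates) lies within $\vre'+o(1)$ of $\nu_F\otimes\k^{\otimes(F\setminus\tau)}$; and (iii) because $\mb z^\circ$ is i.i.d.\ $\k$ and independent of everything, the overwritten entries, empirically over class $\tau$ and jointly with (ii), are distributed as $\k^{\otimes\tau}$ up to $o(1)$. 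Summing over $\tau$ with the weights in (i) and collapsing $\sum_{\tau}t^{|\tau|}(1-t)^{|F|-|\tau|}\,\nu_F\otimes\k^{\otimes(F\setminus\tau)}\otimes\k^{\otimes\tau}=\nu_F\otimes\k^{\otimes F}=(\nu\times\k^\Gamma)_F$, one obtains $d_{\TV}\bigl((P^{\s_n}_{(\mb y,\mb a^{(j)})})_F,(\nu\times\k^\Gamma)_F\bigr)\le\vre'+o(1)<\vre$ for $n$ large, so $(\mb y,\mb a^{(j)})\in\Omega(\s_n,\calV)$.

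To assemble the path, apply the interpolation estimate once with $\mb a=\mb z$ and once — \emph{reusing the same} $\mb z^\circ$ — with $\mb a=\mb z'$; the independence of the fresh sample $\mb z^\circ$ from both given configurations is exactly what legitimizes the reuse. For $n$ large the event that $\mb z^\circ$ and the two random orderings make both interpolation paths lie in $\Omega(\s_n,\calV)$ has probability $\to 1$, hence is nonempty, so fix such data. Concatenating the path $\mb z\rightsquigarrow\mb z^\circ$ with the reverse of $\mb z'\rightsquigarrow\mb z^\circ$ (they share the endpoint $(\mb y,\mb z^\circ)$) produces the required chain $\mb w_1=(\mb y,\mb z),\dots,\mb w_k=(\mb y,\mb z')$ inside $\Omega(\s_n,\calV)$ with $\dee^{(V_n)}(\mb w_i,\mb w_{i+1})\le|V_n|^{-1}<\delta$. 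All thresholds depend only on $\delta,\calU,\calV$ and not on $\mb y,\mb z,\mb z'$, so a single $n_0$ works uniformly.

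The principal difficulty is twofold. First, items (i)--(iii) must be run uniformly over \emph{all} $|V_n|+1$ values of $j$ at once, despite the weak dependence among resampling patterns of distinct vertices (each $u\in V_n$ affects the pattern of at most $|F|$ vertices) and the $o(|V_n|)$ vertices where $\s_n$ misbehaves; a union bound over the polynomially many triples $(j,\tau,\text{cylinder in }(\B\times\tK)^F)$ succeeds because each individual deviation has probability $\exp(-\Omega(|V_n|))$ by a bounded-differences inequality for functions of a uniformly random subset (equivalently, a uniformly random permutation), and the fixed $o(1)$ terms are swallowed by the gap $\vre-\vre'$. Second, and more delicately, the reduction in the first paragraph to a single controlling window $F$ with a genuine margin $\vre'<\vre$ is precisely where the hereditary hypothesis on $\calU$, together with $\overline{\calU}\subseteq\calV$, does its work: for a general open $\calU$ one cannot bound $(P^{\s_n}_{(\mb y,\mb w)})_F$ uniformly over $\Omega(\s_n,\calU)$, but hereditary neighbourhoods are well-behaved enough that this control is available, and that is the only point at which the hereditary assumption enters. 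One may, of course, bypass the whole argument by quoting \cite[Proposition 7.9]{bowen2022} directly, as in the text.
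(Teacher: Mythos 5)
The paper itself gives no proof of this lemma: it is imported wholesale as a special case of \cite[Proposition 7.9]{bowen2022}, and the text explicitly declines even to recall the definition of a hereditary neighbourhood. So your write-up is necessarily a different route --- an attempted self-contained proof. Its engine is sound and is the natural way to prove such connectivity statements: interpolate from $(\mb{y},\mb{z})$ to $(\mb{y},\mb{z}^\circ)$ for a fresh i.i.d.\ sample $\mb{z}^\circ$, one coordinate at a time in a random order, control each resampling class $\tau$ by a law-of-large-numbers/bounded-differences argument, union bound over the $|V_n|+1$ interpolation times, and concatenate the two paths at the common endpoint $(\mb{y},\mb{z}^\circ)$. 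Routing through an independent fresh sample correctly sidesteps the cross-correlation problems one would face interpolating $\mb{z}$ directly with $\mb{z}'$, and items (i)--(iii) are fine modulo routine care with classes of vanishing size.

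The genuine gap is in your opening reduction. You fix $F$ and $\vre$ with $\{\pi : d_{\TV}(\pi_F,(\nu\times\kappa^\Gamma)_F)<\vre\}\subseteq\calV$ and then assert that \emph{every} $(\mb{y},\mb{w})\in\Omega(\sigma_n,\calU)$ satisfies $d_{\TV}\bigl((P^{\sigma_n}_{(\mb{y},\mb{w})})_F,(\nu\times\kappa^\Gamma)_F\bigr)\le\vre'$ for some $\vre'<\vre$, crediting this to hereditariness. This is the load-bearing step --- it is what makes every class-$\tau$ contribution, including $\tau=\emptyset$ and the $t=1$ endpoint, land within $\vre$ of the target --- and it is not justified. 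It certainly does not follow from $\calU$ being open with $\overline{\calU}\subseteq\calV$: for a general open $\calU$ the lemma itself is false. For instance, with $\B$ trivial and $\kappa$ uniform on $\{0,1\}$, take $\calU$ to be the union of a small basic ball around $\kappa^\Gamma$ and a small basic ball around the Dirac mass at the all-zero configuration, and $\calV$ a slight enlargement; then the all-zero microstate and a typical Bernoulli microstate both lie in $\Omega(\sigma_n,\calU)$, but any $\delta$-chain between them must pass through configurations of one-density near $1/4$, whose single-site empirical marginals lie in neither piece of $\calV$. So the hereditary hypothesis carries essential content, and ``hereditary neighbourhoods behave like balls centered at $\nu\times\kappa^\Gamma$'' is an assertion, not an argument --- neither you nor the paper states the definition, so the claim cannot be checked; and even under the most charitable reading (hereditary $=$ a basic total-variation ball), $\calU$ would be a ball for \emph{its own} window and radius, which gives no control of the $F$-marginal for the possibly larger window $F$ dictated by $\calV$, nor the strict margin $\vre'<\vre$. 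A more plausible mechanism, which the shape of the hypotheses suggests, is different: compactness of $\overline{\calU}\subseteq\calV$ supplies a uniform weak$^\ast$ margin, while hereditariness supplies stability of $\calU$ under the ``overwrite a density-$t$ set of Bernoulli coordinates by fresh $\kappa$-samples'' operation, so that the interpolants' empirical measures stay within that margin of $\overline{\calU}$. To close the gap you would need to quote the actual definition from \cite{bowen2022} and rebuild the first paragraph around it (or simply cite Proposition 7.9 there, as the paper does).
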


\begin{proof}[Proof of Theorem~\ref{T:shatt-cons}]
Let $\delta$ be a shatter distance for the system along $\Sigma$.  Let $\varepsilon > 0$ be small enough that $2\varepsilon < \delta$, and now let $\calO_1$ and $n_0$ be a neighbourhood and positive integer as promised by Definition~\ref{defn:shatt} for this choice of $\varepsilon$.  Lastly choose a smaller neighbourhood $\calO$ of $\mu$ whose closure is contained in $\calO_1$.

Let $\phi:\B^\G\times \tK^\G\to \A$ be a measurable map such that $\phi_\ast^\G(\nu\times \k^\G) \in \calO$.  In the rest of the proof we show that~\eqref{eq:near-indep} holds for this $\calO$ and with $3\varepsilon$ in place of $\varepsilon$.

\vspace{7pt}

\emph{Step 1.}\quad Since $\A$ is finite and $\phi$ is measurable, for any $\eta > 0$ there is an approximating map
\[\psi:\B^\Gamma\times \tK^\Gamma \to \A\]
such that
\begin{equation}\label{eq:psi-phi}
    (\nu\times \k^\Gamma)\{\psi \ne \phi\} < \eta
\end{equation}
and such that (i) $\psi$ is a local map and (ii) $\psi$ depends on the coordinates in $\tK^\Gamma$ only through some finite measurable partition $\calP$ of $\tK$.  If we choose $\eta$ sufficiently small in terms of $\calO\subset \calO_1$ and $\varepsilon$, then~\eqref{eq:nearly-factor} and~\eqref{eq:psi-phi} imply that $\psi^\G_\ast(\nu\times \k^\Gamma)$ still lies in $\calO$, and also~\eqref{eq:psi-phi} implies that the desired conclusion~\eqref{eq:near-indep} holds for $\phi$ with error $3\varepsilon$ if it holds for $\psi$ with error $2\varepsilon$.

By replacing $\tK$ with the set of cells $\calP$, we have therefore reduced our work to the case when $\tK$ is finite and $\phi$ is $F$-local for some finite subset $F$ of $\G$. We assume this for the rest of the proof, and do not refer to $\psi$ again.

Having made these assumptions, let us note that the set
\[\Delta:= \{(y,z,z'):\ \phi(y,z) \ne \phi(y,z')\}\]
is closed and open in $\B^\G \times \tK^\G\times \tK^\G$.

\vspace{7pt}

\emph{Step 2.}\quad Since $\phi$ is local, pushing forward by the equivariant map $\phi^\Gamma$ acts continuously on probability measures.  We may therefore choose a neighbourhood $\calV$ of $\nu \times \k^\G$ such that $\phi^\G_\ast[\calV] \subset \calO$.  Since $\sigma_n$ is a homomorphism we have $P_{\phi^{\sigma_n}(\mb{x})}^{\sigma_n} = \phi^\Gamma_* P_{\mb{x}}^{\sigma_n}$, so this implies that
\begin{equation}\label{eq:inclusion}
\phi^{\s_n}[\Omega(\s_n,\calV)] \subset \Omega(\s_n,\calO) \quad \hbox{for every}\ n.
\end{equation}

In addition, since hereditary neighbourhoods form a basis, we may let $\calU$ be a hereditary neighbourhood of $\nu \times \k^\G$ whose closure is contained in $\calV$.

\vspace{7pt}

\emph{Step 3.}\quad By assumption, there is a sequence $\mb{y}_n \in \B^{V_n}$ such that $P^{\s_n}_{\mb{y}_n} \to \nu$.  Fix this for the rest of the proof.

For each $n$, consider $\mb{z}_n$ and $\mb{z}_n' \in \tK^{V_n}$ drawn independently at random according to $\k^{V_n}$, and consider the event
\[E := \big\{(\mb{z}_n,\mb{z}_n') \in \tK^{V_n}\times \tK^{V_n}:\ (\mb{y}_n,\mb{z}_n),(\mb{y}_n,\mb{z}_n')\in \Omega(\s_n,\calU)\big\}.\]
By Lemma~\ref{lem:bern-fibres-good}, we have
\begin{equation}\label{eq:prob-to-1}
(\k^{V_n}\times \k^{V_n})(E) \to 1.   
\end{equation}
When $E$ occurs, we can draw the following additional conclusions:
\begin{itemize}
\item The points $(\mb{y}_n,\mb{z}_n)$ and $(\mb{y}_n,\mb{z}_n')$ are $(\delta/|F|)$-connected within $\Omega(\sigma_n,\calV)$ for all sufficiently large $n$ (not depending on the specific values of $\mb{y}_n$, $\mb{z}_n$ or $\mb{z}_n'$), by Lemma~\ref{lem:bernconnected}.
\item Since $\phi$ is $F$-local, the map $\phi^{\s_n}$ is $|F|$-Lipschitz for the normalized Hamming metrics (Lemma~\ref{lem:localmaplipschitz}), so $\phi^{\s_n}(\mb{y}_n,\mb{z}_n)$ and $\phi^{\s_n}(\mb{y}_n,\mb{z}_n')$ both lie in $\Omega(\s_n,\calO)$ and are $\delta$-connected within that set, by the previous conclusion and~\eqref{eq:inclusion}.
\item By total shattering and our choice of $\calO$, we can now deduce that
\begin{equation}\label{eq:less-than-eps}
\dee^{(V_n)}\big(\phi^{\s_n}(\mb{y}_n,\mb{z}_n),\phi^{\s_n}(\mb{y}_n,\mb{z}_n')\big) < \varepsilon
\end{equation}
for all sufficiently large $n$.  Indeed, if $n$ is large enough and
\[\phi^{\s_n}(\mb{y}_n,\mb{z}_n) = \mb{w}_1,\ \dots,\ \mb{w}_l = \phi^{\s_n}(\mb{y}_n,\mb{z}_n')\]
is a sequence in $\Omega(\sigma_n,\calO)$ with all consecutive distances less than $\delta$, then total shattering implies that all of these distances are actually less than $\varepsilon$.  Then the triangle inequality implies that $\dee^{(V_n)}(\mb{w}_1,\mb{w}_3) < 2\varepsilon$, which is still less than $\delta$.  We may therefore invoke total shattering again to conclude that in fact $\dee^{(V_n)}(\mb{w}_1,\mb{w}_3) < \varepsilon$.  Now a simple induction shows that in fact $\dee^{(V_n)}(\mb{w}_1,\mb{w}_i) < \varepsilon$ for every $i$, giving~\eqref{eq:less-than-eps} when $i=l$.
\end{itemize}

Unpacking the definitions of normalized Hamming metric and empirical distribution, the left-hand side of~\eqref{eq:less-than-eps} is equal to
\[\frac{1}{|V_n|}\sum_{v \in V_n} 1_{\left\{\phi(\Pi^{\s_n}_v\mb{y}_n,\Pi^{\s_n}_v\mb{z}_n)\ne \phi(\Pi^{\s_n}_v\mb{y}_n,\Pi^{\s_n}_v\mb{z}_n')\right\}} = P^{\s_n}_{(\mb{y}_n,\mb{z}_n,\mb{z}_n')}(\Delta).\]
Therefore, in view of the conclusions above,~\eqref{eq:prob-to-1} implies that
\begin{equation}\label{eq:prob-to-1-2}
(\k^{V_n}\times \k^{V_n})\big\{P^{\s_n}_{(\mb{y}_n,\mb{z}_n,\mb{z}_n')}(\Delta) < \varepsilon\big\} \to 1.
\end{equation}

On the other hand, since $\Delta$ is a closed and open set, another appeal to Lemma~\ref{lem:bern-fibres-good} (this time for the larger product $\nu\times \k^\G\times \k^\G$) shows that
\begin{equation}\label{eq:prob-to-1-3}
(\k^{V_n}\times \k^{V_n}) \big\{P^{\s_n}_{(\mb{y}_n,\mb{z}_n,\mb{z}_n')}(\Delta) > (\nu\times \k^\G\times \k^\G)(\Delta) - \eps\big\} \to 1.
\end{equation}
The limits~\eqref{eq:prob-to-1-2} and~\eqref{eq:prob-to-1-3} can hold simultaneously only if
\[(\nu \times \k^\G\times \k^\G)(\Delta) < 2\varepsilon.\]
Since $\varepsilon$ was arbitrary, this completes the proof.
\end{proof}

Theorem~\ref{T:shatt-cons} has several more streamlined consequences.

\begin{cor}
\label{cor:shatt-cons}
Suppose that $(\A^\G,\mu,T)$ has totally shattered microstate spaces along $\Sigma$.
\begin{enumerate}
    \item If $(Y,\theta,S)$ is any probability-preserving $\G$-system that has microstates along $\Sigma$ (recall the definition of this property from Subsection~\ref{S:prelim-entropy}), and $(\tK^\G,\k^\G,T)$ is a Bernoulli shift, then any factor map
    \[(Y\times \tK^\G,\theta\times \k^\G,S\times T)\to (\A^\G,\mu,T)\]
    factorizes through the coordinate projection $Y \times \tK^\G \to Y$ up to agreement a.e.
    \item The system $(\A^\G,\mu,T)$ has no non-trivial direct Bernoulli factors.
    \item The system $(\A^\G,\mu,T)$ is not weakly contained in a Bernoulli shift unless it is actually a trivial system, meaning that $\mu = \delta_{(\dots,a,a,\dots)}$ for some $a \in \A$.
\end{enumerate}
\end{cor}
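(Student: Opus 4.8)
The plan is to derive all three parts from Theorem~\ref{T:shatt-cons}, treating part (1) as the substantive step and then reading off (2) from it and (3) directly from Theorem~\ref{T:shatt-cons}.

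For part (1), recall that a factor map from $(\B^\G\times\tK^\G,\nu\times\k^\G,T)$ onto the shift system $(\A^\G,\mu,T)$ has the form $\phi^\G$ for a measurable $\phi\colon\B^\G\times\tK^\G\to\A$ with $\phi^\G_\ast(\nu\times\k^\G)=\mu$. Since $\mu$ lies in \emph{every} weak$^\ast$ neighbourhood $\calO$ of itself, hypothesis~\eqref{eq:nearly-factor} of Theorem~\ref{T:shatt-cons} holds for every $\varepsilon>0$, so conclusion~\eqref{eq:near-indep} gives $(\nu\times\k^\G\times\k^\G)\{\phi(y,z)\ne\phi(y,z')\}<\varepsilon$ for every $\varepsilon>0$; hence this set is null. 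By Fubini, for $\nu$-a.e.\ $y$ the $\A$-valued function $z\mapsto\phi(y,z)$ agrees with an independent copy of itself almost surely, which forces it to be $\k^\G$-a.s.\ equal to a constant $\tilde\phi(y)$ (a finite-valued $X$ with $\PP\{X=X'\}=1$ has $\sum_a\PP\{X=a\}^2=1$, forcing $\max_a\PP\{X=a\}=1$). The resulting map $\tilde\phi$ is measurable, since $\tilde\phi(y)=a$ iff $\k^\G\{z:\phi(y,z)=a\}=1$, and $\phi=\tilde\phi\circ\mathrm{proj}_{\B^\G}$ a.e.; equivariance of $\mathrm{proj}_{\B^\G}$ then yields $\phi^\G=\tilde\phi^\G\circ\mathrm{proj}_{\B^\G}$ with $\tilde\phi^\G$ a factor map $(\B^\G,\nu,T)\to(\A^\G,\mu,T)$, which is the asserted factorization.

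For part (2), suppose $(\A^\G,\mu,T)$ has a direct Bernoulli factor, i.e.\ $(\A^\G,\mu,T)\cong(Z,\rho,R)\times(\tK^\G,\k^\G,T)$ via a conjugacy $\Theta$, and we must show $\k$ is a point mass. The complement $(Z,\rho,R)$ is a factor of the finite-alphabet shift system $(\A^\G,\mu,T)$, hence admits a finite generating partition by the sofic-group version of Krieger's theorem \cite{seward-kreiger-1,MR3959054}; so we may assume $Z=\B^\G$ with $\rho=\nu\in\Prob^\G(\B^\G)$. Moreover $(\B^\G,\nu,T)$ has microstates along $\Sigma$: since $(\A^\G,\mu,T)$ has microstates (being totally shattered) and the factor map $\A^\G\to\B^\G$ has a local approximating sequence, good microstates for $\mu$ push forward to good microstates for $\nu$ by \cite[Lemma 4.10]{austin2016}. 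Part (1) now applies to $\Theta$ viewed as a factor map, forcing it to factorize through $\mathrm{proj}_{\B^\G}$; but $\mathrm{proj}_{\B^\G}\colon(\B^\G\times\tK^\G,\nu\times\k^\G)\to(\B^\G,\nu)$ is a measure conjugacy only when $\k$ is a point mass, so $\Theta$ — being a conjugacy — can factorize through it only in that case.

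For part (3), suppose $(\A^\G,\mu,T)$ is weakly contained in a Bernoulli shift $(\tK^\G,\k^\G,T)$. Applying the definition of weak containment to the canonical generating partition of $\A^\G$ (using that total-variation balls on finitely many coordinates form a neighbourhood basis at $\mu$), for every weak$^\ast$ neighbourhood $\calO\ni\mu$ there is a measurable $\phi\colon\tK^\G\to\A$ with $\phi^\G_\ast\k^\G\in\calO$. Apply Theorem~\ref{T:shatt-cons} with the first factor taken to be the one-point system: given $\varepsilon>0$ it produces such a $\phi$ — and, shrinking $\calO$, we may also demand that the $\{e\}$-marginal of $\phi^\G_\ast\k^\G$ lies within $\varepsilon$ of $\mu_{\{e\}}$ in total variation — with $(\k^\G\times\k^\G)\{\phi(z)\ne\phi(z')\}<\varepsilon$. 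Writing $\nu_0$ for the law of $\phi$ under $\k^\G$, we get $\sum_a\nu_0(a)^2=\PP\{\phi(z)=\phi(z')\}>1-\varepsilon$, and closeness of $\nu_0$ to $\mu_{\{e\}}$ gives $\sum_a\mu_{\{e\}}(a)^2>1-O_{|\A|}(\varepsilon)$; letting $\varepsilon\to0$ yields $\sum_a\mu_{\{e\}}(a)^2=1$, so $\mu_{\{e\}}=\delta_a$, and by shift-invariance $\mu=\delta_{(\dots,a,a,\dots)}$. The genuine difficulty here lies entirely in Theorem~\ref{T:shatt-cons}; within this corollary the only non-formal point is the reduction in part (2) to the case of a finite-alphabet shift complement with microstates (one could instead rerun the proof of Theorem~\ref{T:shatt-cons} with a countable-alphabet shift representation of $(Z,\rho,R)$, but appealing to part (1) is cleaner), together with the routine unpacking of the definition of weak containment in part (3).
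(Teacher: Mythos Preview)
Your argument is correct and follows essentially the same route as the paper's: all three parts are read off from Theorem~\ref{T:shatt-cons}, with part (1) doing the real work and parts (2)--(3) following as you describe. Your proof of part (1) matches the paper's exactly (with the Fubini step spelled out), and your part (3) is a slight rephrasing of the same limiting argument the paper uses with a sequence $\phi_m$.

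The only notable difference is in part (2): the paper simply writes the complementary factor as a finite-alphabet shift $(\B^\G,\nu,T)$ without comment, whereas you justify this reduction via Seward's Krieger theorem. Your explicitness here is arguably an improvement, though note that Seward's theorem as stated requires ergodicity (or at least a suitable relative version), which the corollary does not assume; an alternative is to observe that the proof of Theorem~\ref{T:shatt-cons} already reduces to local maps factoring through finite partitions of the second coordinate, so one can run it directly with $(Z,\rho,R)$ represented as a countable-alphabet shift (as you note yourself at the end). Either way, the argument that $(\B^\G,\nu,T)$ inherits microstates from $(\A^\G,\mu,T)$ as a factor, and that an isomorphism cannot factor through the projection unless $\kappa$ is a point mass, matches the paper exactly.
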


\begin{proof}
    \emph{Part 1.}\quad Let the factor map in question be $\xi^\Gamma$ for some measurable map ${\xi:Y\times \tK^\G \to \A}$.  Let $\varepsilon > 0$, and let $\calO$ be the neighbourhood of $\mu$ given by Theorem~\ref{T:shatt-cons} for this $\varepsilon$.

    By approximating the level sets of $\xi$ by measurable rectangles, for any $\eta > 0$ we can choose (i) a factor map $\pi$ from $(Y,\theta,S)$ to a finite-alphabet shift system $(\B^\Gamma,\nu,T)$ and (ii) a map $\phi:\B^\Gamma\times \tK^\Gamma \to \A$ such that
\begin{equation}\label{eq:choice-of-eta}
(\theta\times \kappa^\Gamma)\{(y,z):\ \phi(\pi(y),z) \ne \xi(y,z)\} < \eta.
\end{equation}
Provided we choose $\eta$ small enough, the inequality~\eqref{eq:choice-of-eta} implies that
$$\phi^\Gamma_\ast(\nu\times \kappa^\Gamma) \in \calO.$$
By shrinking further if necessary, we may also assume that $\eta < \varepsilon$.

Fix a choice of $\eta$ with these properties, and consider the resulting maps $\pi$ and $\phi$.  Since $(Y,\theta,S)$ has microstates along $\Sigma$, so does its factor $(\B^\G,\nu,T)$.  By our choice of $\eta$ we have arranged that this factor system and the map $\phi$ satisfy~\eqref{eq:nearly-factor}.  Therefore Theorem~\ref{T:shatt-cons} tells us that they also satisfy~\eqref{eq:near-indep}.  Combined with~\eqref{eq:choice-of-eta}, this gives
    \begin{multline*}
    (\theta\times \kappa^\Gamma\times \kappa^\Gamma)\{(y,z,z'):\ \xi(y,z)\neq \xi(y,z')\} \\ \le 2\eta + (\nu\times \kappa^\Gamma\times \kappa^\Gamma)\{(y_1,z,z'):\ \phi(y_1,z)\neq \phi(y_1,z')\}
    < 2\eta + \varepsilon < 3\varepsilon.
    \end{multline*}
    Since $\varepsilon$ is arbitrary, the left-hand side here must actually equal $0$, so in fact
    \[\xi(y,z) = \xi(y,z') \quad \hbox{for}\ (\theta\times \k^\G\times \k^\G)\hbox{-a.e.}\ (y,z,z').\]
    Therefore, possibly after adjusting on a set of measure zero, $\xi$ depends on only the first coordinate in $Y\times \tK^\G$.

    \vspace{7pt}

    \emph{Part 2.}\quad If
    \[\xi^\Gamma:(Y,\theta,S)\times (\tK^\G,\k^\G,T)\to (\A^\G,\mu,T)\]
    is an isomorphism, then the system $(Y,\theta,S)$ is a factor of $(\A^\G,\mu,T)$, and so it inherits the property of having microstates along $\Sigma$.  Therefore Part 1 shows that, up to agreement a.e., $\xi$ depends on only the first coordinate in $Y\times \tK^\Gamma$.  Since $\xi^\Gamma$ is an isomorphism, this is possible only if the Bernoulli factor $(\tK^\Gamma,\kappa^\Gamma,T)$ is trivial.

    \vspace{7pt}

    \emph{Part 3.}\quad Towards a contradiction, suppose that $(\tK^\G,\k^\G,T)$ is a Bernoulli shift and that $\phi_m:\tK^\G \to \A$ is a sequence of measurable maps such that
    \begin{equation}\label{eq:weak-cont}
    \phi^\G_{m\ast}\k^\G \to \mu.
    \end{equation}
    Apply Theorem~\ref{T:shatt-cons} by inserting a trivial one-point system in the place of $(\B^\G,\nu,T)$.  The conclusion is that
    \[(\k^\G\times \k^\G)\{(z,z'):\ \phi_m(z) \ne \phi_m(z')\} \to 0.\]
    However, this implies that the distribution of $\phi_m$ is converging to $\delta_a$ for some $a\in \A$.  Combined with~\eqref{eq:weak-cont}, it follows that $\mu = \delta_{(\dots,a,a,\dots)}$.
    \end{proof}

\subsection{Shattering and Bernoulli splittings}



Suppose the shift system $(\A^\Gamma, \mu, T)$ has totally shattered microstate spaces along $\Sigma$. Note as soon as $\varepsilon < \delta/2$, if $\calO$ is the neighborhood of $\mu$ given by the definition then the relation ``$\dee^{(V_n)} < \delta$'' restricted to the microstate space $\Omega(\sigma_n, \calO)$ is an equivalence relation, so it partitions the microstate space into small-diameter, well-separated clusters.  

In this section, we give a second proof of Corollary~\ref{cor:shatt-cons}(2) and sketch a third. Both of these approaches are based on the clusters of microstates: one uses the number of clusters, which is one of the sofic homological invariants introduced in \cite{bowen2022}, and the other uses the sizes of clusters. Both can be compared to the use of the ``overlap gap property'' introduced in \cite{MR3693964} to prove an a.a.s. upper bound on the size of an independent set on a random regular graph, when the independent set is required to be constructed from a local algorithm. 

The size of clusters can be used as follows: if a direct Bernoulli factor exists, it can be shown that its entropy rate is a uniform lower bound for the exponential size of all microstate clusters, while if a system has totally shattered microstate spaces then all its clusters are of subexponential size. This implies Corollary~\ref{cor:shatt-cons}(2).

In the rest of the section, we give a proof using the number of clusters. First, we give relevant definitions.

If $\calO$ is a subset of a metric space and $x,y \in \calO$, we say $x,y$ are {\bf $\delta$-connected within $\calO$} if there is a sequence of points $z_1, \ldots, z_l \in \calO$ with $z_1 = x$, $z_l = y$, and $\dee(z_i, z_{i+1}) < \delta$ for each $i$. This defines an equivalence relation on $\calO$, which we denote $\cluster(\calO,\delta)$.

Given a map $\sigma \colon \Gamma \to \Sym(V)$, a labeling $\mb{x} \in \A^{V}$, an open set $\calO \subset \Prob(\A^\Gamma)$, and $\delta>0$, let
    \[ \cluster(\sigma, \mb{x}, \calO, \delta) = [x]_{\cluster(\Omega(\sigma,\calO),\delta)} = \{ \mb{z} \in \A^{V} \st \mb{z} \text{ is $\delta$-connected to $\mb{x}$ within } \Omega(\sigma,\calO) \} . \]
where $\Omega(\sigma,\calO) \subset \A^{V}$ has the normalized Hamming metric $\dee^{(V)}$. In particular, $\cluster(\sigma,\mb{x}, \calO, \delta) \subset \Omega(\sigma,\calO)$.

Now if in addition to the above we have some $\calO' \subset \calO$, the quotient
    \[ \bigslant{\Omega(\sigma_n, \calO')}{\cluster(\Omega(\sigma_n, \calO), \, \delta)} \]
is the set of clusters of $\calO'$-microstates that are $\delta$-connected within $\Omega(\sigma_n, \calO)$. We define
    \[ b_{0,\Sigma}(\mu) = \sup_{\calO \ni \mu} \sup_{\delta > 0} \inf_{\mu \in \calO' \subset \calO} \limsup_{n \to \infty} \frac{1}{\abs{V_n}} \log \abs*{\bigslant{\Omega(\sigma_n, \calO')}{\cluster(\Omega(\sigma_n, \calO), \, \delta)}} . \]
Informally, we first set coarseness parameters $\calO, \delta$ which divide microstate spaces into clusters. We consider the exponential growth rate of the number of these clusters which contain $\calO'$-good microstates for $\mu$. At first, it might seem more natural to consider directly the growth rate of the number of $\delta$-connected clusters within a single $\Omega(\sigma_n, \calO)$:
    \[ \limsup_{n \to \infty} \frac{1}{\abs{V_n}} \log \abs*{\bigslant{\Omega(\sigma_n, \calO)}{\cluster(\Omega(\sigma_n, \calO), \, \delta)}}, \]
then take $\delta$ to $0$ and in some sense $\calO$ to $\mu$. But there is no monotonicity in $\calO$: shrinking the neighborhood of $\mu$ removes some microstates from $\Omega(\sigma_n, \calO)$, but this can both remove some clusters and/or break a cluster into multiple pieces. Considering pairs $\calO' \subset \calO$ is one natural way around this. See also the discussion of a related definition of connected model spaces in \cite[Section~2.2]{austin2016a}.

In \cite{bowen2022}, $b_{0,\Sigma}(\mu)$ is called the $0$th Betti number of $\mu$. If $X$ is totally disconnected and $\mu \in \Prob^\Gamma(X^\Gamma)$ then $b_{0,\Sigma}(\mu)$ is a measure-conjugacy invariant \cite[Corollary~4.2]{bowen2022}.

It follows directly from the definition that $b_{0,\Sigma}(\mu) \le \h_\Sigma(\mu)$ (a similar inequality holds for higher-dimensional sofic homology theories \cite[Lemma~7.13]{bowen2022}).


\begin{lemma}
\label{lem:shatteredbetti}
    If $(\A^\Gamma, \mu)$ has totally shattered microstate spaces over $\Sigma$, then $b_{0,\Sigma}(\mu) = \h_\Sigma(\mu)$.
\end{lemma}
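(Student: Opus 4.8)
The inequality $b_{0,\Sigma}(\mu)\le\h_\Sigma(\mu)$ is already noted just before the statement, so the plan is to prove the reverse inequality $\h_\Sigma(\mu)\le b_{0,\Sigma}(\mu)$. Fix a shatter distance $\delta$ for the system along $\Sigma$. Given $\varepsilon>0$ with $2\varepsilon<\delta$, let $\calO_\varepsilon\ni\mu$ and $n_\varepsilon$ be a weak$^\ast$ neighbourhood and integer as promised by Definition~\ref{defn:shatt}. The first step records that, for $n\ge n_\varepsilon$, any two microstates of $\Omega(\sigma_n,\calO_\varepsilon)$ that are $\delta$-connected within $\Omega(\sigma_n,\calO_\varepsilon)$ are in fact within normalized Hamming distance $\varepsilon$ of each other; this is exactly the induction carried out midway through the proof of Theorem~\ref{T:shatt-cons} (connect the two points by a chain of consecutive distances $<\delta$, apply total shattering to turn each of these into a distance $<\varepsilon$, and then use the triangle inequality together with repeated appeals to total shattering and $2\varepsilon<\delta$ to see the first point is within $\varepsilon$ of every point on the chain). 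In particular every class of $\cluster(\Omega(\sigma_n,\calO_\varepsilon),\delta)$ has diameter at most $\varepsilon$.

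Second, I bound the size of such a class. A set of diameter at most $\varepsilon$ in $(\A^{V_n},\dee^{(V_n)})$ is contained in a Hamming ball of radius $\varepsilon|V_n|$, hence has cardinality at most $\mathrm{poly}(|V_n|)\cdot e^{|V_n|\kappa(\varepsilon)}$, where $\kappa(\varepsilon):=-\varepsilon\log\varepsilon-(1-\varepsilon)\log(1-\varepsilon)+\varepsilon\log|\A|$ for $\varepsilon<1/2$; note $\kappa(\varepsilon)\to0$ as $\varepsilon\to0$. Therefore, for any neighbourhood $\calO'$ of $\mu$ with $\calO'\subseteq\calO_\varepsilon$ and any $n\ge n_\varepsilon$, the space $\Omega(\sigma_n,\calO')$ is partitioned by its intersections with the $\cluster(\Omega(\sigma_n,\calO_\varepsilon),\delta)$-classes, and each such piece has size at most $\mathrm{poly}(|V_n|)\cdot e^{|V_n|\kappa(\varepsilon)}$, so
\[
|\Omega(\sigma_n,\calO')|\ \le\ \abs*{\bigslant{\Omega(\sigma_n,\calO')}{\cluster(\Omega(\sigma_n,\calO_\varepsilon),\delta)}}\cdot\mathrm{poly}(|V_n|)\cdot e^{|V_n|\kappa(\varepsilon)}.
\]

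Third, I assemble these. Taking $\frac1{|V_n|}\log(\cdot)$ and $\limsup_{n\to\infty}$, and using $\h_\Sigma(\mu)\le\limsup_{n}\frac1{|V_n|}\log|\Omega(\sigma_n,\calO')|$ (valid for every neighbourhood $\calO'$ of $\mu$, since $\h_\Sigma$ is the infimum over such neighbourhoods), I obtain
\[
\h_\Sigma(\mu)\ \le\ \limsup_{n\to\infty}\frac1{|V_n|}\log\abs*{\bigslant{\Omega(\sigma_n,\calO')}{\cluster(\Omega(\sigma_n,\calO_\varepsilon),\delta)}}\ +\ \kappa(\varepsilon)
\]
for every $\calO'$ with $\mu\in\calO'\subseteq\calO_\varepsilon$. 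Taking the infimum over such $\calO'$ and then specializing the outer suprema in the definition of $b_{0,\Sigma}(\mu)$ to $\calO=\calO_\varepsilon$ and to the shatter distance $\delta$, this gives $\h_\Sigma(\mu)\le b_{0,\Sigma}(\mu)+\kappa(\varepsilon)$. Letting $\varepsilon\to0$ yields $\h_\Sigma(\mu)\le b_{0,\Sigma}(\mu)$, completing the proof. (Since the system has microstates along $\Sigma$, all of the model spaces here are eventually nonempty, so none of these quantities is $-\infty$ and the manipulations are legitimate.)

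The ingredients — the connectivity/diameter argument and the Hamming-ball count — are routine, the first being essentially copied from the proof of Theorem~\ref{T:shatt-cons}. The only point demanding a little care is matching the nested $\sup$–$\sup$–$\inf$ in the definition of $b_{0,\Sigma}(\mu)$ against the data supplied by total shattering: one must cluster inside the larger space $\Omega(\sigma_n,\calO_\varepsilon)$ while counting representatives in the smaller spaces $\Omega(\sigma_n,\calO')$, which is precisely the reason the definition of $b_{0,\Sigma}$ is phrased using pairs $\calO'\subseteq\calO$. I do not expect any genuine obstacle here.
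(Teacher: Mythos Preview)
Your proof is correct and follows essentially the same route as the paper's: bound each cluster by a small Hamming ball using total shattering, then count clusters from below by dividing the microstate count by the maximal cluster size, and finally match this against the $\sup$--$\sup$--$\inf$ structure of $b_{0,\Sigma}$. The only cosmetic difference is that the paper takes the clustering parameter to be $\varepsilon$ rather than the shatter distance $\delta$; since any two points in $\Omega(\sigma_n,\calO_\varepsilon)$ at distance $<\delta$ are already at distance $<\varepsilon$, the $\varepsilon$- and $\delta$-clusters coincide and both choices lead to the same bound.
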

\begin{proof}
    Recall that in general $b_{0,\Sigma}(\mu) \leq \h_\Sigma(\mu)$, so we only have to prove that having totally shattered microstate spaces implies the reverse inequality.

    Let $\delta>0$ be as in the definition of totally shattered microstate spaces. Given $\varepsilon< \delta/2$, there exists a neighborhood $U$ of $\mu$ such that for all large $n$, every $\mb{x},\mb{y} \in \Omega(\sigma_n, U)$ have $\dee^{(V_n)}(\mb{x},\mb{y}) \in [0,\varepsilon) \cup [\delta, \infty)$. In particular, for every $\mb{x} \in \Omega(\sigma_n, U)$
        \[ \cluster(\sigma_n, \mb{x}, U, \varepsilon) \subseteq \ball{\varepsilon}{\mb{x}}, \]
    where $\ball{\varepsilon}{\mb{x}}$ is the radius-$\varepsilon$ ball around $\mb{x}$. Thus
         \[ \abs{\cluster(\sigma_n, \mb{x}, U, \varepsilon)} \leq \abs{\ball{\varepsilon}{\mb{x}}} \le \exp\big( \abs{V_n} (\shent(\varepsilon) + \varepsilon \log \abs{\A} + o(1) )\big) . \]
     
    This leads to a lower bound on the number of clusters for all $\calO' \subset U$:
        \[ \abs*{\bigslant{\Omega(\sigma_n, \calO')}{\cluster(\Omega(\sigma_n, U), \, \varepsilon)}} \geq \abs{\Omega(\sigma_n, \calO')} \exp\big( -\abs{V_n} (\shent(\varepsilon) + \varepsilon \log \abs{\A} + o(1))\big) . \]
    Hence
        \[ \inf_{\mu \in \calO' \subset U} \limsup_{n \to \infty} \frac{1}{\abs{V_n}} \log \abs*{\bigslant{\Omega(\sigma_n, \calO')}{\cluster(\Omega(\sigma_n, U), \, \varepsilon)}} \geq \h_\Sigma(\mu) - \big( \shent(\varepsilon) + \varepsilon \log\abs{\A} \big) , \]
    and taking the supremum over $\varepsilon>0$ and $U \ni \mu$ completes the proof.
\end{proof}

\begin{proof}[Alternate proof of Cor.~\ref{cor:shatt-cons}(2)]
To obtain a contradiction suppose that $(\A^\Gamma,\mu,T)$ is measurably conjugate to the direct product of a nontrivial Bernoulli shift $(\tK^\Gamma,\k^\Gamma,T)$ with another shift system $(\B^\Gamma,\nu,S)$ where $\B$ is a compact metrizable space. We may assume $\B$ is totally disconnected without loss of generality because any dynamical system is measurably conjugate to a system of this form; this assumption is used in results of \cite{bowen2022} cited below.

Sofic entropy is additive under taking direct products with a Bernoulli shift \cite[Theorem 8.1]{bowen-jams-2010}, so
\begin{eqnarray}\label{E:shattered}
\h_\Sigma(\A^\Gamma,\mu,T) = \shent(\tK,\kappa) + \h_\Sigma(\B^\Gamma,\nu,S) > \h_\Sigma(\B^\Gamma,\nu,S).
\end{eqnarray}

Theorem 7.8 of \cite{bowen2022} implies that the $0$-dimensional sofic homology theories of $(\A^\Gamma,\mu,T)$ and $(\B^\Gamma,\nu,S)$ are equivalent. In particular, this implies that the exponential rate of growth of the number of clusters in the microstate spaces of the two actions are the same. In the notation of \cite{bowen2022}, this means
$$b_{0,\Sigma}(\A^\Gamma,\mu,T) = b_{0,\Sigma}(\B^\Gamma,\nu,S)\le \h_\Sigma(\B^\Gamma,\nu,S)$$
where the last inequality holds by \cite[Lemma 7.13]{bowen2022}.

Because $(\A^\Gamma,\mu,T)$ has totally shattered microstate spaces, by Lemma~\ref{lem:shatteredbetti} we have $b_{0,\Sigma}(\A^\Gamma,\mu,T) = \h_\Sigma(\A^\Gamma,\mu,T)$. Combined with the previous inequality this implies
$$\h_\Sigma(\A^\Gamma,\mu,T) = b_{0,\Sigma}(\A^\Gamma,\mu,T) \le \h_\Sigma(\B^\Gamma,\nu,S)$$
which contradicts (\ref{E:shattered}). This contradiction finishes the proof.
\end{proof}

\part{Parity check subshifts}\label{P2}

In this part, we fix natural numbers $d,k$ with $k > d \geq 3$ and let $\Gamma=\Gamma_{d,k}$ be the $d$-fold free product of order-$k$ cyclic groups:
\[\Gamma := \langle s_1,\dots,s_d:\ s_1^k = \dots = s_d^k = e\rangle = \underbrace{\ZZ_k\ast \cdots \ast \ZZ_k}_{d}.\]
Let $X \le \ZZ_2^\Gamma$ be the closed subgroup defined by
\[X = \left\{x \in \ZZ_2^\Gamma:\ \sum_{j=0}^{k-1}x_{gs_i^j} = 0 \ \forall g\in \Gamma,\ i = 1,\dots,d\right\},\]
and let $\mu = m_X$ be the Haar probability measure on $X$.

If $F$ is any subset of $\G$, let $X_F$ be the image of $X$ under the coordinate projection map $\ZZ_2^\Gamma \to \ZZ_2^F$, and let $m_F$ be the pushforward of $m_X$ under this projection.  We mostly use these notations when $F$ is $B_r$, the ball of radius $r$ centered at the identity in the Cayley graph of $\Gamma$, where we use $\{s_i^j:~ 1\le i \le d, 1\le j \le k-1\}$ for the generating set.

\section{LDPC codes and measures on microstate spaces}\label{sec:LDPC}

\subsection{The use of LDPC codes}\label{S:outline}

Our proofs of the main theorems are considerably simplified by using the special structure of the system as a subgroup of $\ZZ_2^\Gamma$.  We largely do so via the corresponding finitary codes constructed over the sofic approximations.  Given a $k$-uniform homomorphism $\sigma:\Gamma_{d,k}\to \Sym(V)$, let
\[X_\sigma := \left\{\mathbf{x} \in \ZZ_2^V:\ \sum_{j=0}^{k-1}x_{\sigma_i^j(v)} = 0\ \forall v\in V,\ i=1,\dots,d\right\}.\]
Let $\mu_\sigma$ be the uniform distribution on $X_\sigma$.

These are the obvious finitary analogs of $X$ and $m_X$ themselves.  It turns out that these finitary constructs can be used as better and better approximations to the infinitary system and measure, and their linear structure makes them easier to analyze than the `looser' sets $\Omega(\sigma,U)$ that are used to define sofic entropy.

In fact, sets such as $X_\sigma$ are classical objects in coding theory.  They are linear codes over the field $\ZZ_2$, each defined by a collection of parity-check constraints.  Since each of those parity-checks involves only $k$ vertices, and $k$ is fixed as $n$ grows, these are examples of low-density parity-check (`LDPC') codes.  Such codes were introduced in Gallager's PhD thesis~\cite{gallager1962,gallager1963}.  After many years of relative neglect, they were re-discovered independently by MacKay in the late 1990s~\cite{mackay1999}, and they are now a textbook family of codes with desirable properties.  A good basic reference for their theory is~\cite[Chapter 47]{mackay2003}, and a more dedicated treatment is~\cite{richardson2008}.  In fact, the essence of the parity-check subshift $X$ itself already appears in those sources too, playing the role of an `idealized limit code' on which to investigate the performance of local decoding algorithms: see, for instance,~\cite[Figure 47.11]{mackay2003} and the discussion around it.

\subsection{Outline of the rest of the paper}

Our use of the measures $\mu_\sigma$ to analyze the system $(X,m_X,T)$ rests on the following main results.  
As before, we confine the index $n$ to multiples of $k$. 
Abbreviate $X_{\sigma_n}$ to $X_n$ and $\mu_{\sigma_n}$ to $\mu_n$. 






Recall that $\PP_n$ is the uniform probability measure on $\Hom_{\unif}(\Gamma, \Sym(n))$ which is the set of $k$-uniform homomorphisms $\s:\Gamma \to \Sym(n)$. 

\begin{theorem}\label{thm:umm}
    There are subsets
    \[\Omega'_n \subseteq \Hom_{\mathrm{unif}}(\Gamma,\Sym(V_n))\]
    such that $\PP_n(\Omega'_n) \to 1$ and the following holds.  If $\sigma_n\in \Omega_n'$ for each $n$, then 
    \begin{enumerate}
        \item $(\mu_n)_n$ has property M;
        \item $(\mu_n)_n$ converges locally and empirically to $m_X$;
        \item $(X,m_X,T)$ has totally shattered microstate spaces along $\Sigma=(\sigma_n)_n$.
    \end{enumerate}
\end{theorem}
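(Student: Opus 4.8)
The plan is to take $\Omega_n'$ to be the intersection of three events that hold with $\PP_n$-probability tending to $1$, and then to read off the three conclusions.  The events are: (i) $\sigma_n\in\Omega_n^{\mathrm{sofic}}$, which is Proposition~\ref{prop:probably-sofic}; (ii) $G(\sigma_n)$ is locally tree-like at a slowly growing radius $R_n\to\infty$, i.e.\ all but a vanishing fraction of $v\in V_n$ have their radius-$R_n$ ball in $G(\sigma_n)$ isomorphic, as a labelled hyper-graph, to the corresponding ball in the Cayley hyper-tree of $\Gamma$ (for each fixed radius this is a routine first-moment estimate — reveal the $k$-cycles of $\sigma_n(s_1),\dots,\sigma_n(s_d)$ through $v$ one at a time, so collisions cost $O(1/n)$ per step — and a diagonal choice of $R_n$ preserves probability $\to1$); and (iii) the Tanner graph of the linear code $X_n$ (variable nodes $V_n$, one check node for each $s_i$-orbit, hence left-degree $d$ and right-degree $k$) is a unique-neighbour expander for all variable sets of size at most $\gamma_0 n$, for a fixed $\gamma_0>0$ — the standard expansion property of the random $(d,k)$-biregular LDPC ensemble, and the one place where the hypothesis $d\ge 3$ is essential.

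For part (2) I would first observe that $m_X$ is mixing, hence ergodic: characters of $X$ are restrictions of the characters $\chi_F$ of $\ZZ_2^\Gamma$ ($F\subset\Gamma$ finite), $\int_X\chi_F\,dm_X$ is $1$ or $0$ according to whether $F$ lies in the $\ZZ_2$-span of the parity checks, and distinct $\Gamma$-translates of a finite set eventually become disjoint, so every nonzero element of the dual of $X$ has infinite orbit.  By Lemma~\ref{lem:lepergodic} it then suffices to prove local convergence of $\mu_n$ to $m_X$ in probability, and for this I would exploit that $X_n$ is \emph{linear}: for $v\in V_n$, after transport along $g\mapsto\sigma_n(g)v$, the marginal $\Loc{\mu_n}{v}_{B_r}$ is the uniform measure on the projection $\pi_W(X_n)$ onto the block $W=\sigma_n^{B_r}(v)$, and if the radius-$R$ ball around $v$ (for $R$ large in terms of $r$) is tree-like, then every dual codeword of $X_n$ supported inside that ball is a $\ZZ_2$-combination of the checks it contains — a tree carries no nontrivial ``even-degree'' sub-hyper-graph — so $\pi_W(X_n)$ is exactly the free local code and $\Loc{\mu_n}{v}_{B_r}=m_{B_r}$ on the nose.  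Hence on $\Omega_n'$ the local marginals are correct for all but a vanishing fraction of $v$, giving local convergence, and Lemma~\ref{lem:lepergodic} upgrades this to local and empirical convergence.

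For part (1), fix $\varepsilon>0$ and $r$; choose $R=R(r)$ as above, a constant $\ell\gg R+r$, and let $S_n$ be a maximal $\ell$-separated subset of the positive-density set of vertices whose radius-$\ell$ ball is tree-like, so that $\liminf|S_n|/|V_n|\ge|B_\ell|^{-1}>0$ — a positive constant depending on $r,\varepsilon$, which is all Definition~\ref{dfn:M} asks for.  The blocks $W_s:=\sigma_n^{B_r}(s)$, $s\in S_n$, are pairwise disjoint and at pairwise distance $\ge\ell-2r$, and running the linear-algebra/tree-likeness argument of part (2) on the disconnected region $W=\bigsqcup_s W_s$ shows that a dual codeword supported in $W$ that couples two distinct blocks would force a bounded-length cycle into its supporting hyper-graph; since $G(\sigma_n)$ has only $o(n)$ vertices within bounded distance of any short cycle, discarding those $o(|S_n|)$ bad vertices leaves $S_n'\subseteq S_n$ of the same density with $\pi_W(X_n)=\bigoplus_{s\in S_n'}\pi_{W_s}(X_n)$ and each $\pi_{W_s}(X_n)$ the free local code, whence $\shent\big((\mu_n)_{\sigma_n^{B_r}(S_n')}\big)=|S_n'|\shent(m_{B_r})\ge|S_n'|(\shent(m_{B_r})-\varepsilon)$, i.e.\ Property~M.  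I expect this part to be the main obstacle: Property~M demands a genuine \emph{lower} bound on the joint entropy of many blocks, which forces one to rule out long-range correlations in the random LDPC code and to manage simultaneously the geometry of a positive-density, well-separated family of tree-like neighbourhoods and the bounded but nonzero collection of short cyclic structures that spoil exact decomposition — checking that the spoiled blocks form only a vanishing fraction is the technical heart of the proof.

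For part (3), microstates exist along $\Sigma$ because empirical convergence from part (2) makes $\Omega(\sigma_n,\calO)$ nonempty for all large $n$, for every neighbourhood $\calO$ of $m_X$.  For the shattering alternative in Definition~\ref{defn:shatt}, given $\varepsilon>0$ I would choose $\calO$ small enough that every $\mathbf{x}\in\Omega(\sigma_n,\calO)$ violates at most $\beta n$ parity checks, with $\beta\to0$ as $\calO$ shrinks to $m_X$ — ``the $s_i$-check at $v$ is violated'' is a clopen cylinder event evaluated at $\Pi^{\sigma_n}_v(\mathbf{x})$, to which $m_X$ assigns mass $0$.  Then for $\mathbf{x},\mathbf{y}\in\Omega(\sigma_n,\calO)$ the sum $\mathbf{z}:=\mathbf{x}+\mathbf{y}$ violates at most $2\beta n$ checks, and unique-neighbour expansion (iii) forces that if $|\mathrm{supp}(\mathbf{z})|\le\gamma_0 n$ then $\mathbf{z}$ violates at least $c_0|\mathrm{supp}(\mathbf{z})|$ checks, so $|\mathrm{supp}(\mathbf{z})|\le(2\beta/c_0)n$; hence $\dee^{(V_n)}(\mathbf{x},\mathbf{y})\in[0,\,2\beta/c_0]\cup(\gamma_0,\,1]$, and taking the shatter distance $\delta:=\gamma_0$ and $\calO$ with $2\beta/c_0<\varepsilon$ completes the verification.
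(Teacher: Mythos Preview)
Your plan correctly identifies the three ingredients, and your treatment of part~(3) via unique-neighbour expansion of the Tanner graph is a legitimate alternative to the paper's Kikuchi-entropy computation of $\codegrowth(t)$ (Proposition~\ref{prop:lowdensitygrowthrate}): both yield that any approximate codeword of weight at most $\gamma_0 n$ must in fact have weight $O(\beta n)$, which is exactly the shattering dichotomy.  The paper's route extracts the precise exponent, but for Definition~\ref{defn:shatt} your coarser argument suffices, provided you verify expansion for the specific model $\tilde{\PP}_n$ rather than the uniform biregular model (these are not contiguous; see Appendix~\ref{sec:non-contig}).

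The gap is in parts~(1) and~(2).  You claim that if the radius-$R$ neighbourhood of $v$ is tree-like then every element of $X_n^\perp$ supported inside it is a $\ZZ_2$-combination of the checks it contains, and hence $\pi_{B_r(v)}(X_n)=X_{B_r}$.  But the justification ``a tree carries no nontrivial even-degree sub-hyper-graph'' shows only that the checks \emph{inside} the tree-like ball are linearly independent; it says nothing about combinations $\mathbf{H}^{\mathsf T}\mathbf{y}$ where $\mathbf{y}$ has support on checks \emph{outside} $B_R(v)$ and cancels there for global reasons.  Such long-range dual codewords can land inside $B_r(v)$ without forcing any short cycle, and they are exactly what must be controlled to pin down the local marginal.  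The same issue recurs, more seriously, in your claimed decomposition $\pi_W(X_n)=\bigoplus_s\pi_{W_s}(X_n)$ for part~(1): a dual codeword supported in $W_{s_1}\cup W_{s_2}$ need not create any bounded-length cycle, since the witnessing $\mathbf{y}$ may wander through the whole graph.  Indeed, the remark following the paper's proof of Theorem~\ref{thm:propertyM} says one should \emph{expect} a small linear number of such spurious extra checks.

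This obstacle is precisely what Proposition~\ref{prop:few-extra-cancellations} is for: a first-moment bound, \emph{after conditioning} on the partial factor graph seen from a set $R$ of density~$\delta$, showing that the remaining random checks almost surely produce very few additional elements of $X_n^\perp$ supported in $\Ver_r(H;R)$.  From this the paper obtains both property~M (Theorem~\ref{thm:propertyM}) and local convergence (Lemma~\ref{lem:mostlyproper}), in that order --- so part~(2) is deduced \emph{from} the machinery built for part~(1), not independently of it.  Your instinct that part~(1) is the technical heart is right, but local tree-likeness and variable-side expansion alone do not supply it.
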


Our proof of Theorem~\ref{thm:umm} relies on the linear structure of the codes $X_\sigma$. 

In the next few sections we introduce notation to formulate Proposition \ref{prop:few-extra-cancellations} which, roughly speaking, rules out near-cancellations among parity-check words of a typical $\s_n \sim \PP_n$. Section \S \ref{subs:cancel-to-umm} proves Theorem~\ref{thm:umm}(1) from Proposition \ref{prop:few-extra-cancellations}. The rest of \S \ref{sec:propM} proves Proposition \ref{prop:few-extra-cancellations}. 

Item (2) of Theorem~\ref{thm:umm} is proven in \S \ref{S:lde}. Its proof relies on item (1). Item (3) is proven in \S \ref{S:mainthm2pf}. Its proof does not refer to items (1) or (2). Theorem~\ref{mainthm2} is an immediate consequence of item (3) and Corollary~\ref{cor:shatt-cons}(2).

Theorem \ref{mainthm3} is proven in \S \ref{sec:mainthm3pf}. Part (b) of that theorem follows readily from items (1) and (2) of Theorem~\ref{thm:umm} together with Theorem \ref{T:wmmcpe}. Part (a) (which computes the sofic entropy value) uses item (2) and the Bethe-Kikuchi entropy theory of \S \ref{sec:B-K}.

\subsection{Random factor graphs}\label{subs:factor-graphs}

In this subsection, fix a size $n$ that is divisible by $k$ and suppress it from the notation: thus, for instance, $\PP$ stands for $\PP_n$.  Most of our work towards Theorem~\ref{thm:umm} consists of estimates of various probabilities under $\PP$.

Several of these estimates involve a sum or union bound over possible subfamilies of the set of all hyper-edges of the form~\eqref{eq:hyper-edge}.  We need to be able to move the sum outside an expectation, and for this purpose the sum must be over a range which is fixed, not random.  For this reason, it is convenient to augment the information in $\sigma_n$ with a labeling of the family of hyper-edges~\eqref{eq:hyper-edge} by a fixed index set.

Let $E_1$, \dots, $E_d$ be disjoint sets, each of size $n/k$, and let $E := E_1\cup\cdots\cup E_d$.  Taking some terminology from coding theory, we refer to the elements of $E$ as \textbf{check nodes}: this is explained further in Subsection~\ref{subs:matrices} below.  Let $\sigma \in \Hom_{\mathrm{unif}}(\Gamma,\Sym(V))$ with $|V|=n$.  Fix $i \in [d]$, and consider that $V$ is partitioned into the orbits of the generator $\sigma(s_i)$. Each orbit corresponds to a hyper-edge as in \eqref{eq:hyper-edge}. Since each hyper-edge has size $k$, there are $n/k$ of them, and so there exists a bijection between $E_i$ and this family of hyper-edges.  Let us choose such a bijection uniformly and independently at randomly for each $i$, and record the result as a subset $H\subseteq E\times V$: a pair $(e,v) \in E_i\times V$ lies in $H$ if $e$ is attached by the $i^{\mathrm{th}}$ bijection to the hyper-edge that contains $v$.

We regard $H$ as a bipartite graph on the disjoint union of $E$ and $V$.  As such, each check node in $E$ has exactly $k$ neighbours in $V$, and each vertex in $V$ has exactly one neighbour in each of the subsets $E_i$ (and thus $d$ neighbours in total).  It follows that each intersection $H\cap (E_i\times V)$ is equivalent to a partition of $V$ into parts labelled by $E_i$.  In the sequel, we borrow some more terminology from coding theory and refer to any such bipartite graph $H$ on $E$ and $V$ as a \textbf{factor graph}: see, for instance,~\cite[Sections 26.1 and 47.2]{mackay2003}.  Beware that this is actually a slight deviation from standard usage, which would not insist that $H$ be a union of the partitions $H\cap (E_i\times V)$, but here we do take this as part of the definition of a `factor graph'.

More generally, if $F \subseteq E$, then a \textbf{partial factor graph} on $F$ and $V$ is a bipartite graph $M \subseteq F\times V$ such that every check node in $F$ has precisely $k$ neighbours in $V$ and every vertex in $V$ is joined to at most one check node in each intersection $F\cap E_i$. Equivalently, there exists a factor graph $H$ such that $M = H \cap (F\times V)$. In particular, if $F = E_i$, then a partial factor graph is simply a partition of $V$ into $k$-sets that are labelled by $E_i$.

If $H$ is a factor graph on $E$ and $V$ and $F\subseteq E$, then the \textbf{vertex neighbourhood} of $F$ is the set
\[\Ver(H;F) := \{v \in V:\ (e,v) \in H\ \hbox{for some}\ e \in F\}.\]
We also use the notation $\Ver(M;F) $ for a partial factor graph $M$ in the same way.

Similarly, the \textbf{check-node neighbourhood} of $U\subseteq V$ is the set
\[\Chec(H;U) := \{e \in E:\ (e,v) \in H\ \hbox{for some}\ v \in V\}.\]

We may iterate these definitions to define neighbourhoods with larger radii.  In general, for $F \subseteq E$ or $U\subseteq V$ we define
\begin{eqnarray*}
\Ver_1(H;F) &:=& \Ver(H;F),\\
\Chec_1(H;U) &:=& \Chec(H;U),\\
\Chec_1(H;F) &:=& \Chec_1\big(\Ver_1(H;F)\big) \\ 
\hbox{and} \quad \Ver_1(H;U) &:=& \Ver_1\big(\Chec_1(H;U)\big).
\end{eqnarray*}
Then for integers $r > 1$, we make the recursive definitions
\[\Ver_r(H;F) := \Ver_1(H;\big( \Ver_{r-1}(H;F)\big),\]
and the same with $U$ in place of $F$ or $\Chec$ in place of $\Ver$.  In graph theoretic terms, if $F \subseteq E$, then $\Ver_r(H;F)$ is the set of all vertices whose graph distance in $H$ is at most $2r-1$ from $F$, $\Chec_r(H;F)$ is the set of all check nodes whose graph distance in $H$ is at most $2r$ from $F$, and similarly for subsets of $V$.

These neighbourhoods are compatible with the Cayley graph neighbourhoods $B_r = \ball{e}{r}$ induced by the word metric on $\Gamma$ (with respect to the generating set $\{s_i^j:~ 1 \le i \le d, 1\le j \le k-1\})$. Specifically, if $H$ arises from $\sigma$ through the construction above, and $U\subseteq V$, then
\[\sigma^{B_r}(U) := \{\sigma^g(u):\ g\in B_r,\ u \in U\} =  \Ver_r(H;U).\]

Now consider $\sigma \sim \PP$ and generate $H$ from it as described above.  Then $H$ results from two random steps: the choice of $\sigma$, and then the choice of a bijection for each $i$.  Let $\tilde{\PP}$ be the joint distribution of $(\sigma,H)$ after this construction.  This is a probability measure on
\[\tilde{\Omega} := \Hom_{\mathrm{unif}}(\Gamma,\Sym(V)) \times \{0,1\}^{E\times V}.\]
It is a coupling of $\PP$ to the law of $H$ described above, and it has the following simple properties:
\begin{itemize}
    \item Given $\sigma$, the conditional distribution of $H$ is uniform over $((n/k)!)^d$ choices of bijections.
    \item Given $H$, the conditional distribution of $\sigma$ is uniform over all choices of cyclic orderings for each hyper-edge of the multi-hyper-graph: there are ${((k-1)!)^{dn/k}}$ such choices for any $H$.
    \item Under the marginal distribution of $H$, the intersections $H\cap (E_i\times V)$ are independent as $i$ varies.
\end{itemize}

For $(\sigma,H)$ drawn from $\tilde{\PP}$, the underlying multi-hyper-graph may be read off from either coordinate: it is the multi-set of all $\sigma(s_i)$ orbits for $i\in [d]$, and it is also the collection of all vertex neighbourhoods of the check nodes according to $H$.  The labeling of these hyper-edges by the fixed set $E$ that is given by $H$ is convenient for union bounds and other forms of counting.  Most of our probabilistic estimates concerning these hyper-graphs later refer to $\tilde{\PP}$ rather than $\PP.$

\begin{remark}
The random hyper-graphs on $V$ that arise from $\sigma \sim \PP$ are $k$-uniform and $d$-regular.

Models of such random hyper-graphs already have an established place in the literature on probabilistic combinatorics: see, for instance,~\cite[Subsection 3.5]{wormald1999} and the references given there.  However, most of that literature is dedicated to a \emph{uniform} random choice of such a hyper-graph, and this is not the same as the distribution that results from a random choice of $\sigma$.  The point is that if the hyper-graph is generated by a homomorphism $\sigma$, then its hyper-edges can be classified according to which generator $s_i$ gave rise to them.  For each single $i$, the corresponding hyper-edges form a partition of $V$ into $k$ sets.  In general, a $k$-uniform $d$-regular hyper-graph need not be a union of partitions.

In case $k=2$, the difference here is between a uniformly random $d$-regular graph and the sum of $d$ independent random matchings.  In this case the difference has been studied in some depth, with the outcome that these models are `contiguous': see, for instance,~\cite{greenhilljansonkimwormald2002} or\cite[Subsection 4.3]{wormald1999}.   The relation of contiguity is strong enough to allow us to transfer most phenomena of interest from one model to the other.

However, for $k > d \ge 3$, it turns out that contiguity fails.  While we have not found a reference for this fact in the literature, it follows fairly easily from some other standard results, so we explain this in Appendix~\ref{sec:non-contig}.

For uniformly random $k$-uniform $d$-regular hyper-graphs, estimates on the typical behaviour of the resulting LDPC codes are widely available in the coding theory literature.  For example, the typical rate of the resulting LDPC code is known very exactly from~\cite{millercohen2003} (we cite this in our proof of non-contiguity in Appendix~\ref{sec:non-contig}).  These known results would include most of the facts we need here, were it not for the difference in the underlying random hyper-graph model.  However, we have not found the analogous estimates for our distributions $\PP$, so we must develop them here from scratch as necessary.  Nevertheless, the conclusions generally look the same as in those previous works, and we have been guided by those throughout.
\end{remark}

\subsection{Parity-check matrices}\label{subs:matrices}

To prove the desired properties of the random measures $\mu_{\sigma_n}$, we make careful use of the linear structure of the codes $X_n$.  As is standard in coding theory, this structure is conveniently summarized by a parity-check matrix.

To introduce this point of view, we start with some more notation.  If a ground set $A$ is understood and $B \subseteq A$, then $\mathbf{e}_B$ denotes the mod-$2$ indicator function of $B$: this is the element of $\ZZ_2^A$ with entries equal to $\mathtt{1}$ precisely at the indices in $B$.

Now consider a vertex set $V$ of size $n$ which is a multiple of $k$, and let $E = E_1\cup \cdots \cup E_d$ as in Subsection~\ref{subs:rndm-sofic-approx}.  Let $H$ be a factor graph on $E$ and $V$, and turn it into the $(E\times V)$-matrix $\mathbf{H} = \mathbf{e}_H$, which also takes values in $\ZZ_2$.  If this generates the same hyper-graph as $\sigma \in \Hom_{\unif}(\Gamma, \Sym(V))$, then our code is given by
\[X_\sigma = \{\mathbf{x} \in \ZZ_2^V:\ \mathbf{H}\mathbf{x} = \boldsymbol{0}\} = \ker \mathbf{H}.\]
In this role $\mathbf{H}$ is called the \textbf{parity-check matrix} (for an introduction, see, for instance,~\cite[Chapter 9]{cameronvanlint1991},~\cite[Section 7.11]{coverthomas2006}, or~\cite[Chapter 1]{mackay2003}, especially the discussion of Exercise 1.9).  This is why we refer to the elements of $E$ as `check nodes': each corresponds to a row of $\mathbf{H}$, which every codeword $\mb{x}$ must be orthogonal to.  The representation of a code using a set of connections between vertices and check nodes is a common example of a `factor graph representation'~\cite[Section 47.2]{mackay2003}, hence our use of the term `factor graph' for $H$.

Because $X_\sigma$ is a linear subspace of $\ZZ_2^V$, it may equivalently be specified via its dual code
$$X_\sigma^\perp := \{\mathbf{y} \in \ZZ_2^V:\ \langle\mathbf{y}, \mathbf{x}\rangle = 0 \pmod{2} \ \forall \mathbf{x} \in X_\sigma \}.$$
Below, we refer to elements of $X_\sigma^\perp$ as \textbf{parity checks}.
By the construction of $X_\sigma$, this $X_\sigma^\perp$ is precisely the linear subspace of $\ZZ_2^V$ spanned by the rows of $\mathbf{H}$: that is, by the vectors $\mathbf{e}_{\Ver(H;\{e\})}$ for $e \in E$. More succinctly,
 \[X_\sigma^\perp = \mathrm{img}\,(\mathbf{H}^\mathsf{T}).\]

Theorem~\ref{thm:umm} (1,2) are proved by counting relations or `near relations' among the rows of $\mathbf{H}$.  The second of these theorems requires the more complicated calculation.  It is based on Proposition~\ref{prop:few-extra-cancellations} below.  With that proposition in hand, we can also prove Theorem~\ref{thm:umm}(2); see Section~\ref{S:lde}.

\section{Property M} 
\label{sec:propM}

In this section we again suppress the subscript $n$ from our notation, as in Subsection~\ref{subs:factor-graphs}.  As before this index will tend to $\infty$ along multiples of $k$.

The main part of the proof of Theorem~\ref{thm:umm} parts (1) and (2) is another, more technical proposition.  Put roughly, it rules out most `near cancellations' among the parity-check words of a typical $(\sigma,H)$ drawn at random.  However, for the application to Theorem~\ref{thm:umm}, we need such a result not only when $(\sigma,H) \sim \tilde{\PP}$, but also after conditioning $(\sigma,H)$ on a small fraction of the vertices and check nodes.

We formulate this technical proposition next. For each $i \in [d]$ let $F_i \subseteq E_i$ and let $W_i := \Ver(H;F_i)$.  Let $w_i:= |W_i| = k|F_i|$.  Let $F:= F_1\cup\cdots\cup F_d$, $W := W_1 \cup \cdots \cup W_d$, and $w:= |W|$; see Figure~\ref{fig:factorgraphs}. Then
\[\max\{w_1,\dots,w_d\} \le w \le w_1 + \cdots + w_d.\]

\begin{figure}
\begin{center}
	\includegraphics{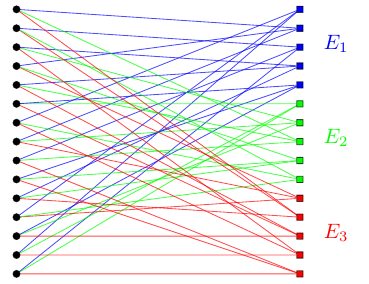}
    \ 
	\includegraphics{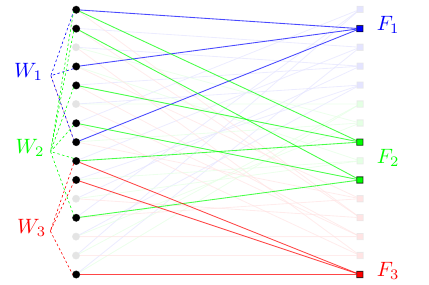}
	\caption{Diagrams showing full factor graph $H$ (left) and partial factor graph $M$ (right). The square vertices on the right of each graph are the check nodes, colored according to their membership in the sets $E_1, E_2, E_3$. The distribution $\tilde{\PP}^M$ draws a new pair $(\sigma, H)$ conditioned on the edges on the right being present.}
 \label{fig:factorgraphs}
 \end{center}
\end{figure}

Let $M$ be a partial factor graph on $F$ and $V$, and let $\tilde{\PP}^M$ be the distribution obtained by conditioning $\tilde{\PP}$ on the event that $H\cap (F\times V) = M$.  The key to Theorem~\ref{thm:umm} is that, if the sets $F_i$ are small enough, then after this conditioning, the rest of $H$ is very unlikely to create many new parity checks that involve only vertices inside $W$. Roughly speaking, this means that if ${\bf x} \in \ZZ^W_2$ satisfies all of the parity checks arising from $F$ then, with high probability, it admits an extension satisfying all of the parity checks.

Here and in the rest of the paper, if $F\subseteq E$ and $U\subseteq V$, then $\mathbf{H}_{F\times U}$ denotes the submatrix of $\mathbf{H}$ indexed by these sets, and we use analogous notation for the transpose $\mathbf{H}^{\mathsf{T}}$ and for vectors indexed by either $E$ or $V$.  We identify $\ZZ_2^{E\setminus F}$ as a subspace of $\ZZ_2^E$ in the obvious way, and so any $\mathbf{y} \in \ZZ_2^{E\setminus F}$ may be written as a tuple $(\mathbf{y}_1,\dots,\mathbf{y}_d)^{\mathsf{T}}$ with $\mathbf{y}_i \in \ZZ_2^{E_i\setminus F_i}$.

\begin{prop}[`Few additional checks inside $W$']\label{prop:few-extra-cancellations}
For every $K > 0$ and $\varepsilon > 0$ the following holds for all sufficiently small $\delta$ (depending on $d$, $k$, $\varepsilon$ and $K$).  Let $F_i$, $W_i$, $w_i$ and $M$ be as above.  If
\begin{equation}\label{eq:small-enough}
\delta n \le w_1 + \dots + w_d \le K \delta n,
\end{equation}
then 
\begin{multline*}
\tilde{\PP}^M\Big(\exists \mathbf{y} \in \ZZ_2^{E\setminus F}\ \hbox{such that}\ \min\{|\mathbf{y}_i|,|\mathbf{e}_{E_i\setminus F_i} - \mathbf{y}_i|\} \ge \varepsilon\delta\frac{n}{k}\ \hbox{for some $i$}\\ \hbox{ and}\ (\mathbf{H}^\mathsf{T}\mathbf{y})_{V\setminus W} = \boldsymbol{0}\Big) \to 0
\end{multline*}
as $n\to\infty$. The rate of convergence depends only on $d$, $k$ and $\delta$, and is independent of $\delta$ provided $\delta$ is small enough and also bounded away from zero.
\end{prop}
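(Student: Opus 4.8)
The plan is to bound the first moment of the number of ``bad'' vectors $\mathbf{y}$ and then apply Markov's inequality. Write $W_i=\Ver(H;F_i)$, $N:=|V\setminus W|=n-w$, and for $\mathbf{y}\in\ZZ_2^{E\setminus F}$ set $Y_i:=\supp\mathbf{y}_i\subseteq E_i\setminus F_i$ and $A_i:=\Ver(H;Y_i)$. Since the hyper-edges indexed by $Y_i$ are pairwise disjoint we have $\mathbf{H}^{\mathsf T}\mathbf{y}=\sum_i\mathbf{e}_{A_i}$ in $\ZZ_2^V$, so $(\mathbf{H}^{\mathsf T}\mathbf{y})_{V\setminus W}=\boldsymbol0$ says exactly that the sets $A_i\cap(V\setminus W)$ cover every vertex of $V\setminus W$ an even number of times. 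Call $\mathbf{y}$ \emph{bulky} if $\min\{|\mathbf{y}_i|,|\mathbf{e}_{E_i\setminus F_i}-\mathbf{y}_i|\}\ge\varepsilon\delta n/k$ for some $i$; the event in the proposition is that some bulky $\mathbf{y}$ has $(\mathbf{H}^{\mathsf T}\mathbf{y})_{V\setminus W}=\boldsymbol0$, so it suffices to prove $\EE_{\tilde\PP^M}\bigl|\{\text{bulky }\mathbf{y}:(\mathbf{H}^{\mathsf T}\mathbf{y})_{V\setminus W}=\boldsymbol0\}\bigr|\to 0$ with the stated uniformity.

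The first step is an exact formula for $\tilde\PP^M\{(\mathbf{H}^{\mathsf T}\mathbf{y})_{V\setminus W}=\boldsymbol0\}$ for fixed $\mathbf{y}$. Conditioned on $M$, the restrictions $H\cap((E_i\setminus F_i)\times V)$ are independent over $i$, and each is a uniformly random partition of $V\setminus W_i$ into $k$-blocks labelled by $E_i\setminus F_i$; by the $\Sym(V\setminus W_i)$-invariance of this law, $A_i$ is a uniformly random $k|Y_i|$-subset of $V\setminus W_i$ and the $A_i$ are independent. Conditioning further on the overlap sizes $a_i:=|A_i\cap(V\setminus W)|$ makes the sets $A_i\cap(V\setminus W)$ independent uniform $a_i$-subsets of $V\setminus W$, and an even covering of $V\setminus W$ by these sets is exactly a function from $V\setminus W$ to the even subsets of $[d]$ whose $i$-th marginal has size $a_i$. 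Counting these yields
\[
\tilde\PP^M\bigl\{(\mathbf{H}^{\mathsf T}\mathbf{y})_{V\setminus W}=\boldsymbol0\bigr\}
=\sum_{\vec a}\Bigl(\prod_i\frac{\binom{w-w_i}{k|Y_i|-a_i}\binom{N}{a_i}}{\binom{n-w_i}{k|Y_i|}}\Bigr)\frac{M_d(\vec a)}{\prod_i\binom{N}{a_i}},
\]
where $M_d(\vec a)$ is the number of such even coverings (a single multinomial coefficient when $d=3$). Multiplying by $\prod_i\binom{|E_i\setminus F_i|}{|Y_i|}$ and summing over bulky $\mathbf{y}$, then grouping by the type $\vec\theta=(k|Y_i|/(n-w_i))_i$ and the rescaled overlaps and applying Stirling, one finds that $\tfrac1n\log$ of the first moment is, up to $o(1)$, the maximum over valid bulky types of a functional of the form
\[
\tfrac1n\Phi=-\sum_i\tfrac{n-w_i}{n}\bigl(1-\tfrac1k\bigr)\shent(\theta_i)\;+\;O_{d,k,K}(\delta)\;+\;\tfrac Nn\,\widetilde{\shent}_d(\vec\beta'),
\]
where $\beta_i':=a_i/N$ satisfies $|\beta_i'-\theta_i|=O_{d,k,K}(\delta)$ and $\widetilde{\shent}_d(\vec\beta')$ is the largest Shannon entropy of a probability distribution on the even subsets of $[d]$ with marginals $\vec\beta'$.

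The decisive ingredient is the entropy inequality $\widetilde{\shent}_d(\vec\beta')\le\tfrac{d-1}{d}\sum_i\shent(\beta_i')$: for a uniform vertex of an even covering, the indicators $X_i$ of ``class contains $i$'' satisfy $X_j=\bigoplus_{i\ne j}X_i$, so $X_j$ is determined by the others and $\widetilde{\shent}_d(\vec\beta')=\shent(X_1,\dots,X_d)=\shent\bigl((X_i)_{i\ne j}\bigr)\le\sum_{i\ne j}\shent(\beta_i')$ for each $j$; average over $j$. Feeding this into $\Phi$, using $n-w_i\ge N\ge(1-K\delta)n$ and $\shent(\beta_i')\le\shent(\theta_i)+O_{d,k,K}(\delta)$ for the coordinates that contribute, the leading term of $\tfrac1n\Phi$ is at most $\bigl(\tfrac{d-1}{d}-(1-\tfrac1k)\bigr)\sum_i\shent(\theta_i)=-\bigl(\tfrac1d-\tfrac1k\bigr)\sum_i\shent(\theta_i)$, which is negative precisely because $k>d$; and bulkiness forces some $\theta_i\in[\varepsilon\delta,1-\varepsilon\delta]$, so $\sum_i\shent(\theta_i)\ge\shent(\varepsilon\delta)$. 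Hence $\tfrac1n\Phi\le-\bigl(\tfrac1d-\tfrac1k\bigr)\shent(\varepsilon\delta)+O_{d,k,K}(\delta)$, and since $\shent(\varepsilon\delta)\asymp\varepsilon\delta\log(1/\varepsilon\delta)$ this is bounded above by a fixed negative multiple of $\shent(\varepsilon\delta)$ once $\delta$ is small enough in terms of $d,k,\varepsilon,K$ — this is where that hypothesis is used. Markov's inequality then finishes the proof, with rate of convergence depending only on $d,k,\delta$ as claimed.

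The hard part will be the bookkeeping around the set $W$: one must show that the ``$O_{d,k,K}(\delta)$'' corrections coming from the possibility that a hyper-edge of some $A_i$ lands partly or entirely in $W$ are genuinely of strictly smaller order (in the rate) than the gain $\shent(\varepsilon\delta)$ from bulkiness, uniformly over all bulky types; the naive estimate $|\shent(\beta_i')-\shent(\theta_i)|\le\shent(|\beta_i'-\theta_i|)$ is of order $\delta\log(1/\delta)$ and is too lossy to beat the gain when $\varepsilon$ is small, so this comparison has to be done more carefully (e.g.\ noting that a coordinate for which this swap is expensive is one for which $\theta_i$ lies in the interior, so that coordinate already carries a compensating negative term). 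An alternative route for that regime, which I would fall back on, is to first run a short second-moment/concentration argument showing that under $\tilde\PP^M$ the number of hyper-edges lying entirely inside $W$ beyond the $|F|$ forced ones is $o(\delta n)$ with probability $1-o(1)$ — its mean is $O_{d,k,K}(\delta^{k}n)=o(\delta n)$ since $k\ge 4$ — and on that event any bulky $\mathbf{y}$ with $(\mathbf{H}^{\mathsf T}\mathbf{y})_{V\setminus W}=\boldsymbol0$ must use $\Omega(\varepsilon\delta n)$ hyper-edges meeting $V\setminus W$ and leave $\Omega(\varepsilon\delta n)$ such hyper-edges unused, forcing each relevant $a_i$ into $[\Omega(\varepsilon\delta n),\,N-\Omega(\varepsilon\delta n)]$, which is exactly the control on the overlap sizes that the first-moment computation needs.
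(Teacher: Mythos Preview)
Your outline matches the paper's proof at every structural level: first moment plus union bound, the conditional law under $\tilde\PP^M$ as independent uniform $k$-block partitions of $V\setminus W_i$, and the decisive entropy inequality $\widetilde\shent_d(\vec\beta')\le\tfrac{d-1}{d}\sum_i\shent(\beta_i')$ (this is exactly the paper's Lemma~\ref{lem:sup-on-evens}). The paper also conditions on the overlap sizes $a_i=|A_i\cap(V\setminus W)|$ via the hypergeometric, arriving at a per-coordinate exponent
\[
f(t_i,\alpha'_i,\alpha''_i)=(1-\tfrac1k)\shent(\alpha_i)-(1-t_i)(1-\tfrac1d)\shent(\alpha'_i)-t_i\shent(\alpha''_i),
\]
with $t_i=(w-w_i)/(n-w_i)$, $\alpha'_i=a_i/N$, $\alpha''_i=(r_i-a_i)/(w-w_i)$, $\alpha_i=\theta_i$. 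So you have reconstructed the skeleton precisely and correctly located the obstacle.

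Where the proposal stops short is exactly where the work lies. Your own diagnosis is right: the naive swap $\shent(\beta_i')\leftrightarrow\shent(\theta_i)$ costs $O(\delta\log(1/\delta))$, which is the \emph{same} order as the gain $(\tfrac1d-\tfrac1k)\shent(\varepsilon\delta)\asymp\varepsilon\delta\log(1/\delta)$ and smaller by the factor $\varepsilon$, so it cannot be absorbed. Your fallback --- showing few hyper-edges land entirely in $W$ --- does not close this gap: it only pins the bulky coordinate's $a_i$ into the middle range, but the dangerous losses come from the \emph{non-bulky} coordinates, where $\theta_i$ is near $0$ or $1$ and the hypergeometric/even-cover bookkeeping can make $f$ genuinely negative of order $t_i\log(1/t_i)$, independently of whether any edge sits entirely in $W$.

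The paper's fix is a scale-separation device you are missing. It introduces thresholds $\gamma_p(t)=(\log(1/t))^{-p/3}$ and proves a four-case lemma on $f(t,\alpha',\alpha'')$ (Lemma~\ref{lem:precalc}) whose upshot is: for the bulky coordinate one gets $f\gtrsim_{d,k,K}\delta(\log(1/\delta))^{2/3}$ (Corollary~\ref{cor:precalc}), while for each non-bulky coordinate one has $\max\{0,-f\}\lesssim_{d,k}\delta(\log(1/\delta))^{1/3}$. Since $2/3>1/3$, the single bulky coordinate beats the sum of at most $d-1$ non-bulky ones once $\delta$ is small. This interpolation between $\delta$ and $\delta\log(1/\delta)$ via fractional powers of the logarithm is the missing idea; without it (or an equivalent refinement) the first-moment bound does not close.
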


We show how Proposition~\ref{prop:few-extra-cancellations} implies Theorem~\ref{thm:umm} in the next subsection, and then we prove Proposition~\ref{prop:few-extra-cancellations} itself in Subsection~\ref{subs:few-extra-cancellations}.

\begin{remark}
    Proposition~\ref{prop:few-extra-cancellations} has a similar flavor to the analysis of the satisfiability threshold for the random combinatorial problem known as random XORSAT.  But our setting has the additional complication that we must analyse conditional probabilities given the behaviour of the random factor graph $H$ on the small sets $F_i$, which seems to make our situation less `homogeneous'.  The random XORSAT model is treated in~\cite[Chapter 18]{mezard2009} using the paradigm of `belief propagation'.  It is possible that that approach could also be brought to bear in the situation in Proposition~\ref{prop:few-extra-cancellations}, possibly leading to an alternative proof, but we have not pursued this idea further.
\end{remark}

\subsection{Property M from few additional checks}\label{subs:cancel-to-umm}

To deduce Theorem~\ref{thm:umm} from Proposition~\ref{prop:few-extra-cancellations}, we also need two simpler calculations that we present as separate lemmas.

The first concerns the following construction.  Fix a subset $R$ of $V$ and let $r > 0$.  For any factor graph $H\subset E\times V$, we can form the neighbourhood $F := \Chec_r(H;R)$, and then the intersection $M := H \cap (F\times V)$ is a partial factor graph on $F$ and $V$.  Let us call the pair $(F,M)$ \textbf{possible with respect to $(R,r)$} if they can arise from a factor graph in this way.

\begin{lemma}\label{lem:cond-dist}
Let $R$ and $r$ be as above, let $(F,M)$ be possible with respect to $(R,r)$, and let $(\sigma,H) \sim \tilde{\PP}$.  If the event
\[\{H:\ H\cap (F\times V) = M\}\]
occurs, then so does the event
\[\{H:\ \Chec_r(H;R) = F\}.\]
If we condition on the former event then the subsets $H\cap ((E_i\setminus F)\times V)$ are independent for different $i$, and each is a uniform random labelled partition of $V\setminus \Ver(M; E_i \cap F)$ into $k$-sets.
\end{lemma}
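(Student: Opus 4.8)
I would treat the two assertions separately, working throughout with the $H$-marginal of $\tilde{\PP}$ (the permutation $\sigma$ plays no role in the statement). For the claim that $H\cap(F\times V)=M$ forces $\Chec_r(H;R)=F$: since $(F,M)$ is possible, fix a witnessing factor graph $H_0$, i.e.\ one with $\Chec_r(H_0;R)=F$ and $H_0\cap(F\times V)=M$. Unrolling the recursive definition of $\Chec_r$ shows that $\Chec_r(\cdot\,;R)$ is computed by the alternating exploration $R=:V_0\mapsto C_1\mapsto V_1\mapsto\cdots\mapsto V_{r-1}\mapsto C_r$, where $C_{j+1}=\Chec(H;V_j)$, $V_j=\Ver(H;C_j)$, and $C_r=\Chec_r(H;R)$. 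The plan is to prove by induction on $j$ that the exploration on $H$ and the exploration on $H_0$ produce identical data for the first $r$ rounds; since the latter ends at $C_r^0=F$, this gives the claim. The engine of the induction is a \emph{pinning} observation: if a vertex $v$ has all of its $H_0$-neighbours inside $F$, then those $d$ edges lie in $H_0\cap(F\times V)=M\subseteq H$, so by the factor-graph property (each vertex has exactly one neighbour in each $E_i$) $v$ has exactly the same neighbours in $H$ as in $H_0$. Write $V^\sharp$ for the set of such vertices. Using the graph-distance description recalled in the excerpt — for $R\subseteq V$, $\Chec_r(H_0;R)$ consists of the check nodes at distance at most $2r-1$ from $R$ — every vertex at distance at most $2r-2$ from $R$ lies in $V^\sharp$; in particular $R\subseteq V^\sharp$, and each $V_j^0$ (being contained in the vertices at distance $\le 2j$ from $R$) lies in $V^\sharp$ for $j\le r-1$. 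The induction is then immediate: if $V_j=V_j^0\subseteq V^\sharp$ then, by pinning, round $j+1$ examines the same edges on $H$ and on $H_0$, so $C_{j+1}=C_{j+1}^0$ and $V_{j+1}=V_{j+1}^0$; and at the last step $\Chec_r(H;R)=\Chec(H;V_{r-1})=\Chec(H_0;V_{r-1}^0)=F$, again by pinning applied to the vertices of $V_{r-1}^0\subseteq V^\sharp$.

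For the conditional law: by the elementary properties of $\tilde{\PP}$ recorded just before the lemma, the $H$-marginal is a product over $i\in[d]$ of independent, uniformly random labelled partitions of $V$ into $k$-blocks (labelled by $E_i$) — the $\sigma(s_i)$ are independent, a uniform $k$-uniform permutation induces a uniform partition of $V$ into $k$-blocks (each such partition arising from $((k-1)!)^{n/k}$ permutations), and the block labels are then chosen uniformly. Conditioning on $H\cap(F\times V)=M$ amounts to conditioning the $i$-th factor on the event that its blocks labelled by $F_i:=F\cap E_i$ are exactly those recorded in $M$ (an event of positive probability, since $M$ extends to a factor graph); as the factors are independent, this conditioning is performed factor by factor, which preserves independence across $i$. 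It remains to note that a uniform labelled partition of $V$ into $k$-blocks, conditioned on a prescribed sub-collection of blocks, has its remaining blocks distributed as a uniform labelled partition of the leftover vertex set $V\setminus\Ver(M;E_i\cap F)$ into $k$-blocks; this is immediate from counting, since the extensions of a fixed family of blocks are in bijection with the labelled partitions of the complement. Putting these together yields exactly the asserted conditional distribution of each $H\cap((E_i\setminus F)\times V)$.

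I expect the only genuine subtlety to be the bookkeeping in the first part: lining up the exploration indices with the graph-distance bounds so that every vertex touched before the final round provably lies in $V^\sharp$. This is precisely where the hypothesis that $F$ is a \emph{full} radius-$r$ check-neighbourhood of $R$, rather than an arbitrary subset of $E$, gets used — it is what guarantees the neighbours of those vertices are all in $F$ and hence pinned. Once the distance bookkeeping is set up correctly the induction is routine, and the second part is essentially a triviality about conditioning a product of uniform partitions.
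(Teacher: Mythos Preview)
Your proposal is correct and follows essentially the same approach as the paper. For the second assertion the arguments are virtually identical: both observe that the unconditioned law of $H$ is a product over $i\in[d]$ of independent uniform labelled $k$-partitions, so conditioning on the blocks in $F_i$ is done coordinate-wise and leaves a uniform labelled partition of the leftover vertices. For the first assertion you and the paper use the same underlying idea --- that computing $\Chec_r(H;R)$ only ever inspects edges incident to check nodes already inside $F$, and those edges are fixed by $M$ --- but you spell this out as an explicit induction along the alternating exploration, whereas the paper compresses it into a single sentence (``the definition of the neighbourhood $\Chec_r(H;R)$ depends only on those edges \ldots that connect this neighbourhood to $V$''). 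Your pinning observation and distance bookkeeping are exactly what is needed to make that sentence rigorous; the one place you could be slightly more explicit is the $\Ver$-step of each round, which follows not from pinning on $V^\sharp$ but directly from $C_{j+1}^0\subseteq F$ together with $H\cap(F\times V)=M$.
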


\begin{proof}
First, the definition of the neighbourhood $\Chec_r(H;R)$ depends only on those edges of the bipartite graph $H$ that connect this neighbourhood to $V$.  Since we are told that $(F,M)$ is possible with respect to $(R,r)$, all those edges must already be visible in $M$, and so knowing that $H\cap (F\times V) = M$ is enough to tell us that $\Chec_r(H;R) = F$.

Now the conditional probability in question is $\tilde{\PP}^M$, as introduced previously.  Since $\tilde{\PP}$ is a uniform distribution, $\tilde{\PP}^M$ is the uniform distribution over all factor graphs for which the event holds.  However, if the event holds then (i) $H \cap (F\times V)$ is uniquely determined, (ii) each $H\cap ((E_i\setminus F)\times V)$ must consist of a labelled partition of $V\setminus \Ver(M; E_i \cap F)$ into $k$-sets, and (iii) any tuple of such labelled partitions is still possible.  So this conditional distribution is simply the uniform distribution over the Cartesian product set of tuples of such labelled partitions. This implies the desired joint distribution for these sets.
\end{proof}

The next lemma may be well-known in coding theory, but we have not found a convenient reference.

\begin{lemma}\label{lem:weight-and-dim}
    Let $A$ be any finite non-empty index set, $\delta < 1/3$, and let $Y$ be a linear subspace of $\ZZ_2^A$ such that
    \begin{equation}\label{eq:big-or-small}
    \hbox{either}\quad |\mathbf{y}| \le \delta |A| \quad \hbox{or} \quad |\mathbf{y}| \ge (1-\delta)|A| \quad \hbox{for every} \quad \mathbf{y} \in Y.
    \end{equation}
    Then $\dim Y \le 2\delta |A| + 1$.
\end{lemma}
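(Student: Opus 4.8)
The plan is to peel off a ``low-weight'' subcode of $Y$ together with a single coset of it, and then to bound the dimension of that subcode by a standard coordinate-counting argument. Write $n := |A|$ and set $Y_0 := \{\mathbf{y}\in Y : |\mathbf{y}|\le \delta n\}$. The first step is to check that $Y_0$ is actually a linear subspace of $Y$: for $\mathbf{y}_1,\mathbf{y}_2\in Y_0$ we have $|\mathbf{y}_1+\mathbf{y}_2|\le |\mathbf{y}_1|+|\mathbf{y}_2|\le 2\delta n$, and since $\delta<1/3$ this is strictly smaller than $(1-\delta)n$, so the dichotomy~\eqref{eq:big-or-small} applied to $\mathbf{y}_1+\mathbf{y}_2\in Y$ forces $|\mathbf{y}_1+\mathbf{y}_2|\le\delta n$.

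The second step is to show $\dim Y\le \dim Y_0+1$, with no case distinction on whether $\mathbf{e}_A\in Y$. If every element of $Y$ already lies in $Y_0$ there is nothing to do, so suppose there is some $\mathbf{y}^\ast\in Y$ with $|\mathbf{y}^\ast|\ge(1-\delta)n$, and write $\overline{\mathbf{y}}:=\mathbf{e}_A+\mathbf{y}$ for the complement, so that $|\overline{\mathbf{y}}| = n-|\mathbf{y}|$. For any other high-weight $\mathbf{y}\in Y$, the identity $\mathbf{y}+\mathbf{y}^\ast=\overline{\mathbf{y}}+\overline{\mathbf{y}^\ast}$ (valid in characteristic $2$) gives $|\mathbf{y}+\mathbf{y}^\ast|\le |\overline{\mathbf{y}}|+|\overline{\mathbf{y}^\ast}|\le 2\delta n<(1-\delta)n$, so $\mathbf{y}+\mathbf{y}^\ast\in Y_0$; conversely, for $\mathbf{z}\in Y_0$ one has $|\mathbf{y}^\ast+\mathbf{z}|\ge(1-\delta)n-\delta n=(1-2\delta)n>\delta n$, again using $\delta<1/3$, so $\mathbf{y}^\ast+\mathbf{z}$ is high-weight. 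Hence $Y=Y_0\sqcup(\mathbf{y}^\ast+Y_0)$, which gives $\dim Y=\dim Y_0+1$.

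The final step bounds $\dim Y_0$ directly. Let $S:=\bigcup_{\mathbf{y}\in Y_0}\supp(\mathbf{y})$. For each $j\in S$ the evaluation $\mathbf{y}\mapsto y_j$ is a nonzero linear functional on $Y_0$, so exactly $|Y_0|/2$ of its elements are supported at $j$; summing over all coordinates yields $\sum_{\mathbf{y}\in Y_0}|\mathbf{y}|=\tfrac12|S|\,|Y_0|$. On the other hand each summand is at most $\delta n$, so $|S|\le 2\delta n$, and since $Y_0$ embeds into $\ZZ_2^S$ we conclude $\dim Y_0\le|S|\le 2\delta n$; combined with the previous step this gives $\dim Y\le 2\delta n+1=2\delta|A|+1$. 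The only point needing care is the weight estimates in the second step (and the closure argument in the first): one must verify that the sums involved genuinely land below the forbidden middle interval $(\delta n,(1-\delta)n)$, which is exactly where $\delta<1/3$ is used; everything else is routine.
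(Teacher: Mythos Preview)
Your proof is correct and follows essentially the same approach as the paper: define the low-weight subcode $Y_0$ (the paper calls it $Z$), show it is a subspace of index at most $2$ in $Y$ via the same triangle-inequality arguments, and then bound $\dim Y_0$ by the size of its support using the fact that each support coordinate is hit by exactly half the elements. The only cosmetic difference is that the paper phrases the last step as an expectation computation (choose $\mathbf{z}\in Z$ uniformly and compute $\mathbb{E}|\mathbf{z}|$) rather than your explicit double-counting sum, but these are the same argument.
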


\begin{proof}
    Let $Z$ be the subset of all $\mathbf{y} \in Y$ for which $|\mathbf{y}| \le \delta |A|$.  We prove that $Z$ is a linear subspace, $\dim Y/Z \le 1$, and $\dim Z \le 2\delta |A|$.

    First, $Z$ clearly contains $\boldsymbol{0}$, and if $\mathbf{y},\mathbf{y}' \in Z$ then $\mathbf{y} + \mathbf{y}' \in Y$ and 
    \[|\mathbf{y} + \mathbf{y}'| \le 2\delta |A|.\]
    Since $2\delta < 1 - \delta$, by~\eqref{eq:big-or-small} this forces $\mathbf{y} + \mathbf{y}' \in Z$, so $Z$ is a subspace.
    
    Next, if $\mathbf{y},\mathbf{y}' \in Y\setminus Z$, then
    \[|\mathbf{y} - \mathbf{y}'| \le |(\mathtt{1},\dots,\mathtt{1}) - \mathbf{y}| + |(\mathtt{1},\dots,\mathtt{1}) - \mathbf{y}'| \le 2\delta |A|,\]
    so again we must in fact have $\mathbf{y} - \mathbf{y}' \in Z$.  Therefore $\dim Y/Z \le 1$.

    Finally, call an index $i \in A$ \textbf{proper} if some member of $Z$ is non-zero in this coordinate.  Now choose $\mathbf{z}$ from $Z$ uniformly at random.  If $i$ is proper, then the coordinate $z_i$ is equally likely to be $0$ or $1$, so the expectation of $|\mathbf{z}|$ is half the number of proper coordinates.  Therefore the number of proper coordinates is at most $2\delta |A|$, and this number is an upper bound on $\dim Z$.
\end{proof}

Property M (Theorem~\ref{thm:umm}) follows from the following result, which is also used to prove local and empirical convergence:

\begin{theorem}
\label{thm:propertyM}
    Given $r \in \NN$ and $\eta > 0$, for all small enough $\delta>0$ the following holds: if for each $n$, $R_n \subset V_n$ is a subset of size $\lceil \delta n \rceil$, then with high probability as $n \to \infty$
        \[ \shent\! \big( (\mu_n)_{\sigma_n^{B_r}(R_n)} \big) \geq (1 - \eta) \, \shent \!\big( m_{B_r} \big)|R_n|. \]
\end{theorem}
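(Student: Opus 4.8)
The whole argument runs inside linear algebra over $\ZZ_2$, and Proposition~\ref{prop:few-extra-cancellations} is used to control a single correction term. First, reduce to a dimension count. Since $X_n = \ker\mathbf{H}$ (where $\mathbf{H}$ is the parity-check matrix of $\sigma_n$) is a $\ZZ_2$-subspace of $\ZZ_2^{V_n}$ and a coordinate projection $\pi_U\colon\ZZ_2^{V_n}\to\ZZ_2^U$ is linear, $\pi_U(X_n)$ is a subspace and $\pi_{U*}\mu_n$ is the uniform measure on it; hence $\shent\big((\mu_n)_U\big) = (\log 2)\dim_{\ZZ_2}\pi_U(X_n)$, and likewise $\shent(m_{B_r}) = (\log 2)\dim_{\ZZ_2}X_{B_r}$. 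Writing $W := \sigma_n^{B_r}(R_n) = \Ver_r(H;R_n)$, it therefore suffices to show that, with high probability, $\dim\pi_W(X_n)\ge (1-\eta)\dim(X_{B_r})\,|R_n|$.

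\emph{Isolating the obstruction.} Put $F := \Chec_r(H;R_n)$ and $F_i := F\cap E_i$, so that $W = \Ver(H;F) = \bigcup_i\Ver(H;F_i)$, each $w_i := |\Ver(H;F_i)| = k|F_i|$, and every check node in $F$ has all $k$ of its neighbours inside $W$. Using $X_n^\perp = \mathrm{img}(\mathbf{H}^{\mathsf T})$, the orthogonal complement $(\pi_W X_n)^\perp$ is isomorphic to $\{\mathbf{z}\in\mathrm{img}(\mathbf{H}^{\mathsf T}) : \supp(\mathbf{z})\subseteq W\}$. For $\mathbf{z} = \mathbf{H}^{\mathsf T}\mathbf{w}$ split $\mathbf{w} = \mathbf{w}_F + \mathbf{y}$ with $\mathbf{w}_F\in\ZZ_2^{F}$ and $\mathbf{y}\in\ZZ_2^{E\setminus F}$; since the rows of $\mathbf{H}^{\mathsf T}$ indexed by $F$ vanish off $W$, the constraint $\supp(\mathbf{z})\subseteq W$ becomes $(\mathbf{H}^{\mathsf T}\mathbf{y})_{V\setminus W} = \boldsymbol{0}$, independently of $\mathbf{w}_F$. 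Hence $(\pi_W X_n)^\perp\cong \mathbf{H}^{\mathsf T}\big(\ZZ_2^F\oplus Y_0\big)$ with $Y_0 := \{\mathbf{y}\in\ZZ_2^{E\setminus F} : (\mathbf{H}^{\mathsf T}\mathbf{y})_{V\setminus W} = \boldsymbol{0}\}$, and therefore
\[\dim\pi_W(X_n)\ \ge\ |W| - |F| - \dim Y_0.\]

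\emph{Bounding $\dim Y_0$.} Fix $\varepsilon := k\eta\dim(X_{B_r})/(4d)$ in advance. The pair $(F,M)$ with $M := H\cap(F\times V_n)$ is possible with respect to $(R_n,r)$, so by Lemma~\ref{lem:cond-dist} conditioning on $\{H\cap(F\times V_n) = M\}$ determines $F$ and $W$ and puts us exactly in the regime of $\tilde{\PP}^M$; moreover $|R_n|\le |W|\le \sum_i w_i = k|F|\le K|R_n|$ for a deterministic constant $K = K(d,k,r)$, so~\eqref{eq:small-enough} holds with this $K$ and with the theorem's $\delta$. Proposition~\ref{prop:few-extra-cancellations} then gives, for all sufficiently small $\delta$ and with a rate uniform over the admissible $(F,M)$ (so that averaging over the random $(F,M)$ keeps the probability $o(1)$), that with probability $\to 1$ the space $Y_0$ contains no $\mathbf{y}$ with $\min\{|\mathbf{y}_i|,|\mathbf{e}_{E_i\setminus F_i}-\mathbf{y}_i|\}\ge \varepsilon\delta n/k$ for any $i$. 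On this event, Lemma~\ref{lem:weight-and-dim} applied to the image of $Y_0$ in each block $\ZZ_2^{E_i\setminus F_i}$ (legitimate since $\varepsilon\delta n/k < \tfrac13|E_i\setminus F_i|$ for small $\delta$, as $|F_i| = o(n)$) yields $\dim Y_0\le \sum_i\big(2\varepsilon\delta n/k + 1\big)\le \tfrac{\eta}{2}\dim(X_{B_r})|R_n| + d$.

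\emph{Lower bound on $|W|-|F|$, and the crux.} Enumerate $R_n = \{v_1,\dots,v_m\}$ and telescope $|W|-|F| = \sum_{j=1}^m\Delta_j$, where $\Delta_j$ is the number of new vertices minus new check nodes that $v_j$ contributes to $\Ver_r(H;\{v_1,\dots,v_j\})$ and $\Chec_r(H;\{v_1,\dots,v_j\})$. One always has $-C\le\Delta_j\le\dim(X_{B_r})$ for a constant $C = C(d,k,r)$, with equality $\Delta_j = \dim(X_{B_r})$ whenever $\Ver_{2r}(H;\{v_j\})$ is a hyper-tree disjoint from $\Ver_{2r}(H;\{v_1,\dots,v_{j-1}\})$; this uses that in the Cayley hyper-tree of $\Gamma$ the ball $B_r$ has exactly $\dim(X_{B_r})$ more vertices than fully-contained $\ZZ_k$-cosets, which follows from the fact that word length in $\ZZ_k^{\ast d}$ (for the generating set $\{s_i^j\}$) equals syllable length. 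Because $R_n$ is fixed before $H$, the expected number of $v_j$ not of this good type is $O(|R_n|/n)$ from short cycles plus $O(|R_n|^2/n) = O(\delta)|R_n|$ from bounded-distance collisions between two elements of $R_n$, hence with high probability at most $o(|R_n|) + O(\delta)|R_n|$ by Markov's inequality, which is $\le \tfrac{\eta}{2}\dim(X_{B_r})|R_n|/(C+\dim X_{B_r})$ once $\delta$ is small enough. So with high probability $|W|-|F|\ge(1-\tfrac{\eta}{2})\dim(X_{B_r})|R_n|$, and combining with the previous paragraph and the initial reduction (absorbing the additive $d$) gives $\shent\big((\mu_n)_{\sigma_n^{B_r}(R_n)}\big)\ge (1-\eta)\shent(m_{B_r})|R_n|$ with high probability. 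The main obstacle lies in this last step: one must establish the uniform-in-$R_n$ local-sparsity estimates for the random $d$-regular $k$-uniform hypergraph generated by $\sigma_n$ — namely that a fixed linear-size set of vertices meets only an $o(1)$-fraction of short cycles and that the bounded-radius neighbourhoods of its elements are pairwise disjoint up to an $o(1)+O(\delta)$ fraction — and pin down the precise ``vertices minus internal checks'' count of a hyper-tree ball; everything else is elementary linear algebra and bookkeeping around Proposition~\ref{prop:few-extra-cancellations}.
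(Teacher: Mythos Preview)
Your proof is correct and follows essentially the same strategy as the paper: both arguments reduce to a dimension count via the duality $(\pi_W X_n)^\perp = \{\mathbf{z}\in\mathrm{img}\,\mathbf{H}^{\mathsf T}:\supp\mathbf{z}\subseteq W\}$, bound the space $Y_0$ of ``extra checks'' (which is exactly the paper's $D_2$) by combining Proposition~\ref{prop:few-extra-cancellations} with Lemma~\ref{lem:weight-and-dim}, and establish local sparsity of $R_n$ (few short cycles, few nearby pairs) by the same first-moment estimates.

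The only organizational difference is in how the ``main term'' is extracted. The paper first passes to a $(2r)$-separated subset $S\subseteq R$ of tree-like vertices, projects onto $\Ver_r(H;S)$, and pays a correction $\dim D_1 = |\Chec_r(H;R)\setminus\Chec_r(H;S)|$ for check nodes around $R\setminus S$. You instead keep all of $R$, write $\dim\pi_W(X_n)\ge |W|-|F|-\dim Y_0$, and control $|W|-|F|$ by telescoping over $R_n$ with bad vertices absorbed via $\Delta_j\ge -C$. Your route avoids the $D_1$ term and is marginally cleaner, at the cost of needing the identity $|B_r|-|C_r|=\dim X_{B_r}$ for a single tree-like ball (which the paper uses implicitly when writing $|W_S|-|F_S| = |S|\dim X_{B_r}$). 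One minor remark: the general upper bound $\Delta_j\le\dim X_{B_r}$ that you assert is true but not actually needed; only the exact value for good $v_j$ and the crude lower bound for bad $v_j$ enter the argument.
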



\begin{proof}[Proof of Theorem \ref{thm:propertyM} assuming Proposition \ref{prop:few-extra-cancellations}]
Beware that we continue to suppress $n$ from subscripts where possible.  It should be understood that the following argument and construction are carried out for each $n$ that is divisible by $k$.

Fix $\varepsilon > 0$. For a small positive $\delta$ to be specified shortly, let $R$ be any fixed choice of a subset of $V$ of size $\lceil\delta n\rceil$, and now consider the following three subsets of $(\sigma,H)$ in $\tilde{\Omega}$:
\begin{itemize}
    \item[i.] (Few additional checks) $(\sigma,H)$ is in $\Omega^1$ if for
    \begin{quote}
    any $\mathbf{y} \in \ZZ_2^{E\setminus \Chec_r(H;R)}$  such that $(\mathbf{H}^\mathsf{T}\mathbf{y})_{V\setminus \Ver_r(H;R)} = \boldsymbol{0}$ also has
    \[\min\{|\mathbf{y}_i|,|\mathbf{e}_{E_i\setminus F_i} - \mathbf{y}_i|\} < \varepsilon\delta\frac{n}{k} \qquad \hbox{for every}\ i \in [d].\]
    \end{quote}
    where $F_i = E_i \cap \Chec_r(H;R)$.
    \item[ii.] (Most vertices in $R$ well-separated) $(\sigma,H)$ is in $\Omega^2$ if the set
    \[S_1 := \big\{v \in R:\ \Ver_r(H;v)\cap \Ver_r(H;R \setminus v) = \emptyset\big\}.\]
    has $|S_1| > (1-\varepsilon)|R|$.
    \item[iii.] (Most vertices in $R$ not close to any short loops) $(\sigma,H)$ is in $\Omega^3$ if the set
    \[S_2 := \big\{v \in R:\ \hbox{the orbit map}\ B_r\to \Ver_r(H;v)\ \hbox{is injective}\big\}.\]
    has $|S_2| > (1-\varepsilon)|R|$. (Here, the orbit map sends $\gamma \in B_r \subset \Gamma$ to $\sigma(\gamma)v$.)
\end{itemize}
Let $S := S_1\cap S_2$ and $\Omega' := \Omega^1 \cap \Omega^2\cap \Omega^3$.  Think of $S$ as the result of expurgating the `bad' vertices from $R$, and $\Omega'$ as the event that there are only a few of these.

In the remainder of the proof we show that, provided $\delta$ was chosen small enough, we have $\tilde{\PP}(\Omega')\to 1$ and the set $R$ satisfies the desired entropy bound.

\vspace{7pt}

\emph{Step 1: $\tilde{\PP}(\Omega')\to 1$.}\quad First, we have $\tilde{\PP}(\Omega^2) \to 1$ as $n\to\infty$ for any sufficiently small $\delta$ in terms of $d$, $k$, $r$ and $\varepsilon$. To see this, observe that although we are fixing $R$ and choosing $(\sigma,H)$ at random, we obtain the same distribution on $\abs{S_1}$ if we choose $R$ uniformly at random among all subsets of $V$ of cardinality $\lceil\delta n\rceil$, independently of $(\sigma,H)$. Thus it suffices to show that on any $d$-regular $k$-uniform hyper-graph on $n$ vertices, if a subset $R \subset V$ of $\lceil\delta n\rceil$ vertices is chosen uniformly at random then the probability that there are more than $\varepsilon \delta n$ vertices of $R$ which are $\le 2r$ distance from another vertex in $R$ tends to zero as $n\to\infty$. 

If some number, say $x$, of vertices have already been chosen, then the probability that the next vertex is not within distance $2r$ of the previously selected vertices is at least 
$$1- \frac{(kd)^{2r}x}{n-x} \ge 1- \frac{(kd)^{2r}\lceil\delta n\rceil}{n - \lceil\delta n\rceil}\ge 1- (kd)^{2r} \frac{\delta}{1-\delta}.$$ 
This is because the number of vertices in the $(2r)$-neighbourhood of a given vertex is at most $(kd)^{2r}$. Thus $\tilde{\PP}(\Omega^2)$ is at least the probability that in $\lceil\delta n\rceil$ Bernoulli trials with success probability 
$1- (kd)^{2r} \, \delta/(1-\delta) $, there are at least $(1-\varepsilon)\lceil \delta n\rceil$ successes. This occurs with overwhelming probability as $n\to\infty$ as long as $\varepsilon> (kd)^{2r} \, \delta/(1-\delta)$, which we may assume by choosing $\delta$ sufficiently small. 

Second, for any $\delta > 0$ we have $\tilde{\PP}(\Omega^3)\to 1$ as $n\to\infty$ as an immediate corollary of Proposition~\ref{prop:probably-sofic}.

So now let us show that $\tilde{\PP}(\Omega^1) \to 1$ as $n\to\infty$.  This is our application of Proposition~\ref{prop:few-extra-cancellations}. We make contact with that proposition by conditioning on the partial factor graph $H\cap (\Chec_r(H;R)\times V)$ and using the law of total probability.

Let $(F,M)$ be a possible pair with respect to $(R,r)$ as in Lemma~\ref{lem:cond-dist}, and let $\tilde{\PP}^M$ be the result of conditioning $\tilde{\PP}$ on the event $H\cap (F\times V) = M$, as previously.  Let $W_i := \Ver(M;F_i)$ and let $W := W_1\cup \cdots \cup W_d$.  By the law of total probability, $\tilde{\PP}(\tilde{\Omega}\setminus \Omega^1)$ is equal to the sum
\begin{multline*}
\sum_{\mathrm{possible}\,F,M}\tilde{\PP}\big(H\cap (F\times V) = M\big)\cdot \tilde{\PP}^M\Big(\exists \mathbf{y} \in \ZZ_2^{E\setminus F}\ \hbox{such that}\\
\min\{|\mathbf{y}_i|,|\mathbf{e}_{E_i\setminus F_i} - \mathbf{y}_i|\} \ge \varepsilon\delta\frac{n}{k}\ \ \hbox{for some $i$}\ \hbox{and}\ (\mathbf{H}^\mathsf{T}\mathbf{y})_{V\setminus W} = \boldsymbol{0}\Big).
\end{multline*}
Let $K := (dk)^{r+1}+1$. Our construction of the possible pair $(F,M)$ gives that
\[ \delta n\le |R| \le |W| \le \sum_{i=1}^d |W_i| \le (dk)^{r+1}|R| \le K\delta n,\]
when $\delta n >1$ and so the condition~\eqref{eq:small-enough} is satisfied for this value of $K$ by the second factor in every term of the sum above.  Therefore, by Proposition~\ref{prop:few-extra-cancellations}, if $\delta$ is small enough in terms of $d$, $k$, $\varepsilon$ and $r$, then this sum is a convex combination of quantities that converge to $0$ at a rate depending only on $d$, $k$ and $\delta$, and hence so does the whole expression.

This proves that $\tilde{\PP}(\Omega')\to 1$ for any sufficiently small $\delta$.  Fix such a $\delta$ for the rest of the proof.

\vspace{7pt}

\emph{Step 2.}\quad To finish the proof, we show that if $\sigma \in \Omega'$ then (i), (ii) and (iii) imply that $S$ has the properties required to witness property M as in Definition~\ref{dfn:M}.

First, $S_1$ is $(2r)$-separated by property (ii), and hence so is $S$.

Second, properties (ii) and (iii) together give
\[\frac{|S|}{|V|} \ge (1 - 2\varepsilon)\frac{|R|}{|V|} \ge (1 - 2\varepsilon)\delta,\]
which is uniformly positive in $n$ provided $\varepsilon < 1/2$.

Finally, we must prove~\eqref{eq:high-ent}.  Because $\mu_\sigma$ is the uniform distribution on the linear subspace $X_\sigma$ of $\ZZ_2^V$, the projection of $\mu_\sigma$ over the subset of vertices $\sigma^{B_r}(S) = \Ver_r(H;S)$ is the uniform distribution on the linear space
\[Z:= \{\mathbf{x}_{\Ver_r(H;S)}:\ \mathbf{x} \in X_\sigma\}.\]
Therefore, after ignoring a factor of $\log 2$, we need a lower bound on $\dim Z$.

By property (iii), each of the neighbourhoods $\Ver_r(H;v)$ for $v \in S$ is a bijective copy of $B_r \subseteq \Gamma$, and by property (ii) these neighbourhoods are disjoint.  Therefore $Z$ is naturally identified with a linear subspace of
$$\prod_{v \in S} \ZZ_2^{\Ver_r(H;v)}\cong (\ZZ_2^{B_r})^S.$$
As a linear subspace of $\ZZ_2^{\Ver_r(H;S)}$, $Z$ is determined by its dual code $Z^\perp$: that is, its own collection of parity-check words in $\ZZ_2^{\Ver_r(H;S)}$~\cite[Section 13.10]{mackay2003}.  Since $Z$ is the projection of $X_\sigma$, each word in $Z^\perp$ becomes a word in $X^\perp_\sigma$ when it is extended by $0$ to $V\setminus \Ver_r(H;S)$.  Therefore $Z^\perp$ is identified with the set of all mod-$2$ sums of the rows of $\mathbf{H}$ that vanish on $V\setminus \Ver_r(H;S)$, and so the rank-nullity formula gives
\begin{equation}\label{eq:rank-nullity}
\dim Z = \dim (\ZZ_2^{B_r})^S - \dim \ker \mathbf{H}^{\mathsf{T}}_{(V\setminus \Ver_r(H;S))\times E}.
\end{equation}

So let us consider the ways in which a mod-$2$ sum of the rows of $\mathbf{H}$ can vanish on $V\setminus \Ver_r(H;S)$.  First, each individual row of $\mathbf{H}$ that is indexed by an element of $\Chec_r(H;S)$ vanishes outside $\Ver_r(H;v)$ for some single element $v \in S$.  Since each of these sets is a bijective copy of $B_r$, these checks by themselves show that $Z$ is actually a linear subspace of $X_{B_r}^S \subset (\ZZ_2^{B_r})^S$.

On the other hand, given any other mod-$2$ sum of rows of $\mathbf{H}$ which vanishes on $V\setminus \Ver_r(H;S)$, we may remove any summands indexed by $\Chec_r(H;S)$ without losing that property.  So now consider a mod-$2$ sum of rows of $\mathbf{H}$ that is supported in $\Ver_r(H;S)$ and uses no rows indexed by $\Chec_r(H;S)$.  It might use only rows indexed by $\Chec_r(H;R)\setminus \Chec_r(H;S)$.  Or it might include at least one summand indexed by an element of $E\setminus \Chec_r(H;R)$, in which case those summands by themselves define a non-zero vector in
$$D := \ker \mathbf{H}^{\mathsf{T}}_{(V\setminus \Ver_r(H;R)) \times (E\setminus \Chec_r(H;R))},$$
because the rows of $\mathbf{H}$ indexed by $\Chec_r(H;R)$ are supported in $\Ver_r(H;R)$ and so ignoring them cannot change the support in $V\setminus \Ver_r(H;R)$.

Combining these possibilities,~\eqref{eq:rank-nullity} implies the lower bound
\begin{equation}\label{eq:rank-nullity-2}
\dim Z \ge |S|\cdot \dim X_{B_r} - |\Chec_r(H;R)\setminus \Chec_r(H;S)| - \dim D.
\end{equation}
By properties (ii) and (iii), $|S| \ge (1-2\eps)|R|$. Since ${|\Chec_r(H;R \setminus S)| \le (dk)^{r-1}k |R\setminus S|}$, this implies
\[|\Chec_r(H;R)\setminus \Chec_r(H;S)| \le |\Chec_r(H;R\setminus S)| \le Kk\varepsilon|S|.\]
On the other hand, by property (i), if $\mathbf{y} \in D$ and we write $\mathbf{y} = (\mathbf{y}_1,\dots,\mathbf{y}_d)^{\mathsf{T}}$, then for each $i$ we have $\mathbf{y}_i \in \ZZ_2^{E_i\setminus \Chec_r(H;R)}$ and
\[\hbox{either}\ |\mathbf{y}_i| < \varepsilon\delta\frac{n}{k} \quad \hbox{or} \quad |\mathbf{e}_{E_i\setminus \Chec_r(H;R)} - \mathbf{y}_i| < \varepsilon\delta\frac{n}{k}.\]
Therefore, provided $\varepsilon\delta < 1/3$, Lemma~\ref{lem:weight-and-dim} gives
\[\dim\{\mathbf{y}_i:\ \mathbf{y} \in D\} \le 2\varepsilon\delta\frac{n}{k} + 1\quad \hbox{for each}\ i,\]
and hence $\dim D \le 2\varepsilon\delta nd/k+d$.

Inserting these bounds into~\eqref{eq:rank-nullity-2}, we finally arrive at
\begin{align*}
\dim Z &\ge |S|\cdot \dim X_{B_r} - Kk\varepsilon|S| - 2\varepsilon\delta\frac{nd}{k}-d\\
&\ge \left(\dim X_{B_r} - Kk\varepsilon - \frac{2\varepsilon d}{k} - o(1)\right)\cdot |S|.
\end{align*}

This gives us a lower bound on the desired joint entropy:
\begin{align*}
    \shent\! \big( (\mu_n)_{\sigma_n^{B_r}(R)} \big)
        &\geq \shent\! \big( (\mu_n)_{\sigma_n^{B_r}(S)} \big) \\
        &= \log 2 \cdot \dim Z \\
        &\geq \left( \log2 \cdot \dim X_{B_r} - \varepsilon \log 2 \cdot \Big(Kk - \frac{2 d}{k}-o(1)\Big) \right)\cdot (1-2\varepsilon)|R| \\
        &\geq \left( \shent(m_{B_r}) - \varepsilon C \right)|R|
\end{align*}
where $C = \log 2 \cdot (Kk - \frac{2 d}{k}) + 2\shent(m_{B_r})$.
Since $C$ depends only on $d,k$ and $r$, we could have taken at the beginning $\varepsilon = \eta/C$, so this proves~\eqref{eq:high-ent}.
\end{proof}

\begin{remark}
Proposition~\ref{prop:few-extra-cancellations} shows that, with high conditional probability in the choice of $\mathbf{H}$, the only parity checks among the bits in $W$ that are created by $\mathbf{H}$ are (i) those created by the rows in $F$, and possibly (ii) a few others that are generated by some vectors $\mathbf{y}$ whose support is either extremely small or extremely close to the whole of $E\setminus F$.  Since the number of possible vectors of type (ii) is very small compared with those of type (i), this implies that there are few enough `spurious' parity checks among the bits in $W$ to give Theorem~\ref{thm:umm}.  However, we do not expect that there are \emph{no} extra parity checks of type (ii): just by chance, one should typically find as many as a very small multiple of $n$ of these. 
\end{remark}

\begin{proof}[Proof of Theorem~\ref{thm:umm} item (1) from Theorem~\ref{thm:propertyM}]

We have to prove there are subsets
    \[\Omega'_n \subseteq \Hom_{\mathrm{unif}}(\Gamma,\Sym(V_n))\]
    such that $\PP_n(\Omega'_n) \to 1$ and if $\sigma_n\in \Omega_n'$ for each $n$, then $(\mu_n)_n$ has property M with respect to $\Sigma=(\sigma_n)_n$. This means that for every $\eps>0$ and $0<r<\infty$ there is a sequence of subsets $S_n \subset V_n$ such that 
$$\liminf \frac{|S_n|}{|V_n|} >0$$
and
\begin{equation}\label{eq:high-ent2}
\shent( (\mu_n)_{\sigma_n^{B_r}(S_n)} ) \ge |S_n| (\shent(m_{B_r}) - \eps)
\end{equation}
for all $n$.

    Let $r,\varepsilon>0$. By Theorem~\ref{thm:propertyM}, if $\delta>0$ is sufficiently small and for each $n$, $S_n \subset [n]$ is an arbitrary set of size $\lceil \delta n \rceil$, then there is some sequence $(\Omega'_n)_n$ with $\PP(\Omega'_n) \to 1$ such that the desired entropy inequality holds when $\sigma_n \in \Omega'_n$, and $\liminf \frac{\abs{S_n}}{n} = \liminf \frac{\lceil \delta n \rceil}{n} = \delta > 0$.

    Then, by diagonalizing over the countably many choices of $r \in \NN$ and $\varepsilon \in \{\frac{1}{2},\frac{1}{3},\frac{1}{4},\ldots\}$, we can get a single sequence of sets $(\Omega'_n)_n$ with $\PP(\Omega'_n) \to 1$ such that for any $r \in \NN$ and $\varepsilon>0$ there is a sequence $(S_n)_n$ with the desired properties.
\end{proof}

\subsection{Proof that there are few additional checks}\label{subs:few-extra-cancellations}

This subsection proves Proposition~\ref{prop:few-extra-cancellations}.  Let the sets $F_i$, $W_i$ and $M$ and parameters $K$, $\varepsilon$, $w_i$ and $w$ be as in that statement. For $\mathbf{y} \in \ZZ_2^{E\setminus F}$, let
\[G_{\mathbf{y}} := \left\{ (\sigma, H) \st (\mathbf{H}^{\mathsf{T}}\mathbf{y})_{V\setminus W} = \boldsymbol{0}\right\} \subset \tilde{\Omega}_n.\]
Write $\mathbf{y}=(\mathbf{y}_1, \ldots, \mathbf{y}_d)^\mathsf{T}$ with $\mathbf{y}_i \in  \ZZ_2^{E_i\setminus F_i}$. We will focus on those vectors $\mathbf{y}$ which satisfy restrictions on the cardinalities $|\mathbf{y}_i|$ coming from Proposition~\ref{prop:few-extra-cancellations}. To be precise, let $\mathcal{R}=\mathcal{R}(\varepsilon,\delta)$ be the set of all non-negative integer tuples $\mathbf{\ell} = (\ell_1,\dots,\ell_d)$ that satisfy
\begin{equation}\label{eq:allowed-ys}
\ell_i \le \frac{n-w_i}{k} \quad \hbox{for every} \ i \quad \hbox{and} \quad \min\left\{\ell_i,\frac{n-w_i}{k} - \ell_i\right\} \ge \varepsilon\delta\frac{n}{k} \quad \hbox{for some}\ i.
\end{equation}
The main conclusion of Proposition~\ref{prop:few-extra-cancellations} is equivalent to:
    \[\tilde{\PP}^M\left(\bigcup_{\mathbf{y}:\ (|\mathbf{y}_1|,\dots,|\mathbf{y}_d|) \in \mathcal{R}}G_{\mathbf{y}}\right) \to 0\]
as $n\to\infty$. Our proof uses a simple union bound over $\mathbf{y}$. We will derive estimates on $\tilde{\PP}^M(G_{\mathbf{y}})$ that depend on $\varepsilon$ and $\delta$ for $\mathbf{y} \in\mathcal{R}(\varepsilon,\delta)$, and then use these to conclude Proposition~\ref{prop:few-extra-cancellations} provided $\delta$ is chosen correctly.

Fix a vector $\mathbf{y} \in \ZZ_2^{E\setminus F}$ with $(|\mathbf{y}_1|,\dots,|\mathbf{y}_d|) \in \mathcal{R}$, and set $r_i := k|\mathbf{y}_i|$ and $\mathbf{r} := (r_1,\dots,r_d)$.  Let $Y_i$ be the support of the random vector $\mathbf{H}^\mathsf{T}\mathbf{y}_i$ for each $i$. This means $\mathbf{H}^\mathsf{T}\mathbf{y}_i = \mathbf{e}_{Y_i}$.  These random sets are independent by the independence in Lemma~\ref{lem:cond-dist}.  Each $Y_i$ is uniformly random among all subsets of $V\setminus W_i$ of size $r_i$. 

Each $Y_i$ may have non-empty intersection with $W\setminus W_i$.  We bound $\tilde{\PP}^M(G_{\mathbf{y}})$ by breaking into a few further cases depending on the sizes of these intersections.  To this end, let $Z_i := Y_i\setminus W$.  Under $\tilde{\PP}^M$, the cardinality $|Z_i|$ is a random quantity obtained by sampling $r_i$ points from $V\setminus W_i$ without replacement and counting how many of them land in $V\setminus W$.  This random quantity has the hypergeometric distribution with parameters $n-w_i$, $n-w$ and $r_i$: see, for instance,~\cite[Section II.6]{fellerVolI}. Its possible values are the integers $s_i$ that satisfy
\begin{equation}\label{eq:bounds-on-s}
\max(0,r_i-(w-w_i)) \le s_i \le \min(r_i,n-w),
\end{equation}
and for such values we have
\[\tilde{\PP}^M(|Z_i| = s_i) = \frac{{n-w \choose s_i}{w-w_i \choose r_i-s_i}}{{n-w_i \choose r_i}}.\]
By the standard exponential estimate for binomial coefficients (see, for instance,~\cite[Example 11.1.3]{coverthomas2006}), this ratio is at most
\begin{equation}\label{eq:hypergeo}
\exp\left(\shent\left(\frac{s_i}{n-w}\right)(n-w) + \shent\left(\frac{r_i-s_i}{w-w_i}\right)(w-w_i)  - \shent\left(\frac{r_i}{n-w_i}\right)(n-w_i) + o(n)\right),
\end{equation}
where quality of the error term does not depend on any other parameters.

Towards Proposition~\ref{prop:few-extra-cancellations}, we estimate the probability of $G_{\mathbf{y}}$ after further conditioning on the tuple of cardinalities $|Z_i|$, and then combine this estimate with~\eqref{eq:hypergeo} using the law of total probability. That refined conditional probability estimate depends on the following lemma.

Let $\mathtt{Even}(d)$ be the subset of all strings in $\{0,1\}^d$ that have even weight.

\begin{lemma}\label{lem:sup-on-evens}
    Let $q_1$, \dots, $q_d$ be probability distributions on $\{0,1\}$, and assume that $q$ is a coupling of $q_1$, \dots, $q_d$ that is supported on ${\mathtt{Even}}(d)$.  Then
    \[\shent(q) \le \left(1 - \frac{1}{d}\right)\big(\shent(q_1) + \dots + \shent(q_d)\big).\]
\end{lemma}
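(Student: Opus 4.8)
The plan is to exploit the single linear constraint that defines $\mathtt{Even}(d)$.  Let $Y=(Y_1,\dots,Y_d)$ be a random vector with law $q$, so that each $Y_j$ has marginal $q_j$ and $Y_1 + \dots + Y_d \equiv 0 \pmod 2$ almost surely.  The key observation is that this parity constraint makes \emph{every} coordinate a deterministic function of the other $d-1$ coordinates: for each $j \in [d]$ we have $Y_j = \sum_{i \ne j} Y_i \bmod 2$.  Consequently
\[
\shent(q) = \shent(Y_1,\dots,Y_d) = \shent(\{Y_i : i \ne j\}) \qquad \text{for every } j \in [d].
\]

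Next I would apply subadditivity of Shannon entropy to the $d-1$ variables $\{Y_i : i \ne j\}$, giving
\[
\shent(q) \le \sum_{i \ne j} \shent(Y_i) = \sum_{i=1}^d \shent(q_i) - \shent(q_j)
\]
for each fixed $j$.  Summing this inequality over $j = 1,\dots,d$ and dividing by $d$ yields
\[
\shent(q) \le \sum_{i=1}^d \shent(q_i) - \frac{1}{d}\sum_{j=1}^d \shent(q_j) = \Big(1 - \frac{1}{d}\Big)\big(\shent(q_1) + \dots + \shent(q_d)\big),
\]
which is exactly the claimed bound.

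There is essentially no obstacle here: the only facts used are that a $\{0,1\}^d$-valued parity constraint expresses each coordinate in terms of the rest (so conditioning leaves entropy unchanged) and the subadditivity $\shent(X_1,\dots,X_m) \le \sum_j \shent(X_j)$, both of which are elementary.  The one point worth stating carefully in the write-up is the symmetry step — that the argument works with \emph{any} index $j$ singled out, since the constraint $\sum_i Y_i \equiv 0$ treats all coordinates identically — because that is what upgrades the crude bound $\shent(q) \le \sum_{i \ne d}\shent(q_i)$ to the averaged bound with the factor $1 - 1/d$.
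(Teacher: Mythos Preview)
Your proof is correct and follows essentially the same approach as the paper: both use that the parity constraint makes any one coordinate a function of the others, then apply subadditivity to the remaining $d-1$ coordinates. The only cosmetic difference is the final step: the paper orders the indices so that $\shent(q_1)$ is largest and drops that single term, whereas you average the inequality $\shent(q)\le\sum_{i\ne j}\shent(q_i)$ over all $j$; both maneuvers yield the factor $1-1/d$ immediately.
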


\begin{proof}
    By permuting indices, we may assume without loss of generality that
    \[\shent(q_1) \ge \dots \ge \shent(q_d).\]
    Let $\mathbf{z} = (z_1,\dots,z_d)$ be the identity map on $\{0,1\}^d$, and regard it as a random binary string with distribution $q$.  Then $\mathbf{z}$ has even weight almost surely, and hence the coordinates $z_2$, \dots, $z_d$ determine $z_1$ almost surely.  Therefore
    \[\shent(q) = \shent_q(\mathbf{z}) = \shent_q(z_2,\dots,z_d) \le \shent_q(z_2) + \cdots + \shent_q(z_d) = \shent(q_2) + \cdots + \shent(q_d).\]
    Because of our ordering of the indices, this is at most the desired upper bound.
\end{proof}

Now let $\mathcal{S}(\mathbf{r})$ be the set of all integer tuples $\mathbf{s} = (s_1,\dots,s_d)$ that satisfy~\eqref{eq:bounds-on-s} for every $i$.

\begin{lemma}\label{lem:unbiased-estimate}
 Fix a vector $\mathbf{y} \in \ZZ_2^{E\setminus F}$ with $(|\mathbf{y}_1|,\dots,|\mathbf{y}_d|) \in \mathcal{R}$ as above.   If $\mathbf{s} \in \mathcal{S}(\mathbf{r})$, then
    \[\tilde{\PP}^M\left(G_{\mathbf{y}}\ \big|\ |Z_i|=s_i\ \hbox{for each}\ i\right) \le \exp\left(-\frac{1}{d}\sum_{i=1}^d\shent\left(\frac{s_i}{n-w}\right)\cdot (n-w) + o_d(n)\right).\]
\end{lemma}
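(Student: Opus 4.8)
The plan is to first strip away the conditioning to expose a clean combinatorial count, and then feed it into Lemma~\ref{lem:sup-on-evens}. Write $U := V\setminus W$ and $N := |U| = n-w$. First I would observe, via Lemma~\ref{lem:cond-dist}, that under $\tilde\PP^M$ the pieces $H\cap((E_i\setminus F)\times V)$ are independent uniform labelled partitions of $V\setminus W_i$ into $k$-sets; consequently each support $Y_i$ of $\mathbf{H}^\mathsf{T}\mathbf{y}_i$ is a uniformly random $r_i$-subset of $V\setminus W_i$, and the $Y_i$ are independent across $i$. Since $W_i\subseteq W$, further conditioning on $|Y_i\setminus W| = s_i$ makes $Z_i := Y_i\setminus W$ a uniformly random $s_i$-subset of $U$, still independent across $i$. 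On $U$ we have $\mathbf{H}^\mathsf{T}\mathbf{y} = \sum_{i=1}^d \mathbf{e}_{Z_i}$, so conditionally on this data the event $G_{\mathbf y}$ is exactly the event that every vertex of $U$ lies in an even number of the sets $Z_1,\dots,Z_d$.

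Next I would count using the method of types. Encoding a tuple $(Z_1,\dots,Z_d)$ by the colouring $v\mapsto(1_{v\in Z_1},\dots,1_{v\in Z_d})\in\{0,1\}^d$, the conditional probability in question is the number of colourings taking values only in $\mathtt{Even}(d)$ with colour-class sizes matching the $s_i$, divided by $\prod_{i=1}^d\binom{N}{s_i}$, the total number of valid colourings. Grouping by empirical distribution $q$ on $\{0,1\}^d$ --- a type with denominator $N$ whose $i$-th marginal is forced to equal $\mathrm{Bernoulli}(s_i/N)$ --- the number of colourings realizing $q$ is the multinomial coefficient $\binom{N}{(Nq(\mathbf a))_{\mathbf a}}\le\exp(N\shent(q))$, and there are only polynomially many types; hence the numerator is at most $\exp\big(N\sup_q\shent(q) + o_d(n)\big)$, the supremum taken over couplings $q$ of $\mathrm{Bernoulli}(s_1/N),\dots,\mathrm{Bernoulli}(s_d/N)$ supported on $\mathtt{Even}(d)$. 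The denominator is $\exp\big(N\sum_{i=1}^d\shent(s_i/N) - o_d(n)\big)$ by the usual binomial estimates.

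Finally, Lemma~\ref{lem:sup-on-evens} with $q_i = \mathrm{Bernoulli}(s_i/N)$ gives $\sup_q\shent(q)\le(1-\tfrac1d)\sum_{i=1}^d\shent(s_i/N)$, and substituting this into the ratio yields
\[\tilde\PP^M\big(G_{\mathbf y}\ \big|\ |Z_i| = s_i\ \forall i\big)\le\exp\Big(-\tfrac1d\sum_{i=1}^d\shent\big(\tfrac{s_i}{N}\big)\,N + o_d(n)\Big),\]
which is the assertion since $N = n-w$. I expect the only real care --- not difficulty --- to be in two places: verifying that the two-stage conditioning genuinely leaves the $Z_i$ independent and uniform of the stated sizes, which is exactly where $W_i\subseteq W$ and Lemma~\ref{lem:cond-dist} are used; and keeping the marginal constraint on $q$ explicit so that Lemma~\ref{lem:sup-on-evens} applies to the correct distributions. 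There are no analytic subtleties: $\shent(s_i/N)$ is well defined even for $s_i\in\{0,N\}$, and the $o_d(n)$ error term is in fact only $O_d(\log n)$, arising from the polynomial type count and the lower-order terms in the binomial estimates.
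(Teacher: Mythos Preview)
Your proposal is correct and follows essentially the same argument as the paper's proof: both encode the tuple $(Z_1,\dots,Z_d)$ as a $\{0,1\}^d$-valued colouring of $V\setminus W$, bound the numerator via the method of types together with Lemma~\ref{lem:sup-on-evens}, and bound the denominator by the standard binomial-coefficient estimate. The only cosmetic difference is that the paper phrases the count as a maximum over types multiplied by a pointwise probability, whereas you write it as a ratio of counts; the substance is identical.
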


\begin{proof}
    To lighten notation, within this proof let
    \[\tilde{\PP}^{M,\mathbf{s}} := \tilde{\PP}^M\big(\,\cdot\,\big|\  |Z_i|=s_i\ \hbox{for each}\ i\big).\]

    Record the random sets $Z_1$, \dots, $Z_d$ into the random vector
    \[\boldsymbol{\eta} = (\eta_v)_{v\in V\setminus W} \in (\{0,1\}^d)^{V\setminus W} \quad \hbox{where}\ \eta_v := (1_{Z_1}(v),\dots,1_{Z_d}(v)).\]
    Let $P_{\boldsymbol{\eta}} = \frac{1}{\abs{V \setminus W}} \sum_{v \in V \setminus W} \delta_{\eta_v}$ be the empirical distribution of $\boldsymbol{\eta}$.  This is a probability distribution on $\{0,1\}^d$, and the event $G_{\mathbf{y}}$ occurs if and only if this probability distribution is supported on the subset $\mathtt{Even}(d)$.  The marginals of $P_{\boldsymbol{\eta}}$ are $(q_i,1-q_i)$ for $i=1,2,\dots,d$, where $q_i := s_i/(n-w)$. 
    Moreover, $P_{\boldsymbol{\eta}}$ must take values that are multiples of $1/(n-w)$, and the total number of such possible distributions is at most
\[(n-w+1)^{2^d} \le n^{2^d}= e^{o_d(n)}\]
(here and in some subsequent steps we generally loosen $o_d(n-w)$ to $o_d(n)$). Therefore
\begin{multline}\label{eq:P-good}
\tilde{\PP}^{M,\mathbf{s}}(G_{\mathbf{y}}) \le e^{o_d(n)}\max\big\{\tilde{\PP}^{M,\mathbf{s}}(P_{\boldsymbol{\eta}} = q):\ q\ \hbox{a coupling}\\ \hbox{of}\ q_1,\dots,q_d\ \hbox{such that}\ q(\mathtt{Even}(d)) = 1\big\}.
\end{multline}
For a distribution $q$ as above, the set of vectors $\boldsymbol{\eta}$ that give $P_{\boldsymbol{\eta}} = q$ are the `type class' of $q$, and their number is simply bounded using the entropy of $q$:
\[|\{\boldsymbol{\eta} \in (\{0,1\}^d)^{V\setminus W}:\ P_{\boldsymbol{\eta}} = q\}| \le e^{\shent(q)\cdot (n-w)}\]
(see, for instance,~\cite[Theorem 11.1.3]{coverthomas2006}, except note that Cover and Thomas use $\log_2$ rather than natural logarithms to define $\shent$). By Lemma~\ref{lem:sup-on-evens}, this upper bound is always at most
\begin{equation}\label{eq:card-even}
\exp\left(\left(1 - \frac{1}{d}\right)\cdot \big(\shent(q_1) + \cdots + \shent(q_d)\big)\cdot (n-w)\right).   \end{equation}

On the other hand, under the conditional probability measure $\tilde{\PP}^{M,\mathbf{s}}$, the set $Z_i$ is a uniform random subset of $V\setminus W$ of size $s_i$, and these random sets are still independent.  Therefore the probability of any particular $d$-tuple of sets of these sizes occurring is
\[\prod_{i=1}^d{n-w \choose s_i}^{-1},\]
and by another use of standard exponential estimates on binomial coefficients~\cite[Example 11.1.3]{coverthomas2006} this is at most
\[\exp\left(-\big(\shent(q_1) + \cdots + \shent(q_d)\big)\cdot (n-w) + o_d(n)\right).\]

Multiplying by the cardinality upper bound~\eqref{eq:card-even}, we obtain
\[\tilde{\PP}^{M,\mathbf{s}}(P_{\boldsymbol{\eta}} = q) \le \exp\left(-\frac{1}{d}\big(\shent(q_1) + \cdots + \shent(q_d)\big)\cdot (n-w) + o_d(n)\right)\]
for any such coupling $q$.  Since this upper bound is independent of the particular coupling $q$, and the extra factor in~\eqref{eq:P-good} is sub-exponential, this gives the result.
\end{proof}


\begin{lemma}\label{lem:LOTP}
    Fix $k$ and $d$ as before, and define the function
    \[f(t,\alpha',\alpha'') := (1 - k^{-1})\shent\big((1-t)\alpha' + t\alpha''\big) - (1-t)(1 - d^{-1})\shent(\alpha') - t\shent(\alpha'')\]
    for $0 \le \alpha',\alpha'',t \le 1$. Then
    \[\tilde{\PP}^M\left(\bigcup_{\mathbf{y}:\ (|\mathbf{y}_1|,\dots,|\mathbf{y}_d|) \in \mathcal{R}}G_{\mathbf{y}}\right) \le \sum_{\mathbf{r} \in k\mathcal{R},\ \mathbf{s} \in \mathcal{S}(\mathbf{r})}\exp \left(-\sum_{i=1}^df_i(r_i,s_i)\cdot (n-w_i) + o_d(n)\right),\]
    where
    \[f_i(r_i,s_i) = f\left(\frac{w-w_i}{n-w_i},\frac{s_i}{n-w},\frac{r_i-s_i}{w-w_i}\right).\]
    \end{lemma}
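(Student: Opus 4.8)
The plan is to prove Lemma~\ref{lem:LOTP} by combining the union bound over $\mathbf{y}$ with the law of total probability over the intersection sizes $|Z_i|$, feeding in the two estimates established above. First I would start from the union bound
\[
\tilde{\PP}^M\!\left(\bigcup_{\mathbf{y}:\ (|\mathbf{y}_1|,\dots,|\mathbf{y}_d|) \in \mathcal{R}}G_{\mathbf{y}}\right) \le \sum_{\mathbf{y}:\ (|\mathbf{y}_1|,\dots,|\mathbf{y}_d|) \in \mathcal{R}}\tilde{\PP}^M(G_{\mathbf{y}}),
\]
and observe that for a fixed $\mathbf{y}$ the probability $\tilde{\PP}^M(G_{\mathbf{y}})$ depends only on the cardinalities $|\mathbf{y}_i|$, hence only on $\mathbf{r} = (r_1,\dots,r_d) = (k|\mathbf{y}_1|,\dots,k|\mathbf{y}_d|)$. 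The number of vectors $\mathbf{y}$ with a given cardinality vector is $\prod_i \binom{n-w_i}{r_i/k}$, which is at most $\exp\big(\sum_i \shent(r_i/(k(n-w_i)))(n-w_i) + o_d(n)\big)$; this is the combinatorial factor I need to carry along. So the union bound becomes a sum over $\mathbf{r} \in k\mathcal{R}$ of $\exp\big(\sum_i \shent(r_i/(k(n-w_i)))(n-w_i)\big)\cdot \tilde{\PP}^M(G_{\mathbf{y}})$ for a representative $\mathbf{y}$.

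Next I would expand $\tilde{\PP}^M(G_{\mathbf{y}})$ by conditioning on $\mathbf{s} = (|Z_1|,\dots,|Z_d|)$. By the law of total probability,
\[
\tilde{\PP}^M(G_{\mathbf{y}}) = \sum_{\mathbf{s} \in \mathcal{S}(\mathbf{r})} \tilde{\PP}^M(|Z_i| = s_i\ \forall i)\cdot \tilde{\PP}^M\big(G_{\mathbf{y}}\ \big|\ |Z_i| = s_i\ \forall i\big).
\]
For the first factor I would use the independence of the $Z_i$ (Lemma~\ref{lem:cond-dist}) together with the hypergeometric estimate~\eqref{eq:hypergeo}: each $|Z_i|$ is hypergeometric with parameters $n-w_i$, $n-w$, $r_i$, so
\[
\tilde{\PP}^M(|Z_i| = s_i\ \forall i) \le \exp\!\Big(\sum_i \big[\shent(\tfrac{s_i}{n-w})(n-w) + \shent(\tfrac{r_i-s_i}{w-w_i})(w-w_i) - \shent(\tfrac{r_i}{n-w_i})(n-w_i)\big] + o_d(n)\Big).
\]
For the second factor I would apply Lemma~\ref{lem:unbiased-estimate}, which gives $\exp\!\big(-\tfrac1d\sum_i \shent(\tfrac{s_i}{n-w})(n-w) + o_d(n)\big)$. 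Multiplying the combinatorial factor, the hypergeometric factor, and the conditional-probability factor, the $-\shent(r_i/(n-w_i))(n-w_i)$ term cancels the combinatorial $\shent(r_i/(k(n-w_i)))(n-w_i)$ term --- here I would need to check carefully that $\shent(r_i/(k(n-w_i)))(n-w_i) \le \shent(r_i/(n-w_i))(n-w_i)$, or more precisely track that writing $r_i = k|\mathbf{y}_i|$ makes the two entropy terms match up to the stated $o_d(n)$; since $r_i/k = |\mathbf{y}_i|$, we have $\binom{n-w_i}{|\mathbf{y}_i|} \le \exp(\shent(|\mathbf{y}_i|/(n-w_i))(n-w_i))$ and $|\mathbf{y}_i|/(n-w_i) = (r_i/k)/(n-w_i)$, so this is exactly the term to be cancelled against $\shent(r_i/(n-w_i))(n-w_i)$ in the hypergeometric bound --- wait, these are not literally equal, so I should instead use the bound $\binom{n-w_i}{|\mathbf{y}_i|} \le \binom{n-w_i}{r_i/k}$ and note $\shent(\tfrac{r_i}{k(n-w_i)}) \le \shent(\tfrac{r_i}{n-w_i})$ when $r_i \le (n-w_i)/2$, handling the complementary range by the symmetry $|\mathbf{y}_i| \leftrightarrow |\mathbf{e}_{E_i\setminus F_i}-\mathbf{y}_i|$ built into $\mathcal{R}$.

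After this cancellation, the exponent collected for each $i$ is
\[
-(1-\tfrac1d)\shent(\tfrac{s_i}{n-w})(n-w) + \shent(\tfrac{s_i}{n-w})(n-w) + \shent(\tfrac{r_i-s_i}{w-w_i})(w-w_i) - \big[\text{surviving combinatorial term}\big] - \tfrac1d\shent(\tfrac{s_i}{n-w})(n-w),
\]
and after rewriting in terms of the parameter $t_i := (w-w_i)/(n-w_i)$, $\alpha_i' := s_i/(n-w)$, $\alpha_i'' := (r_i-s_i)/(w-w_i)$, one checks that $(1-t_i)\alpha_i' + t_i\alpha_i''$ is precisely $r_i/(n-w_i)$ up to the appropriate normalization, so that $(n-w) = (1-t_i)(n-w_i)$ and $(w-w_i) = t_i(n-w_i)$, and the whole per-$i$ exponent collapses to $-f_i(r_i,s_i)\cdot(n-w_i)$ with $f$ as defined in the statement. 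Summing over $i$, pulling the $o_d(n)$ errors out of the sum (there are only $d$ of them and each is $o_d(n)$), and keeping the outer sum over $\mathbf{r}\in k\mathcal{R}$ and $\mathbf{s}\in\mathcal{S}(\mathbf{r})$ gives exactly the claimed bound. The main obstacle I anticipate is the bookkeeping of the algebraic identity that reduces the collected entropy terms to $-\sum_i f_i(r_i,s_i)(n-w_i)$: getting the normalizations $n-w = (1-t_i)(n-w_i)$ versus $n-w_i$ consistent across the three estimates, and making sure the combinatorial prefactor for counting $\mathbf{y}$'s with given $|\mathbf{y}_i|$ really does cancel the $-\shent(r_i/(n-w_i))(n-w_i)$ term from~\eqref{eq:hypergeo} rather than leaving a residual. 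Everything else is a routine assembly of the union bound, the law of total probability, Lemma~\ref{lem:unbiased-estimate}, and the standard binomial exponential estimates.
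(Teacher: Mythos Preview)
Your overall strategy is exactly the paper's: union bound over $\mathbf{y}$, then law of total probability over $\mathbf{s}=(|Z_1|,\dots,|Z_d|)$, combining the hypergeometric estimate~\eqref{eq:hypergeo} with Lemma~\ref{lem:unbiased-estimate}. The gap is in your combinatorial count of the $\mathbf{y}$'s. You write that the number of $\mathbf{y}$ with given $|\mathbf{y}_i|=r_i/k$ is $\prod_i\binom{n-w_i}{r_i/k}$, but recall that $\mathbf{y}_i \in \ZZ_2^{E_i\setminus F_i}$ and $|E_i\setminus F_i| = n/k - w_i/k = (n-w_i)/k$, not $n-w_i$. So the correct count is
\[
\prod_{i=1}^d \binom{(n-w_i)/k}{r_i/k} \le \exp\!\left(\sum_{i=1}^d \shent\!\left(\frac{r_i}{n-w_i}\right)\frac{n-w_i}{k}\right).
\]
This is the source of the trouble you flag in your ``wait, these are not literally equal'' aside. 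With the correct count there is no cancellation to arrange and no symmetry patch needed: the combinatorial term $+\tfrac1k\shent(r_i/(n-w_i))(n-w_i)$ combines with the $-\shent(r_i/(n-w_i))(n-w_i)$ from~\eqref{eq:hypergeo} to leave exactly $-(1-k^{-1})\shent(r_i/(n-w_i))(n-w_i)$, which is the first term of $-f_i(r_i,s_i)(n-w_i)$ once you substitute $t_i=(w-w_i)/(n-w_i)$ and verify $(1-t_i)\alpha_i'+t_i\alpha_i'' = r_i/(n-w_i)$. The remaining two terms of $f$ come directly from the hypergeometric contribution $(1-t_i)\shent(\alpha_i')$ and $t_i\shent(\alpha_i'')$ and the $-\tfrac1d(1-t_i)\shent(\alpha_i')$ from Lemma~\ref{lem:unbiased-estimate}, so the algebra closes cleanly.

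Your attempted patch (bounding $\shent(r_i/(k(n-w_i)))\le \shent(r_i/(n-w_i))$ on half the range and invoking a symmetry of $\mathcal{R}$) would not work anyway: the inequality goes the wrong direction for the exponent of the count (you want an \emph{upper} bound on the number of $\mathbf{y}$'s, so you would be overcounting, but then the resulting exponent would not match $f$), and $\mathcal{R}$ has no such built-in symmetry---the $\min$ condition is only required for \emph{some} $i$. Once you correct the size of $E_i\setminus F_i$, the bookkeeping you identified as the main obstacle disappears entirely.
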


\begin{proof}
For each $\mathbf{y}$, we let $r_i := k|\mathbf{y}_i|$ and bound $\tilde{\PP}^M(G_{\mathbf{y}})$ from above using~\eqref{eq:hypergeo}, Lemma~\ref{lem:unbiased-estimate} and the law of total probability.  The resulting upper bound is
\begin{multline*}
  \tilde{\PP}^M\left(G_{\mathbf{y}}\right) \le  \sum_{\mathbf{s} \in \mathcal{S}(\mathbf{r})}\exp\Bigg(-\frac{1}{d}\sum_{i=1}^d\shent\left(\frac{s_i}{n-w}\right) (n-w) + \sum_{i=1}^d\shent\left(\frac{s_i}{n-w}\right)(n-w) \\
    + \sum_{i=1}^d\shent\left(\frac{r_i-s_i}{w-w_i}\right)(w-w_i) - \sum_{i=1}^d\shent\left(\frac{r_i}{n-w_i}\right)(n-w_i) + o_d(n)\Bigg).
\end{multline*}

On the other hand, the number of vectors $\mathbf{y} \in \ZZ_2^{E\setminus F}$ with given weights $|\mathbf{y}_i| = r_i/k$ is at most
\[\exp\left(\sum_{i=1}^d\shent\left(\frac{r_i}{n-w_i}\right)\frac{n-w_i}{k}\right).\]
Therefore the sum of $\tilde{\PP}^M(G_{\mathbf{y}})$ over all $\mathbf{y}$ satisfying $(|\mathbf{y}_1|,\dots,|\mathbf{y}_d|) \in \mathcal{R}$ is at most
\begin{equation}\label{eq:union-bound}
\sum_{\mathbf{r} \in k\mathcal{R},\ \mathbf{s} \in \mathcal{S}(\mathbf{r})}\exp \left(-\sum_{i=1}^df_i(r_i,s_i)\cdot (n-w_i) + o_d(n)\right),
\end{equation}
where
\begin{align*}
f_i(r_i,s_i) &:= -\frac{1}{k}\shent\left(\frac{r_i}{n-w_i}\right) + \frac{1}{d}\cdot \frac{n-w}{n-w_i}\cdot \shent\left(\frac{s_i}{n-w}\right) \\
    &\qquad -\frac{n-w}{n-w_i}\cdot \shent\left(\frac{s_i}{n-w}\right) - \frac{w-w_i}{n-w_i}\cdot \shent\left(\frac{r_i-s_i}{w-w_i}\right) + \shent\left(\frac{r_i}{n-w_i}\right)\\
    &= \left(1 - \frac{1}{k}\right)\cdot \shent\left(\frac{r_i}{n-w_i}\right)\\
    &\qquad -\left(1-\frac{1}{d}\right)\cdot \frac{n-w}{n-w_i}\cdot \shent\left(\frac{s_i}{n-w}\right) - \frac{w-w_i}{n-w_i}\cdot \shent\left(\frac{r_i-s_i}{w-w_i}\right)\\
    &= f\left(\frac{w-w_i}{n-w_i},\frac{s_i}{n-w},\frac{r_i-s_i}{w-w_i}\right). \qedhere
\end{align*}
\end{proof}

We are nearly ready to prove Proposition~\ref{prop:few-extra-cancellations}.  For that proof, we must combine Lemma~\ref{lem:LOTP} with an elementary but rather fiddly estimate.  That estimate refers to the functions
\[\gamma_1(t) := \frac{1}{(\log(1/t))^{1/3}} \quad \hbox{and} \quad \gamma_2(t) := \frac{1}{(\log(1/t))^{2/3}},\]
both for $0 < t < 1$. The exponents $1/3$ and $2/3$ are not particularly special here: all we really need is the ordering $0 < 1/3 < 2/3 < 1$. The next lemma gives a collection of simple bounds on the quantity $f(t,\alpha',\alpha'')$ for different ranges of the arguments.  Each part requires that $t$ is sufficiently small in terms of $d$ and $k$. The quantities $t_0^{\mathrm{(a)}}, t_0^{\mathrm{(b)}}$, $t_0^{\mathrm{(c)}}$ and  $t_0^{\mathrm{(d)}}$ are unspecified positive numbers that are sufficiently small in terms of only $d$ and $k$. Recall we assume $k > d \geq 3$.

\begin{lemma}\label{lem:precalc}
Write $\alpha := (1-t)\alpha' + t\alpha''$. The function $f$ from Lemma~\ref{lem:LOTP} satisfies the following.
\begin{enumerate}
\item[a.] If $t < t_0^{\mathrm{(a)}}$ and either $\alpha'' \le \alpha' \le 1/2$ or $\alpha'' \ge \alpha' > 1/2$, then
\[f(t,\alpha',\alpha'') \gtrsim_{d,k}
\shent(\alpha).\]
(This includes the assertion that the left-hand side is non-negative; see Section~\ref{sec:notation}.)
\item[b.] If $t < t_0^{\mathrm{(b)}}$ and $t\gamma_p(t) \le \alpha' \le 1 - t\gamma_p(t)$, then
\[f(t,\alpha',\alpha'') \gtrsim_{d,k} t\cdot (\log(1/t))^{1-p/3}.\]
\item[c.] If $t < t_0^{\mathrm{(c)}}$ and $\gamma_p(t) \le \alpha'' \le 1 - \gamma_p(t)$, then
\[f(t,\alpha',\alpha'') \gtrsim_{d,k} t\cdot (\log(1/t))^{1-p/3}.\]
\item[d.] If $t < t_0^{\mathrm{(d)}}$ and
\[ [\ \alpha' < t\gamma_p(t)\ \hbox{or}\ \alpha' > 1-t\gamma_p(t) ]\ \hbox{and}\ [\ \alpha'' < \gamma_p(t)\ \hbox{or}\ \alpha'' > 1 - \gamma_p(t)\ ],\]
then
\[ \max\{ 0, -f(t,\alpha',\alpha'')\} \lesssim_{d,k}  t\cdot (\log(1/t))^{1-p/3}.\]
(The maximum is used here to maintain the non-negativity convention for $\lesssim$.)
\end{enumerate}
\end{lemma}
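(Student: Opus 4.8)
The plan is to prove each of the four bounds (a)--(d) by direct analysis of the single-variable Taylor behaviour of the Shannon entropy function $\shent$ near $0$ and $1$, combined with the algebraic identity
\[f(t,\alpha',\alpha'') = (1-k^{-1})\shent(\alpha) - (1-t)(1-d^{-1})\shent(\alpha') - t\shent(\alpha''),\]
where $\alpha = (1-t)\alpha' + t\alpha''$. The guiding heuristic is: when $t$ is tiny, $\alpha \approx \alpha'$, so the two leading terms nearly combine into $[(1-k^{-1}) - (1-d^{-1})]\shent(\alpha')$, and since $k > d$ we have $(1-k^{-1}) - (1-d^{-1}) = d^{-1} - k^{-1} > 0$; this positive ``spectral gap'' is what makes $f$ positive in the bulk. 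The error terms — from $\shent(\alpha) - \shent(\alpha')$, from the factor $(1-t)$ versus $1$, and from the $-t\shent(\alpha'')$ term — must be controlled against this gap, and the various regimes in (a)--(d) are exactly the places where one or another of these errors threatens to dominate.

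For part (a), I would first reduce to the case $\alpha',\alpha'' \le 1/2$ by the symmetry $\shent(x) = \shent(1-x)$ (replacing each argument by its complement flips the second listed sub-case to the first). In this regime $\alpha'' \le \alpha' \le 1/2$ forces $\alpha \le \alpha'$, hence (since $\shent$ is increasing on $[0,1/2]$) $\shent(\alpha) \le \shent(\alpha')$; more usefully, concavity of $\shent$ gives $\shent(\alpha) \ge (1-t)\shent(\alpha') + t\shent(\alpha'')$, so
\[f(t,\alpha',\alpha'') \ge (1-k^{-1})[(1-t)\shent(\alpha') + t\shent(\alpha'')] - (1-t)(1-d^{-1})\shent(\alpha') - t\shent(\alpha'').\]
The $t\shent(\alpha'')$ terms have coefficient $(1-k^{-1}) - 1 = -k^{-1} < 0$; bound $\shent(\alpha'') \le \shent(\alpha')$ (valid here) so that all terms become proportional to $\shent(\alpha')$, with net coefficient $(1-t)(d^{-1} - k^{-1}) - t k^{-1}$, which for $t$ small is $\gtrsim_{d,k} 1$. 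Finally one notes $\shent(\alpha') \gtrsim_{d,k} \shent(\alpha)$ for $\alpha \le \alpha' \le 1/2$ with $t$ small, because $\alpha \ge (1-t)\alpha'$ and $\shent$ is essentially of the form $x\log(1/x)$ near $0$, so a bounded multiplicative change in the argument changes $\shent$ by a bounded factor when the argument is bounded away from $1/2$ — one handles $\alpha'$ near $1/2$ separately, where $\shent(\alpha') \asymp 1 \asymp \shent(\alpha)$ trivially.

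For parts (b), (c), (d) — the ``bulk in $\alpha'$'', ``bulk in $\alpha''$'', and ``both extreme'' regimes — the key estimate is the quantitative expansion $\shent(x) = x\log(1/x) + O(x)$ as $x\to 0^+$, together with $\shent(\alpha) - \shent(\alpha') = O(t\alpha'' \cdot \log(1/\alpha)) + O(t\alpha')$-type bounds obtained from the mean value theorem applied to $\shent$ on the interval between $\alpha'$ and $\alpha$ (whose length is $t|\alpha'' - \alpha'| \le t$). In regime (b), where $t\gamma_p(t) \le \alpha' \le 1 - t\gamma_p(t)$, the dominant positive contribution is $(d^{-1}-k^{-1})\shent(\alpha') \gtrsim_{d,k} \shent(t\gamma_p(t)) \asymp t\gamma_p(t)\log(1/t)$ — and since $\gamma_p(t) = (\log(1/t))^{-p/3}$ this is exactly $t(\log(1/t))^{1-p/3}$, as claimed; one then checks the three error terms are all $o$ of this, using that $\alpha'$ is bounded below by $t\gamma_p(t)$ and that $t$ itself (times at most $\log\lvert\A\rvert$-type constants) is much smaller. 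Regime (c) is similar but the positive contribution comes instead from needing $t\shent(\alpha'') \lesssim$ the gap term: when $\alpha''$ is in the bulk $\shent(\alpha'') \asymp 1$, so $-t\shent(\alpha'') \gtrsim -t$, and one shows the remaining structure forces $f \gtrsim t(\log(1/t))^{1-p/3}$; here $\alpha'$ could be small, and one uses that $\shent(\alpha) \ge t\shent(\alpha'')\cdot(\text{concavity bound})$ roughly, so the $(1-k^{-1})$ coefficient on $\shent(\alpha)$ salvages a fraction $(1-k^{-1})t\shent(\alpha'') \ge t\shent(\alpha'')$-sized piece... wait, $(1-k^{-1}) < 1$, so the net is $(1-k^{-1}) - 1 < 0$ again — the point is that $\shent(\alpha) \ge \shent((1-t)\alpha') + t\shent(\alpha'')$-type concavity plus the $\alpha'$-bulk-or-not dichotomy; I would split into $\alpha'$ small (use $\shent(\alpha)\gtrsim \shent(t\alpha'')\asymp t\log(1/t)$, huge) versus $\alpha'$ large (reduces to regime (b)). Regime (d) is a pure lower-bound-on-$f$-can-be-mildly-negative statement: here both $\alpha'$ and $\alpha''$ are within $t\gamma_p(t)$, resp. $\gamma_p(t)$, of $\{0,1\}$, so $\shent(\alpha'), \shent(\alpha''), \shent(\alpha)$ are all small and one just needs $-f \le$ (sum of three small entropies, each $\lesssim t(\log(1/t))^{1-p/3}$ after plugging in the constraints); the worst term is $t\shent(\alpha'')$ with $\shent(\alpha'') \lesssim \gamma_p(t)\log(1/\gamma_p(t)) \asymp \gamma_p(t)\log(1/t) = (\log(1/t))^{1-p/3}$, giving exactly the stated bound.

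**The main obstacle** is the bookkeeping in regime (c) and at the boundaries between regimes: one must be careful that the claimed bound $t(\log(1/t))^{1-p/3}$ is simultaneously (i) small enough to be a valid \emph{lower} bound on the genuinely-positive $f$ in (b), (c) and (ii) large enough to absorb $f$'s possible negativity in (d), and that the three error terms — $|\shent(\alpha)-\shent(\alpha')|$, $t(1-d^{-1})\shent(\alpha')$, and $t\shent(\alpha'')$ — never conspire. The reason the exponents $1/3, 2/3$ (rather than, say, $0,1$) are used is precisely to leave headroom: $\gamma_1 \gg \gamma_2 \gg$ (anything polynomial in $t$), so the regimes in (b)/(c) with $p=1$ overlap generously with the extreme regime in (d) with $p=2$, and in the eventual application (the proof of Proposition~\ref{prop:few-extra-cancellations}) one sums over $\mathbf{r}, \mathbf{s}$ and needs the per-index exponent $-f_i(r_i,s_i)(n-w_i)$ to beat the entropy $\shent(r_i/(n-w_i))(n-w_i)/k$ counting the vectors $\mathbf{y}_i$ of that weight — and part (a) is what supplies that, while (b)--(d) handle the $s_i$-summation for a fixed $\mathbf{r}$. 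I would organize the proof as: first record the two lemmas on $\shent$ near the endpoints (expansion and mean-value bound), then dispatch (a) via concavity, then (d) via crude summation of endpoint entropies, and finally (b) and (c) together by a case split on whether $\alpha'$ is in its own bulk.
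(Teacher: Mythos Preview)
Your proposal is essentially correct and tracks the paper's proof closely, with a few differences worth flagging.

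For part (a), your concavity-then-monotonicity argument is a valid alternative to the paper's. The paper instead applies concavity directly to the weighted sum $(1-t)(1-d^{-1})\shent(\alpha') + t\shent(\alpha'')$, absorbing the unequal weights into the convex combination and observing that the resulting argument is skewed towards $\alpha''$, hence (by monotonicity of $\shent$ on $[0,1/2]$) bounded by $(1-d^{-1}+td^{-1})\shent(\alpha)$ in one step. In your version, the final step ``$\shent(\alpha') \gtrsim \shent(\alpha)$'' is actually trivial: $\alpha \le \alpha' \le 1/2$ forces $\shent(\alpha) \le \shent(\alpha')$, so no separate treatment near $1/2$ is needed.

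For part (b), your mean-value approach works but needs more care than you indicate. The naive bound $|\shent(\alpha) - \shent(\alpha')| \le t\cdot\sup|\shent'|$ could give $O(t\log(1/t))$, which would swamp $t(\log(1/t))^{1-p/3}$. The point is that when this difference is \emph{negative} (the dangerous direction, $\alpha'' < \alpha'$) one has $\alpha \ge (1-t)\alpha'$, so the relevant interval has length $\le t\alpha'$ and the derivative there is $\lesssim \log(1/\alpha')$, giving $\shent(\alpha') - \shent(\alpha) \lesssim t\,\shent(\alpha')$, which is absorbed by the main term $(d^{-1}-k^{-1})\shent(\alpha')$. The paper packages exactly this as the single inequality $\shent(\alpha) \ge \frac{1-d^{-1}+c_1}{1-k^{-1}}\,\shent(\alpha')$ for small $t$ (with $c_1 = (d^{-1}-k^{-1})/2$), which is the cleanest formulation.

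For part (c), your case split is correct and in fact slightly tighter than the paper's: after reducing via (b) to $\alpha' < t\gamma_p(t)$, the hypothesis $\alpha'' \ge \gamma_p(t) > t\gamma_p(t) > \alpha'$ forces $\alpha'' > \alpha'$ automatically, so the paper's sub-case ``$\alpha' \ge \alpha''$'' (dispatched there via part (a)) is vacuous. Your ``$\shent(\alpha) \gtrsim \shent(t\alpha'')$'' move is then exactly what the paper does.

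For part (d), your estimate of $t\shent(\alpha'')$ contains a slip: $\log(1/\gamma_p(t)) = \tfrac{p}{3}\log\log(1/t)$, not $\asymp \log(1/t)$. This is harmless, since the crude bound $t\shent(\alpha'') \le t\log 2 = O(t)$ already suffices, and $O(t) \lesssim t(\log(1/t))^{1-p/3}$ for small $t$ when $p < 3$; the paper uses exactly this.
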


Observe that at least one of the parts (b), (c) or (d) must hold whenever $t < \min\{ t_0^{\mathrm{(b})},t_0^{\mathrm{(c)}},t_0^{\mathrm{(d)}}\}$.


\begin{proof}
Each part of this lemma is symmetric under replacing $(\alpha',\alpha'')$ with ${(1-\alpha',1-\alpha'')}$, and so is the function $f$.  We therefore assume that $\alpha' \le 1/2$ throughout the proof.

\vspace{7pt}

\emph{Part (a).}\quad By the concavity of $\shent$, we have
\begin{align*}
    &(1-t)(1 - d^{-1})\shent(\alpha') + t\shent(\alpha'')  \\
    &\le \big((1-t)(1 - d^{-1}) + t\big)\shent\left(\frac{(1-t)(1 - d^{-1})}{(1-t)(1 - d^{-1}) + t}\alpha' + \frac{t}{(1-t)(1 - d^{-1}) + t}\alpha''\right).
    \end{align*}
    Since $1-d^{-1} < 1$, the convex combination inside the argument of $\shent$ here skews more towards $\alpha''$ than does the convex combination that gives $\alpha$.  Therefore, since $\alpha'' \le \alpha' \le 1/2$ and $\shent$ is increasing on $[0,1/2]$, the right-hand side above is bounded above by
    \[\big((1-t)(1 - d^{-1}) + t\big)\shent(\alpha) = (1 - d^{-1} + td^{-1})\shent(\alpha).\]
Therefore
\[f(t,\alpha',\alpha'') \ge \big((1-k^{-1}) - (1-d^{-1} + td^{-1})\big)\shent(\alpha) = (d^{-1} - k^{-1} - td^{-1})\shent(\alpha),\]
which is $\gtrsim_{d,k} \shent(\alpha)$ provided $t < t_0^{\mathrm{(a)}}$.

\vspace{7pt}

\emph{Part (b).}\quad For this part our assumptions are now $\alpha' \le 1/2$ and $\alpha' \ge t\gamma_p(t)$. Let $c_1 := (d^{-1} - k^{-1})/2 > 0$. Since $\alpha'\le 1/2$ and $\shent$ is continuous on $[0,1]$ and increasing on $[0,1/2]$, we have that
\begin{equation}\label{eq:step3.1}
\shent((1-t)\alpha' + t\alpha'') \ge \frac{1-d^{-1} + c_1}{1-k^{-1}}\shent(\alpha') \quad \hbox{whenever}\ t < t_0^{\mathrm{(b)}}.
\end{equation}

If $\alpha' \ge t\gamma_p(t)$ and $t < t_0^{\mathrm{(b)}}$, then
\begin{eqnarray}
\shent(\alpha') &\ge& \shent(t\gamma_p(t)) \ge t\cdot \gamma_p(t) \cdot \big(\log(1/\gamma_p(t)) + \log(1/t)\big) \label{eq:step3.2} \\
&\ge& t\cdot \gamma_p(t) \cdot \log(1/t) = t \cdot (\log(1/t))^{1-p/3}. \nonumber
\end{eqnarray}
Combining~\eqref{eq:step3.1} and~\eqref{eq:step3.2}, we obtain
\begin{align*}
f(t,\alpha',\alpha'') &\ge (1 - d^{-1} + c_1)\shent(\alpha') - (1-t)(1-d^{-1})\shent(\alpha') - t\shent(\alpha'')\\
&\ge c_1\shent(\alpha') - t\shent(\alpha'')\\
&\ge c_1\cdot t\cdot (\log(1/t))^{1-p/3} - \log 2\cdot t\\
&\gtrsim_{d,k} t\cdot (\log(1/t))^{1-p/3} \qquad \qquad \qquad \hbox{if}\ t < t_0^{\mathrm{(b)}}.
\end{align*}

\vspace{7pt}

\emph{Part (c).}\quad For this part our assumptions are now $\alpha' \le 1/2$ and $\alpha'' \ge \gamma_p(t)$.  We may also assume that $\alpha' < t\gamma_p(t)$, for otherwise part (b) already gives the desired bound.

If, in addition, we have $\alpha' \ge \alpha''$, then part (a) gives
\[f(t,\alpha',\alpha'')\gtrsim_{d,k} \shent(\alpha),\]
and this in turn satisfies
\begin{equation}\label{eq:Halpha''}
\shent(\alpha) \ge \shent(t\alpha'') \ge t\cdot \alpha''\cdot \log(1/t) \ge t\cdot \gamma_p(t) \cdot \log(1/t) = t\cdot (\log(1/t))^{1-p/3}.
\end{equation}

So for the rest of this part assume in addition that $\alpha' \le \alpha''$.  Then for sufficiently small $t$ we must have the ordering $\alpha' \le \alpha < 1/2$, and so
\begin{align*}
    (1-t)(1-d^{-1})\shent(\alpha') + t\shent(\alpha'') &\le (1-d^{-1})\shent(\alpha') + t\shent(\alpha'')\\
    &\le (1-d^{-1})\shent(\alpha) + t\shent(\alpha'').
\end{align*}
Therefore in this case it suffices to show that
\[(d^{-1} - k^{-1})\shent(\alpha) - t\shent(\alpha'') \gtrsim_{d,k} t\cdot (\log(1/t))^{1-p/3}.\]
Since the second left-hand term here is $O(t)$, this follows by another use of~\eqref{eq:Halpha''}.

\vspace{7pt}

\emph{Part (d).}\quad For this case we simply neglect the positive term in $f$ entirely.  If
\[t < t_0^{\mathrm{(d)}}, \quad \alpha' < t\cdot \gamma_p(t) \quad \hbox{and}\quad \alpha'' < \gamma_p(t),\]
then
\[\shent(\alpha') \lesssim_{d,k} t\cdot \gamma_p(t)\cdot \log\frac{1}{t} = t\cdot (\log(1/t))^{1-p/3}\]
and
\[t \shent(\alpha'') = O(t) \lesssim_{d,k} t\cdot (\log(1/t))^{1-p/3}.\]
In the cases where $\alpha' > 1 - t \cdot \gamma_p(t)$ or $\alpha'' > 1 - \gamma_p(t)$ the same estimates hold, by the symmetry $\shent(x) = \shent(1-x)$. Adding these estimates gives the conclusion.
\end{proof}

\begin{cor}\label{cor:precalc}
Fix $K\ge 1$ and $\delta > 0$, and let the other notation be as for Lemma~\ref{lem:precalc}.  If $\delta$ is sufficiently small in terms of $d$, $k$ and $K$, and if
\[t \le K\delta \quad \hbox{and} \quad \delta\cdot \gamma_1(\delta) \le \alpha \le 1 -  \delta\cdot \gamma_1(\delta),\]
then
\[f(t,\alpha',\alpha'') \gtrsim_{d,k,K} \delta \cdot (\log(1/\delta))^{2/3}\]
(irrespective of any further bounds on $\alpha'$ and $\alpha''$).
\end{cor}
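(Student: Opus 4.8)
The plan is to prove Corollary~\ref{cor:precalc} directly, sidestepping the case analysis of Lemma~\ref{lem:precalc}. The hypothesis $\delta\gamma_1(\delta)\le\alpha\le1-\delta\gamma_1(\delta)$ already forces $\shent(\alpha)$ to be of order at least $\delta(\log(1/\delta))^{2/3}$, whereas the discrepancy between $f(t,\alpha',\alpha'')$ and its `main term' $(d^{-1}-k^{-1})\shent(\alpha)$ will turn out to be only of order $t\le K\delta$, which is negligible against $\delta(\log(1/\delta))^{2/3}$ once $\delta$ is small. So the corollary reduces to making these two comparisons precise.

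First I would reduce to the case $\alpha\le1/2$: both $f$ and the hypotheses and conclusion are invariant under replacing $(\alpha',\alpha'')$ by $(1-\alpha',1-\alpha'')$, which sends $\alpha$ to $1-\alpha$, using $\shent(x)=\shent(1-x)$. Assuming $\alpha\le1/2$, monotonicity of $\shent$ on $[0,1/2]$ and the elementary bound $\shent(x)\ge x\log(1/x)$ give
\[\shent(\alpha)\ \ge\ \shent(\delta\gamma_1(\delta))\ \ge\ \delta\gamma_1(\delta)\,\log\frac{1}{\delta\gamma_1(\delta)}\ \ge\ \delta\,(\log(1/\delta))^{-1/3}\,\log(1/\delta)\ =\ \delta\,(\log(1/\delta))^{2/3},\]
the last inequality because $\log(1/(\delta\gamma_1(\delta)))=\log(1/\delta)+\tfrac13\log\log(1/\delta)\ge\log(1/\delta)$ for small $\delta$.

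Next I would decompose $f$ by adding and subtracting $(1-d^{-1})\shent(\alpha)$, obtaining
\[f(t,\alpha',\alpha'')=(d^{-1}-k^{-1})\shent(\alpha)+(1-d^{-1})\big(\shent(\alpha)-\shent(\alpha')\big)+t\big((1-d^{-1})\shent(\alpha')-\shent(\alpha'')\big).\]
The last bracket is $\ge-\shent(\alpha'')\ge-\log2$, so the last term is $\ge-K\delta\log2$. For the middle term, $\alpha=(1-t)\alpha'+t\alpha''\ge(1-t)\alpha'$ gives $\alpha'\le\alpha/(1-t)$; plugging this into the concavity inequality $\shent(\lambda x)\ge\lambda\shent(x)$ (valid for $\lambda\in[0,1]$ since $\shent(0)=0$) with $x=\alpha'$, $\lambda=\alpha/\alpha'$ shows $\shent(\alpha')\le\tfrac{\alpha'}{\alpha}\shent(\alpha)\le\tfrac{1}{1-t}\shent(\alpha)$ whenever $\alpha'\ge\alpha$, while $\shent(\alpha')\le\shent(\alpha)$ is immediate when $\alpha'\le\alpha\le1/2$, and a short separate check disposes of the last possibility $\alpha\le1/2<\alpha'$. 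In every case $\shent(\alpha)-\shent(\alpha')\ge-\tfrac{t}{1-t}\shent(\alpha)\ge-2t\log2$ once $t\le1/2$. Combining the three contributions, $f\ge(d^{-1}-k^{-1})\shent(\alpha)-3K\log2\cdot\delta$, and inserting the lower bound for $\shent(\alpha)$ gives $f\ge(d^{-1}-k^{-1})\,\delta(\log(1/\delta))^{2/3}-3K\log2\cdot\delta$. Since $(\log(1/\delta))^{2/3}\to\infty$, for all $\delta$ small enough in terms of $d,k,K$ the subtracted term is at most half the first, so $f\gtrsim_{d,k}\delta(\log(1/\delta))^{2/3}$, hence a fortiori $\gtrsim_{d,k,K}\delta(\log(1/\delta))^{2/3}$.

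The step I expect to be the main obstacle is the entropy comparison $\shent(\alpha')\le\shent(\alpha)/(1-t)$. Because $\shent'$ is unbounded near $0$ and $1$, a crude Lipschitz bound on $|\shent(\alpha)-\shent(\alpha')|$ in terms of $|\alpha-\alpha'|\lesssim t$ is too weak; the point is rather that the constraint $(1-t)\alpha'\le\alpha$ pins $\alpha$ and $\alpha'$ to within a multiplicative factor $(1-t)^{-1}=1+O(t)$ of each other, and a multiplicative perturbation of the argument of the binary entropy produces only a multiplicative perturbation of its value, which is precisely what $\shent(\lambda x)\ge\lambda\shent(x)$ encodes. The edge case $\alpha\le1/2<\alpha'$ (where that argument does not literally apply) needs its own short computation: there the constraint forces both $\alpha$ and $1-\alpha'$ to within $O(t)$ of $1/2$, where $\shent$ is flat, so $|\shent(\alpha)-\shent(\alpha')|=O(t^2)$ and the estimate is even better. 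Everything else is elementary, and in particular the argument does not invoke Lemma~\ref{lem:precalc}.
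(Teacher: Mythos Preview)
Your proof is correct and takes a cleaner route than the paper's. The paper reduces by symmetry to $\alpha'\le 1/2$ (rather than $\alpha\le 1/2$) and then splits into the two cases $\alpha''\le\alpha'$ and $\alpha''\ge\alpha'$: in the first it invokes part (a) of Lemma~\ref{lem:precalc} to get $f\gtrsim_{d,k}\shent(\alpha)$ directly, while in the second it uses the additive constraint $\alpha'\le\alpha\le\alpha'+K\delta$ to argue that $(1-d^{-1})\shent(\alpha')\le\bigl(1-\tfrac{d^{-1}+k^{-1}}{2}\bigr)\shent(\alpha)$ for small $\delta$, and then proceeds as you do. Your approach replaces this case analysis by the single concavity bound $\shent(\lambda x)\ge\lambda\shent(x)$, which yields the multiplicative control $\shent(\alpha')\le\shent(\alpha)/(1-t)$ uniformly over all $\alpha'\ge\alpha$; combined with monotonicity for $\alpha'\le\alpha\le 1/2$, this dispenses with the case split and with any appeal to Lemma~\ref{lem:precalc}. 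One minor redundancy: your ``edge case'' $\alpha\le 1/2<\alpha'$ automatically has $\alpha'>\alpha$ and is therefore already covered by the concavity argument, so the separate $O(t^2)$ check, while correct, is unnecessary.
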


\begin{proof}
By the same symmetry as for Lemma~\ref{lem:precalc}, we may assume that $\alpha' \le 1/2$.

Having done so, suppose first that $\alpha'' \le \alpha'$.  Then part (a) of Lemma~\ref{lem:precalc} gives
\[f(t,\alpha',\alpha'') \gtrsim_{d,k} \shent(\alpha),\]
and our assumed range for $\alpha$ gives
\begin{equation}\label{eq:Halpha-below}
\shent(\alpha) \ge\shent(\delta\cdot \gamma_1(\delta)) \ge \delta\cdot (\log(1/\delta))^{2/3},
\end{equation}
giving a lower bound of the desired form.

So now suppose that $\alpha' \le 1/2$ and $\alpha'' \ge \alpha'$.  Since $t \le K\delta$, it follows that
\[\alpha' \le \alpha\le (1-K\delta)\alpha' + K\delta.\]
Provided $\delta$ is sufficiently small in terms of $d$ and $k$, this range of possible values for $\alpha'$ and $\alpha$ implies that
\[(1-d^{-1})\shent(\alpha') \le \left(1-\frac{d^{-1}+k^{-1}}{2}\right)\shent(\alpha)\]
(noting that the constant in front of the entropy on the left is slightly smaller than the constant in front of the entropy on the right). Re-arranging, this implies that
\[(1-k^{-1})\shent(\alpha) - (1-d^{-1})\shent(\alpha') \ge \frac{d^{-1}-k^{-1}}{2}\shent(\alpha) \ge \frac{d^{-1}-k^{-1}}{2}\cdot \delta\cdot (\log(1/\delta))^{2/3},\]
using again the lower bound~\eqref{eq:Halpha-below}.  This now gives
\begin{align*}
f(t,\alpha',\alpha'') &\ge \frac{d^{-1}-k^{-1}}{2}\cdot \delta\cdot (\log(1/\delta))^{2/3} - t\big(\shent(\alpha'') - (1-d^{-1})\shent(\alpha')\big)\\
&\ge \frac{d^{-1}-k^{-1}}{2}\cdot \delta\cdot (\log(1/\delta))^{2/3} - \log 2\cdot K\cdot \delta.
\end{align*}
This implies the desired lower bound on $f$ for all sufficiently small $\delta$.
\end{proof}

\begin{proof}[Proof of Proposition~\ref{prop:few-extra-cancellations}]
Fix $K$ and $\varepsilon$ and also $F_i$, $W_i$, $w_i$ and $M$ as in the statement of Proposition~\ref{prop:few-extra-cancellations}, and suppose that~\eqref{eq:small-enough} holds.  We prove the convergence to zero of the required probabilities provided that $\delta$ is small enough in terms of $d$, $k$, $\varepsilon$ and $K$.

First, assume $\delta$ is small enough that
\begin{equation}\label{eq:delta-and-eps}
\gamma_1(\delta) \le \varepsilon,
\end{equation}
and also small enough in terms of $d$, $k$ and $K$ that Corollary~\ref{cor:precalc} applies.

By Lemma~\ref{lem:LOTP}, and since a small choice of $\delta$ ensures that $n-w_i \ge n/2$ for each $i$, it suffices to show that, if $\delta$ is sufficiently small, then the negative exponent
\begin{equation}\label{eq:exponent}
\sum_{i=1}^d f\left(\frac{w-w_i}{n-w_i},\frac{s_i}{n-w},\frac{r_i-s_i}{w-w_i}\right)    
\end{equation}
is bounded below by a positive quantity that is independent of $\mathbf{r} \in k\mathcal{R}$ and $\mathbf{s} \in \mathcal{S}(\mathbf{r})$.

So fix $\mathbf{r}$ and $\mathbf{s}$, and let
\[(t_i,\alpha'_i,\alpha''_i) := \left(\frac{w-w_i}{n-w_i},\frac{s_i}{n-w},\frac{r_i-s_i}{w-w_i}\right)\]
and
$$\alpha_i = (1-t_i)\alpha'_i + t_i \alpha''_i =\frac{r_i}{n-w_i}.$$
For each $i$ this implies that
\[t_i \le \frac{w}{n} \le K \delta.\]
Assume $\delta$ is also small enough that $K\delta < \min\{t_0^{\mathrm{(a)}}, t_0^{\mathrm{(b)}},t_0^{\mathrm{(c)}},t_0^{\mathrm{(d)}}\}$, so each $t_i$ is also less than this minimum.

Classify the indices $i \in [d]$ into two subsets:
\begin{align*}
I_1 &= \{i \in [d]:\ \varepsilon\delta \le r_i/(n-w_i) \le 1 - \varepsilon\delta\}\\
I_2 &= [d] \setminus I_1 .
\end{align*}
The definition of $\mathcal{R}$ implies that $I_1 \neq \emptyset$.  Consider the terms in~\eqref{eq:exponent} for indices in these subsets:
\begin{itemize}
 \item If $i \in I_1$, then, by our choice of $\delta$ in~\eqref{eq:delta-and-eps} and since $t_i \le K\delta$, Corollary~\ref{cor:precalc} gives
\[f\left(\frac{w-w_i}{n-w_i},\frac{s_i}{n-w},\frac{r_i-s_i}{w-w_i}\right) \ge C_1 \cdot \delta \cdot (\log(1/\delta))^{2/3}\]
for some positive constant $C_1$ depending only on $d$, 
$k$ and $K$.

\item Next, for $i\in I_2$, we use that at least one of parts (b), (c), and (d) of Lemma~\ref{lem:precalc} must hold, which gives that
\begin{align*}
    \max\left\{ 0, \, -f\left(\frac{w-w_i}{n-w_i},\frac{s_i}{n-w},\frac{r_i-s_i}{w-w_i}\right) \right\}
        &\lesssim_{d,k} t_i\cdot (\log(1/t_i))^{1/3}\\
        &\lesssim_K \delta\cdot (\log(1/\delta))^{1/3}
\end{align*}
provided $\delta$ is sufficiently small, and so the left-hand side is bounded above by $C_2\cdot \delta\cdot (\log(1/\delta))^{1/3}$ for some positive $C_2$ depending only on $d$, $k$ and $K$.
\end{itemize}

We bound~\eqref{eq:exponent} from below by adding these estimates.  At least one term has $i \in I_1$, and there are at most $d-1$ terms with $i \in I_2$.  This leaves the lower bound
\[C_1\cdot \delta \cdot (\log(1/\delta))^{2/3} - (d-1)\cdot C_2\cdot \delta\cdot (\log(1/\delta))^{1/3}.\]
This is positive for all sufficiently small $\delta$, uniformly over different choices of $\mathbf{r}$ or $\mathbf{s}$, as required.
\end{proof}

%
%

\section{Proof of local and empirical convergence}
\label{S:lde}

In this section we prove Theorem~\ref{thm:umm}(2): we show that if $(\Omega'_n)_n$ is the sequence given by Theorem~\ref{thm:umm}(1) and $\sigma_n \in \Omega'_n$ for each $n$, then the measures $\mu_n$ converge locally and empirically in probability to the Haar measure $m$.

Recall $X_n = X_{\sigma_n}$ is the set of parity check codewords over $\sigma_n \in \Hom_{\unif}(\Gamma,\Sym(V_n))$. For $v \in V_n$ let
    \[ X_{n,v} = \left\{ \Pi_v^{\sigma_n} \mb{x} \st \mb{x} \in X_n \right\} \]
be the set of pullback names at $v$ of all the codewords in $X_n$. As above, let
    \[ \Loc{\mu_{n}}{v} = \left( \Pi_v^{\sigma_n} \right)_* \mu_n \in \Prob(X) .\]

Let $X_{n,v,r}$ be the projection of $X_{n,v}$ onto $\ZZ_2^{B_r}$. Call a vertex $v \in V_n$ {\bf $r$-proper} if $X_{n,v,r} = X_{B_r}$. Otherwise, call it {\bf $r$-improper}.
 
The following lemma is a (stronger) version of \cite[Lemma 3.47]{richardson2008} for our random factor graph model.

\begin{lemma}
\label{lem:mostlyproper}
	For any $r, \varepsilon>0$
		 \[ \PP \left( \frac{1}{\abs{V_n}} \abs{\{ v \in V_n \st v \text{ is $r$-improper} \}} \geq \varepsilon \right) \to 0 \]
	as $n \to \infty$.
\end{lemma}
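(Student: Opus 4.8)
The plan is to translate $r$-properness into the statement that a local projection of the code $X_n$ has full dimension, to notice that local tree-likeness already forces the correct \emph{upper} bound on that dimension, and then to extract the matching lower bound from Property~M (Theorem~\ref{thm:propertyM}). Thus the lemma is not a purely local statement: the needed global input is exactly what Property~M supplies.

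\emph{Step 1 (a linear-algebra dictionary).} Fix $\sigma=\sigma_n$ with an associated factor graph $H$, and write $U:=\Ver_r(H;v)=\sigma_n^{B_r}(\{v\})$. Since $X_n=\ker\mathbf H$, the projection of $X_n$ onto the coordinates $U$ is $(X_n^\perp\cap\ZZ_2^{U})^\perp$ (the second complement taken inside $\ZZ_2^{U}$), where $X_n^\perp=\im\mathbf H^{\mathsf T}$ is spanned by the hyper-edge indicators $\mathbf e_{\Ver(H;\{e\})}$, $e\in E$; the same identity holds with $X$, $X_{B_r}$, $B_r$ in place of $X_n$, $U$. I would then prove the elementary ``hyper-tree lemma'': in the Cayley hyper-tree of $\Gamma$, any parity check whose support lies in $B_r$ is a sum of hyper-edge checks $\mathbf e_{\{gs_i^j\}}$ which are themselves contained in $B_r$. (Take a vertex $u$ of maximal word length among the hyper-edges used; $u$ ends in a power of at most one generator, so for at least one of the hyper-edges through $u$ the remaining $k-1$ vertices have strictly larger word length, contradicting maximality unless $|u|\le r$.) By the same argument the hyper-edge indicators are linearly independent on any hyper-tree, so $X_{B_r}=(\text{span of the hyper-edge checks inside }B_r)^\perp$ and $\dim X_{B_r}=|B_r|-\#\{\text{hyper-edges of }\Gamma\text{ inside }B_r\}$. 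Call $v$ \emph{$r$-tame} if the radius-$r$ ball of $v$ in $H$ is isomorphic, as a labelled hyper-graph, to the radius-$r$ ball of $e$ in the Cayley hyper-tree; then the orbit map $B_r\to U$ is a hyper-edge-respecting bijection. For $r$-tame $v$, the restriction of any $\mathbf x\in X_n$ to $U$ satisfies every hyper-edge check inside $U$, so (transporting along the iso) $X_{n,v,r}\subseteq X_{B_r}$ and $\dim X_{n,v,r}\le\dim X_{B_r}$. Since two nested subspaces coincide iff their dimensions do, an $r$-tame vertex is $r$-improper precisely when $\dim X_{n,v,r}<\dim X_{B_r}$ — equivalently, when some local codeword on $B_r$ fails to extend to a global codeword, a genuinely global failure. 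Finally, any vertex that is not $r$-tame is automatically $r$-improper. Hence
\[ \#\{r\text{-improper }v\}\;\le\;\#\{v:\ v\text{ not }r\text{-tame}\}\;+\;\#\{r\text{-tame }v:\ \dim X_{n,v,r}<\dim X_{B_r}\}. \]

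\emph{Step 2 (counting bad vertices inside a fixed positive-density set).} The first term is $o(n)$ with high probability by Proposition~\ref{prop:probably-sofic} (the fraction of vertices with any fixed atypical radius-$r$ neighbourhood tends to $0$). For the second, fix $\varepsilon>0$ and $r$, pick $\eta>0$ small (to be fixed below), let $\delta$ be the corresponding constant from Theorem~\ref{thm:propertyM}, and shrink $\delta$ further so that $\delta<\varepsilon$. Let $R_n\subseteq V_n$ be any set of size $\lceil\delta n\rceil$. The proof of Theorem~\ref{thm:propertyM} in fact produces an $r$-tame subset $S_n\subseteq R_n$ with $|S_n|\ge(1-o(1))|R_n|$ whose radius-$r$ balls $\{\Ver_r(H;v):v\in S_n\}$ are pairwise disjoint, and with high probability
\[ \log2\cdot\dim\big(\mathrm{proj}_{\sigma_n^{B_r}(S_n)}X_n\big)\;=\;\shent\big((\mu_n)_{\sigma_n^{B_r}(S_n)}\big)\;\ge\;(1-\eta)\,\shent(m_{B_r})\,|S_n|\;=\;(1-\eta)\log2\cdot\dim(X_{B_r})\cdot|S_n|. \]
By sub-additivity of dimension over the disjoint sets $\Ver_r(H;v)$ and the bound $\dim X_{n,v,r}\le\dim X_{B_r}$ from Step~1,
\[ \dim\big(\mathrm{proj}_{\sigma_n^{B_r}(S_n)}X_n\big)\;\le\;\sum_{v\in S_n}\dim X_{n,v,r}\;\le\;\dim(X_{B_r})\,|S_n|, \]
so the number of $v\in S_n$ with $\dim X_{n,v,r}<\dim X_{B_r}$ is at most $\eta\,\dim(X_{B_r})\,|S_n|$, and therefore the second term of Step~1, restricted to $R_n$, is $\le\eta\,\dim(X_{B_r})\,|R_n|+o(n)$. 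Choosing $\eta$ small enough in terms of $\varepsilon,r,d,k$ makes this at most $\tfrac{\varepsilon\delta}{2}n$ with high probability, for every fixed such $R_n$.

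\emph{Step 3 (from a fixed $R_n$ to all of $V_n$).} Let $I_n$ denote the set of $r$-improper vertices. Since $\PP_n$ is invariant under relabelling $V_n$, the high-probability estimate of Step~2 is uniform over all $R_n$ of size $\lceil\delta n\rceil$; averaging over a uniformly random such $R_n$ drawn independently of $\sigma_n$ gives $\PP_n(|I_n\cap R_n|>\tfrac{\varepsilon\delta}{2}n)\to0$. On the other hand, conditionally on $\sigma_n$ with $|I_n|>\varepsilon n$, the quantity $|I_n\cap R_n|$ is hypergeometric with mean $>\varepsilon\delta n$, hence exceeds $\tfrac{\varepsilon\delta}{2}n$ with conditional probability $\to1$ by Chebyshev. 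Combining these two facts forces $\PP_n(|I_n|>\varepsilon n)\to0$, which is the lemma.

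The step I expect to be the main obstacle is the interface of Steps~1--2 with Step~3: converting the joint-entropy lower bound furnished by Property~M into the pointwise conclusion ``$X_{n,v,r}=X_{B_r}$ for all but $o(n)$ vertices $v$'', and especially the passage from a single density-$\delta$ sample $R_n$ to the whole vertex set. Since $R_n$ cannot be allowed to depend on $\sigma_n$ when invoking Theorem~\ref{thm:propertyM}, this passage must go through the exchangeability symmetrisation of Step~3 rather than a direct union bound. The hyper-tree bookkeeping of Step~1, by contrast, is routine once the projection-of-kernel identity is in place.
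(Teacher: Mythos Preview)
Your proof is correct and shares the paper's core strategy: feed Theorem~\ref{thm:propertyM} a fixed set $R_n$ of density $\delta$, compare the resulting joint-entropy lower bound against per-vertex upper bounds to cap the number of $r$-improper vertices inside $R_n$, and then use the permutation symmetry of $\PP_n$ to transfer this to all of $V_n$. Your Step~3 and the paper's symmetry argument are essentially identical. Where you diverge is in Steps~1--2, which the paper handles much more directly.

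First, the containment $X_{n,v,r}\subseteq X_{B_r}$ holds for \emph{every} $v\in V_n$, not only $r$-tame ones. Since $\sigma_n$ is a genuine homomorphism, the pullback name $\Pi_v^{\sigma_n}\mb{x}$ of any $\mb{x}\in X_n$ satisfies every parity check of $X$ (the hyper-edge $\{gs_i^j\}_j$ pulls back to a $\sigma_n(s_i)$-orbit), so its restriction to $B_r$ already lies in $X_{B_r}$. Thus $r$-improper means $\dim X_{n,v,r}\le\dim X_{B_r}-1$, i.e.\ $\shent(\Loc{\mu_n}{v}_{B_r})\le\shent(m_{B_r})-\log 2$, with no hyper-tree lemma and no tame/non-tame split required. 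Second, there is no need to reach inside the proof of Theorem~\ref{thm:propertyM} for the disjoint-ball set $S_n$: plain subadditivity of Shannon entropy gives
\[
\shent\big((\mu_n)_{\sigma_n^{B_r}(R_n)}\big)\;\le\;\sum_{v\in R_n}\shent\big(\Loc{\mu_n}{v}_{B_r}\big)
\]
regardless of whether the balls overlap, so if an $\tfrac{\varepsilon}{2}$-fraction of $R_n$ is $r$-improper the right side is at most $(\shent(m_{B_r})-\tfrac{\varepsilon}{2}\log 2)|R_n|$, already contradicting Theorem~\ref{thm:propertyM} with $\eta=\tfrac{\varepsilon\log 2}{4}$. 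Your dimension-counting with disjoint balls accomplishes the same comparison, but the disjointness hypothesis is what forces you to open the black box.
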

\begin{proof}
	In general $X_{n,v,r}$ is a vector subspace of $X_{B_r}$. If $v$ is $r$-improper, then it is a subspace of strictly smaller dimension, so
	\begin{equation}
    \label{eqn:Hsubspacebound}
	    \shent(\Loc{\mu_{n}}{v}_{B_r}) \leq \log \abs{X_{n,v,r}} = (\dim X_{n,v,r}) \log 2 \leq (\dim X_{B_r} -1)\log 2 = \shent(m_{B_r}) - \log 2 .
	\end{equation}
		
	With $\varepsilon,r$ as given, pick $\eta = \frac{\varepsilon \log 2}{4}$. By Theorem~\ref{thm:propertyM}, for small enough $\delta>0$, if for each $n$ we pick a subset $R_n$ of $V_n$ of size $\lceil \delta n \rceil$, then with high probability $\sigma_n$ satisfies
		\[ \shent( (\mu_n)_{\sigma_n^{B_r}\cdot R_n} ) \geq \left( \shent(m_{B_r}) - \eta \right) \abs{R_n} . \]
	
	
	Now for the sake of contradiction suppose that
        \[ \limsup_{n \to \infty} \PP \left( \frac{1}{\abs{V_n}} \abs{\{ v \in V_n \st v \text{ is $r$-improper} \}} \geq \varepsilon \right) > 0 . \]
    Then, by symmetry of the law of $\sigma_n$, and using that $\abs{R_n} \geq \delta \abs{V_n}$, the probability that the fraction of $r$-improper vertices within $R_n$ is at least $\frac{\varepsilon}{2}$ is uniformly bounded below for infinitely many $n$. But if an $\frac{\varepsilon}{2}$ fraction of vertices of $R_n$ are $r$-improper, then by~\eqref{eqn:Hsubspacebound} and subadditivity of Shannon entropy
		\[ \shent((\mu_n)_{\sigma_n^{B_r} \cdot R_n} ) \leq \left( \shent(m_{B_r}) - \tfrac{\varepsilon}{2} \log 2 \right) \abs{R_n}.\]
	Combining with the above, this implies that with nonvanishing probability
		\[\left( \shent(m_{B_r}) - \eta \right) \abs{R_n}\leq \left( \shent(m_{B_r}) - \tfrac{\varepsilon}{2} \log 2 \right) \abs{R_n}. \]
	But this is false by choice of $\eta$, so it must be that
        \[\limsup_{n \to \infty} \PP \left( \frac{1}{\abs{V_n}} \abs{\{ v \in V_n \st v \text{ is $r$-improper} \}} \geq \varepsilon \right) = 0\]
    as desired.
 \end{proof}

 \begin{lemma}\label{L:mixing}
  Assume $k\ge 3$. Then the action of $\G$ on $(X,m_X)$ is mixing and hence, ergodic.
 \end{lemma}

 \begin{proof}
 We prove this using Fourier analysis on the compact Abelian group $X$.
 
 \vspace{7pt}
 
 \emph{Step 1.}\quad For any $g \in \G$, let $|g|$ be its word length in the generating set $\{s_i^j:\ i = 1,\dots,d,\ j=1,\dots,k-1\}$.  The characters of $X$ form an orthonormal basis for $L^2_\CC(m_X)$, and all non-identity characters have mean zero. It therefore suffices to prove that, for any two non-identity characters $\chi_1$ and $\chi_2$, we have
\[\langle \chi_1,\chi_2\circ T^g\rangle = 0\]
whenever $|g|$ is sufficiently large.

\vspace{7pt}

\emph{Step 2.}\quad  Every character of $X$ is the restriction of a character of $\ZZ_2^\G$, and all of these have the form
\[\chi_U(\mathbf{x}) := (-1)^{\sum_{g \in U}x_g} \qquad (\mathbf{x} \in \ZZ_2^\G)\]
for some finite subset $U$ of $\G$.  In addition, the action $T$ is by group automorphisms, and satisfies $\chi_U\circ T^g = \chi_{gU}$.  Therefore $\chi_U\cdot (\chi_W\circ T^g) = \chi_{U\triangle gW}$ for any $U$, $W$ and $g$, where $\triangle$ denotes symmetric difference.

Let $E$ be the set of all $k$-cycles of the form $\{g,gs_i,\dots,gs_i^{k-1}\}$ in the Cayley graph of $\G$, where $g \in \G$ and $i \in \{1,\dots,d\}$.  As in the Introduction, we can regard $E$ as the set of $k$-hyper-edges of a hypergraph on $\Gamma$, and the corresponding characters $\chi_e$ for $e \in E$ give the parity checks that define the LDPC subgroup $X$.  Therefore, by Pontrjagin duality, a character $\chi_U$ restricts to the identity character on $X$ if and only if there is a finite subfamily $F$ of $E$ such that
\[1_U = \sum_{e \in F}1_E \mod 2.\]
Let us write $X^\perp$ for the collection of finite subsets $U$ that have this property. Regarded as a subspace of $\ZZ_2^{\oplus \G}$, this $X^\perp$ is the linear span of the set $\{1_e:\ e \in E\}$.

In these terms, we must show that, if $U$ and $W$ are finite subsets of $\G$ and neither of them lies in $X^\perp$, then $U\triangle gW$ also does not lie in $X^\perp$ whenever $|g|$ is sufficiently large.  Fix such $U$ and $W$ for the rest of the proof.

\vspace{7pt}

\emph{Step 3.}\quad Let $B_r$ and $B_s$ be closed balls around the identity in the right Cayley graph that contain $U$ and $W$, respectively.  More specifically, in this last step of the proof we assume that  $g \in \G$ satisfies both (i) $|g| > r+ s+ 2$ and also (ii) $U\triangle gW \in X^\perp$, and derive a contradiction from these. Assumption (i) implies $U$ and $gW$ are disjoint and therefore $U\triangle gW=U\cup gW$. Having made assumption (ii), let $F$ be a finite subfamily of $E$ such that
\begin{equation}\label{eq:UgW}
1_{U\triangle gW} = \sum_{e \in F}1_e \mod 2.
\end{equation}

To work with elements of $X^\perp$, it helps to introduce the dual graph $(E,\tilde{E}$) of the hypergraph $(\G,E)$.  The vertices of the dual graph are the hyperedges in $E$, and two hyperedges $e_1$ and $e_2$ are joined in $\tilde{E}$ if and only if $e_1\cap e_2 \ne \emptyset$.

In the dual graph, $F$ is the union of its connected components.  At least one of these must meet both $U$ and $gW$.  Indeed, otherwise we could let $G$ be the union of those connected components of $F$ that meet $U$, and would then find that $1_U$ agrees with $\sum_{e \in G}1_e$ mod $2$, contradicting our assumption that $U \not\in X^\perp$.

Next, because the hypergraph $(\G,E)$ is a hyper-tree and we have $|g| > r+s +2$, there is a single hyperedge $e_0$ lying `between' $B_r$ and $gB_s$ in the following sense: removing this hyperedge $e_0$ disconnects that hypergraph into $k$ components, one of which contains the whole of $B_r$ (and hence $U$), and a different one of which contains the whole of $gB_s$ (and hence $gW$).

As a result, if $F_0$ is a connected component of $F$ in the dual graph which meets both $U$ and $gW$, then $F_0$ must contain $e_0$.  Now consider again the $k$ connected components of the hypergraph $(\G,E\setminus e_0)$.  Since $k\ge 3$, at least one of them does not meet either $U$ or $gW$; let $V \subset \G$ be one such component. Then $V$ meets $e_0$ in a single vertex (that is, group element) $h$.

Finally, let $h'$ be a vertex of $V\cap \bigcup_{e\in F}e$ that lies at maximal distance from $h$.  This $h'$ may be equal to $h$, or it may lie `deeper inside' the component $V$.  However, by that distance maximality and the hyper-tree structure of $(\G,E)$, $h'$ can be contained in only one member of $F$, and because $h' \in V$ it cannot lie in either $U$ or $gW$. So at $h'$ the left-hand side of~\eqref{eq:UgW} must equal zero while the right-hand side must equal $1$. This is the desired contradiction with that equation.
 \end{proof}

\begin{proof}[Proof of Theorem~\ref{thm:umm}(2)]

     Let $\Loc{\mu_n}{v}_{B_r}$ denote the marginal of $\Loc{\mu_n}{v}$ on $X_{n,v,r}$.
     
	Since $\Loc{\mu_n}{v}_{B_r}$ is the uniform distribution on $X_{n,v,r}$ and $m_{B_r}$ is the uniform distribution on $X_{B_r}$, local convergence in probability to $\mu$ is implied by
		\[ \lim_{n\to\infty} \PP \left( \frac{1}{\abs{V_n}} \abs{ \left\{ v \in V_n \st X_{n,v,r} \ne X_{B_r} \right\}} > \varepsilon \right) = 0 , \]
    which is true by Lemma~\ref{lem:mostlyproper}.

    Since $(\mu_n)_n$ converges locally in probability to $m_X$, which is ergodic by Lemma \ref{L:mixing}, Lemma~\ref{lem:lepergodic} completes the proof.
\end{proof}

\section{Proof of Theorem~\ref{mainthm3}}
\label{sec:mainthm3pf}

Next we prove the upper bound in Theorem~\ref{mainthm3}, part (a). After this, part (b) of the Theorem~\ref{mainthm3} is an immediate consequence of Theorems~\ref{thm:umm} (items 1,2) and \ref{T:wmmcpe}. 

\subsection{Proof of the sofic entropy value}

\begin{proof}[Proof of Theorem~\ref{mainthm3}, part (a)]
Let $\sigma_n \sim \PP_n$ and let $\Omega_n' \subseteq \Omega_n^{\mathrm{sofic}}$ be as in Proposition \ref{P:upper-estimate}. Let $\Sigma=\{\sigma_n\}_{n=1}^\infty$ satisfy $\sigma_n \in \Omega'_{i_n}$ for some increasing sequence $(i_n)_n$ with $k \mid i_n$ for all $n$. It suffices to prove
  $$\h_\Sigma(X,m_X,T)= (1-d/k)\log 2.$$

By definition,
$$\kik(m_X) = (1-d) \shent( W_{m_X}(\cdot) ) + \frac{1}{k} \sum_{i \in [d]} \shent( W_{m_X}(\cdot; i) ).$$
The weight $W_{m_X}$ is uniform on $\ZZ_2$. So $ \shent( W_{m_X}(\cdot) )=\log(2)$. For each $i\in [d]$, the measure $W_{m_X}(\cdot; i)$ is supported on the subspace of $\ZZ_2^k$ which is the kernel of the homomorphism $(x_1,\ldots, x_k) \mapsto \sum_i x_i \in \ZZ_2$. This subspace has cardinality $2^{k-1}$. So
$$\shent( W_{m_X}(\cdot; i) ) \leq (k-1)\log(2).$$
Combined with the previous formula, we obtain
$$\kik(m_X) \leq (1 -d/k)\log(2).$$
The conclusion of Proposition \ref{P:upper-estimate} now implies the upper bound
  $$\h_\Sigma(X,m_X,T)\le (1-d/k)\log 2.$$


By Theorem \ref{thm:umm}(2), $\mu_n$ converges empirically to $m_X$. Therefore, if $\mathcal{O}$ is any open neighborhood of $m_X$ then
$$1=\lim_{n\to\infty} \mu_n(\Omega(\sigma_n, \mathcal{O})).$$
Since $\mu_n$ is the uniform measure on $X_{\sigma_n}$, this implies
  $$\h_\Sigma(X,m_X,T)\ge \lim_{n\to\infty}  \frac{1}{i_n} \log |X_{\sigma_n}|.$$

We may estimate $|X_{\sigma_n}|$ by simple dimension-counting. In fact, $X_{\sigma_n}$ is the kernel of a homomorphism from $\ZZ_2^{i_n}$ to $\ZZ_2^{di_n/k}$. So 
$$|X_{\sigma_n}| \ge 2^{(1-d/k)i_n}.$$
This gives the lower bound
  $$\h_\Sigma(X,m_X,T)\ge (1-d/k)\log 2.$$
\end{proof}

\section{Proof of Theorems~\ref{mainthm2} and~\ref{mainthm5}}
\label{S:mainthm2pf}

Theorem~\ref{mainthm2} is an immediate consequence of Theorem~\ref{thm:umm}(3) and  Corollary~\ref{cor:shatt-cons}(2).

\subsection{Expected number of low-density codewords}
In this section we derive an asymptotic formula for the expected number of low-density codewords. Versions of this appear in \cite[Formula (11.10)]{mezard2009} and \cite[Lemma 3.163]{richardson2008}. 
The authors of \cite{mezard2009} and \cite{richardson2008} use a configuration model instead of a permutation model. As discussed above, these models are not contiguous (see Appendix~\ref{sec:non-contig}). Moreover, we are able to show a stronger result than contiguity alone would imply.

For $\eta>0$ and $\sigma \in \Hom_{\unif}(\Gamma, \Sym(kn))$, let $X_{\sigma}^\eta$ denote the set of $\mb{x} \in (\ZZ_2)^{kn}$ for which the sum around all but at most a fraction $\eta$ of each of the $d$ hyper-edges types is even. More formally,
    \[ X_\sigma^\eta = \bigcap_{i \in [d]} \left\{\mb{x} \in (\ZZ_2)^{kn} \st \abs*{\left\{v \in [kn] \st \sum_{j=0}^{k-1} \mb{x}(\sigma_i^j v) = 0 \pmod{2} \right\}} > kn (1-\eta) \right\} .\]
We can think of these as ``approximate codewords.'' 

For $t \in [0,1]$ we define the upper exponential growth rate of the expected number of approximate codewords of density $t$ by
    \[ \codegrowth(t) = \inf_{\varepsilon, \eta>0}\limsup_{n \to \infty} \frac{1}{kn} \log \EE_{\sigma \sim \PP_{kn}} \abs*{\{\mb{x} \in X_\sigma^\eta \st \tfrac{1}{kn}\abs{\mb{x}} \in (t-\varepsilon,\, t+\varepsilon) \}} . \]

\begin{prop}
\label{prop:lowdensitygrowthrate}
	For any $d,k$,
		\[ \codegrowth(t) = \frac{1}{2} t \big( d \log (k-1)- d +2 +(d -2) \log (t) \big)+O(t^2) . \]
	In particular, for $k \geq 2$ this is negative for small $t>0$ if and only if $d > 2$.
\end{prop}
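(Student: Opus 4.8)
The plan is to compute $\codegrowth(t)$ by a first-moment calculation whose structure closely mirrors the proof of Proposition~\ref{prop:kikformula}, specializing to $\A = \ZZ_2$ and to codewords of density $t$. First I would fix a target weight $w \in [0,1]$ (with $kn \cdot w \in \ZZ$), and count the expected number of $\mb{x} \in (\ZZ_2)^{kn}$ with $\abs{\mb{x}} = kn\cdot w$ that satisfy the parity condition around \emph{all} hyper-edges (i.e.\ work with $\eta = 0$ first, then relax). As in the proof of Proposition~\ref{prop:kikformula}, the expectation over $\sigma \sim \PP_{kn}$ factorizes: one first chooses the vertex statistics (there are $\binom{kn}{kn w}$ such $\mb{x}$), then for each of the $d$ generators independently one counts the $k$-uniform permutations $\pi$ of $[kn]$ whose every $k$-cycle sees an even number of $1$'s of $\mb{x}$. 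The latter count is a purely combinatorial quantity: partition $kn$ points, $knw$ of which are `marked', into $n$ unordered $k$-sets each containing an even number of marked points, then cyclically order each $k$-set. Writing this using multinomial coefficients and applying Stirling exactly as in Proposition~\ref{prop:kikformula} gives an exponential rate of the form $kn\,\phi(w) + o(n)$, where $\phi$ is an explicit entropy-type functional.

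The next step is to identify $\phi(w)$ explicitly and then Taylor-expand around $w = 0$. The key internal quantity is the generating-function / entropy expression for `number of ways to split $knw$ marked points among $n$ $k$-sets so that each set is even', which has the closed form governed by the coefficient extraction
\[
\frac{1}{2}\Bigl((1+z)^k + (1-z)^k\Bigr),
\]
the even part of $(1+z)^k$. Passing to the exponential rate and optimizing (a Legendre transform / saddle-point in $z$) yields, per generator, a contribution whose small-$w$ asymptotics I would extract: for small $w$ the dominant configuration puts the marked points in pairs within distinct $k$-sets, contributing a factor roughly $\binom{k}{2}$ per pair, i.e.\ $\tfrac{kn w}{2}\log\binom{k}{2}$ at leading order, with the next correction coming from the entropy of distributing $\tfrac{knw}{2}$ pairs among $n$ sets. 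Summing the $d$ generator contributions, subtracting the overcounting from the vertex-statistics term $\shent(w) = -w\log w - (1-w)\log(1-w)$ that appears $d$ times (once per generator) against once for the single-site count, and collecting the $\log(k-1)$ versus $\log\binom{k}{2} = \log k + \log(k-1) - \log 2$ terms carefully, I expect to arrive exactly at
\[
\codegrowth(t) = \tfrac{1}{2}t\bigl(d\log(k-1) - d + 2 + (d-2)\log t\bigr) + O(t^2).
\]
The $(d-2)\log t$ term is the signature of the competition between the $d$ edge-entropy terms (each producing a $\tfrac{t}{2}\log t$ from the pair-distribution entropy) and the two $-\shent(w) \approx t\log t$ type terms with the right signs; getting these coefficients to land precisely is the delicate bookkeeping.

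Two further points must be handled. First, to pass from the $\eta = 0$ count to the definition of $\codegrowth(t)$, which allows a fraction $\eta$ of violated checks, I would note that allowing $\eta kn$ bad checks in each of $d$ edge-types only multiplies the count by $\exp(O(\shent(\eta) \cdot kn) )$ and changes the optimizing weight by $O(\eta)$; taking the infimum over $\varepsilon, \eta \to 0$ and using continuity recovers the same leading-order formula. (This is the same device used after Proposition~\ref{prop:kikformula} to replace exact weights by $\varepsilon$-neighborhoods, and is why the definition of $\codegrowth$ includes the double infimum.) Second, the final sentence of the statement—negativity for small $t$ iff $d > 2$—is then immediate: for small $t$ the term $\tfrac{1}{2}t(d-2)\log t$ dominates (it is $\to 0^-$ times $t$, i.e.\ $-\infty \cdot t/2$ scaled, so negative and of larger magnitude than the linear-in-$t$ terms when $d > 2$), whereas if $d = 2$ the $\log t$ term vanishes and the sign is that of $2\log(k-1) - 2 + 2 + 0 = 2\log(k-1) \ge 0$ for $k \ge 2$, and if $d < 2$ (not relevant here since $d \ge 3$) the $\log t$ term would push it to $+\infty$.

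\textbf{Main obstacle.} I expect the main difficulty is not the conceptual setup—which is a direct analogue of the Kikuchi/annealed computation already carried out in the excerpt—but the explicit small-$t$ expansion: correctly evaluating the per-generator count of `all-even $k$-uniform permutations of a marked set' via the even-part generating function, performing the saddle-point/Legendre step, and then tracking the $\log$ and linear terms through the subtraction of the $(1-d)\shent(\cdot)$-type correction so that the coefficients $d\log(k-1) - d + 2$ and $(d-2)$ emerge exactly rather than up to an unexplained constant. A secondary technical nuisance is justifying that the $O(t^2)$ error is genuinely uniform and that the $\eta$-relaxation does not perturb the leading two orders, but both are routine given the tools (Stirling with $o(n)$ control, finitely many weight classes growing polynomially) already invoked in the proof of Proposition~\ref{prop:kikformula}.
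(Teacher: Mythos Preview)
Your proposal is correct and follows essentially the same route as the paper. The paper packages the computation slightly differently---it first identifies $\codegrowth(t)$ as the supremum of $\kik(W)$ over even-parity weights with density $t$ (via Proposition~\ref{prop:kikformula} and a compactness argument), then optimizes by Lagrange multipliers to obtain the parametric form $\codegrowth(t) = (1-d)\shent(t) + \tfrac{d}{k}(-kt\log s + \log Z)$ with $Z = \tfrac12((1+s)^k+(1-s)^k)$ and $t = s\,Z'(s)/(kZ)$, and finally Taylor-expands using Lagrange inversion ($t = (k-1)s^2 + O(s^4)$, hence $s^2 = t/(k-1) + O(t^2)$)---but your direct first-moment count, saddle-point in the even-part generating function, and small-$t$ expansion are exactly the same calculation in different clothing.
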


The ``$O(t^2)$'' term here is a power series convergent on some neighborhood of $t=0$ with lowest-order term $t^2$.

Figure \ref{fig:6uniform_approx} compares exact plots of $\codegrowth(t)$ (created using parametric plots in the parameter $s$ of Lemma~\ref{lem:codegrowth}) with plots of this approximation.
\begin{figure}
    \centering
    \includegraphics{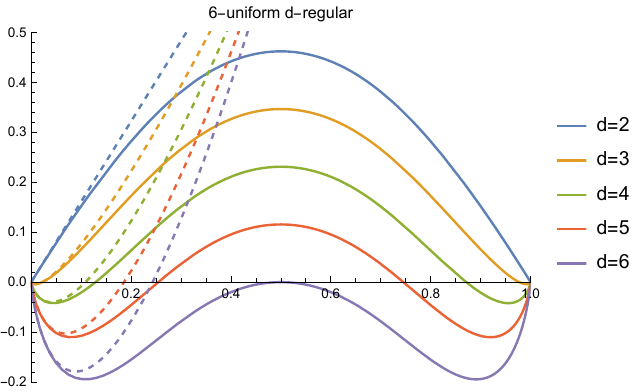}
    \caption{Comparison of $\codegrowth(t)$ (solid lines) with asymptotic in Proposition~\ref{prop:lowdensitygrowthrate} (dashed lines) for $k=6$ and several choices of $d$.}
    \label{fig:6uniform_approx}
\end{figure}

The proof of Proposition~\ref{prop:lowdensitygrowthrate} is based on the following lemma.

\begin{lemma}
\label{lem:codegrowth}
	For any $k,d$ and any $t \in [0,1]$
		\[ \codegrowth(t) = (1-d) \shent(t) + \frac{d}{k}\left( -kt \log s + \log Z \right) , \]
	where $s$ and $Z$ are related to $t$ via
		\[ t = s \frac{(1+s)^{k-1} - (1-s)^{k-1}}{(1+s)^k + (1-s)^k} \quad \text{and} \quad
			Z = \frac{1}{2} \left( (1+s)^k + (1-s)^k \right) . \]
\end{lemma}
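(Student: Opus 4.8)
The plan is a first-moment (annealed) computation of the same shape as the proof of Proposition~\ref{prop:kikformula}. First I would use linearity of expectation together with the vertex-permutation symmetry of $\PP_{kn}$: for a fixed weight $m$,
\[\EE_{\sigma\sim\PP_{kn}}\big|\{\mb{x}\in X_\sigma^\eta:\ |\mb{x}|=m\}\big|=\binom{kn}{m}\,\PP_{kn}\big(\mb{x}_0\in X_\sigma^\eta\big)\]
for any fixed $\mb{x}_0\in(\ZZ_2)^{kn}$ of weight $m$. Whether $\mb{x}_0$ lies in $X_\sigma^\eta$ depends on $\sigma$ only through the $d$ orbit partitions of $\sigma(s_1),\dots,\sigma(s_d)$, and since the map sending a $k$-uniform permutation to its orbit partition pushes the uniform measure to the uniform measure on partitions of $[kn]$ into $k$-blocks, these $d$ partitions are i.i.d.\ uniform. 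Hence $\PP_{kn}(\mb{x}_0\in X_\sigma^\eta)=q_\eta(m)^d$, where $q_\eta(m)$ is the probability that a uniform random partition of $[kn]$ into $n$ blocks of size $k$ has all but at most $\eta n$ of its blocks meeting $\supp(\mb{x}_0)$ in an even set.

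Next I would compute $q_\eta(m)$ by a transfer-type generating-function expansion. With $Z_{\mathrm e}(x):=\tfrac12((1+x)^k+(1-x)^k)$ and $Z_{\mathrm o}(x):=\tfrac12((1+x)^k-(1-x)^k)$ (whose coefficients are $\binom ka$ for $a$ even, respectively $a$ odd), classifying labelled $k$-block partitions by the number $j$ of blocks whose intersection with $\supp(\mb{x}_0)$ is odd yields
\[q_\eta(m)=\binom{kn}{m}^{-1}\sum_{j<\eta n}\binom nj\,[x^m]\,Z_{\mathrm o}(x)^j Z_{\mathrm e}(x)^{n-j},\]
and in particular for exact codewords ($\eta=0$),
\[\EE_{\sigma\sim\PP_{kn}}\big|\{\mb{x}\in X_\sigma:\ |\mb{x}|=m\}\big|=\binom{kn}{m}^{1-d}\big([x^m]Z_{\mathrm e}(x)^n\big)^d.\]
Taking $m=\lfloor tkn\rfloor$ and evaluating the coefficient by a saddle-point estimate (equivalently, by Cramér's theorem for a sum of $n$ i.i.d.\ block-weights valued in $\{0,2,\dots\}$), one gets $\tfrac1n\log[x^m]Z_{\mathrm e}(x)^n=\log Z_{\mathrm e}(s)-\tfrac mn\log s+o(1)$, where $s=s(t)>0$ is the saddle determined by $s\,Z_{\mathrm e}'(s)/Z_{\mathrm e}(s)=m/n=tk$; writing this out gives exactly the stated relation $t=s\big((1+s)^{k-1}-(1-s)^{k-1}\big)/\big((1+s)^k+(1-s)^k\big)$, and with $Z:=Z_{\mathrm e}(s)$ the stated $Z$. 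Combined with $\tfrac1{kn}\log\binom{kn}{m}\to\shent(t)$ and the exponents above, this produces the asserted parametric formula for $\codegrowth(t)$. I would also record that $s\mapsto sZ_{\mathrm e}'(s)/Z_{\mathrm e}(s)$ is strictly increasing, so $s(t)$ is well defined on the relevant range of $t$ (for $k$ odd the codeword density cannot exceed $1-1/k$, beyond which $\codegrowth=-\infty$).

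Finally I would show that passing to approximate codewords does not change the exponential rate, so that the infimum over $\eta$ in the definition of $\codegrowth(t)$ may be dropped. Using the elementary Cauchy bound $[x^m]P(x)\le P(s)/s^m$ (valid for any $s>0$ when $P$ has nonnegative coefficients), applied with $s$ equal to the saddle point above, together with $\sum_{j<\eta n}\binom nj\le e^{n\shent(\eta)+o(n)}$ and $Z_{\mathrm o}(s)\le Z_{\mathrm e}(s)$ when $s\le1$ (and a similar weaker bound when $s>1$), one sandwiches
\[[x^m]Z_{\mathrm e}(x)^n\ \le\ \sum_{j<\eta n}\binom nj[x^m]Z_{\mathrm o}(x)^jZ_{\mathrm e}(x)^{n-j}\ \le\ e^{O_{d,k}(n\shent(\eta))}\,[x^m]Z_{\mathrm e}(x)^n,\]
and since $\shent(\eta)\to0$ as $\eta\to0$ the extra factor vanishes after taking the infimum over $\eta$. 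The conceptual content is entirely standard annealed bookkeeping; the one genuinely fiddly point is this last sandwich (choosing the evaluation radius correctly and controlling the sum over the number of violated parity checks), and the one place to be careful in the main computation is making the saddle-point estimate precise enough that its minimizer reproduces the parametrization in the statement rather than some rearrangement of it.
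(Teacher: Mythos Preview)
Your approach is correct and is a genuinely different route from the paper's. The paper proceeds variationally: it first identifies $\codegrowth(t)$ with $\sup\{\kik(W):W\in\mathcal{W},\ W(1)=t\}$ by invoking Proposition~\ref{prop:kikformula} (using a compactness/covering argument to handle the infimum over $\varepsilon,\eta$), then observes the maximizer has all $d$ edge marginals equal, and finally solves a Lagrange-multiplier problem for the single edge marginal, which produces the exponential-family form $W(\bta;i)=s^{\alpha(\bta)}/Z$ and hence the parametric formula. Your route is the Legendre-dual picture: you write down the exact first-moment count $\binom{kn}{m}^{1-d}\big([x^m]Z_{\mathrm e}(x)^n\big)^d$ directly from the i.i.d.\ factorization over generators, and extract the exponent by a saddle-point estimate whose stationarity condition \emph{is} the stated relation $t=sZ_{\mathrm e}'(s)/(kZ_{\mathrm e}(s))$.

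Each approach has its merits. The paper's version reuses machinery already in place and handles the passage from approximate to exact codewords cleanly via continuity of $\kik$ on a compact set of weights; your Cauchy-bound sandwich for the same purpose is correct but, as you note, needs a little extra care when $s>1$ (for $k$ even one still has $Z_{\mathrm o}(s)<Z_{\mathrm e}(s)$, while for $k$ odd the saddle stays in the region where the bound applies on the relevant range of $t$). Your argument, on the other hand, is entirely self-contained and is the computation one would find in the coding-theory literature; it also makes the formula's provenance (the exponential generating function for even-weight $k$-tuples) completely transparent without any detour through Kikuchi entropy.
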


 




\begin{proof}[Proof of Lemma~\ref{lem:codegrowth}]
Here, let $\A = \ZZ_2$. We also let $\mathcal{W}$ denote the set of weights which have edge weights that are cyclically invariant and supported on configurations with even parity. The terminology of weights  was defined in Section~\ref{sec:kikformula}.

We first show that, for every $t \in [0,1]$,
    \begin{equation}
    \label{eqn:codegrowth}
        \codegrowth(t) = \sup_{W \in \mathcal{W} \st W(1) = t}  \kik(W) .
    \end{equation}
    
First suppose $W \in \mathcal{W}$ has $W(1) = t$. For every $\varepsilon, \eta > 0$, if $\delta>0$ is small enough then we always have
    \[ \{ \mb{x} \in (\ZZ_2)^{kn} \st \norm{W_{\sigma, \mb{x}} - W} < \delta\} \subseteq \{ \mb{x} \in X_\sigma^\eta \st \tfrac{1}{kn} \abs{\mb{x}} \in (t - \varepsilon, t+ \varepsilon) \} .\]
By Proposition~\ref{prop:kikformula}, this implies that $\kik(W) \leq \codegrowth(t)$, which gives one half of Eqn.~\ref{eqn:codegrowth}.

For the converse inequality: 
Given $\varepsilon,\eta>0$ let $\mathcal{W}_{\varepsilon,\eta}$ be the set of cyclically-invariant weights with $\abs{W(1) - t} < \varepsilon$ and each $W(\cdot; i)$ giving mass at least $1 - \eta$ to even-parity configurations.
    
Given $\delta>0$, by Proposition~\ref{prop:kikformula} for each weight $W$ there is some $r_W > 0$ which satisfies
    \[\limsup_{n \to \infty} \frac{1}{kn} \log \EE_\sigma \abs{\{ \mb{x} \in (\ZZ_2)^{kn} \st \norm{W_{\sigma, \mb{x}} - W} < r_W \}} \leq \kik(W) + \delta . \]
By compactness, there is a finite set $\mathcal{S} \subset \overline{\mathcal{W}_{\varepsilon,\eta}}$ such that the balls centered at $W \in \mathcal{S}$ of radius $r_W$ cover $\mathcal{W}_{\varepsilon,\eta}$.
Then we have
    \[ \{ \mb{x} \in X_\sigma^\eta \st \tfrac{1}{kn}\abs{\mb{x}} \in (t-\varepsilon, t+\varepsilon) \} \subseteq \bigcup_{W \in \mathcal{S}} \{ \mb{x} \in (\ZZ_2)^{kn} \st \norm{W_{\sigma, \mb{x}} - W} < r_W \}. \]
Therefore,
    \begin{align*}
        \limsup_{n \to \infty} &\frac{1}{kn} \log \EE_\sigma \abs{\{ \mb{x} \in X_\sigma^\eta \st \tfrac{1}{kn}\abs{\mb{x}} \in (t-\varepsilon, t+\varepsilon) \}} \\
            &\leq \max_{W \in \overline{\mathcal{W}_{\varepsilon,\eta}}} \limsup_{n \to \infty} \frac{1}{kn} \log \EE_\sigma \abs{\{ \mb{x} \in (\ZZ_2)^{kn} \st \norm{W_{\sigma, \mb{x}} - W} < r_W \}} \\
            &\leq \max_{W \in \overline{\mathcal{W}_{\varepsilon,\eta}}} \kik(W) + \delta.
    \end{align*}
Taking $\varepsilon,\eta$ and then $\delta$ to 0 gives the other half of Eqn.~\ref{eqn:codegrowth}.

Now any weight achieving the supremum in \ref{eqn:codegrowth} must have all edge weights equal: to maximize $\kik$ under the constraint $W(1) = t$, whichever $W(\cdot; i)$ maximizes the edge term in the definition of $\kik(W)$ should be used for all $i \in [d]$.

We can specify such a weight by a single probability vector $\mb{p} \in \Prob(\A^{\ZZ_k}/\ZZ_k)$ recording the edge weight. We use a probability measure on $\A^{\ZZ_k}/\ZZ_k$ rather than $\A^{\ZZ_k}$ because the edge weights must be invariant under cyclic permutations. Let $W_{\mb{p}}$ be the weight with edge weights specified by $\mb{p}$, and write $\alpha(\mb{p}) = W_{\mb{p}}(1)$. The Kikuchi entropy of $W_{\mb{p}}$ is
	\[ \kik(W_{\mb{p}}) = (1-d) \shent(\alpha(\mb{p})) + \frac{d}{k}\left( \shent(\mb{p}) + \sum_{[\bta] \in \A^{\ZZ_k}/\ZZ_k} \mb{p}([\bta]) \log \abs{[\bta]} \right) \]
where $\abs{[\bta]}$ is the number of elements of the equivalence class $[\bta] \in \A^{\ZZ_k}/\ZZ_k$.

Now, we are interested in estimating
	\[ \codegrowth(t) = \max_{\mb{p} \st \alpha(\mb{p})=t} \kik(W_{\mb{p}})   , \]
where the maximum is also constrained to $\mb{p}$ supported on equivalence classes of even-parity configurations. Since $\alpha(\mb{p})$ is fixed to be $t$, we really just need to maximize
	\[ \shent(\mb{p}) + \sum_{[\bta] \in \A^{\ZZ_k}/\ZZ_k} \mb{p}([\bta]) \log \abs{[\bta]} \]
subject to the constraints
	\[ \sum_{[\bta]} \mb{p}([\bta]) \alpha([\bta]) = kt \quad \text{and} \quad \sum_{[\bta]} \mb{p}([\bta]) = 1 \]
 where $\alpha([\bta])$ is the number of $1$'s in any representative of $[\bta]$.
So we get two Lagrange multipliers $\lambda_1, \lambda_2$ so that a maximizer on the interior of the constraint region is given by
	\[ \mb{p}([\bta]) = \abs{[\bta]}\cdot e^{\lambda_1 \alpha([\bta]) + \lambda_2 - 1} \]
for $[\bta] \in \A^{\ZZ_k}/\ZZ_k$ with even parity and 0 otherwise. We rewrite this as
	\[ \mb{p}([\bta]) = \abs{[\bta]} \cdot \frac{s^{\alpha([\bta])}}{Z} \] 
where $Z$ is determined by the normalization constraint and $s$ is determined by the density-$t$ constraint. Since the objective function is strictly concave, the critical point given by these $Z,s$ is in fact the unique maximum. The corresponding $\kik$-maximizing weight $W$ therefore satisfies
	\[ W(\bta; i) = \frac{s^{\alpha(\bta)}}{Z} \]
if $\alpha(\bta)$ (the number of $1$'s in $\bta$) is even and $W(\bta; i) = 0$ otherwise.
	
Note that
\begin{equation}
	Z = \frac{1}{2} \left( (1+s)^k + (1-s)^k \right)
\end{equation}
since
	\[ Z = \sum_{\substack{\bta \in \A^k \\ \alpha(\bta) \text{ even}}} s^{\alpha(\bta)} = \sum_{\substack{m = 0 \\ m \text{ even}}}^k s^m \binom{k}{m} = \frac{1}{2} \left( \sum_{m=0}^k s^m \binom{k}{m} + \sum_{m=0}^k (-s)^m \binom{k}{m} \right). \]
The constraint $\sum_{[\bta]} \mb{p}([\bta])\, \alpha([\bta]) = kt$ gives the relation between $t$ and $s$:
	\[ kt = \sum_{[\bta] \text{ even}} \abs{[\bta]} \frac{s^{\alpha([\bta])}}{Z} \cdot \alpha([\bta]) = s \frac{\frac{dZ}{ds}}{Z} = s k \frac{(1+s)^{k-1} - (1-s)^{k-1}}{(1+s)^k + (1-s)^k}. \]
Finally, we can calculate 
	\[ \shent(W(\cdot; i)) = -\sum_{\bta \text{ even}} \frac{s^{\alpha(\bta)}}{Z}\alpha(\bta) \log s + \log Z = -kt \log s + \log Z \]
which gives the claimed formula.
\end{proof}

This lemma does not give an explicit formula for $\codegrowth(t)$: we have $t$ as a function of $s$ but we do not have an explicit formula for the inverse function. Still, we can use it to prove Proposition \ref{prop:lowdensitygrowthrate}.

\begin{proof}[Proof of Proposition~\ref{prop:lowdensitygrowthrate}]
	
	Expanding the formula for $t$ as a power series centered at $s=0$, we get $t = (k-1) s^2 + O(s^4)$ so, by Lagrange inversion \cite[\S 7.32]{MR4286926}, 
    there is a power series for $s^2$ on some interval around $0$ of the form
		\[ s^2 = \frac{t}{k-1} + O(t^2) . \]
	Hence
		\[ t \log s = \frac{1}{2} t \log \frac{t}{k-1} + O(t^2) \]
	and
		\[ \log Z = \frac{1}{2} k (k-1) s^2 + O(s^4) = \frac{1}{2} kt + O(t^2) . \]
	The estimate for Shannon entropy
		\[ \shent(t) = -t \log t + t + O(t^2) \]
	completes the proof.
\end{proof}

\subsection{Proof of Theorem~\ref{thm:umm}(3)}

Recall that $\mu$ has totally shattered microstate spaces along $(\sigma_n)_n$ if there exists a $\delta > 0$ for which the following holds: For every $\varepsilon > 0$ there exist a weak$^\ast$ neighbourhood $U$ of $\mu$ and a positive integer $n_0$ such that for any $n\ge n_0$ and any two microstates $\mathbf{x},\mathbf{y} \in \Omega(\sigma_n,U)$ we have either $\dee^{(V_n)}(\mathbf{x},\mathbf{y}) \ge \delta$ or $\dee^{(V_n)}(\mathbf{x},\mathbf{y}) < \varepsilon$.

\begin{proof}[Proof of Theorem~\ref{thm:umm}(3)]
    Given $k \geq 2$ and $d>2$ by Proposition~\ref{prop:lowdensitygrowthrate} there is some $\delta>0$ such that $\codegrowth(t) < 0$ for $t \in (0,\delta)$. We will use this $\delta$ to establish totally shattered microstates.

    For any fixed $\varepsilon>0$,
        \[ \inf_{\eta > 0} \limsup_{n \to \infty} \frac{1}{n} \log \EE | \{ \mb{x} \in X_{n}^\eta \st \tfrac{1}{n} |\mb{x}| \in [\varepsilon, \delta] \} | = \sup\{ \codegrowth(t) \st t \in (\varepsilon, \delta) \} < 0 . \]
    Pick some $\eta$ such that the expression in the infimum is negative, and let $U^\eta \subset \Prob(\ZZ_2^\Gamma)$ be the set of all probability measures whose marginal on every hyper-edge gives probability greater than $1 - \eta$ to labelings with even parity. Then $U^\eta$ is a weak$^\ast$ neighborhood of $m_X$ and
        \[ X_{n}^\eta = \Omega(\sigma_{n}, U^\eta) , \]
    so
        \[ \limsup_{n \to \infty} \frac{1}{n} \log \EE | \{ \mb{x} \in \Omega(\sigma_{n}, U^\eta) \st \tfrac{1}{n} |\mb{x}| \in [\varepsilon, \delta] \} | < 0 \]
    and there are subsets $\Omega_n^\varepsilon \subset \Omega_n^{\mathrm{sofic}}$ with $\PP_n(\Omega_n^\varepsilon) \to 1$ and such that for all large enough $n$ if $\sigma_n \in \Omega_n^\varepsilon$ then $\{ \mb{x} \in \Omega(\sigma_{n}, U^\eta) \st \tfrac{1}{n} |\mb{x}| \in [\varepsilon, \delta] \} = \varnothing$. Now if $\mb{x}, \mb{y} \in \Omega(\sigma_{n}, U^{\eta/2})$, then $\mb{x}+\mb{y} \in \Omega(\sigma_{n}, U^{\eta})$ so
        \[ \dee^{(V_n)}(\mb{x}, \mb{y}) = \tfrac{1}{n} |\mb{x} + \mb{y}| \notin [\varepsilon, \delta] . \]

    We can then get a single sequence $\Omega_n'$ that works for every $\varepsilon$ by picking one for each $\varepsilon = \frac{1}{2}, \frac{1}{3}, \ldots$ and then using a diagonal argument.
\end{proof}

\subsection{Proof of Theorem~\ref{mainthm5}}

Theorem~\ref{mainthm5} is an immediate consequence of Corollary~\ref{cor:shatt-cons}(3) and Theorem~\ref{thm:umm}(3).

\section{Directions for further study}

A probability measure-preserving action $\Gamma \cc (X,\mu)$ is {\bf anti-Pinkser} if it has positive entropy but does not have any nontrivial direct Bernoulli factors. We are being deliberately vague here by not specifying whether ``positive entropy'' refers to sofic, Rokhlin or some other notion of entropy. 

\subsection{Possible anti-Pinsker actions of other groups}
\begin{enumerate}
    \item 
Are there explicit anti-Pinsker actions of a free group? One candidate is the frozen model associated to independent sets~\cite{ding2016}.
    \item Do all non-amenable groups admit anti-Pinsker actions? Here it might be necessary to use Rokhlin rather than sofic entropy.
    
    \item Given a positive number $h$ and a non-amenable group $\Gamma$, does there exist an uncountable family of pairwise non-measurably conjugate ergodic pmp actions of $\Gamma$ which are anti-Pinsker, have completely positive sofic entropy  and have sofic entropy $h$ (with respect to some fixed sofic approximation)?  Starting from the example of the present paper, one place to look might be among its `typical' compact extensions.

\end{enumerate}




\subsection{Open problems for the parity-check subshift}

Let $(X,m_X,T)$ be the system in Theorem A.

\begin{enumerate}
\item Does there exist a sofic approximation $\Sigma$ to $\Gamma$ with respect to which $(X,m_X,T)$ does not have completely positive sofic entropy?

\item Is $(X,m_X,T)$ finitely determined? This would mean that, if $(\{0,1\}^\Gamma,\mu,T)$ is another system for which $\mu$ is close to $m_X$ both in sofic entropy and in the weak$^\ast$ topology, then there is a joining of these two systems under which the identity coordinates agree with high probability.  This property characterizes those processes isomorphic to Bernoulli shifts over amenable groups~\cite{OW87}, but little is known about it for non-amenable groups.

\item Formally, the family of equations that defines our LDPC shift $X$ can be used to define a system of algebraic origin inside $A^\Gamma$ for another compact Abelian group $A$, such as a finite cyclic group or the continuous circle $\RR/\ZZ$.  For which such $A$ (and which values of $d$ and $k$) is the resulting system still anti-Pinsker?  If there are such examples with $A = \RR/\ZZ$, do these have infinite sofic entropy?

\end{enumerate}
\appendix

\section{A failure of contiguity}\label{sec:non-contig}

Let $V$ be a vertex set of size $n$ divisible by $k$, let $E$ be a set of size $d n/k$, and let $\tilde\PP_n$ be the measure on $k$-uniform $d$-regular factor graphs on $(V,E)$ that is constructed in Section~\ref{subs:factor-graphs}. In addition, let $\PP_n^{\mathrm{unif}}$ be the uniform distribution on all such $k$-uniform $d$-regular factor graphs on $(V,E)$ produced without respect to a partition $E = \bigsqcup_{i\in [d]} E_i$.  


\begin{prop}
Let $k > d \ge 3$, and let
\[U_n := \left\{ H \subset V \times E :\ \exists E'\subseteq E\ \hbox{with every $v \in V$ adj.~to exactly two elements of } E' \right\}.\]
Then $\tilde\PP_n(U_n) = 1$ for all $n$, but $\PP_n^{\mathrm{unif}}(U_n)\to 0$ as $n\to\infty$.  As a result, the models $\tilde\PP_n$ and $\PP_n^{\mathrm{unif}}$ are not contiguous.
\end{prop}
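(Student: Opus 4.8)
The plan is to prove the two probability statements separately and then invoke the standard fact that contiguous sequences of measures assign asymptotically comparable mass to every event, so no event can have probability identically $1$ under one sequence while its probability tends to $0$ under the other. Concretely, contiguity of $(\tilde\PP_n)$ and $(\PP_n^{\mathrm{unif}})$ would require that $\PP_n^{\mathrm{unif}}(A_n)\to 0$ implies $\tilde\PP_n(A_n)\to 0$ for every sequence of events $A_n$; taking $A_n := \Omega_n \setminus U_n$ (the complement of $U_n$ in the space of all $k$-uniform $d$-regular factor graphs) gives the contradiction, since $\PP_n^{\mathrm{unif}}(A_n) = 1 - \PP_n^{\mathrm{unif}}(U_n)\to 1$ is false only if... more carefully: $\PP_n^{\mathrm{unif}}(U_n)\to 0$ means $\PP_n^{\mathrm{unif}}(A_n)\to 1$, whereas $\tilde\PP_n(A_n) = 0$ for all $n$; this directly contradicts the (symmetric) definition of contiguity. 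So the whole proof reduces to the two displayed limit/equality claims about $U_n$.

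The first claim, $\tilde\PP_n(U_n) = 1$, is structural and essentially immediate from the construction in Section~\ref{subs:factor-graphs}. When $H$ is drawn from $\tilde\PP_n$, it is built from a $k$-uniform homomorphism $\sigma:\Gamma_{d,k}\to\Sym(V)$ together with bijections labeling the hyper-edges by $E = E_1\sqcup\cdots\sqcup E_d$. Take $E' := E_1\cup E_2$ (any two of the $d$ blocks). Since $H\cap(E_i\times V)$ is exactly a partition of $V$ into $k$-sets labeled by $E_i$, every vertex $v\in V$ is adjacent to exactly one check node in $E_1$ and exactly one in $E_2$, hence to exactly two elements of $E'$. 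Thus $H\in U_n$ deterministically, giving $\tilde\PP_n(U_n) = 1$ for every $n$. (One should remark that $E'$ need not be all of $E$ — the definition of $U_n$ only asks for the existence of such an $E'$, and here $|E'| = 2n/k < dn/k$.)

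The second claim, $\PP_n^{\mathrm{unif}}(U_n)\to 0$, is the substantive step. The event $U_n$ says that the $(dn/k)$ rows of the parity-check matrix $\mathbf{H}$ contain a subset $E'$ summing (mod $2$) over rows to a vector that is the all-ones vector $\mathbf{e}_V$ — equivalently, $\mathbf{e}_V\in X_\sigma^\perp = \mathrm{img}(\mathbf{H}^{\mathsf T})$, i.e., the all-ones codeword lies in the dual code, i.e., every codeword of $X_\sigma$ has even weight. So $U_n$ is precisely the event that $\mathbf{e}_V$ is in the row span of $\mathbf{H}$. Under $\PP_n^{\mathrm{unif}}$, this is a well-studied `parity' property of uniformly random $k$-uniform $d$-regular LDPC ensembles: the standard approach is a first-moment computation. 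Let $Z$ be the number of subsets $E'\subseteq E$ whose mod-$2$ row sum equals $\mathbf{e}_V$; then $\PP_n^{\mathrm{unif}}(U_n)\le \PP_n^{\mathrm{unif}}(Z\ge 1)$, but since the set of such $E'$ is either empty or a coset of $\ker$, we actually have $\PP_n^{\mathrm{unif}}(U_n) = \PP_n^{\mathrm{unif}}(Z\ge 1)$ and can also work with the number of such cosets. The cleanest route is to compute $\EE_{\PP_n^{\mathrm{unif}}}[Z']$ where $Z'$ counts pairs $(E',\text{witness})$ or, better, to directly estimate $\PP_n^{\mathrm{unif}}(\mathbf{e}_V\in\mathrm{img}\,\mathbf{H}^{\mathsf T})$ via the generating-function / exponent formula for the weight enumerator of the dual code. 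In fact this can be quoted: the typical weight enumerator of these ensembles is known exactly from Miller--Cohen~\cite{millercohen2003}, and the rate of $X_\sigma^\perp$ is $d/k$ almost surely (matching the rank of $\mathbf{H}$), so the probability that the specific vector $\mathbf{e}_V$ — a single weight-$n$ vector — lies in a random $(dn/k)$-dimensional subspace of $\ZZ_2^V$ behaves like $2^{dn/k - n} = 2^{-(1-d/k)n}\to 0$ heuristically; the point is that the random subspace $\mathrm{img}\,\mathbf{H}^{\mathsf T}$ is `spread out' enough that it contains $\mathbf{e}_V$ with exponentially small probability. I expect the main obstacle to be making this last estimate rigorous without simply citing a coding-theory black box: one must show that the row span of the uniform-ensemble $\mathbf{H}$ does not systematically contain $\mathbf{e}_V$, which requires either a genuine first-moment bound on the number of even-parity codewords (and noting $\mathbf{e}_V\in X_\sigma^\perp$ iff all codewords are even) together with a complementary second-moment or switching argument, or else a careful appeal to~\cite{millercohen2003} or~\cite[Section 3.5]{wormald1999} for the precise rank/weight-distribution asymptotics of this ensemble. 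The cleanest self-contained version: count configurations realizing $\mathbf{e}_V$ as a row sum using the configuration-model / permanent expansion, apply Stirling, and check the exponent is strictly negative for $k>d\ge 3$; the contrast with $\tilde\PP_n$ is that the partitioned construction forces the specific small witness $E_1\cup E_2$ that the uniform ensemble almost never contains.
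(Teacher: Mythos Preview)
Your argument for $\tilde\PP_n(U_n)=1$ is correct and is exactly what the paper does. The contiguity deduction at the end is also fine.

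However, your translation of the event $U_n$ into linear algebra is wrong, and this derails the second half. If every vertex $v$ is adjacent to \emph{exactly two} elements of $E'$, then summing the rows of $\mathbf{H}$ indexed by $E'$ gives the integer vector $(2,2,\dots,2)$, whose reduction mod $2$ is $\mathbf{0}$, not $\mathbf{e}_V$. Thus the correct algebraic content of $H\in U_n$ is that $\mathbf{e}_{E'}\in\ker\mathbf{H}^{\mathsf T}$ for some $E'\subseteq E$. Moreover $E'\ne\emptyset$ (else each vertex meets zero, not two, elements of $E'$) and $E'\ne E$ (else each vertex meets $d\ge 3$ elements), so $\mathbf{e}_{E'}$ is a kernel element distinct from both $\mathbf{0}$ and $\mathbf{e}_E$. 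Your subsequent discussion of whether $\mathbf{e}_V$ lies in $\mathrm{img}\,\mathbf{H}^{\mathsf T}$ therefore addresses a different event entirely, and the heuristic $2^{-(1-d/k)n}$ and the ``even-weight codeword'' reformulation are not relevant here.

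With the correct translation the paper's route is short: Miller--Cohen~\cite{millercohen2003} show that, with high probability under $\PP_n^{\mathrm{unif}}$, $\ker\mathbf{H}^{\mathsf T}$ is trivial when $d$ is odd and equals $\{\mathbf{0},\mathbf{e}_E\}$ when $d$ is even. Either way there is no room for an additional kernel element $\mathbf{e}_{E'}$, so $\PP_n^{\mathrm{unif}}(U_n)\to 0$. You actually gesture at this when you mention ``rank asymptotics'' of the ensemble --- full row rank of $\mathbf{H}$ is exactly $\ker\mathbf{H}^{\mathsf T}=\{\mathbf{0}\}$ --- but you attached it to the wrong event.
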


Note that the definition of $U_n$ ensures that the contiguity also fails for the associated multi-hyper-graph models with unlabeled hyper-edges.

\begin{proof}
If $H$ arises from $\tilde\PP_n$, then let $i,j \in [d]$ be distinct and let $E' = E_i \cup E_j$.  Then every vertex is adjacent to exactly one check node in each of $E_i, E_j$, and in particular to exactly two check nodes in $E'$.

On the other hand, the very precise calculations  in~\cite{millercohen2003} show that, with high probability according to $\PP_n^{\mathrm{unif}}$, the transposed parity-check matrix associated to $H$ has kernel that is either trivial (if $d$ is odd) or one-dimensional (if $d$ is even, in which case the all 1's vector is in the kernel).  In either case, there can be no $E'$ as promised by the event $U_n$, since it would give an additional nontrivial element of the kernel.
\end{proof}


\bibliographystyle{plain}
\bibliography{refs}

\begin{thebibliography}{10}

\bibitem{MR2277138}
Dimitris Achlioptas and Federico Ricci-Tersenghi.
\newblock On the solution-space geometry of random constraint satisfaction
  problems.
\newblock In {\em S{TOC}'06: {P}roceedings of the 38th {A}nnual {ACM}
  {S}ymposium on {T}heory of {C}omputing}, pages 130--139. ACM, New York, 2006.

\bibitem{airey2022}
Dylan Airey, Lewis Bowen, and Yuqing Lin.
\newblock A topological dynamical system with two different positive sofic
  entropies.
\newblock {\em Transactions of the American Mathematical Society, Series B},
  9(2):35--98, February 2022.

\bibitem{austin2016}
Tim Austin.
\newblock Additivity {{Properties}} of {{Sofic Entropy}} and {{Measures}} on
  {{Model Spaces}}.
\newblock {\em Forum of Mathematics, Sigma}, 4, 2016.

\bibitem{austin2016a}
Tim Austin.
\newblock The {{Geometry}} of {{Model Spaces}} for {{Probability-Preserving
  Actions}} of {{Sofic Groups}}.
\newblock {\em Analysis and Geometry in Metric Spaces}, 4(1):160--186, 2016.

\bibitem{MR3905465}
Tim Austin.
\newblock Measure concentration and the weak {P}insker property.
\newblock {\em Publ. Math. Inst. Hautes \'{E}tudes Sci.}, 128:1--119, 2018.

\bibitem{austin2020}
Tim Austin.
\newblock Multi-variate correlation and mixtures of product measures.
\newblock {\em Kybernetika}, pages 459--499, July 2020.

\bibitem{austinburton2019}
Tim Austin and Peter Burton.
\newblock Uniform mixing and completely positive sofic entropy.
\newblock {\em J. Anal. Math.}, 138(2):597--612, 2019.

\bibitem{bethe1935}
H.A. Bethe.
\newblock {Statistical Theory of Superlattices}.
\newblock {\em Proc. Roy. Soc. London Ser. A}, 152:552--575, 1935.

\bibitem{bowen2010}
Lewis Bowen.
\newblock The ergodic theory of free group actions: Entropy and the
  f-invariant.
\newblock {\em Groups, Geometry, and Dynamics}, pages 419--432, 2010.

\bibitem{bowen2010a}
Lewis Bowen.
\newblock A measure-conjugacy invariant for free group actions.
\newblock {\em Ann. of Math. (2)}, 171(2):1387--1400, 2010.

\bibitem{bowen-jams-2010}
Lewis Bowen.
\newblock Measure conjugacy invariants for actions of countable sofic groups.
\newblock {\em J. Amer. Math. Soc.}, 23(1):217--245, 2010.

\bibitem{bowen-expansive}
Lewis Bowen.
\newblock Entropy for expansive algebraic actions of residually finite groups.
\newblock {\em Ergodic Theory Dynam. Systems}, 31(3):703--718, 2011.

\bibitem{bowen-ornstein-2012}
Lewis Bowen.
\newblock Every countably infinite group is almost {O}rnstein.
\newblock In {\em Dynamical systems and group actions}, volume 567 of {\em
  Contemp. Math.}, pages 67--78. Amer. Math. Soc., Providence, RI, 2012.

\bibitem{bowen-ICM}
Lewis Bowen.
\newblock A brief introduction to sofic entropy theory.
\newblock In {\em Proceedings of the {I}nternational {C}ongress of
  {M}athematicians---{R}io de {J}aneiro 2018. {V}ol. {III}. {I}nvited
  lectures}, pages 1847--1866. World Sci. Publ., Hackensack, NJ, 2018.

\bibitem{MR3962876}
Lewis Bowen.
\newblock Finitary random interlacements and the {G}aboriau-{L}yons problem.
\newblock {\em Geom. Funct. Anal.}, 29(3):659--689, 2019.

\bibitem{bowen-survey}
Lewis Bowen.
\newblock Examples in the entropy theory of countable group actions.
\newblock {\em Ergodic Theory Dynam. Systems}, 40(10):2593--2680, 2020.

\bibitem{bowen2022}
Lewis Bowen.
\newblock Sofic homological invariants and the {{Weak Pinsker Property}}.
\newblock {\em American Journal of Mathematics}, 144(1):169--226, 2022.

\bibitem{MR4424970}
Lewis Bowen and Robin~D. Tucker-Drob.
\newblock Superrigidity, measure equivalence, and weak {P}insker entropy.
\newblock {\em Groups Geom. Dyn.}, 16(1):247--286, 2022.

\bibitem{burton2020}
Peter~J. Burton and Alexander~S. Kechris.
\newblock Weak containment of measure-preserving group actions.
\newblock {\em Ergodic Theory and Dynamical Systems}, 40(10):2681--2733,
  October 2020.

\bibitem{cameronvanlint1991}
P.~J. Cameron and J.~H. van Lint.
\newblock {\em Designs, graphs, codes and their links}, volume~22 of {\em
  London Mathematical Society Student Texts}.
\newblock Cambridge University Press, Cambridge, 1991.

\bibitem{capraro-lupini}
Valerio Capraro and Martino Lupini.
\newblock {\em Introduction to {S}ofic and hyperlinear groups and {C}onnes'
  embedding conjecture}, volume 2136 of {\em Lecture Notes in Mathematics}.
\newblock Springer, Cham, 2015.
\newblock With an appendix by Vladimir Pestov.

\bibitem{MR3385742}
Amin Coja-Oghlan and Charilaos Efthymiou.
\newblock On independent sets in random graphs.
\newblock {\em Random Structures Algorithms}, 47(3):436--486, 2015.

\bibitem{coverthomas2006}
Thomas~M. Cover and Joy~A. Thomas.
\newblock {\em Elements of information theory}.
\newblock Wiley-Interscience [John Wiley \& Sons], Hoboken, NJ, second edition,
  2006.

\bibitem{DMSS2014}
Amir Dembo, Andrea Montanari, Allan Sly, and Nike Sun.
\newblock The replica symmetric solution for {P}otts models on {$d$}-regular
  graphs.
\newblock {\em Comm. Math. Phys.}, 327(2):551--575, 2014.

\bibitem{ding2016}
Jian Ding, Allan Sly, and Nike Sun.
\newblock Maximum independent sets on random regular graphs.
\newblock {\em Acta Mathematica}, 217(2):263--340, 2016.

\bibitem{DomGre72}
C.~Domb and M.~S. Green, editors.
\newblock {\em Phase transitions and critical phenomena. {V}ol. 2}.
\newblock Academic Press, London-New York, 1972.

\bibitem{MR2089244}
G{\'a}bor Elek and Endre Szab{\'o}.
\newblock Sofic groups and direct finiteness.
\newblock {\em J. Algebra}, 280(2):426--434, 2004.

\bibitem{fellerVolI}
William Feller.
\newblock {\em An introduction to probability theory and its applications.
  {V}ol. {I}}.
\newblock John Wiley \& Sons, Inc., New York-London-Sydney, third edition,
  1968.

\bibitem{MR847620}
Yaotian Fu and P.~W. Anderson.
\newblock Application of statistical mechanics to {${\rm NP}$}-complete
  problems in combinatorial optimisation.
\newblock {\em J. Phys. A}, 19(9):1605--1620, 1986.

\bibitem{gallager1963}
R.~G. Gallager.
\newblock Low-density parity-check codes.
\newblock MIT PhD thesis, now available online at
  http://www.inference.org.uk/mackay/gallager/papers/ldpc.pdf.

\bibitem{gallager1962}
R.~G. Gallager.
\newblock Low-density parity-check codes.
\newblock {\em IRE Trans.}, IT-8:21--28, 1962.

\bibitem{gamarnik-survey}
David Gamarnik.
\newblock The overlap gap property: {{A}} topological barrier to optimizing
  over random structures.
\newblock {\em Proceedings of the National Academy of Sciences}, 118(41),
  October 2021.
\newblock Available online at \texttt{arXiv.org}: 2109.14409.

\bibitem{MR3693964}
David Gamarnik and Madhu Sudan.
\newblock Limits of local algorithms over sparse random graphs.
\newblock {\em Ann. Probab.}, 45(4):2353--2376, 2017.

\bibitem{greenhilljansonkimwormald2002}
Catherine Greenhill, Svante Janson, Jeong~Han Kim, and Nicholas~C. Wormald.
\newblock Permutation pseudographs and contiguity.
\newblock {\em Combinatorics, Probability and Computing}, 11:273--298, 2002.

\bibitem{MR1694588}
M.~Gromov.
\newblock Endomorphisms of symbolic algebraic varieties.
\newblock {\em J. Eur. Math. Soc. (JEMS)}, 1(2):109--197, 1999.

\bibitem{hayes-fuglede}
Ben Hayes.
\newblock Fuglede-{K}adison determinants and sofic entropy.
\newblock {\em Geom. Funct. Anal.}, 26(2):520--606, 2016.

\bibitem{hayes2017}
Ben Hayes.
\newblock Mixing and spectral gap relative to {{Pinsker}} factors for sofic
  groups.
\newblock In {\em Proceedings of the 2014 {{Maui}} and 2015 {{Qinhuangdao}}
  Conferences in Honour of {{Vaughan F}}. {{R}}. {{Jones}}' 60th Birthday},
  pages 193--221. {Canberra: Australian National University, Centre for
  Mathematics and its Applications}, 2017.

\bibitem{hayes2016a}
Ben Hayes.
\newblock Relative entropy and the {P}insker product formula for sofic groups.
\newblock {\em Groups Geom. Dyn.}, 15(2):413--463, 2021.

\bibitem{kerr--soficdfn}
David Kerr.
\newblock Sofic measure entropy via finite partitions.
\newblock {\em Groups Geom. Dyn.}, 7(3):617--632, 2013.

\bibitem{MR3616077}
David Kerr and Hanfeng Li.
\newblock {\em Ergodic theory}.
\newblock Springer Monographs in Mathematics. Springer, Cham, 2016.
\newblock Independence and dichotomies.

\bibitem{kieffer-1975a}
J.~C. Kieffer.
\newblock A generalized {S}hannon-{M}c{M}illan theorem for the action of an
  amenable group on a probability space.
\newblock {\em Ann. Probability}, 3(6):1031--1037, 1975.

\bibitem{kikuchi1951}
R.~Kikuchi.
\newblock {A Theory of Cooperative Phenomena}.
\newblock {\em Physical Review}, 81:988--1003, 1951.

\bibitem{kolmogorov-1958}
A.~N. Kolmogorov.
\newblock A new metric invariant of transient dynamical systems and
  automorphisms in {L}ebesgue spaces.
\newblock {\em Dokl. Akad. Nauk SSSR (N.S.)}, 119:861--864, 1958.

\bibitem{kolmogorov-1959}
A.~N. Kolmogorov.
\newblock Entropy per unit time as a metric invariant of automorphisms.
\newblock {\em Dokl. Akad. Nauk SSSR}, 124:754--755, 1959.

\bibitem{MR2317690}
Florent Krz\c{a}ka\l{a}, Andrea Montanari, Federico Ricci-Tersenghi, Guilhem
  Semerjian, and Lenka Zdeborov\'{a}.
\newblock Gibbs states and the set of solutions of random constraint
  satisfaction problems.
\newblock {\em Proc. Natl. Acad. Sci. USA}, 104(25):10318--10323, 2007.

\bibitem{lyons2017}
Russell Lyons.
\newblock Factors of {{IID}} on {{Trees}}.
\newblock {\em Combinatorics, Probability and Computing}, 26(2):285--300, March
  2017.

\bibitem{mackay1999}
David J.~C. MacKay.
\newblock Good error-correcting codes based on very sparse matrices.
\newblock {\em IEEE Trans. Inform. Theory}, 45(2):399--431, 1999.

\bibitem{mackay2003}
David J.~C. MacKay.
\newblock {\em Information theory, inference and learning algorithms}.
\newblock Cambridge University Press, New York, 2003.

\bibitem{mezard2009}
Marc M{\'e}zard and Andrea Montanari.
\newblock {\em Information, {{Physics}}, and {{Computation}}}.
\newblock {Oxford University Press}, January 2009.

\bibitem{MPV1987}
Marc M\'{e}zard, Giorgio Parisi, and Miguel~Angel Virasoro.
\newblock {\em Spin glass theory and beyond}, volume~9 of {\em World Scientific
  Lecture Notes in Physics}.
\newblock World Scientific Publishing Co., Inc., Teaneck, NJ, 1987.

\bibitem{millercohen2003}
Gadi Miller and G\'{e}rard Cohen.
\newblock The rate of regular {LDPC} codes.
\newblock {\em IEEE Trans. Inform. Theory}, 49(11):2989--2992, 2003.

\bibitem{ornstein-1970a}
Donald Ornstein.
\newblock Bernoulli shifts with the same entropy are isomorphic.
\newblock {\em Advances in Math.}, 4:337--352, 1970.

\bibitem{ornstein-1970b}
Donald Ornstein.
\newblock Two {B}ernoulli shifts with infinite entropy are isomorphic.
\newblock {\em Advances in Math.}, 5:339--348, 1970.

\bibitem{MR0316682}
Donald~S. Ornstein.
\newblock An example of a {K}olmogorov automorphism that is not a {B}ernoulli
  shift.
\newblock {\em Advances in Math.}, 10:49--62, 1973.

\bibitem{MR0330416}
Donald~S. Ornstein.
\newblock A {$K$} automorphism with no square root and {P}insker's conjecture.
\newblock {\em Advances in Math.}, 10:89--102, 1973.

\bibitem{MR0399416}
Donald~S. Ornstein.
\newblock A mixing transformation for which {P}insker's conjecture fails.
\newblock {\em Advances in Math.}, 10:103--123, 1973.

\bibitem{MR0382598}
Donald~S. Ornstein and Paul~C. Shields.
\newblock An uncountable family of {$K$}-automorphisms.
\newblock {\em Advances in Math.}, 10:63--88, 1973.

\bibitem{OW87}
Donald~S. Ornstein and Benjamin Weiss.
\newblock Entropy and isomorphism theorems for actions of amenable groups.
\newblock {\em J. Analyse Math.}, 48:1--141, 1987.

\bibitem{pestov-sofic-survey}
Vladimir~G. Pestov.
\newblock Hyperlinear and sofic groups: a brief guide.
\newblock {\em Bull. Symbolic Logic}, 14(4):449--480, 2008.

\bibitem{popa-sasyk}
Sorin Popa and Roman Sasyk.
\newblock On the cohomology of {B}ernoulli actions.
\newblock {\em Ergodic Theory Dynam. Systems}, 27(1):241--251, 2007.

\bibitem{richardson2008}
Tom Richardson and R{\"u}diger Urbanke.
\newblock {\em Modern {{Coding Theory}}}.
\newblock {Cambridge University Press}, first edition, March 2008.

\bibitem{rudolphweiss}
Daniel~J. Rudolph and Benjamin Weiss.
\newblock Entropy and mixing for amenable group actions.
\newblock {\em Ann. of Math. (2)}, 151(3):1119--1150, 2000.

\bibitem{seward-kreiger-1}
Brandon Seward.
\newblock Krieger's finite generator theorem for actions of countable groups
  {I}.
\newblock {\em Invent. Math.}, 215(1):265--310, 2019.

\bibitem{MR3959054}
Brandon Seward.
\newblock Krieger's finite generator theorem for actions of countable groups
  {II}.
\newblock {\em J. Mod. Dyn.}, 15:1--39, 2019.

\bibitem{MR4066472}
Brandon Seward.
\newblock Positive entropy actions of countable groups factor onto {B}ernoulli
  shifts.
\newblock {\em J. Amer. Math. Soc.}, 33(1):57--101, 2020.

\bibitem{seward2022}
Brandon Seward.
\newblock Bernoulli shifts with bases of equal entropy are isomorphic.
\newblock {\em J. Mod. Dyn.}, 18:345--362, 2022.

\bibitem{sinai-weak}
Ja.~G. Sina{\u\i}.
\newblock On a weak isomorphism of transformations with invariant measure.
\newblock {\em Mat. Sb. (N.S.)}, 63 (105):23--42, 1964.

\bibitem{stepin-1975}
A.~M. Stepin.
\newblock Bernoulli shifts on groups.
\newblock {\em Dokl. Akad. Nauk SSSR}, 223(2):300--302, 1975.

\bibitem{MR0453982}
J.-P. Thouvenot.
\newblock On the stability of the weak {P}insker property.
\newblock {\em Israel J. Math.}, 27(2):150--162, 1977.

\bibitem{Voi--III}
D.~Voiculescu.
\newblock The analogues of entropy and of {F}isher's information measure in
  free probability theory. {III}. {T}he absence of {C}artan subalgebras.
\newblock {\em Geom. Funct. Anal.}, 6(1):172--199, 1996.

\bibitem{WJ2008}
Martin~J. Wainwright and Michael~I. Jordan.
\newblock {Graphical Models, Exponential Families, and Variational Inference}.
\newblock {\em Foundations and Trends in Machine Learning}, 1(1-2):1--305,
  2008.

\bibitem{watanabe1960}
Satosi Watanabe.
\newblock Information {{Theoretical Analysis}} of {{Multivariate Correlation}}.
\newblock {\em IBM Journal of Research and Development}, 4(1):66--82, January
  1960.

\bibitem{weiss-2000}
Benjamin Weiss.
\newblock Sofic groups and dynamical systems.
\newblock {\em Sankhy\=a Ser. A}, 62(3):350--359, 2000.
\newblock Ergodic theory and harmonic analysis (Mumbai, 1999).

\bibitem{weiss-survey}
Benjamin Weiss.
\newblock Actions of amenable groups.
\newblock In {\em Topics in dynamics and ergodic theory}, volume 310 of {\em
  London Math. Soc. Lecture Note Ser.}, pages 226--262. Cambridge Univ. Press,
  Cambridge, 2003.

\bibitem{MR4286926}
E.~T. Whittaker and G.~N. Watson.
\newblock {\em A course of modern analysis---an introduction to the general
  theory of infinite processes and of analytic functions with an account of the
  principal transcendental functions}.
\newblock Cambridge University Press, Cambridge, fifth edition, 2021.
\newblock Edited by Victor H. Moll, with a foreword by S. J. Patterson.

\bibitem{wormald1999}
N.~C. Wormald.
\newblock Models of random regular graphs.
\newblock In {\em Surveys in combinatorics, 1999 ({C}anterbury)}, volume 267 of
  {\em London Math. Soc. Lecture Note Ser.}, pages 239--298. Cambridge Univ.
  Press, Cambridge, 1999.

\bibitem{yedidia2000}
Jonathan Yedidia.
\newblock {An Idiosyncratic Journey Beyond Mean Field Theory}.
\newblock Technical Report TR-2000-27, Mitsubishi Electric Res. Lab., June
  2000.

\bibitem{YFW2001}
Jonathan Yedidia, William Freeman, and Yair Weiss.
\newblock {Bethe free energy, Kikuchi approximations, and belief propagation
  algorithms}.
\newblock Technical Report TR-2001-16, Mitsubishi Electric Res. Lab., May 2001.

\end{thebibliography}

\end{document}